\def\UseTikzCache{}\pdfoutput=1
\documentclass[12pt,dvipsnames,oneside]{amsart}

\usepackage[english]{babel}
\usepackage[utf8]{inputenc}
\usepackage[normalem]{ulem}
\usepackage{amssymb}
\usepackage{a4wide}
\usepackage{microtype}
\usepackage{lmodern}
\usepackage{mathtools}
\mathtoolsset{mathic=true}
\usepackage{color, graphicx}
\usepackage[inline]{enumitem}
\usepackage{bbold}
\usepackage{quiver}
\usepackage{mathabx}
\usepackage{tikz-cd}
\tikzcdset{arrow style=math font}

\ifdefined\tikzexternalrealjob
  \def\UseTikzCache{}\def\MakeTikzCache{}
  \PassOptionsToPackage{bookmarks=false}{hyperref}
\fi

\ifdefined\UseTikzCache
  \usetikzlibrary{external}
  \tikzset{external/mode=graphics if exists}
  \ifdefined\MakeTikzCache
    \tikzset{external/mode=convert with system call}
  \fi
  \ExplSyntaxOn\makeatletter
  \let\orig@tikzcd\tikzcd
  \let\orig@endtikzcd\endtikzcd
  \RenewDocumentEnvironment{tikzcd}{O{}+b}{
    \begin{tikzpicture}[commutative~diagrams/.cd, every~diagram, #1]
      \tl_set:Nn \l_tmpa_tl { #2 }
      \tl_replace_all:Nnn \l_tmpa_tl { \& } { \pgfmatrixnextcell }
      \tl_replace_all:Nnn \l_tmpa_tl { & } { \pgfmatrixnextcell }
      \let\saved@tikzpicture\tikzpicture
      \let\saved@endtikzpicture\endtikzpicture
      \def\tikzpicture[##1]{}
      \let\endtikzpicture\relax
      \orig@tikzcd
        \l_tmpa_tl
      \orig@endtikzcd
      \let\tikzpicture\saved@tikzpicture
      \let\endtikzpicture\saved@endtikzpicture
    \end{tikzpicture}
  }{}
  \ExplSyntaxOff\makeatother
\fi

\usepackage{stmaryrd}
\usepackage{pdflscape}
\usepackage{aligned-overset} 

\usepackage[backend=bibtex, backref=true, maxnames=6, maxalphanames=6, style=alphabetic, citestyle=alphabetic]{biblatex}
\DeclareFieldFormat*{title}{#1}

\DeclareDelimFormat{finalnamedelim}{
  \ifnumgreater{\value{liststop}}{2}{,}{}%
  \addspace\bibstring{and}\space}
\bibliography{bib}

\usepackage{url}
\definecolor{red}{RGB}{175, 49, 39}
\usepackage[colorlinks=true,linkcolor=red,citecolor=red,urlcolor=red]{hyperref}
\usepackage{amsthm}
\usepackage[capitalise, noabbrev]{cleveref}

\makeatletter
\newtheorem*{rep@theorem}{\rep@title}
\newenvironment{reptheorem}[1]{\def\rep@title{\cref{#1}}\begin{rep@theorem}}{\end{rep@theorem}}
\makeatother

\DeclareFontFamily{U}{dmjhira}{}
\DeclareFontShape{U}{dmjhira}{m}{n}{ <-> dmjhira }{}

\usepackage{thmtools}
\theoremstyle{plain}

\newtheorem{proposition}{Proposition}[section]
\newtheorem{theorem}[proposition]{Theorem}
\newtheorem{lemma}[proposition]{Lemma}
\newtheorem{corollary}[proposition]{Corollary}
\declaretheorem[name=Theorem,qed={\tiny$\blacksquare$},numbered=no]{theorem*}

\theoremstyle{remark}
\declaretheorem[ name=Remark, style=definition, qed=$\diamondsuit$, sharenumber=proposition ]{remark}

\theoremstyle{definition}
\declaretheorem[ name=Definition, style=definition, qed=$\triangledown$, sharenumber=proposition ]{definition}
\declaretheorem[ name=Example, style=definition, qed=$\triangle$, sharenumber=proposition ]{example}

\usepackage{rotating}
\usepackage{floatpag}
\newenvironment{amssidewaysfigure}{%
  \begin{landscape}
    \thispagestyle{empty}
    \begin{figure}[htbp]
}{%
    \end{figure}
  \end{landscape}
}

\newcommand{\mathscale}[2]{\scalebox{#1}{\mbox{\ensuremath{\displaystyle #2}}}}

\DeclareMathOperator{\id}{\mathsf{id}}
\DeclareMathOperator{\Id}{\mathsf{Id}}

\newcommand*{\from}{\colon}
\newcommand*{\blank}{{-}}
\newcommand*{\bblank}{{=}}
\newcommand*{\eqdef}{\coloneqq}

\newcommand{\Tor}{\ensuremath{\mathrm{Tor}}}

\newcommand{\Set}{\mathsf{Set}}
\newcommand{\Vect}{\mathsf{Vect}}
\newcommand{\preord}{\mathsf{Preord}}
\newcommand{\plgraph}{\mathsf{Graph}}
\newcommand{\rgraph}{\mathsf{RGraph}}
\newcommand{\Cat}{\mathsf{Cat}}
\newcommand{\thinCat}{\mathsf{thinCat}}

\makeatletter
\newcommand{\cat}{%
  \@ifnextchar[
    {\@cat@square}%
    {\@cat@brace}%
}

\def\@cat@square[#1]{\ensuremath{\mathcal{#1}}}

\def\@cat@brace{%
  \@ifnextchar\bgroup
    {\@cat@with@brace}
    {\ensuremath{\mathcal{C}}}
}

\def\@cat@with@brace#1{\ensuremath{\mathcal{#1}}}
\makeatother

\newcommand{\mb}[1]{\boldsymbol{#1}}

\newcommand{\tu}{\mathbb{1}}
\newcommand{\act}{\mathfrak{a}}

\DeclareMathOperator{\End}{\mathsf{End}}
\DeclareMathOperator{\ev}{\mathsf{ev}}
\DeclareMathOperator{\coev}{\mathsf{coev}}
\newcommand{\op}{^{\textup{op}}}
\NewDocumentCommand{\hmodM}{O{H}O{}O{}O{}}{\prescript{#4}{#1}{\mathcal{M}}_{#2}^{#3}}

\newcommand*{\Hom}{\mathsf{Hom}}
\newcommand*{\term}{{\star}}
\newcommand{\dobj}{{d}}
\newcommand{\free}{F}
\newcommand{\forget}{U}
\newcommand{\G}{G}
\newcommand{\U}{U}

\makeatletter
\newcommand*{\coslice}{\mathpalette\@coslice{}}
\newcommand*{\@coslice}[2]{\raisebox{1pt}{$#1\downarrow$}}
\newcommand*{\points}[1][\cat{C}]{\mathpalette\@term{}\coslice#1}
\newcommand*{\@term}[2]{\raisebox{1pt}{$#1\term$}}
\newcommand*{\htens}{\mathpalette\@htens{}}
\newcommand*{\@htens}[2]{\mathbin{\raisebox{1pt}{\scalebox{0.7}{$#1\rightthreetimes$}}}}
\makeatother

\newcommand{\cC}{\mathcal{C}}
\newcommand{\cD}{\mathcal{D}}

\newcommand{\cM}{\mathcal{M}}

\newcommand{\cP}{\mathcal{P}}

\makeatletter
\patchcmd{\@setaddresses}{\indent}{\noindent}{}{}
\patchcmd{\@setaddresses}{\indent}{\noindent}{}{}
\patchcmd{\@setaddresses}{\indent}{\noindent}{}{}
\patchcmd{\@setaddresses}{\indent}{\noindent}{}{}
\makeatother

\usepackage{xspace}
\newcommand*{\adj}[4]{\ensuremath{#1 \colon #3 \rightleftarrows #4 \cocolon #2}\xspace}
\newcommand*{\cocolon}{%
  \nobreak%
  \mskip6mu plus1mu
  \mathpunct{}%
  \nonscript%
  \mkern-\thinmuskip%
  {:}%
  \mskip2mu
  \relax
}
\newcommand*{\defeq}{\coloneqq}

\usepackage{multicol} 

\title{Gabi-monads}

\author{Sebastian Halbig}
\address{S.H., Philipps-Universität Marburg, Arbeitsgruppe Algebraische Lie-Theorie, Hans-Meer\-wein-Straße 6, 35043 Marburg}
\email{sebastian.halbig@uni-marburg.de}

\author{Paolo Saracco}
\address{P.S., Dipartimento di Matematica `G.\ Peano', Universit\`a degli Studi di Torino, via Carlo Albero 10, 10123 Torino, Italy.}
\email{p.saracco@unito.it}
\urladdr{\url{https://sites.google.com/view/paolo-saracco}}

\author{Tony Zorman}
\address{T.Z., Universität Hamburg, Fachbereich Mathematik, Bereich Algebra und Zahlentheorie, Bundesstraße 55, D-20146 Hamburg, Germany}
\email{tony.zorman@uni-hamburg.de}
\urladdr{\url{https://tony-zorman.com/research}}

\date{\today}
\subjclass[2020]{Primary: 18C15, 18D15, 18C20; Secondary: 18M50, 16T05}
\thanks{T.Z.~was partially supported by the DFG grant KR 5036/2--1 and acknowledges partial support from the Deutsche Forschungsgemeinschaft -- SFB 1624 -- ``Higher structures, moduli spaces and integrability'' -- 506632645.
This paper was written while P.S.~was a member of the ``National Group for Algebraic and Geometric Structures and their Applications'' (GNSAGA-INdAM). P.S.~was supported by the project ``HADRA - Hopf Algebroids: Duality, Representations and Applications'' funded by the Italian Ministero dell'Universit\`a e della Ricerca, grant number PGR21S8QB2, and was partially supported by the project PID2024-157173NB-I00 funded by MCIN/AEI/10.13039/501100011033 and by FEDER, UE}

\begin{document}

\allowdisplaybreaks

\begin{abstract}
  We study gabi-monads on skew-closed categories, extending the gabi-algebras of Berger, the second author, and Vercruysse beyond the linear case.
  Our main reconstruction theorem identifies gabi-monad structures on a monad with skew-closed structures on its Eilenberg--Moore category for which the canonical forgetful functor is strict closed.
  We compare this notion with closed monads in the sense of Kock, showing that in representation-theoretic cases these notions are quite different.
  On closed monoidal categories, every left Hopf monad is a normal gabi-monad, but the converse fails in general.
  We characterise when a gabi-monad is Hopf by the invertibility of the corresponding parametric mates, which recovers the ring-theoretic result that normal gabi-algebras over a commutative base ring are Hopf algebras.
  The theory of gabi-monads admits several natural examples, such as torsion-free modules,
  reflexive digraphs, and simplicial complexes, that we will explore in detail; we also study pointed sets as a quasi-example.
\end{abstract}

\maketitle

\tableofcontents

\section{Introduction}

The relationship between monads and closed monoidal categories---categories admitting a tensor product and an internal hom---has been studied extensively through monoidal reconstruction, both in the language of bimonads developed by Moerdijk and McCrudden~\cite{Moerdijk2002,McCrudden2002} and via the theory of Hopf monads, introduced by Bruguières--Virelizier and developed further by Bruguières--Lack--Virelizier~\cite{Bruguieres2007,BruguieresLackVirelizier}.
Given a monad \(T\) on a closed monoidal category \(\cat{C}\), a bimonad structure on \(T\) ensures that the Eilenberg--Moore category \(\cat{C}^{T}\) inherits a monoidal structure such that the canonical forgetful functor \(\forget^{T}\from \cat{C}^{T} \to \cat{C}\) is strict monoidal.
Moreover, \(T\) is left Hopf monad if and only if \(\cat{C}^{T}\) is additionally closed and \(U^{T}\) is strict closed.
As such, the closed structure is reconstructed \emph{after} the tensor product, by lifting it as an adjoint.
Instead, the point of view of this paper is to start with the internal hom itself.

Thus, we work with the more general notion of \emph{\textup(skew\textup) closed category} studied extensively by for example Eilenberg--Kelly~\cite{Eilenberg-Kelly}, Street~\cite{Street-skew_closed}, and Uustalu--Veltri--Zeilberger~\cite{UVZ_Eilenberg-Kelly_reloaded}.
Briefly, this is a category \(\cat{C}\) equipped with an internal hom functor \([\blank,\bblank]\), a distinguished unit object \(\tu\), and natural transformations encoding identities and composition, subject to appropriate coherence conditions.
Every closed monoidal category, such as the category of vector spaces, is closed in this sense.

\medskip
In the monoidal case, a strong monoidal forgetful functor automatically yields a colax monoidal monad, which may be used for monoidal reconstruction immediately, see~\cite{Kelly-doctrines,Moerdijk2002,McCrudden2002}.
Things are less well-behaved in the closed case: a strong closed forgetful functor does not in general induce a closed monad in the sense of Kock~\cite{Kock-Monads,Kock-ClosedEM}.
Instead of a closed structure on a monad, what is needed to lift the internal hom to the Eilenberg--Moore category is that the monad admits the structure of a \emph{gabi-monad}, see~\cref{def:gabi_monad},
which generalises the gabi-algebras of~\cite{Berger-Vercruysse-Saracco}; see Theorem~3.4 of \emph{loc cit} and \cref{thm:gabi-bijection}.
Concretely, a gabi-monad is a monad \(T\) equipped with a \(T\)-algebra structure on the closed unit, as well as a natural transformation \(s\from T[T\blank,\bblank]\to[\blank,T\bblank]\),
subject to certain compatibility relations, which turn \([M,N]\) into a \(T\)-algebra whenever \(M\) and \(N\) are.
In the easiest case, on may choose \(T=A\otimes \blank \) for a \emph{gabi-algebra} \(A\):
an algebra, endowed with an augmentation and a map \(s \from A\to A \otimes A^{\op}\) that axiomatises the familiar plus-minus translation map \(a\mapsto a_+\otimes a_-\),
and whose associated morphism \(a\otimes b\mapsto a_+\otimes a_-b\) plays the role of the Galois map in Hopf theory.

Gabi-monads fit into an analogue of the doctrinal adjunction-type result used for monoidal reconstruction discussed above:

\begin{reptheorem}{prop:gabi-moand-VS-strong-closed}
  Consider an ordinary adjunction \(\adj{\free}{\forget}{\cat{C}}{\cat{D}}\) between two skew-closed categories \(\cat{C}\) and \(\cat{D}\).
  There is a bijective correspondence between strong closed structures on the right adjoint \(\forget\),
  and pairs consisting of a gabi-monad structure on \(\forget\free\) and a strong closed structure on the comparison functor.
\end{reptheorem}

A closed monad does not necessarily have a strong closed right forgetful functor, and will in general only carry a gabi-monad structure under additional hypotheses.

\begin{reptheorem}{prop:normal-closed-is-gabi,cor:nice-closed-gabi}
  Let \(\cat{C}\) be a skew-closed category, with \((T, \xi_0,\xi)\) a closed monad on \(\cat{C}\).
  If \(\xi_0\from \tu \to T\tu\) is an isomorphism, then \(T\) is a gabi-monad.

  If \(\cat{C}\) is an abelian closed monoidal category and \(T\) is right exact
  then \(T\) admits a gabi-monad structure such that \(\xi_0\) is a morphism of \(T\)-algebras
  if and only if
  the canonical unit comparison of \(\forget^T\) is invertible.
  In case \(\tu \in \cat{C}\) is a generator and \(\cat{C}\) is cocomplete, this implies \(T \cong \Id\).
\end{reptheorem}

On ordinary closed monoidal categories, the difference between gabi- and Hopf monads is measured by whether the lifted internal hom is still part of a lifted tensor--hom adjunction.
This is governed by the invertibility of certain ``parametric mates''.

\begin{reptheorem}{thm:lift-to-hopf-monads}
  Let \(\cat[C]\) be a closed monoidal category and \((T, s , \mathfrak{a}_{\tu})\) a gabi-monad on \(\cat[C]\).
  Then \(T\) is a left Hopf monad if and only if for every object \(\boldsymbol{M} \in \cat[C]^T\),
  the mate of
  \[
    {\left(\gamma^{\boldsymbol{M}}_X \eqdef s_{M,X}\circ T[\act_{M}, X] \from T[M,X] \to [M,TX]\right)}_{X \in \cat{C}}
  \]
  is invertible.
\end{reptheorem}

The ring-theoretic setting of~\cite{Berger-Vercruysse-Saracco} is recovered from the monadic theory:
for algebras over a commutative ring, Hopf algebra structures, normal gabi-algebra structures,\footnote{%
  \,That is, gabi-algebra structures where we don't just lift the weaker skew-closed, but instead the entire, closed structure.%
}
and gabi-algebra structures such that \(a\otimes b\mapsto a_+\otimes a_-b\) is invertible are all equivalent.
In particular, this more conceptual proof bypasses much of the element-wise verification involved in the original result, see \cref{cor:gabi-in-the-com-case}.

The monadic situation presents itself differently: normal gabi-monads---gabi-monads whose Eilenberg--Moore categories are normal closed as opposed to skew-closed---need not be Hopf.
A large supply of examples is discussed in \cref{sec:gabi-not-hopf} and comes from idempotent monads associated with reflective exponential ideals: such reflections lift internal homs automatically, and in fact one may even define a tensor product on the reflective exponential ideal.
However, this tensor product need not be compatible with the forgetful functor.
For example, for an integral domain \(R\), the reflection \(M \mapsto M/\Tor(M)\) from \(R\)-modules to torsion-free \(R\)-modules defines a normal idempotent gabi-monad. It is Hopf if and only if \(R\) is a Pr\"ufer domain (i.e., the tensor product of torsion-free modules is torsion-free~\cite[Theorem~4.2]{chase61:direc});
see \cref{thm:normal-gaby-monad-torsion}.
Examples of non-Pr\"ufer domains abound: the polynomial ring \(R[x]\) is Pr\"ufer if and only if \(R\) is a field.
Other examples of non-Hopf gabi-monads come from the reflection of reflexive digraphs onto preorders and from a very specific closed structure on finite simplicial complexes.

Algebras for the maybe monad, on the contrary, provide a natural example of a skew-closed category where the forgetful functor lifts the internal homs but not the unit object.
In particular, the maybe monad is not a gabi-monad.
The study of these weaker gabi-monads goes beyond the scope of this paper and will be the subject of a forthcoming work by the authors.

\medskip
The paper is organised as follows.
In \cref{sec:closec-cats}, we review basic results of (skew-)closed categories.
\Cref{sec:gabi} introduces gabi-monads, establishes the reconstruction theorem, and relates gabi- and closed monads.
\Cref{sec:normal-gabi} focuses on normal gabi-monads and characterises Hopf monads by the invertibility of certain parametric adjoints.
\Cref{sec:dyadic-gabi} studies gabi-monads in the special case of non-symmetric \(*\)-autonomous categories.
Finally, in \cref{sec:gabi-not-hopf}, we study a variety of examples and non-examples, including idempotent gabi-monads, torsion-free modules, preorders, simplicial complexes, and the maybe monad.

\section{Closed Categories}\label{sec:closec-cats}
Many categories, such as modules over a commutative ring \(R\), have a natural notion of internal morphism objects associated to every pair of objects \(M\) and \(N\). In our example, this is the \(R\)-module \(\Hom_{R}(M,N)\).
These internal morphisms come with additional structure such as an abstract notion of identities and composition.
The most prominent setting is that of a closed monoidal category, where the tensor-hom adjunction governs the behaviour of the internal morphism.
The category of \(R\)-modules is a classic example of this type.
In the absence of a tensor product, the idea of categories with coherent internal hom-objects is axiomatised by the notion of closed categories.
These form the basis for our investigation.

\subsection{Left skew-closed and left skew-monoidal categories}\label{ssec:skew-closed}
First we turn to a suitably lax version of closed categories,
following~\cite{Street-skew_closed, UVZ_Eilenberg-Kelly_reloaded},
which has the advantage of only involving conditions internal to \(\cat\).

\begin{definition}\label{def:skewclosed}
  A (\emph{left}) \emph{skew-closed category} is a tuple \((\cat, [\blank,\bblank], \tu, \Gamma, i, j)\), where
  \begin{enumerate}[label={(C\arabic*)},leftmargin=1.2cm]
    \item
    \(\tu \in \cat\) is an object;

    \item
    \([\blank,\bblank] \colon \cat\op \times \cat \to \cat\) is a functor;

    \item
    \(i \colon [\tu, \blank] \to \blank\) is a natural transformation;

    \item
    \(j \colon \tu \xrightarrow{\cdot\cdot} [\blank,\blank]\) is a dinatural transformation (see, e.g.,~\cite[\S IX.4]{Maclane}); and

    \item
    \(\Gamma\) is a family of morphisms \(\Gamma^X_{Y, Z} \colon [Y, Z] \to [[X, Y], [X, Z]]\) for  \(X,Y,Z\) objects in \(\cat\), which is natural in the lower indices and dinatural in the upper index.
  \end{enumerate}
  This data is subject to the following axioms: \par
  \begin{subequations}
    \noindent\begin{minipage}{.5\textwidth}
      \begin{equation}
        \begin{gathered}
          \begin{tikzcd}[ampersand replacement=\&,row sep=22.5pt,column sep=19pt]
            {\tu} \arrow[r, "{j_\tu}"]\arrow[rd, equal]\& {{[\tu, \tu]}} \arrow[d, "{i_\tu}"]\\ \&
            {\tu}
          \end{tikzcd}
        \end{gathered}
      \end{equation}
    \end{minipage}
    \begin{minipage}{.5\textwidth}
      \begin{equation}
        \begin{gathered}
          \begin{tikzcd}[ampersand replacement=\&,row sep=22.5pt,column sep=19pt]
            {{[X, Y]}} \arrow[r, "{\Gamma^X_{X, Y}}"]\arrow[d, equal]\& {{[[X, X], [X, Y]]}} \arrow[d, "{[j_X, [X, Y]]}"]\\
            {{[X, Y]}}
            \& {{[\tu, [X, Y]]}} \arrow[l, "{i_{[X, Y]}}"]
          \end{tikzcd}
        \end{gathered}
      \end{equation}
    \end{minipage}
    \par
    \noindent\begin{minipage}{.5\textwidth}
      \begin{equation}
        \begin{gathered}
          \begin{tikzcd}[ampersand replacement=\&,row sep=22.5pt,column sep=19pt]
            {\tu} \arrow[r, "{j_Y}"]\arrow[rd, "{j_{[X, Y]}}"']\& {{[Y, Y]}} \arrow[d, "{\Gamma^X_{Y, Y}}"]\\ \&
            {{[[X, Y], [X, Y]]}}
          \end{tikzcd}
        \end{gathered}
      \end{equation}
    \end{minipage}
    \begin{minipage}{.5\textwidth}
      \begin{equation}
        \begin{gathered}
          \begin{tikzcd}[ampersand replacement=\&,row sep=22.5pt,column sep=19pt]
            {{[X, Y]}} \arrow[r, "{\Gamma^\tu_{X, Y}}"]\arrow[rd, "{[i_X, Y]}"']\& {{[[\tu, X], [\tu, Y]]}} \arrow[d, "{[[\tu, X], i_Y]}"]\\ \&
            {{[[\tu, X], Y]}}
          \end{tikzcd}
        \end{gathered}
      \end{equation}
    \end{minipage}
    \par
    \begin{equation}
      \begin{gathered}
        \begin{tikzcd}[ampersand replacement=\&,row sep=24.2pt,column sep=21.5pt]
          {{[W, X]}} \arrow[r, "{\Gamma^U_{W, X}}"]\arrow[dd, "{\Gamma^V_{W, X}}"']\& {{[[U, W], [U, X]]}} \arrow[d, "{\Gamma^{[U, V]}_{[U, W], [U, X]}}"]\\ \&
          {{[[[U, V], [U, W]], [[U, V], [U, X]]]}}
          \arrow[d, "{[\Gamma^U_{V, W}, [[U, V], [U, X]]]}"]\\
          {{[[V, W], [V, X]]}}
          \arrow[r, "{[[V, W], \Gamma^U_{V, X}]}"']\& {{[[V, W], [[U, V], [U, X]]]}}
        \end{tikzcd}
      \end{gathered}
    \end{equation}
  \end{subequations}
  A skew-closed category is said to be
  \begin{enumerate}[label=(N\arabic*),ref=(N\arabic*),leftmargin=1.2cm]
    \item\label{item:N1}
    \emph{left normal} if
    \[
      \widehat{\jmath}_{X,Y}:\cat(X, Y) \to \cat(\tu, [X, Y]), \quad
      f \mapsto [f, Y] \circ j_Y
    \]
    is a natural bijection;

    \item\label{item:N2}
    \emph{right normal} if \(i\) is a natural isomorphism;

    \item\label{item:N3}
    \emph{associative normal} if the  canonical morphism
    \[
      \hat\Gamma_{U,X,Y,Z}\from \int^{V \in \cat} \cat(U, [V, Z]) \times \cat(X, [Y, V]) \to \cat(U, [X, [Y, Z]])
    \]
    is a bijection, where \(\hat\Gamma_{U,X,Y,Z}\) is induced by
    \[
      (f,g) \quad\mapsto\quad
      \left(U \xrightarrow{\ f\ } [V,Z] \xrightarrow{\Gamma^Y_{V,Z}} [[Y,V],[Y,Z]] \xrightarrow{[g,[Y,Z]]} [X,[Y,Z]]\right),
    \]
    for all \(f\in \cat(U, [V, Z])\), \(g\in \cat(X, [Y, V])\).
  \end{enumerate}
  A skew-closed category satisfying left and right normality conditions will be called \emph{unital normal}.
  A skew-closed category satisfying all three normality conditions will be called \emph{normal-closed}.
\end{definition}

A category \(\cat{C}\) is called \emph{\textup(left\textup) skew-monoidal} if it is equipped with a functor \(\otimes\from \cat{C} \times \cat{C} \to \cat{C}\), an object \(\tu \in \cat{C}\),
and not-necessarily-invertible coherence morphisms \(\lambda\from \tu \otimes \blank \to \Id_{ \cat{C}}\), \(\rho\from \Id_{ \cat{C}} \to \blank\otimes \tu \), and \(\alpha\from (\blank \otimes \bblank) \otimes {\equiv} \to \blank \otimes (\bblank \otimes {\equiv})\),
satisfying axioms similar to those of a monoidal category; see~\cite[Definition~2.1]{szlachanyi2012:SkewMonoidalBialgebroid} for the right skew-monoidal case.
The following theorem relates skew-closed and skew-monoidal categories.

\begin{theorem}[{\cite[Theorems 2.10 and 3.8]{UVZ_Eilenberg-Kelly_reloaded}}]\label{thm:bijection_skew_closed_monoidal_structures}
  Let \(\cat\) be a category with a distinguished object \(\tu\), functors \(\blank \otimes \bblank \colon \cat \times \cat \to \cat\) and \([\blank,\bblank] \colon \cat\op \times \cat \to \cat\), and dinatural transformations
  \[
    \coev_Y^X \from Y \to [X, Y\otimes X], \qquad
    \ev_Y^X \from [X,Y]\otimes X \to Y,
  \]
  establishing a family adjunctions \(\{\blank \otimes X   \dashv [X, \blank]\}_{X \in \cat[C]}\) parameterised by the objects \(X \in \cat{C}\).

  Then left skew-monoidal structures \((\alpha, \lambda, \rho)\) on \((\cat, \otimes, \tu)\) are in bijection
  with left skew-closed structures \((\Gamma, i, j)\) on \((\cat, [\blank,\bblank], \tu)\).
  Moreover, the left skew-monoidal structure is left/right/associative normal if and only if the
  left skew-closed structure is left/right/associative normal.
\end{theorem}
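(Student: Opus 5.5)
The plan is to transport structure along the parametrised adjunctions $\blank\otimes X \dashv [X,\blank]$ using the parametrised Yoneda lemma, so that each piece of skew-monoidal data corresponds to a unique piece of skew-closed data. Concretely, morphisms $A\otimes X\to B$ correspond to morphisms $A\to[X,B]$, naturally in all variables and dinaturally in $X$. I would set up the dictionary as follows.

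First, $i\from[\tu,\blank]\to\blank$ corresponds to $\rho\from \Id\to\blank\otimes\tu$. Both are natural transformations between a functor and its adjoint, namely $\blank\otimes\tu\dashv[\tu,\blank]$, so they are mates of one another under the identity adjunction.

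Second, $j_X\from\tu\to[X,X]$ corresponds to $\lambda_X\from\tu\otimes X\to X$ by transposition, and dinaturality of $j$ matches naturality of $\lambda$.

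Third, $\Gamma^X_{Y,Z}\from[Y,Z]\to[[X,Y],[X,Z]]$ transposes twice to a map $[Y,Z]\otimes[X,Y]\otimes X\to Z$ with the appropriate bracketing. By Yoneda, natural families of this kind correspond to $\alpha_{A,B,C}\from(A\otimes B)\otimes C\to A\otimes(B\otimes C)$. Explicitly, $\alpha$ is defined as the transpose of the composite
\[
A\otimes B \xrightarrow{\coev} [B\otimes C, A\otimes(B\otimes C)] \xrightarrow{[\coev, 1]} [C,[C,\ldots]]\ldots,
\]
or more cleanly via the Yoneda isomorphisms
\[
\cat((A\otimes B)\otimes C, D)\cong\cat(A,[B,[C,D]])
\quad\text{and}\quad
\cat(A\otimes(B\otimes C),D)\cong\cat(A,[B\otimes C,D]).
\]
A natural transformation $[B\otimes C,D]\to[B,[C,D]]$, built from $\Gamma^C$ precomposed with $\coev$, then yields $\alpha$ by Yoneda, and conversely.

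Next, each of the five skew-closed axioms is to be matched with one of the five skew-monoidal axioms: the pentagon, the three triangle-type axioms, and $\lambda\rho=1$. The matching is done by transposing every morphism in the diagrams and invoking naturality and dinaturality of $\ev$ and $\coev$. For example, (C-axiom 1), $i_\tu j_\tu=1$, transposes to $\lambda_\tu\rho_\tu=1$. The $\Gamma$-pentagon transposes to Mac Lane's pentagon for $\alpha$. Since the correspondence is bijective on data, and equalities of morphisms are preserved and reflected by transposition, this gives the bijection of structures.

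For normality, right normality is immediate, since $i$ is invertible if and only if its mate $\rho$ is. Left normality follows because $\widehat\jmath_{X,Y}$ is, under the adjunction isomorphism $\cat(\tu,[X,Y])\cong\cat(\tu\otimes X,Y)$, exactly precomposition with $\lambda_X$; by Yoneda this map is a bijection for all $Y$ if and only if $\lambda_X$ is an isomorphism. For associative normality, rewrite both sides of $\hat\Gamma$ using the adjunctions: the coend $\int^V\cat(U,[V,Z])\times\cat(X,[Y,V])$ becomes $\int^V\cat(U\otimes V,Z)\times\cat(X\otimes Y,V)\cong\cat(U\otimes(X\otimes Y),Z)$ by co-Yoneda, and the target becomes $\cat((U\otimes X)\otimes Y,Z)$. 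One then checks that $\hat\Gamma$ is precomposition with $\alpha_{U,X,Y}$, and Yoneda concludes.

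The main obstacle is the coherence bookkeeping: verifying that each transposed diagram really is the corresponding monoidal axiom, especially the pentagon and the triangle involving $\Gamma^\tu$ and $\rho$. I would organise this by working entirely with representable functors and the Yoneda isomorphisms above, so that each axiom becomes an equality of natural transformations between hom-functors and can be checked on identity elements.
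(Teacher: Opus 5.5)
Your proposal is correct and is essentially the argument behind the cited result. Your dictionary matches the explicit correspondence the paper records right after the statement: $i$ is the mate of $\rho$, $j$ is the transpose of $\lambda$, and $\alpha$ is obtained by Yoneda from $\beta_{X,Y,Z}=[\coev^Y_X,[Y,Z]]\circ\Gamma^Y_{X\otimes Y,Z}$. Your Yoneda/co-Yoneda treatment of the three normality conditions is also sound. The only slip is your first displayed composite for $\alpha$, which should begin at $A$ with $\coev^{B\otimes C}_A$ rather than at $A\otimes B$; your subsequent Yoneda description already gives the right map.
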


To make \cref{thm:bijection_skew_closed_monoidal_structures} explicit, suppose that \(\cat{C}\) is a skew-closed category.
We may define the associated skew-monoidal structure via
\begin{align}\label{eq:mon_closed}
  \begin{gathered}
    \begin{tikzcd}[ampersand replacement=\&,cramped]
      {\tu \otimes X} \&\& { [X,X] \otimes X } \\
      \& { X}
      \arrow["{j_{X} \otimes X}", from=1-1, to=1-3]
      \arrow["{\lambda_X}"', dashed, from=1-1, to=2-2]
      \arrow["{\ev_X^X}", from=1-3, to=2-2]
    \end{tikzcd}
    \qquad
    \begin{tikzcd}[ampersand replacement=\&,cramped]
      {X } \&\& {[\tu, X\otimes \tu]  } \\
      \& {X\otimes \tu}
      \arrow["{\coev_X^\tu}", from=1-1, to=1-3]
      \arrow["{\rho_X}"', dashed, from=1-1, to=2-2]
      \arrow["{i_{{X\otimes \tu}}}", from=1-3, to=2-2]
    \end{tikzcd}\\
    \begin{tikzcd}[ampersand replacement=\&,cramped]
      {(X\otimes Y)\otimes Z} \&\& {([Y\otimes Z, X\otimes (Y\otimes Z)] \otimes Y) \otimes Z } \\
      \&\& {([Y,[Z, X\otimes (Y\otimes Z)]] \otimes Y) \otimes Z      } \\
      {X\otimes (Y\otimes Z)} \&\& {[Z, X\otimes (Y\otimes Z)] \otimes Z}
      \arrow["{(\coev_{X}^{Y \otimes Z} \otimes Y) \otimes Z}", from=1-1, to=1-3]
      \arrow["{\alpha_{X,Y,Z}}"',dashed, from=1-1, to=3-1]
      \arrow["{(\beta_{Y,Z,X\otimes (Y\otimes Z)}\otimes Y)\otimes Z}", from=1-3, to=2-3]
      \arrow["{\ev^{Y}_{[Z,X\otimes (Y\otimes Z)]}\otimes Z}", from=2-3, to=3-3]
      \arrow["{\ev^{Z}_{X\otimes (Y\otimes Z)}}", from=3-3, to=3-1]
    \end{tikzcd}
  \end{gathered}
\end{align}
where we set for all \(X, Y, Z \in \cat[C]\):
\begin{equation}
  \beta_{X, Y, Z} \colon
  [X \otimes Y, Z] \xrightarrow{\Gamma^{Y}_{X \otimes Y, Z}}
  [[Y, X \otimes Y], [Y, Z]] \xrightarrow{ [\coev_X^Y, [Y, Z]] }
  [X, [Y, Z]].
\end{equation}

In the opposite direction, if \(\cat[C]\) is a skew-monoidal category, then we can define the skew-closed structure via
\begin{gather}
  \begin{tikzcd}[ampersand replacement=\&,row sep=22.5pt,column sep=19pt,baseline=(\tikzcdmatrixname-1-1.base)]
{\tu} \arrow[dr, dashed, "{j_{X}}"']\arrow[rr, "{\coev_\tu^X}"]\& \& {[X,\tu \otimes X]} \arrow[dl, "{[\tu, \lambda_{X}]}"]\\
    \& {[X,X]} \&
\end{tikzcd}
  \qquad
  \begin{tikzcd}[ampersand replacement=\&,row sep=22.5pt,column sep=19pt,baseline=(\tikzcdmatrixname-1-1.base)]
{[\tu,X]} \arrow[dr, dashed, "{i_X}"']\arrow[rr, "{\rho_{[\tu,X]}}"]\& \& {[\tu,X] \otimes \tu} \arrow[dl, "{\ev_X^\tu}"]\\
    \& {X} \&
\end{tikzcd} \\
  \begin{tikzcd}[ampersand replacement=\&,row sep=24.2pt,column sep=21.5pt,baseline=(\tikzcdmatrixname-1-1.base)]
{[Y,Z]} \arrow[ddd, dashed, "{\Gamma^X_{Y,Z}}"']\arrow[r, "{\coev^{[X,Y]}_{[Y,Z]}}"]\& {[[X,Y],[Y,Z] \otimes [X,Y]]} \arrow[d, "{[[X,Y],\coev^X_{[Y,Z] \otimes [X,Y]}]}"]\\
    \& {[[X,Y],[X,([Y,Z] \otimes [X,Y]) \otimes X]]} \arrow[d, "{[[X,Y],[X,\alpha_{[Y,Z],[X,Y],X}]]}"]\\
    \& {[[X,Y],[X,[Y,Z] \otimes ([X,Y] \otimes X)]]} \arrow[d, "{[[X,Y],[X,[Y,Z] \otimes \ev^X_Y]]}"]\\
    {[[X,Y],[X,Z]]} \& {[[X,Y],[X,[Y,Z] \otimes Y]]} \arrow[l, "{[[X,Y],[X,\ev^Y_Z]]}"]
\end{tikzcd}
\end{gather}

By the above theorem, the coherences of the tensor product and internal-hom of \(\cat{C}\) completely determine each other, allowing us to define closed monoidal categories equivalently as special cases of monoidal or closed categories.

\begin{definition}\label{def:skew-closed-monoidal-category}
  A left skew-closed category \(\cat{C}\) with internal hom \([\blank,\bblank] \colon \cat\op \times \cat \to \cat\) is called (\emph{left}) \emph{skew-closed monoidal} if there is a functor \(\blank \otimes \bblank \colon \cat \times \cat \to \cat\) and an adjunction \(\blank \otimes X   \dashv [X, \blank]\) for all \(X \in \cat[C]\).

  We refer to \( \cat{C}\) as (\emph{left}) \emph{closed monoidal} if \( \cat{C}\) is normal closed.
\end{definition}
In this paper, we are mainly concerned with left (skew-)closed and left (skew-)monoidal structures, whence we often omit the adjective \emph{left}, unless clarity requires specifying it.

\medskip

To familiarise ourselves with closed categories, let us close this subsection by collecting a number of key examples.

\begin{example}\label{ex:set_is_closed}
The category \(\Set\) of sets and functions is skew-closed.
The internal hom functor is given by \([A, B] = \Set(A, B)\).
The unit object is the unit object of the monoidal structure, i.e.\ a fixed
one-element set \(\term \coloneqq \{\ast\}\).
The natural transformation \(i_A \colon \Set(\{\ast\}, A) \to A\) is the isomorphism given by
\(i_A(f) \defeq f(\ast)\), and the dinatural transformation \(j_A \colon \{\ast\} \to \Set(A, A)\) picks
out the identity, i.e.\ \(j_A(\ast) = \id_A\).
Finally, the transformation \(\Gamma^A_{B, C}\) is given by post-composition, meaning
\begin{equation*}
\Gamma^A_{B, C} \colon \Set(B, C) \to \Set(\Set(A, B), \Set(A, C)),
\quad f \mapsto (g \mapsto f \circ g). \qedhere
\end{equation*}
\end{example}

\begin{example}\label{ex:k-mod_is_closed}
Let \(R\) be a commutative ring with unit \(1\).
Its category of left modules \(\hmodM[R]\) is skew-closed.
The internal hom is given by \([M, N] = \hmodM[R](M, N)\), on which \(R\) acts as \((r .
f)(m) = r f(m)\) for all \(r\in R\).
The unit object is \(R\).
The natural transformation \(i_M \colon \hmodM[R](R, M) \to M\) is the isomorphism
given by \(i_M(f) = f(1)\).
The dinatural transformation \(j_M\from R \to [M, M]\) is again `picking out the identity', i.e.\ it is
the unique \(R\)-module map with \(j_M(1) = \id_M\).
Lastly, the transformation \(\Gamma^M_{N, P}\from [N, P]\to [[M,N],[M,P]] \) is once more given by post-composition: \(\Gamma^M_{N, P} (f) = f \circ -\).
\end{example}

The next example is one of the primary motivations for the present article.
It states that the reconstruction theory of closed monoidal categories is governed by so-called left Hopf monads in the sense of \cite{BruguieresLackVirelizier}.

\begin{example}\label{ex:T-alg_for_T_hopf_is_closed}
  Let \(T\) be a left Hopf monad on the left closed monoidal category \(\cat\).
  In view of \cite[Theorem~3.6]{BruguieresLackVirelizier}, the Eilenberg--Moore category \(\cat^T\) is left closed monoidal.
  The internal hom \([\mb{M},\mb{N}]_T\) for \(T\)-algebras \(\mb{M} = (M,\act_M)\) and \(\mb{N} = (N,\act_N)\) is given by the internal hom \([M,N]\) in \(\cat\) with \(T\)-algebra structure \([M,\act_N] \circ s^{\ell}_{M,N} \circ T[\act_M,N]\), where \(s^{\ell}_{M,N} \colon T[TM,N] \to [M,TN]\) is the left antipode for \(T\), see~\cite[Section~3.3]{BruguieresLackVirelizier}.
  The constraints \(i\), \(j\), and \(\Gamma\) are the same as those of \(\cat\).
\end{example}

Henceforth, we will often denote an algebra \((M,\act_M)\) for a monad \(T\) simply by \(\mb{M}\).

\smallskip

The categories in \cref{ex:set_is_closed,ex:k-mod_is_closed,ex:T-alg_for_T_hopf_is_closed} are, in fact,  normal-closed.
While one can check this directly, it follows most easily from the fact that they are closed monoidal and Theorem~\ref{thm:bijection_skew_closed_monoidal_structures}.
An example of a non normal-closed category is the following.

\begin{example}
  Let \(k\) be a commutative ring and \(R\) be a (not necessarily commutative) \(k\)-algebra such that the multiplication \(\mu\from R \otimes_k R \to R\) is not an isomorphism.
  Consider the category \(\hmodM[R]\) of left \(R\)-modules. For any \(M,N\in \hmodM[R]\), define \([M,N]=\hmodM[k](M,N)\) with the following \(R\)-action.
  \[(r.f)(m) = r \cdot f(m) \qquad \text{for all } f\in \hmodM[k](M,N), r\in R \text{ and }m\in M.\]
  The unit object is given by the regular module \(R\) and the structure maps are
  \begin{align*}
    i_M \colon & \hmodM[k](R,M)\to M, \qquad f \mapsto f(1_R), \\
    j_M\colon & R\to \hmodM[k](M,M), \qquad r \mapsto r.\id_{M}.
  \end{align*}
  Again, \(\Gamma\) is simply post-composition.

  For any \(M, N, P \in \hmodM[R]\), we have an adjunction
  \begin{align*}
    \hmodM[R](.M,\hmodM[k](N,.P))
    & \cong \hmodM[R](.M,\hmodM[R](_{*}R. \otimes_{k} N, {_{*}P}))
      \cong \hmodM[R](({_{*}R}. \otimes_{k} N) \otimes_{R} .M, {_{*}P}) \\
    & \cong \hmodM[R](.M \otimes_{k} N,.P).
  \end{align*}
  Due to Theorem~\ref{thm:bijection_skew_closed_monoidal_structures}, the given skew-closed structure \([\blank,\bblank]\) corresponds to the skew-monoidal structure \(\otimes_{k}\) given by tensoring over \(k\), with the regular left \(R\)-action on the left-hand tensor factor.
  While \(\otimes_{k}\) is associative normal, it is clearly not unital normal.
\end{example}

The next \cref{ex:gabi-algebra} is another primary motivation for the present article, as it shows that the reconstruction theory of closed categories is governed by something more general than left Hopf monads.

\begin{definition}\label{def:gabi-algebra}
  A \emph{gabi-algebra} over a commutative ring \(k\) is a triple \((A, \varepsilon, \delta)\) comprising an augmented algebra \((A, \varepsilon)\) and an algebra morphism \(\delta \from A \to A \otimes A\op\), \(a \mapsto a_{+} \otimes a_{-}\) (with summation understood), satisfying
  \begin{align}
    a_+ \varepsilon(a_{-}) &= a  \tag{GA1}\label{eq:GA1},\\
    a_{+}a_{-} & = \varepsilon(a) 1_{A} \tag{GA2}\label{eq:GA2},\\
    (a_{+})_{+} \otimes (a_{-})_{+} \otimes (a_{-})_{-} \; (a_{+})_{-} &= a_{+} \otimes a_{-} \otimes 1_{A}, \tag{GA3}\label{eq:GA3}
  \end{align}
  for all \(a \in A\).
\end{definition}

Gabi-algebras were introduced and studied in~\cite{Berger-Vercruysse-Saracco} with the aim of developing a Tannakian reconstruction theory in the closed setting.

\begin{example}\label{ex:gabi-algebra}
  Let \(A\) be a gabi-algebra over a commutative ring \(k\).
  The category \({}_A\cM\) of left \(A\)-modules with the skew-closed structure  \(\left({}_A\cM,\cM(\blank,\bblank),k\right)\) lifted from \(k\)-modules is not, in general, normal-closed.
  For example, when \(A\) admits a gabi-algebra structure arising from a one-sided Hopf algebra structure on \(A\), \(\left({}_A\cM, \cM(\blank,\bblank), k\right)\) is skew-closed; see \cite[\S4.5]{Berger-Vercruysse-Saracco}. In fact, \({}_A\cM\) is normal-closed if and only if \(A\) is a Hopf algebra.
\end{example}

\subsection{Closed and lax monoidal functors}\label{sec:closed-functors}

Functors between skew-closed categories can be required to preserve the skew-closed structure in a coherent way.
This leads to the next definition \cite[\S3]{Eilenberg-Kelly}.

\begin{definition}\label{def:closed-functor}
  A functor \(F \colon \cat \to \cat[D]\) between skew-closed  categories is
  \emph{closed} if there is a morphism \(\varphi_0 \colon \tu_{\cat[D]} \to F\tu_{\cat}\) and a
  natural transformation \(\varphi_{X, Y} \colon F([X, Y]_{\cat}) \to {[F X, F Y]}_{\cat[D]}\)
  rendering the following diagrams commutative
  \hspace{-2em}
  \begin{gather}
  \begin{gathered}
    \begin{tikzcd}[ampersand replacement=\&,column sep=50pt]
{F X}
      \& {{[\tu_{\cat[D]}, F X]}_{\cat[D]}}
      \arrow[l, "{i_{F X}}"']\\
      {{F([\tu_{\cat}, X]_{\cat})}} \arrow[r, "{\varphi_{\tu_{\cat}, X}}"']\arrow[u, "{F i_X}"]\&
      {{{[F\tu_{\cat}, F X]}_{\cat[D]}}} \arrow[u, "{{[\varphi_0, F X]}_{\cat[D]}}"']
\end{tikzcd}
    \end{gathered}
    \qquad
    \begin{gathered}
    \begin{tikzcd}[ampersand replacement=\&,column sep=50pt]
{\tu_{\cat[D]}} \arrow[r, "{\varphi_0}"]\arrow[d, "{j_{F X}}"']\& {F \tu_{\cat}} \arrow[d, "{F j_X}"]\\
      {{[F X, F X]_{\cat[D]}}}
      \& {F{[X, X]}_{\cat}}
      \arrow[l, "{\varphi_{X, X}}"]
\end{tikzcd}
    \end{gathered}\\
    \begin{gathered}
    \begin{tikzcd}[ampersand replacement=\&,column sep=40pt]
{{F([X, Y]_{\cat})}} \arrow[r, "{F\Gamma^Z_{X, Y}}"]\arrow[d, "{\varphi_{X, Y}}"']\&
      {{F[[Z, X]_{\cat}, [Z, Y]_{\cat}]_{\cat}}} \arrow[r, "{\varphi_{[Z, X], [Z,Y]}}"]\&
      {{[F[Z, X]_{\cat}, F[Z, Y]_{\cat}]_{\cat[D]}}} \arrow[d, "{[\id, \varphi_{Z, Y}]}"]\\
      {{[F X, F Y]_{\cat[D]}}} \arrow[r, "{\Gamma^{F Z}_{F X, F Y}}"']\&
      {{[[F Z, F X]_{\cat[D]}, [F Z, F Y]_{\cat[D]}]_{\cat[D]}}} \arrow[r, "{[\varphi_{Z, X}, \id]}"']\&
      {{[F[Z, X]_{\cat}, [F Z, F Y]_{\cat[D]}]_{\cat[D]}}}
\end{tikzcd}
    \end{gathered}
  \end{gather}
  If \(\varphi_{0}\) and \((\varphi_{X,Y})_{X,Y \in\, \cat{C}}\) are isomorphisms, then \(\free\) is called  \emph{strong closed}.
  If they are identities, then \(\free\) is called \emph{strict closed}.
\end{definition}

\begin{example}
  The identity functor on a skew-closed category is strict closed.
  If \((F,\varphi_0,\varphi)\) and \((G,\psi_0,\psi)\) are composable closed functors, then \(FG\) is closed with structure morphisms
  \[
    \xi_0 = F \psi_0 \circ \varphi_0 \from \tu \to FG\tu
    \qquad\text{and}\qquad
    \xi = \varphi_{G,G} \circ F \psi \from FG[\blank, \bblank] \to [FG \blank, FG \bblank].\qedhere
  \]
\end{example}

\begin{example}
    Let \(R\) be a commutative ring and let \(\prescript{}{R}{\cat[M]}\) be its category of left modules, equipped with the closed structure from \cref{ex:k-mod_is_closed}. The forgetful functor \(\forget \colon \prescript{}{R}{\cat[M]} \to \mathsf{Ab}\) is closed with \(\mathbb{Z} \xrightarrow{u} \forget(R)\) and \(\forget\left(\prescript{}{R}{\cat[M]}(M,N)\right) \subseteq \mathsf{Ab}(M,N)\), but it is neither strong nor strict closed in general.
\end{example}

\begin{example}\label{ex:forget_hopf_monad_is_strong_closed}
  Let \(T\) be a left Hopf monad on the left closed monoidal category \(\cat\). As in \cref{ex:T-alg_for_T_hopf_is_closed}, \(\cat^T\) can be endowed with a left closed monoidal structure. With respect to this structure, the forgetful functor \(\U^T \colon \cat^T \to \cat\) is strict closed.
\end{example}

\begin{definition}[{\cite[\S4]{Eilenberg-Kelly}}]\label{def:closed_natural_transformation}
    Let \((F,\varphi_0,\varphi)\) and \((G,\psi_0,\psi)\) be closed functors between skew-closed categories \(\cat{C}\) and \(\cat{D}\). A \emph{closed natural transformation} from \(F\) to \(G\) consists of a natural transformation \(\alpha \colon F \to G\) such that
    \begin{equation}
        \begin{gathered}
            \begin{tikzcd}[ampersand replacement=\&]
\& {F(\tu_{\cat{C}})} \arrow[dd, "{\alpha_{\tu_{\cat{C}}}}"]\\
                {\tu_{\cat{D}}} \arrow[ur, "{\varphi_0}"]\arrow[dr, "{\psi_0}"']\& \\
                 \& {G(\tu_{\cat{C}})}
\end{tikzcd}
        \end{gathered}
        \qquad \text{and} \qquad
        \begin{gathered}
            \begin{tikzcd}[ampersand replacement=\&]
{F([X,Y]_{\cat{C}})} \arrow[r, "{\varphi_{X,Y}}"]\arrow[dd, "{\alpha_{[X,Y]_{\cat{C}}}}"']\& {[F X,F Y]_{\cat{D}}} \arrow[d, "{[F X,\alpha_Y]_{\cat{D}}}"]\\
                 \& {[F X,G Y]_{\cat{D}}} \\
                {G([X,Y]_{\cat{C}})} \arrow[r, "{\psi_{X,Y}}"']\& {[G X,G Y]_{\cat{D}}} \arrow[u, "{[\alpha_X,G Y]_{\cat{D}}}"']
\end{tikzcd}
        \end{gathered}
    \end{equation}
    commute for every \(X,Y\) in \(\cat{C}\).
\end{definition}

We now turn to the monoidal side.

\begin{definition}\label{def:lax-mon-fun}
  Consider skew-monoidal categories \( \cat{C}\) and \( \cat{D}\).
  A \emph{lax monoidal functor} consists of a functor \(\free \from \cat{C} \to \cat{D} \),
  a natural transformation \(\phi \from \free(\blank) \otimes_{ \cat{D}} \free(\bblank) \to \free(\blank\otimes_{ \cat{C}}\bblank)\),
  and a morphism \(\phi_{0} \from \tu_{ \cat{D}}\to \free\tu_{ \cat{C}}\),
  satisfying for all \(X,Y,Z \in \cat{C}\) the identities
  \begin{gather*}
    \lambda_{\free X} = \free\lambda_{X} \circ \phi_{\tu_{\cat{C}}, X} \circ (\phi_{0} \otimes_{ \cat{D}} \free X), \qquad
    \free(\rho_{X}) = \phi_{X, \tu_{ \cat{C}}} \circ (\free X \otimes_{ \cat{D}} \phi_{0}) \circ \rho_{\free X},\\
    \free\alpha_{X,Y,Z} \circ \phi_{X\otimes Y, Z} \circ (\phi_{X,Y} \otimes _{ \cat{D}} \free Z) =
    \phi_{X, Y\otimes Z} \circ (\free X \otimes_{ \cat{D}} \phi_{Y, Z}) \circ \alpha_{\free X, \free Y, \free Z}. \qedhere
  \end{gather*}
\end{definition}

From the correspondence between skew-closed and skew-monoidal structures on a category, we get a similar correspondence between closed and lax monoidal structures on a functor between skew-closed monoidal categories.
In fact, \cite[Chapter~II, (3.23), Propositions~4.3 and~4.4]{Eilenberg-Kelly} readily translate to the skew-closed monoidal setting; see~\cite[Remark~19]{Street-skew_closed}.

\begin{proposition}\label{prop:closed-mon-bij}
  Let  \(\free \from \cat{C} \to \cat{D}\) be a functor between  skew-closed monoidal categories.
  Then there is a bijective correspondence between closed and lax monoidal structures on \(\free\).

\end{proposition}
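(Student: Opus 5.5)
The plan is to pass between the two structures through the parametrised adjunctions \(\blank \otimes X \dashv [X,\blank]\) in \(\cat{C}\) and \(\cat{D}\). Here \(\varphi\) denotes a closed structure and \(\phi\) a lax monoidal one.

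\emph{From lax monoidal to closed.} Given \((\phi_0,\phi)\), set \(\varphi_0 \defeq \phi_0\). Define \(\varphi_{X,Y}\from \free[X,Y]\to[\free X,\free Y]\) as the mate of
\[
\free[X,Y]\otimes \free X \xrightarrow{\phi_{[X,Y],X}} \free([X,Y]\otimes X) \xrightarrow{\free\ev^X_Y} \free Y .
\]

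\emph{From closed to lax monoidal.} Given \((\varphi_0,\varphi)\), set \(\phi_0 \defeq \varphi_0\). Define \(\phi_{X,Y}\from \free X\otimes \free Y\to \free(X\otimes Y)\) as the transpose of
\[
\free X \xrightarrow{\free\coev^Y_X} \free[Y,X\otimes Y] \xrightarrow{\varphi} [\free Y,\free(X\otimes Y)] .
\]

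\emph{Naturality and bijectivity.} Naturality of \(\varphi\) and \(\phi\) follows from the (di)naturality of \(\ev\) and \(\coev\). The two assignments are mutually inverse by the triangle identities. This is the standard correspondence between transformations \(\free(\blank)\otimes \free X\to \free(\blank\otimes X)\) and \(\free[X,\blank]\to[\free X,\free\blank]\), i.e.\ the mate correspondence for the adjunctions \(\blank\otimes X\dashv[X,\blank]\) and \(\blank\otimes \free X\dashv[\free X,\blank]\) along \(\free\). This part is formal.

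\emph{Matching the axioms.} The substantive step is to show that the three axioms of \cref{def:lax-mon-fun} hold if and only if the three diagrams of \cref{def:closed-functor} commute. I would match them one by one, using the explicit dictionary \eqref{eq:mon_closed} of \cref{thm:bijection_skew_closed_monoidal_structures}.
\begin{itemize}
\item The \(\lambda\)-axiom corresponds to the \(j\)-axiom, since \(\lambda_X = \ev^X_X\circ(j_X\otimes X)\). Transposing both sides of the \(\lambda\)-identity along \(\blank\otimes \free X\dashv[\free X,\blank]\) yields exactly \(\varphi_{X,X}\circ \free j_X\circ\varphi_0 = j_{\free X}\).
\item The \(\rho\)-axiom corresponds to the \(i\)-axiom, since \(i = \ev^{\tu}\circ\rho\) and \(\rho = i\circ\coev^{\tu}\). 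Transposing along \(\blank\otimes\tu\dashv[\tu,\blank]\) converts one into the other.
\end{itemize}
In each case the mechanism is the same: write the skew-monoidal constraint via \eqref{eq:mon_closed}, substitute the definition of \(\phi\) through \(\varphi\), and use naturality of \(\varphi\) to slide \(\free\coev\) and \(\free\ev\) past it.

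\emph{Main obstacle.} I expect the hard part to be matching the \(\alpha\)-axiom with the \(\Gamma\)-axiom. The natural route goes through the auxiliary map \(\beta_{X,Y,Z}\from[X\otimes Y,Z]\to[X,[Y,Z]]\). First, show that the closed-functor \(\Gamma\)-axiom is equivalent to compatibility of \(\varphi\) with \(\beta\), namely that
\[
[\free X,\varphi_{Y,Z}]\circ\varphi_{X,[Y,Z]}\circ \free\beta \quad\text{and}\quad \beta_{\free X,\free Y,\free Z}\circ[\phi_{X,Y},\free Z]\circ\varphi_{X\otimes Y,Z}
\]
agree. This uses \(\beta = [\coev,\id]\circ\Gamma\) together with the converse expression of \(\Gamma\) through \(\alpha\), \(\ev\) and \(\coev\). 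Second, transposing this \(\beta\)-compatibility twice turns it into the \(\alpha\)-hexagon of \cref{def:lax-mon-fun}.

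The dinaturality bookkeeping in the upper index of \(\Gamma\) is where care is needed. Rather than recompute it, I would follow the argument of \cite[Chapter~II, Propositions~4.3 and~4.4]{Eilenberg-Kelly}, checking that it never invokes invertibility of \(i\), of \(\widehat{\jmath}\), or of the associativity comparison. This is the observation in \cite[Remark~19]{Street-skew_closed}. If so, the proof goes through verbatim for skew-closed monoidal categories.
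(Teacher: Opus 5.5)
Your proposal is correct and follows the paper's proof. The paper writes down exactly these two mate constructions and, like you, delegates the axiom verification to \cite[Chapter~II, Propositions~4.3 and~4.4]{Eilenberg-Kelly}, citing \cite[Remark~19]{Street-skew_closed} for the skew setting. Your additional sketch of the axiom matching is sound, with two small remarks. First, the \(\rho\leftrightarrow i\) step is a substitution argument using \(i=\ev^{\tu}\circ\rho\) and \(\rho=i\circ\coev^{\tu}\), not a literal transposition. Second, for the direction from \(\beta\)-compatibility to the \(\Gamma\)-axiom, the identity \(\Gamma^X_{Y,Z}=\beta_{[X,Y],X,Z}\circ[\ev^X_Y,Z]\) is enough, so you do not need the longer expression of \(\Gamma\) through \(\alpha\).
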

\begin{proof}
  Given a closed structure \(\varphi_{0} \from \tu_{\cat{D}} \to F\tu_{\cat{C}} \) and \(\varphi_{X,Y} \from F([X, Y]_{ \cat{C}}) \to [F X, F Y]_{ \cat{D}}\), the associated lax monoidal structure is constructed by setting \(\phi_0 \defeq \varphi_0\),
  and, for all \(X,Y \in \cat{C}\), defining \(\phi_{X,Y}\) to be the composition
  \[
    \mathscale{0.88}{
    F X \otimes F Y
    \xrightarrow{F \coev^Y_X \otimes F Y} F[Y, X \otimes Y] \otimes F Y
    \xrightarrow{\varphi_{Y,X\otimes Y} \otimes F Y} [F Y, F(X \otimes Y)] \otimes F Y
    \xrightarrow{\ev^{F Y}_{F(X \otimes Y)}} F(X \otimes Y).}
  \]

  The other way around, suppose we start with a lax monoidal structure \((\phi_0, \phi)\) on \(\free\).
  Then we set \(\varphi_0 \defeq \phi_0\) and define \(\varphi_{X,Y}\) to be the composition
  \[
    \mathscale{.88}{
    F[X,Y]
    \xrightarrow{\coev^{F X}_{F[X,Y]}} [F X, F[X,Y] \otimes F X]
    \xrightarrow{[F X, \phi_{[X,Y],X}]} [F X, F([X,Y] \otimes X)]
    \xrightarrow{[F X,F\ev^X_Y]} [F X,F Y].}\qedhere
  \]
\end{proof}

However, it is in general not true that strong or strict closed structures induce strong, respectively strict monoidal ones (or conversely), as the following examples highlight.

\begin{example}
  Let \(B\) be a bialgebra over a commutative ring \(k\).
  The category \(\left({}_B\cM,\otimes,k\right)\) of left \(B\)-modules with the monoidal structure lifted from \(k\)-modules is closed monoidal with respect to the internal hom \([M,N] \defeq {}_B\cM(B \otimes M,N)\), where \(B \otimes M\) has the diagonal left action induced by \(\Delta\).
  The forgetful functor \(\forget \colon {}_B\cM \to \cM\) is strict monoidal, but in general neither strict nor strong closed.
\end{example}

\begin{example}\label{ex:gabi-algebra-forgetful-functor}
  Let \(A\) be a gabi-algebra over a commutative ring \(k\). As in \cref{ex:gabi-algebra}, the category \(\left({}_A\cM,\cM(\blank,\bblank),k\right)\) of left \(A\)-modules is skew-closed with the skew-closed structure lifted from \(k\)-modules. It is, in fact, a skew-closed monoidal category with respect to the monoidal product
  \begin{equation*}
    M \boxtimes N \coloneqq (A \otimes N) \otimes_A M,
  \end{equation*}
  where \(A \otimes N\) is an \(A\)-bimodule with respect to the actions \(a \cdot (b \otimes n) \cdot c = abc_+ \otimes c_-\cdot n\) for all \(a,b,c \in A\), \(n \in N\) (see \cite[Proposition~4.12]{Berger-Vercruysse-Saracco}).
  The forgetful functor \(\forget \colon {}_A\cM \to \cM\) is strict closed, but neither strict nor strong monoidal in general, see \cite[Example~4.4]{Berger-Vercruysse-Saracco}.
\end{example}

\section{Gabi-monads and Tannaka--Krein reconstruction of closed categories}\label{sec:gabi}

In the representation theoretic approach of \cite{Moerdijk2002,McCrudden2002,Bruguieres2007,BruguieresLackVirelizier},
bimonads correspond to lifts of monoidal structures.
We will now develop a notion of gabi-monads that follows the same principle for closed categories.
To do this, we first need to determine which transformations on a monad correspond to lifts of internal homs.

\subsection{Gabi-monads}\label{sec:gabi-monads}

The next result is a special case of \cite[Theorem~A.2]{Berger-Vercruysse-Saracco} (reported below as \cref{prop:lifting-co-and-contra}).

\begin{lemma}[{\cite[Lemma~3.1]{Berger-Vercruysse-Saracco}}]\label{lem:gabi-functor-lift}
  Let \(\cat\) be a category, \([\blank,\bblank] \colon \cat\op \times \cat \to \cat\) a functor, and \(T\) a monad on \(\cat{C}\).
  Then liftings of \([\blank,\bblank]\) to a functor \([\blank,\bblank]_T\from {(\cat{C}^T)}\op \times \cat{C}^T \to \cat{C}^T\) are in bijective correspondence
  with natural transformations \(s \colon T[T\blank, \bblank] \to [\blank, T\bblank]\) rendering the following diagrams commutative for all \(X, Y \in \cat\):
  \begin{equation}\label{eq:lifting_homs}
    \begin{gathered}
      \begin{tikzcd}[ampersand replacement=\&,column sep=35pt,row sep=35pt]
{[TX,Y]} \arrow[dr, "{[\eta_X,\eta_Y]}"']\arrow[r, "{\eta_{[TX,Y]}}"]\& {T[TX,Y]} \arrow[d, "{s_{X,Y}}"]\\
        \& {[X,TY]}
\end{tikzcd}
    \end{gathered}
    \quad
    \begin{gathered}
      \begin{tikzcd}[ampersand replacement=\&,column sep=35pt,row sep=35pt]
{T^2[TX,Y]} \arrow[d, "{T^2[\mu_X,Y]}"']\arrow[r, "{\mu_{[TX,Y]}}"]\& {T[TX,Y]} \arrow[r, "{s_{X,Y}}"]\& {[X,TY]} \\
        {T^2[T^2X,Y]} \arrow[r, "{Ts_{TX,Y}}"']\& {T[TX,TY]} \arrow[r, "{s_{X,TY}}"']\& {[X,T^2Y]} \arrow[u, "{[X,\mu_Y]}"']
\end{tikzcd}
    \end{gathered}
  \end{equation}
\end{lemma}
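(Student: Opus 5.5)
Both directions are explicit constructions, and I would then check that they are mutually inverse.

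\emph{From \(s\) to a lifting.} Given \(s\) satisfying \eqref{eq:lifting_homs}, for \(T\)-algebras \(\mb{M}=(M,\act_M)\) and \(\mb{N}=(N,\act_N)\) I define the action on \([M,N]\) by
\[
  \act_{[M,N]} \defeq [M,\act_N]\circ s_{M,N}\circ T[\act_M,N] \colon T[M,N]\to[M,N],
\]
which is the formula of \cref{ex:T-alg_for_T_hopf_is_closed}. The checks are as follows.
\begin{enumerate*}[label=(\roman*)]
\item \emph{Unit law.} Use naturality of \(\eta\) to move \(\eta\) past \([\act_M,N]\). Then the left diagram turns \(s\circ\eta\) into \([\eta_M,\eta_N]\), and the unit laws \(\act\circ\eta=\id\) for \(\mb{M}\) and \(\mb{N}\) finish it.
\item \emph{Associativity.} Expand \(\act_{[M,N]}\circ T\act_{[M,N]}\). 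Use naturality of \(s\) in both variables to push \(\act_M\) and \(\act_N\) outward, and the associativity of \(\act_M\) and \(\act_N\) to trade them for \(\mu_M\) and \(\mu_N\). The right diagram of \eqref{eq:lifting_homs} then identifies the result with \(\act_{[M,N]}\circ\mu_{[M,N]}\).
\item \emph{Functoriality.} If \(f,g\) are algebra morphisms, naturality of \(s\) shows that \([f,g]\) is an algebra morphism. So \([\blank,\bblank]_T\) is a functor lifting \([\blank,\bblank]\).
\end{enumerate*}

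\emph{From a lifting to \(s\).} Conversely, suppose a lifting \([\blank,\bblank]_T\) is given. Apply it to free algebras \((TX,\mu_X)\) and \((TY,\mu_Y)\) to get an action \(\beta_{X,Y}\colon T[TX,TY]\to[TX,TY]\), which is natural in \(X,Y\) because free-algebra maps \(Tf\) are algebra morphisms. Set
\[
  s_{X,Y}\defeq [\eta_X,TY]\circ\beta_{X,Y}\circ T[TX,\eta_Y].
\]
The left diagram follows from naturality of \(\eta\) and \(\beta\circ\eta=\id\). For the right diagram I would use that \(\mu_X\) and \(T\eta_X\) (resp.\ \(\mu_Y\)) are morphisms of free algebras, so \([\mu_X,TY]\), \([T\eta_X,\ldots]\), etc.\ commute with the lifted actions. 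Combined with the associativity of \(\beta\), this rewrites both composites into the same expression.

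\emph{Mutual inverseness.}
\begin{enumerate*}[label=(\roman*)]
\item \emph{\(s\) to lifting to \(s\).} The new \(s\) equals \([\eta_X,TY]\circ[TX,\mu_Y]\circ s_{TX,TY}\circ T[\mu_X,TY]\circ T[TX,\eta_Y]\). Naturality of \(s\) in the first variable (along \(\eta_X\)) and in the second (along \(\eta_Y\)) reduces this, via \(\mu\circ T\eta=\id\) and \(\mu\circ\eta T=\id\), back to \(s_{X,Y}\).
\item \emph{Lifting to \(s\) to lifting.} For arbitrary \(\mb{M}\) and \(\mb{N}\), the action maps \(\act_M\colon(TM,\mu)\to\mb{M}\) and \(\act_N\colon(TN,\mu)\to\mb{N}\) are algebra morphisms. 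Hence \([\act_M,N]\colon[\mb{M},\mb{N}]_T\to[TM,\mb{N}]_T\) and \([TM,\act_N]\) are algebra maps into or out of \([TM,TN]_T\). This lets one express the given action on \([M,N]\) via \(\beta_{M,N}\), and that expression coincides with \([M,\act_N]\circ s_{M,N}\circ T[\act_M,N]\).
\end{enumerate*}

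The main obstacle is this last point and the associativity check. In both, the contravariant variable must be handled carefully: the split coequaliser/epi \(\act_M\) lets one reduce to free algebras only because \([\blank,N]\) turns it into a split mono. I would organise the computation around the fact that \(\act_M\) is split by \(\eta_M\) in \(\cat\), rather than by any direct diagram chase. Alternatively, one could simply invoke the general \cref{prop:lifting-co-and-contra} (Theorem~A.2 of \cite{Berger-Vercruysse-Saracco}) for mixed-variance functors with \(T\) acting on both variables, and specialise it.
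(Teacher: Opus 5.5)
Your proposal is correct and is essentially the paper's argument: your two constructions are exactly \eqref{eq:ActfromS} and \eqref{eq:SfromAct}, which is \cref{prop:lifting-co-and-contra} specialised to \(G=[\blank,\bblank]\) and \(R=S=T\). Your round trip \(s\mapsto[\blank,\bblank]_T\mapsto s\) is the diagram chase in the proof of \cref{thm:gabi-bijection}, and the other round trip (which the paper only calls ``similar'') goes through, as you say, because \([\act_M,N]\) is a split mono of algebras. One minor nit: the \(s\mapsto[\blank,\bblank]_T\mapsto s\) round trip only uses \(\mu\circ T\eta=\id\), once at \(X\) and once at \(Y\). The identity \(\mu\circ\eta T=\id\) is instead needed in your check of the second diagram of \eqref{eq:lifting_homs}, where it enters through \(\mu_X,\mu_Y\) being maps of free algebras; that check needs \(\mu_X,\mu_Y\) only, not \(T\eta_X\).
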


\begin{example}
    When \(T = H\otimes \blank \from  \Vect_{  \Bbbk} \to \Vect_{  \Bbbk}\) is given by tensoring with a \(  \Bbbk\)-Hopf algebra \(H\), the transformation \(s \colon T[T\blank, \bblank] \to [\blank, T\bblank]\) corresponds to a variant of the adjoint action on \(H\)-linear morphisms.
    In fact, let \(S \from H\to H\) be the antipode of \(H\) and \(\Delta\from H\to H\otimes H\) the comultiplication.
    By using reduced Sweedler notation, we write  \(h_{(1)} \otimes h_{(2)}\eqdef \Delta(h)\) for all \(h \in H\).
    From Example~3.2 of \cite{Berger-Vercruysse-Saracco} one can deduce that
    \begin{equation*}
      s_{X,Y}\from H \otimes \Hom_{  \Bbbk}(H\otimes X, Y) \to \Hom_{ \Bbbk}(X, H\otimes Y), \quad
      h \otimes f \mapsto \big(x \mapsto h_{(1)} \otimes f(S(h_{(2)}) \otimes x) \big). \qedhere
    \end{equation*}
\end{example}

\begin{definition}\label{def:gabi_monad}
  A \emph{gabi-monad} is a triple \((T, s, \act_{\tu})\) consisting of a monad \((T, \mu, \eta)\) on a skew-closed category \(\cat{C}\),  a natural transformation
  \(s \colon T[T\blank, \bblank] \to [\blank, T\bblank]\),
  and a morphism \(\act_\tu \colon T \tu \to \tu\)
  subject to the following axioms:
  \begin{enumerate}[label=(GM\arabic*),leftmargin=1.7cm]
    \item \(\act_\tu \colon T \tu \to \tu\) is a \(T\)-algebra structure,
    \item  \(s \colon T[T\blank, \bblank] \to [\blank, T\bblank]\) makes the diagrams \eqref{eq:lifting_homs} commutative, and
    \item for all \(X,Y \in \cat\) and \((M,\act_M),(P,\act_P) \in \cat^T\), the diagrams  below commute:
    \begin{gather}\label{eq:siotaj}
      \begin{gathered}
        \begin{tikzcd}[ampersand replacement=\&,column sep=35pt,row sep=35pt]
{T[\tu,X]} \arrow[r, "{Ti_X}"]\arrow[d, "{T[\act_\tu,X]}"']\& {TX} \\
          {T[T\tu,X]} \arrow[r, "{s_{\tu,X}}"']\& {[\tu,TX]} \arrow[u, "{i_{TX}}"']
\end{tikzcd}
      \end{gathered}
      \qquad
      \begin{gathered}
        \begin{tikzcd}[ampersand replacement=\&,column sep=35pt,row sep=35pt]
{T\tu} \arrow[r, "{\act_\tu}"]\arrow[d, "{Tj_{TM}}"']\& {\tu} \arrow[r, "{j_M}"]\& {[M,M]} \\
          {T[TM,TM]} \arrow[r, "{s_{M,TM}}"']\& {[M,T^2M]} \arrow[r, "{[M,\mu_M]}"']\& {[M,TM]} \arrow[u, "{[M,\act_M]}"']
\end{tikzcd}
      \end{gathered} \\
      \label{eq:sGamma}
      \begin{gathered}
        \mathscale{.95}{\begin{tikzcd}[ampersand replacement=\&,column sep=55pt,row sep=35pt]
{T[TX,Y]} \arrow[d, "{T\Gamma_{TX,Y}^P}"']\arrow[r, "{s_{X,Y}}"]\& {[X,TY]} \arrow[r, "{\Gamma_{X,TY}^P}"]\& {[[P,X],[P,TY]]} \\
            {T[[P,TX],[P,Y]]} \arrow[r, "{T[s_{P,X},[\act_P,Y]]}"']\& {T[T[TP,X],[TP,Y]]} \arrow[r, "{s_{[TP,X],[TP,Y]}}"']\& {[[TP,X],T[TP,Y]]} \arrow[u, "{[[\act_P,X],s_{P,Y}]}"']
\end{tikzcd}}
      \end{gathered}
    \end{gather}
  \end{enumerate}
\end{definition}

Gabi-monads  generalise the notion of gabi-algebras, developed in \cite{Berger-Vercruysse-Saracco}.

\begin{example}\label{ex:gabi-algebras}
  Consider a gabi-algebra \((A, \varepsilon, \delta)\) over a commutative ring \(k\).
  We may endow \(A\otimes \blank \from \cM \to \cM\) with the structure of a gabi-monad by setting
  \begin{align*}
    \mathfrak{a}_{\tu}& \from A\otimes \tu = A \otimes k \to k= \tu, & \mathfrak{a}_{\tu}(a \otimes 1) & = \varepsilon(a)\\
    s_{X,Y} & \from A\otimes \cM(A\otimes X , Y) \to \cM(X, A \otimes Y),              & s_{X,Y}(a \otimes f)(x)      & = a_{+} \otimes f(a_{-}\otimes x). \qedhere
  \end{align*}
\end{example}
Suppose \((T, s, \act_{\tu})\) is a gabi-monad on a skew-closed category \( \cat{C}\).
For any pair of \(T\)-algebras \((M, \act_{M}) \) and \((N, \act_{N})\), we define a morphism
\(\act_{M}\star \act_{N} \from T[M, N]\to [M,N]\) as the composition
\begin{equation}\label{eq:ActfromS}
  T[M, N] \xrightarrow{T[\act_M, N]} T[TM, N] \xrightarrow{s_{M, N}} [M, TN] \xrightarrow{[M, \act_N]} [M, N].
\end{equation}
The commutativity of Diagram~\eqref{eq:lifting_homs} implies that \(([M,N], \act_{M}\star \act_{N})\)
is a \(T\)-algebra, allowing us to define the functor
\begin{equation}\label{eq:lifted-internal-hom}
  \begin{gathered}
    [\blank, \bblank]_T \from {(\cat{C}^{T})}\op\! \times \cat{C}^{T} \to \cat{C}^{T},\qquad \big((M, \act_{M}),(N, \act_{N})\big) \mapsto \big([M, N], \act_{M}\star \act_{N}).
  \end{gathered}
\end{equation}

The following lemma is essentially a part of \cite[Theorem~3.4]{Berger-Vercruysse-Saracco}, and a detailed proof can be found therein.

\begin{lemma}\label{lem:gabi-monads-lift-skew-closed-structures}
  Let \((T, s, \act_{\tu})\) be a gabi-monad on a skew-closed category \(( \cat{C}, [ \blank , \bblank], \tu, \Gamma, i, j)\).
  Its Eilenberg--Moore category \( \cat{C}^{T}\) becomes skew-closed with the internal hom \([\blank,\bblank]_T\) of Equation~\eqref{eq:lifted-internal-hom}, the unit \((\tu, \act_{\tu})\), and the coherence morphisms \(\Gamma,i,j\) inherited from \( \cat{C}\).
\end{lemma}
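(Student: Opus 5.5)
By \cref{lem:gabi-functor-lift} and (GM2), the assignment \eqref{eq:lifted-internal-hom} already defines a functor \([\blank,\bblank]_T\) on \({(\cat{C}^T)}\op \times \cat{C}^T\) lifting \([\blank,\bblank]\). By (GM1), \(\boldsymbol{\tu} \eqdef (\tu,\act_\tu)\) is an object of \(\cat{C}^T\). The forgetful functor \(\forget^T\) is faithful, so the skew-closed axioms (the five diagrams of \cref{def:skewclosed}) and the naturality and dinaturality of \(\Gamma, i, j\) hold in \(\cat{C}^T\) as soon as they hold in \(\cat{C}\). The whole proof therefore reduces to one check: for all \(T\)-algebras \(\mb{M},\mb{N},\mb{P}\), the components \(i_M\), \(j_M\) and \(\Gamma^P_{M,N}\) must be morphisms of \(T\)-algebras between the lifted objects.

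For \(i\), we must show \(\act_M \circ Ti_M = i_M \circ (\act_\tu \star \act_M)\), where \(\act_\tu\star\act_M = [\tu,\act_M]\circ s_{\tu,M}\circ T[\act_\tu,M]\). We use naturality of \(i\) to move \([\tu,\act_M]\) past \(i\), which gives \(\act_M \circ i_{TM}\circ s_{\tu,M}\circ T[\act_\tu,M]\). The first square of \eqref{eq:siotaj} with \(X=M\) identifies this with \(\act_M\circ Ti_M\).

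For \(j\), the source \(\boldsymbol{\tu}\) carries the action \(\act_\tu\), so we must show \((\act_M\star\act_M)\circ Tj_M = j_M\circ\act_\tu\). Dinaturality of \(j\) applied to \(\act_M\from TM\to M\) gives \([\act_M,M]\circ j_M = [TM,\act_M]\circ j_{TM}\). We insert this into \(T[\act_M,M]\circ Tj_M\) and then push \(T[TM,\act_M]\) through \(s\) by naturality of \(s\) in its second variable. This turns the left-hand side into \([M,\act_M]\circ[M,T\act_M]\circ s_{M,TM}\circ Tj_{TM}\). Since \(\act_M\circ T\act_M = \act_M\circ\mu_M\), this becomes exactly the lower path of the second diagram in \eqref{eq:siotaj}, which equals \(j_M\circ\act_\tu\).

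For \(\Gamma\), we must show that \(\Gamma^P_{M,N}\) intertwines \(\act_M\star\act_N\) on \([M,N]\) with \((\act_P\star\act_M)\star(\act_P\star\act_N)\) on \([[P,M],[P,N]]\). The plan is to precompose \eqref{eq:sGamma} with \(T[\act_M,N]\) and postcompose with \([[P,M],[P,\act_N]]\). We then use naturality of \(s\) and of \(\Gamma\) in \(X\) and \(Y\) to replace \(Y\) by \(N\) and \(X\) by \(M\), rewriting \(TY\to Y\) via \(\act_N\) and \(TX\leftarrow X\) via \(\act_M\). After this, both sides should assemble into the expanded definition \eqref{eq:ActfromS} of the two actions.

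This last step is the main obstacle. The bookkeeping of the mixed variance in \([[P,\blank],[P,\bblank]]\) is delicate, in particular identifying \(T[[\act_P\star\act_M],\ldots]\) with the term \(T[s_{P,X},[\act_P,Y]]\). It may also require the dinaturality of \(\Gamma\) in \(P\) with respect to \(\act_P\).
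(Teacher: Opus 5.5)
This is the paper's approach: its proof just says that \eqref{eq:siotaj} and \eqref{eq:sGamma} make $i_M$, $j_M$ and $\Gamma^P_{M,N}$ morphisms of $T$-algebras, with faithfulness of the forgetful functor doing the rest, and your computations for $i$ and $j$ are correct. The $\Gamma$ step you flag is routine and needs neither dinaturality in $P$ nor any algebra axiom. First apply naturality of $\Gamma$ in its lower indices along $\act_N$ and $\act_M$, together with \eqref{eq:sGamma} at $X=M$, $Y=N$. The two sides then agree by bifunctoriality of $[\blank,\bblank]$ and naturality of $s$ in its second variable along $[\act_P,N]$ and in its first variable along $[\act_P,M]$, i.e.\ $[[\act_P,M],T[P,N]]\circ s_{[TP,M],[P,N]} = s_{[P,M],[P,N]}\circ T[T[\act_P,M],[P,N]]$.
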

\begin{proof}
  A direct computation shows that the commutativity of the Diagrams~\eqref{eq:siotaj} and \eqref{eq:sGamma} entails that the constraints \(\Gamma,i,j\) lift to \(\cat^T\).
\end{proof}

By rephrasing~\cite[Theorem~3.4]{Berger-Vercruysse-Saracco}, we obtain that gabi-monads are Tannaka-dual to skew-closed structures.

\begin{theorem}\label{thm:gabi-bijection}
  Let \(T\) be a monad on a skew-closed category \(\cat{C}\).
  There exists a bijective correspondence between gabi-monad structures on \(T\)
  and skew-closed structures on \(\cat{C}^T\) such that \(\U^T\from \cat{C}^T \to \cat{C}\) is strict closed.
\end{theorem}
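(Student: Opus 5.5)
We construct maps in both directions and check that they are mutually inverse. Throughout we rely on \cref{lem:gabi-functor-lift}: liftings of the functor \([\blank,\bblank]\) to \(\cat{C}^T\) correspond bijectively to natural transformations \(s\) satisfying \eqref{eq:lifting_homs}.

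\emph{From gabi-monads to skew-closed structures.} Given a gabi-monad \((T,s,\act_\tu)\), \cref{lem:gabi-monads-lift-skew-closed-structures} equips \(\cat{C}^T\) with a skew-closed structure. Its internal hom is \([\blank,\bblank]_T\), its unit is \((\tu,\act_\tu)\), and its constraints \(\Gamma,i,j\) are those of \(\cat{C}\). By construction \(\U^T\) sends each of these data to the corresponding data of \(\cat{C}\), so \(\U^T\) is strict closed with \(\varphi_0=\id\) and \(\varphi=\id\).

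\emph{From skew-closed structures to gabi-monads.} Suppose \(\cat{C}^T\) carries a skew-closed structure \(([\blank,\bblank]',\tu',\Gamma',i',j')\) for which \(\U^T\) is strict closed.
\begin{enumerate}
\item Strictness of \(\varphi\) says that \(\U^T[\mb M,\mb N]'=[M,N]\), naturally in \(\mb M,\mb N\). Hence \([\blank,\bblank]'\) is a lifting of \([\blank,\bblank]\), and \cref{lem:gabi-functor-lift} produces a unique \(s\) satisfying (GM2).
\item Strictness of \(\varphi_0\) says that \(\U^T\tu'=\tu\). Writing \(\tu'=(\tu,\act_\tu)\) gives a \(T\)-algebra structure on \(\tu\), which is (GM1).
\item Strictness also forces \(\U^T\Gamma'=\Gamma\), \(\U^T i'=i\), and \(\U^T j'=j\). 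Since \(\U^T\) is faithful, the primed constraints are determined by \(\Gamma,i,j\). They exist exactly when \(\Gamma^{\mb P}_{\mb M,\mb N}\), \(i_{\mb M}\), and \(j_{\mb M}\) are morphisms of \(T\)-algebras for the actions \(\act_M\star\act_N\) of \eqref{eq:ActfromS}.
\end{enumerate}
The axioms of a skew-closed category, and the naturality and dinaturality conditions, then hold automatically on \(\cat{C}^T\): they can be checked after applying the faithful functor \(\U^T\), where they are the axioms of \(\cat{C}\).

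\emph{The main step: equivalence of (GM3) with the constraints being algebra maps.} We must show that (GM3) holds if and only if \(\Gamma,i,j\) are \(T\)-algebra morphisms. The direction (GM3) \(\Rightarrow\) algebra morphisms is the content of \cref{lem:gabi-monads-lift-skew-closed-structures}. The converse carries the real content, because \eqref{eq:siotaj} and \eqref{eq:sGamma} are stated for arbitrary objects \(X,Y\in\cat{C}\), while the algebra-morphism conditions only concern \(T\)-algebras. The strategy is to specialise to free algebras \((TX,\mu_X)\) and \((TY,\mu_Y)\) and then precompose with \(T[\eta,\blank]\) or postcompose with \([\blank,\eta]\), as appropriate.

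Consider \(i\) first. The algebra-morphism condition for \(i_{(TX,\mu_X)}\) reads
\[
i_{TX}\circ[\tu,\mu_X]\circ s_{\tu,TX}\circ T[\act_\tu,TX]=\mu_X\circ Ti_{TX}.
\]
Precomposing with \(T[\tu,\eta_X]\) and using naturality of \(s\) and of \(i\) yields the first square of \eqref{eq:siotaj}.

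The same pattern handles \(j\) and \(\Gamma\). For \eqref{eq:sGamma}, we take the algebra condition for \(\Gamma^{\mb P}_{(TX,\mu),(TY,\mu)}\), precompose with \(T[TX,\eta_Y]\)-type maps, and use dinaturality of \(\Gamma\) together with the unit axiom of \eqref{eq:lifting_homs}, namely \(s\circ\eta=[\eta,\eta]\), to remove the extra multiplications. I expect this diagram chase to be the most laborious part; its details parallel \cite[Theorem~3.4]{Berger-Vercruysse-Saracco}.

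\emph{Mutual inverses.} The two constructions are inverse to each other. In one direction, \(s\) is recovered from the lift by the bijection of \cref{lem:gabi-functor-lift}, and \(\act_\tu\) is read off from the unit. In the other direction, a strict closed \(\U^T\) determines the skew-closed structure on \(\cat{C}^T\) completely, so the structure rebuilt from \((s,\act_\tu)\) is the one we started with.
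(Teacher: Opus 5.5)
Your proposal is correct and follows essentially the paper's route: \(s\) is extracted from the lifted hom by the formula \eqref{eq:SfromAct}, which is exactly what \cref{lem:gabi-functor-lift} packages. As in the paper, \(\act_\tu\) is read off the unit, \eqref{eq:siotaj}--\eqref{eq:sGamma} are matched with \(i,j,\Gamma\) being \(T\)-algebra maps, and the \(\Gamma\)-chase is deferred to \cite[Theorem~3.4]{Berger-Vercruysse-Saracco}. One correction to your sketch of that chase: write \(s_{X,Y}=[\eta_X,TY]\circ(\mu_X\star\mu_Y)\circ T[TX,\eta_Y]\), and what then makes it go through is naturality of \(\Gamma^P\) in its lower indices together with naturality of \(s\) and \(\mu\circ T\eta=\id\); neither dinaturality of \(\Gamma\) nor the unit axiom \(s\circ\eta=[\eta,\eta]\) is needed.
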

\begin{proof}
  The fact that \(T\) is a gabi-monad if and only if there exists a skew-closed structure on \(\cat{C}^T\) such that \(\U^T\from \cat{C}^T \to \cat{C}\) is strict closed is proven in \cite[Theorem~3.4]{Berger-Vercruysse-Saracco}.
  We recall the constructions here, for the convenience of the reader.

  Suppose that \(\cat^T\) is skew-closed in such a way that \(\U^T\) is strict closed.
  Then \(\tu\) admits a \(T\)-algebra structure \(\act_\tu \colon T\tu \to \tu\) and, for all \((M, \act_M), (N, \act_N) \in \cat^T\), the object \([M, N]\) is equipped with some action of \(T\),
  which we denote by \(\act_M \star \act_N \colon T[M, N] \to [M, N]\).
  For all \(X, Y \in \cat\) we can thus define a natural transformation \(s_{X,Y}\from T [ TX, Y] \to [X,TY]\) as the composition
  \begin{equation}\label{eq:SfromAct}
    T [TX, Y] \xrightarrow{T[TX, \eta_Y]} T[TX, TY] \xrightarrow{\mu_X \star \mu_Y} [TX, TY] \xrightarrow{[\eta_X, TY]} [X, TY]
  \end{equation}
  and a direct computation shows that Diagram~\eqref{eq:lifting_homs} commutes.
  Moreover, the fact that for \((M,\act_M)\), \((N,\act_N)\), \((P,\act_P)\) in \(\cat^T\), the coherence maps  \(i_M,j_M,\Gamma^M_{N,P}\) are morphisms of \(T\)-algebras entails that \eqref{eq:siotaj} and \eqref{eq:sGamma} commute.

  Conversely, if \((T, s, \act_{\tu})\) is a gabi-monad, \( \cat{C}^{T}\) can be endowed with the skew-closed structure of Lemma~\ref{lem:gabi-monads-lift-skew-closed-structures}.
  In this case, \(\U^{T}\from \cat{C}^{T}\to \cat{C}\) is strict closed.

  These constructions are easily seen to be inverse to each other.
  For example, suppose that \((T, s,\act_{\tu}) \) is a gabi-monad.
  The map \(s'\) induced from the closed structure on \( \cat{C}^{T}\) is
  \[
    \begin{tikzcd}[ampersand replacement=\&,column sep=60pt]
      {T[TX,Y]} \& \& \& {[X,TY]} \\
      {T[TX,TY]} \& {T[T^2X,TY]} \& {[TX, T^2Y]} \& {[TX,TY]}
	   \arrow["{s'_{X,Y}}",dashed, from=1-1, to=1-4]
      \arrow["{T[TX,\eta_Y]}"', from=1-1, to=2-1]
      \arrow["{T[\mu_X,TY]}"', from=2-1, to=2-2]
      \arrow["{s_{TX,TY}}"', from=2-2, to=2-3]
      \arrow["{[TX,\mu_Y]}"', from=2-3, to=2-4]
      \arrow["{[\eta_X,TY]}"', from=2-4, to=1-4]
    \end{tikzcd}
  \]
  This is seen to be equal to \(s\) by the commutativity of the following diagram:
  \[\mathscale{0.9}{
    \begin{tikzcd}[ampersand replacement=\&,column sep=large]
      {T[TX,Y]} \& {T[TX,TY]} \& {T[T^2X,TY]} \& {[TX, T^2Y]} \& {[TX,TY]} \\
      \&\& {T[TX,TY]} \\
      {[X,TY]} \&\&\& {[X,T^2Y]} \& {[X, TY]}
      \arrow["{T[TX,\eta_Y]}", from=1-1, to=1-2]
      \arrow["{s_{X,Y}}"', from=1-1, to=3-1]
      \arrow["{T[\mu_X,TY]}", from=1-2, to=1-3]
      \arrow[equals, from=1-2, to=2-3]
      \arrow["{s_{TX,TY}}", from=1-3, to=1-4]
      \arrow["{T[T\eta_{X},TY]}", from=1-3, to=2-3]
      \arrow["{[TX,\mu_Y]}", from=1-4, to=1-5]
      \arrow["{[\eta_X,T^2Y]}", from=1-4, to=3-4]
      \arrow["{[\eta_X,TY]}", from=1-5, to=3-5]
      \arrow["{s_{X,TY}}"{description}, from=2-3, to=3-4]
      \arrow["{[X,T\eta_{Y}]}", from=3-1, to=3-4]
      \arrow["{[X,\mu_Y]}", from=3-4, to=3-5]
      \arrow[equals, from=3-1, to=3-5, rounded corners,
      to path={(\tikztostart.south) -- ([yshift=-.7cm]\tikztostart.center) -- ([yshift=-.7cm]\tikztotarget.center) \tikztonodes -- (\tikztotarget.south)}]
      \arrow["s'", color=red, from=1-1, to=3-5, rounded corners,
        to path={(\tikztostart.north) -- ([yshift=.7cm]\tikztostart.center) -- ([yshift=3.7cm, xshift=1.5cm]\tikztotarget.center)\tikztonodes -- ([xshift=1.5cm]\tikztotarget.center)  -- (\tikztotarget.east)}]
    \end{tikzcd}
    }
  \]
  The other direction is similar.
\end{proof}

\begin{remark}
  There exist gabi-monads \(T\) on closed monoidal categories \(\cat[C]\) for which \(\cat[C]^T\) is just skew-closed with respect to the lifted internal hom, as can be inferred from \cite[\S4.5]{Berger-Vercruysse-Saracco}. This is due to the fact that two of the normality conditions, namely \ref{item:N1} and \ref{item:N3}, are external conditions---they are bijections in \(\Set\)---and so they are not automatically satisfied by the lifted internal hom. This leads to the forthcoming \cref{def:normal_gabi_monad}.
\end{remark}

\begin{definition}\label{def:normal_gabi_monad}
    A gabi-monad \((T, s, \act_{\tu})\) on a closed category \(\cat\) is said to be \emph{normal}, or a \emph{normal gabi-monad}, if the skew-closed structure on \(\cat^T\) for which the forgetful functor \(\cat^T \to \cat\) is strict closed is even normal-closed.
\end{definition}

According to Theorem~\ref{thm:bijection_skew_closed_monoidal_structures}, the closed and monoidal coherence maps share the same ``strength''. In particular, invertibility of the unitors and associator on the monoidal side is equivalent to the normality conditions of the closed side being satisfied.
Applied to left Hopf monads, this provides us with our first examples of normal gabi-monads.

\begin{example}\label{ex:Hopf-monads-are-normal-gabi}
  Let \(H\) be a left Hopf monad in the sense of~\cite{BruguieresLackVirelizier} on a left closed monoidal category \(\cat{C}\).
  By Example~\ref{ex:T-alg_for_T_hopf_is_closed}, its Eilenberg--Moore category \( \cat{C}^{H}\) is closed monoidal with the same coherence maps.
  As the monoidal coherences of \(\cat{C}\) are invertible, \cref{thm:bijection_skew_closed_monoidal_structures} implies that \(H\) is a normal gabi-monad.
\end{example}

In~\cite{Berger-Vercruysse-Saracco}, it was shown that a converse of this observation holds in a ring-theoretic context: any normal gabi-algebra over a commutative base ring is a Hopf algebra.
This is not the case anymore for gabi-monads.
In~\cref{sec:gabi-not-hopf}, we will study normal gabi-monads that are not left Hopf.

\subsection{Gabi-monads from adjunctions}\label{sec:gabi-monads-from-adjunctions}

Consider an ordinary adjunction \(\adj{\free}{\forget}{\cat{C}}{\cat{D}}\) between two skew-closed categories \(\cC\) and \(\cat{D}\). Suppose that \(\forget\) is strong closed with structure morphisms \(\varphi_0 \colon \tu_\cC \to \forget\left(\tu_\cD\right)\) and \(\varphi_{A,B} \colon \forget[A,B]_\cD \to [\forget A,\forget B]_\cC\), for objects \(A,B\) in \(\cD\) and set \(T \coloneqq \forget\free\).

Since \(\forget\) is strong closed, \(\varphi_0\) is an isomorphism, and so \(\tu_{\cat{C}}\) naturally becomes a \(T\)-algebra by transport of structure, that is, with respect to
\begin{equation}\label{eq:gabi-def-1}
    \act_{\tu_{\cat{C}}} \colon T\tu_{\cat{C}} \xrightarrow{T\varphi_0} T\forget\tu_{\cat{D}} \xrightarrow{\forget\epsilon_{\tu_{\cat{D}}}} \forget\tu_{\cat{D}} \xrightarrow{\varphi_0^{-1}} \tu_{\cat{C}}.
\end{equation}
Similarly, also \([\forget A,\forget B]_\cC\) becomes a \(T\)-algebra for every \(A,B\) in \(\cD\) by transport of structure, that is to say, with respect to
\begin{equation}\label{eq:gabi-def-2}
    \act_{[\forget A,\forget B]} \colon T[\forget A,\forget B] \xrightarrow{T\varphi_{A,B}^{-1}} T\forget[A,B] \xrightarrow{\forget\epsilon_{[A,B]}} \forget[A,B] \xrightarrow{\varphi_{A,B}} [\forget A,\forget B],
\end{equation}
natural in \(A,B\) in \(\cat{D}\).
As a consequence, \([TX,TY]_\cC\) also becomes a \(T\)-algebra for every \(X\) and \(Y\) in \(\cC\), with respect to \(\act_{[TX,TY]}\) natural in \(X,Y\) in \(\cat{C}\). This allows us to consider a natural transformation \(s_{X,Y} \colon T[TX,Y] \to [X,TY]\) as in \eqref{eq:SfromAct}, i.e.
\begin{equation}\label{eq:Sclosed}
  \begin{gathered}
    \begin{tikzcd}[ampersand replacement=\&,cramped,column sep=60pt]
{T[TX,Y]} \arrow[d, dotted, "{s_{X,Y}}"']\arrow[r, "{T[TX,\eta_Y]}"]\& {T[TX,TY]} \arrow[r, "{T\varphi_{\free X,\free Y}^{-1}}"]\arrow[d, "{\act_{[TX,TY]}}"]\& {\forget\free\forget[\free X,\free Y]} \arrow[d, "{\forget\epsilon_{[\free X,\free Y]}}"]\\
      {[X,TY]} \& {[TX,TY]} \arrow[l, "{[\eta_X,TY]}"]\& \arrow[l, "{\varphi_{\free X,\free Y}}"]\forget[\free X,\free Y],
\end{tikzcd}
  \end{gathered}
\end{equation}
and a morphism \(\act_{[M,N]} \coloneqq \act_M \star \act_N \colon T[M,N] \to [M,N]\) as in \eqref{eq:ActfromS}, natural in \((M,\act_M), (N,\act_N)\) in \(\cat{C}^T\). Note that \(\mu_X \star \mu_Y = \act_{[TX,TY]}\) for every \(X,Y\) in \(\cat{C}\).

Remark also that, by definition of \(s\) and naturality of \(\eta\),
\begin{equation}\label{eq:s_and_eta}
    s_{X,Y} \circ \eta_{[TX,Y]} = [\eta_X,\eta_Y]
\end{equation}
for all \(X,Y\) in \(\cat{C}\). Furthermore,
\cref{fig:GammaTalgs} shows how, in the present setting, \(\Gamma_{TX,TY}^{TP}\) becomes a morphism of \(T\)-algebras for every \(X,Y,P\) in \(\cat\). That is to say, it shows that
\begin{equation}\label{eq:GammaTalgsFree}
  \begin{aligned}
    \Gamma_{TX,TY}^{TP} & \circ \act_{[TX,TY]} = \left(\act_{[TP,TX]} \star \act_{[TP,TY]}\right) \circ T \Gamma_{TX,TY}^{TP} \\
    & \stackrel{\eqref{eq:ActfromS}}{=} [[TP,TX],\act_{[TP,TY]}] \circ s_{[TP,TX],[TP,TY]} \circ T\left[\act_{[TP,TX]},[TP,TY]\right] \circ T\Gamma_{TX,TY}^{TP}.
  \end{aligned}
\end{equation}
These relations shall be used to prove the next \cref{prop:gabi-moand-VS-strong-closed}.

\begin{proposition}\label{prop:gabi-moand-VS-strong-closed}
    Consider an ordinary adjunction \(\adj{\free}{\forget}{\cat{C}}{\cat{D}}\) between two skew-closed categories \(\cat{C}\) and \(\cat{D}\).
    There is a bijective correspondence between strong closed structures on the right adjoint \(\forget\) on one side and pairs consisting of a gabi-monad structure on \(T \defeq \forget\free\) and a strong closed structure on the comparison functor \(K \colon \cat[D] \to \cat[C]^T\) on the other. That is to say,
    \[\left\{\begin{array}{cc}\text{strong closed structures } \\ \text{on } \forget\end{array}\right\} \leftrightarrow \left\{\begin{array}{cc}\text{gabi-monad structures on } T \\ \text{and strong closed structures on } K\end{array}\right\}.\]
    If either structure is given, then \(\U^T \circ K = \forget\) holds as closed functors.
\end{proposition}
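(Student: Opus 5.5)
The plan is to build both directions of the correspondence explicitly and check that they are mutually inverse. The main tool is \cref{thm:gabi-bijection}: gabi-monad structures on \(T\) are the same as skew-closed structures on \(\cat{C}^T\) for which \(\U^T\) is strict closed. We also use the elementary fact that closed functors compose, with the composite closed structure described in the example following \cref{def:closed-functor}.

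\emph{From \(\forget\) to the pair.} Suppose \((\forget,\varphi_0,\varphi)\) is strong closed. We define \(\act_{\tu}\) by \eqref{eq:gabi-def-1} and \(s\) by \eqref{eq:Sclosed}. To see that \((T,s,\act_\tu)\) is a gabi-monad, it is simplest to produce a skew-closed structure on \(\cat{C}^T\) with \(\U^T\) strict closed and then invoke \cref{thm:gabi-bijection}. On \(T\)-algebras \(\mb M,\mb N\), put \([\mb M,\mb N]_T \defeq ([M,N],\act_M\star\act_N)\). The associativity and unit of this action follow from \cref{lem:gabi-functor-lift}: the first diagram in \eqref{eq:lifting_homs} is \eqref{eq:s_and_eta}, and the second follows from the fact that \(\act_{[TX,TY]}\) is a transported algebra structure (via \(\varphi^{-1}\) and \(\forget\epsilon\)), together with naturality of \(\act_{[\forget A,\forget B]}\) in \(A,B\).

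Next we must show that \(i\), \(j\) and \(\Gamma\) are \(T\)-algebra maps. For free algebras this is exactly \eqref{eq:GammaTalgsFree} and its analogues for \(i\) and \(j\). The analogues hold because \(\forget\) is a closed functor: \(\varphi\) intertwines \(\forget\Gamma\), \(\forget i\), \(\forget j\) with \(\Gamma\), \(i\), \(j\), and \(\forget\epsilon\) makes everything in the image of \(\forget\) a \(T\)-algebra map. For general algebras \(\mb M\), we use the split coequaliser \(T^2M\rightrightarrows TM\to M\) and naturality to reduce to the free case. Concretely, \([\act_M,N]\) is a split monomorphism of algebras into \([TM,N]\) (split by \([\eta_M,N]\)), and \(\act_M\) is a split epimorphism. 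I expect this step to be the main obstacle: it requires checking that the algebra structure \(\act_M\star\act_N\) is compatible with these splittings, i.e.\ that \([\act_M,\act_N]\)-type maps are algebra morphisms. This reduces to naturality of \(s\) plus \eqref{eq:lifting_homs}. Granting it, \cref{thm:gabi-bijection} gives the gabi-monad structure.

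Then we define the closed structure on \(K\). We have \(K\tu_\cD=(\forget\tu_\cD,\forget\epsilon)\), and \(\varphi_0\colon(\tu,\act_\tu)\to K\tu_\cD\) is an algebra isomorphism by the very definition of \(\act_\tu\). Likewise \(\varphi_{A,B}\colon K[A,B]\to [KA,KB]_T\) is an algebra isomorphism, because \(\act_{\forget A}\star\act_{\forget B}\) agrees with the transported structure \eqref{eq:gabi-def-2}. To verify this, we expand \eqref{eq:ActfromS} and \eqref{eq:Sclosed} and use naturality of \(\act_{[\forget\blank,\forget\bblank]}\) along \(\epsilon_A\colon \free\forget A\to A\) and \(\epsilon_B\). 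Since \(\U^T\) is faithful and strict closed, the closed axioms for \((K,\varphi_0,\varphi)\) are those of \(\forget\), and \(\U^TK=\forget\) holds as closed functors.

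\emph{From the pair to \(\forget\), and bijectivity.} Conversely, given a gabi-monad structure and a strong closed \(K\), we give \(\forget=\U^TK\) the composite closed structure, which is strong because \(\U^T\) is strict. Starting from \(\forget\), the round trip returns the same \(\varphi_0,\varphi\), as observed above. For the other round trip, the action on \(\tu\) and on \([KA,KB]_T\) is forced by \(K\)'s structure maps being algebra isomorphisms. Hence \(s\) recomputed via \eqref{eq:Sclosed} equals \(s\) recomputed via \eqref{eq:SfromAct}, and by \cref{thm:gabi-bijection} this is the original \(s\), since \(\free X\mapsto K\free X=(TX,\mu_X)\) identifies the relevant actions. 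The closed structure on \(K\) is recovered because \(\U^T\) is faithful.
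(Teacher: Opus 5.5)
Your proof is correct, but it reaches the gabi-monad axioms by a different route than the paper.

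\textbf{The paper's route.} The paper checks \eqref{eq:lifting_homs}, \eqref{eq:siotaj} and \eqref{eq:sGamma} for the \(s\) of \eqref{eq:Sclosed} directly, by diagram chases (the appendix figures). Only afterwards does it show that \(\varphi_0,\varphi\) are algebra maps.

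\textbf{Your route.} You build the lifted skew-closed structure on \(\cat{C}^T\) and let \cref{thm:gabi-bijection} convert it back into axioms on \(s\). This is legitimate, since the \(s'\) of \eqref{eq:SfromAct} recovers \(s\) using only naturality of \(s\) and the monad laws. Your check that \(i,j,\Gamma\) are algebra maps has two stages.
\begin{enumerate}
\item On algebras \(KA\), in particular \(K\free X=(TX,\mu_X)\), the closed-functor axioms of \(\forget\) exhibit the coherence maps as composites of \(\varphi^{\pm1}\), \(\varphi_0^{\pm1}\), internal homs of these, and \(\forget\)-images of \(i,j,\Gamma\). All of these are algebra maps.
\item (Di)naturality along \(\act_M\colon TM\to M\) then propagates this to arbitrary algebras. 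You cancel algebra maps that are monic in \(\cat{C}\), such as \([\act_M,N]\) or \([[P,N],[\act_P,Q]]\), or split epic in \(\cat{C}\), such as \([M,\act_N]\), so that \(T\) of it is epic.
\end{enumerate}
This works, and it needs only that \(\act_M\star\act_N\) is natural in algebra morphisms, which follows from naturality of \(s\).

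\textbf{Two minor corrections.}
\begin{enumerate}
\item \([\eta_M,N]\) is not an algebra map. What you have, and all the cancellation needs, is an algebra map that is split monic in \(\cat{C}\), not a split monomorphism of algebras.
\item Stage one already uses the identity \(\forget\epsilon_A\star\forget\epsilon_B=\) the transported structure \eqref{eq:gabi-def-2} for arbitrary \(A,B\in\cat{D}\). For \(\Gamma\) this includes non-free objects such as \(A=[\free P,\free X]\). You only prove this identity in the following paragraph, so that computation, which uses no gabi axiom, should come first.
\end{enumerate}

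\textbf{What each route buys.} Yours is more modular: the large chases are replaced by composing algebra maps plus a generic split-coequaliser reduction, with \cref{thm:gabi-bijection} doing the translation. The paper's is self-contained at the level of \(s\) and yields the axioms exactly in the form of \cref{def:gabi_monad}. There, the reduction to free algebras is absorbed into the cells labelled by naturality of \(\act\) and by \(\act_P\) being an algebra map. Your treatment of \(K\) and of the two round trips agrees with the paper.
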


\begin{proof}
    Suppose that \((\varphi_0,\varphi)\) is a strong closed structure on \(\forget\), to begin with.
    Define \(\act_{\tu_{\cat{C}}}\) to be the composition \eqref{eq:gabi-def-1} and \(s_{X,Y}\) to be the composition \eqref{eq:Sclosed}.
    Then, the commutativity of
    \begin{equation*}
        \begin{tikzcd}[ampersand replacement=\&]
            {T[TX,Y]} \&\&\&\& {T[TX,TY]} \\
            {[TX,Y]} \&\& {[TX,TY]} \\
            {[X,TY]} \&\&\&\& {[TX,TY]}
            \arrow[""{name=0, anchor=center, inner sep=0}, "{T[TX,\eta_Y]}", from=1-1, to=1-5]
            \arrow[""{name=1, anchor=center, inner sep=0}, "{\act_{[TX,TY]}}", from=1-5, to=3-5]
            \arrow["{\eta_{[TX,Y]}}", from=2-1, to=1-1]
            \arrow["{[TX, \eta_{Y}]}",from=2-1, to=2-3]
            \arrow["{[\eta_X,\eta_Y]}"', from=2-1, to=3-1]
            \arrow["{\eta_{[TX,TY]}}"{description},from=2-3, to=1-5]
            \arrow[equals, from=2-3, to=3-5]
            \arrow[""{name=2, anchor=center, inner sep=0}, "{[\eta_X,TY]}", from=3-5, to=3-1]
            \arrow["{\eta\ \mathsf{nat}}"', draw=none, from=2-3, to=0]
            \arrow["{[\blank, \bblank]\ \mathsf{functor}}"', draw=none, from=2-3, to=2]
            \arrow["{T\mbox{-}\mathsf{alg}}"', draw=none, from=2-3, to=1]
            \arrow["{s_{X,Y}}"{description}, from=1-1, to=3-1, rounded corners,
            to path={
            (\tikztostart.north)
            -- ([yshift=.7cm]\tikztostart.center)
            -- ([xshift=10cm,yshift=.7cm]\tikztostart.center)
            \tikztonodes
            -- ([xshift=10cm,yshift=-.7cm]\tikztotarget.center)
            -- ([yshift=-.7cm]\tikztotarget.center)
            -- (\tikztotarget.south)}]
        \end{tikzcd}
    \end{equation*}
    and of \cref{fig:gabi-moand-VS-strong-closed-1} ensures that the diagrams from \eqref{eq:lifting_homs} are commutative.
    The commutativity of the leftmost diagram in \eqref{eq:siotaj} follows from \cref{fig:badmagic},
    the commutativity of the rightmost diagram in \eqref{eq:siotaj} follows from the commutativity of \cref{fig:gabi-moand-VS-strong-closed-2},
    and \cref{fig:gabi-moand-VS-strong-closed-3} witnesses the commutativity of \eqref{eq:sGamma}. Therefore \((T,s,\act_\tu)\) is a gabi-monad.

    To show that the comparison functor \(K\from \cat{D}\to \cat{C}^T\), \(D \mapsto (\forget D, \forget\epsilon_{D})\) is strong closed,
    it is enough to prove that \(\varphi\) and \(\varphi_0\) are morphisms of \(T\)-algebras.
    For \(\varphi_{A,B}\), with \(A,B\) objects in \( \cat{D}\), this follows from the commutativity of the below diagram.
    \[
        \mathscale{.8}{\begin{tikzcd}[ampersand replacement=\&,cramped,column sep=13pt,row sep=20pt]
            {\forget\free[\forget A,\forget B]} \&\& {\forget\free[\forget\free\forget A,\forget B]} \& {[\forget A,\forget\free\forget B]} \&\& {[\forget A,\forget B]} \\
            \& {\forget\free[\forget A,\forget\free\forget B]} \& {\forget\free[\forget\free\forget A,\forget\free\forget B]} \& {[\forget\free\forget A,\forget\free\forget B]} \& {[\forget A,\forget\free\forget B]} \\
            \&\& {\forget\free\forget[\free\forget A,\free\forget B]} \& {\forget[\free\forget A,\free\forget B]} \\
            \& {\forget\free\forget[A,\free\forget B]} \&\&\& {\forget[A,\free\forget B]} \\
            {\forget\free\forget[A,B]} \&\&\& {\forget[A,B]} \&\& {[\forget A,\forget B]}
            \arrow[""{name=0, anchor=center, inner sep=0}, "{{\forget\free[\forget\epsilon_A,\id]}}", from=1-1, to=1-3]
            \arrow["{{\forget\free[\id,\eta_{\forget B}]}}"{description}, from=1-1, to=2-2]
            \arrow["{{\forget\free\varphi^{-1}_{A,B}}}"', from=1-1, to=5-1]
            \arrow["{{s_{\forget A,\forget B}}}", from=1-3, to=1-4]
            \arrow["{{\forget\free[\id,\eta_{\forget B}]}}"', from=1-3, to=2-3]
            \arrow["{\mathsf{def}\ s}"{description}, draw=none, from=1-3, to=2-4]
            \arrow["{{[\id,\forget\epsilon_{B}]}}", from=1-4, to=1-6]
            \arrow[equals, from=1-4, to=2-5]
            \arrow["{\mathsf{nat}\ \varphi}"{description}, draw=none, from=1-6, to=4-5]
            \arrow[""{name=1, anchor=center, inner sep=0}, "{\forget\free[\forget\epsilon_A,\id]}"'{yshift=-4pt}, from=2-2, to=2-3]
            \arrow["{{\forget\free\varphi^{-1}_{A,\free\forget B}}}"', from=2-2, to=4-2]
            \arrow["{{\forget\free\varphi^{-1}_{\free\forget A,\free\forget B}}}", from=2-3, to=3-3]
            \arrow["{{[\eta_{\forget A},\id]}}"{description}, from=2-4, to=1-4]
            \arrow[""{name=2, anchor=center, inner sep=0}, "{{\forget\epsilon_{[\free\forget A,\free\forget B]}}}"'{yshift=-4pt}, from=3-3, to=3-4]
            \arrow["{{\varphi_{\free\forget A,\free\forget B}}}"', from=3-4, to=2-4]
            \arrow[""{name=3, anchor=center, inner sep=0}, "{\forget\free\forget[\epsilon_A,\id]}"{description}, from=4-2, to=3-3]
            \arrow[""{name=4, anchor=center, inner sep=0}, "{{\forget\epsilon_{[A,\free\forget B]}}}"', from=4-2, to=4-5]
            \arrow[""{name=5, anchor=center, inner sep=0}, "{\forget\free\forget[A,\epsilon_B]}"{description}, from=4-2, to=5-1]
            \arrow["{{\varphi_{A,\free\forget B}}}"', from=4-5, to=2-5]
            \arrow[""{name=6, anchor=center, inner sep=0}, "{\forget[\epsilon_A,\id]}"{description}, from=4-5, to=3-4]
            \arrow["{\forget[A,\epsilon_B]}"{description}, from=4-5, to=5-4]
            \arrow[""{name=7, anchor=center, inner sep=0}, "{{\forget\epsilon_{[A,B]}}}"', from=5-1, to=5-4]
            \arrow["{{\varphi_{A,B}}}"', from=5-4, to=5-6]
            \arrow[equals, from=5-6, to=1-6]
            \arrow["\equiv"{description}, draw=none, from=0, to=2-2]
            \arrow["{\genfrac{}{}{0pt}{}{\mathsf{nat}\ \varphi}{+\mathsf{adj}}}"{description}, draw=none, from=1-1, to=5]
            \arrow["{\mathsf{nat}\ \varphi^{-1}}"{description}, draw=none, from=1, to=4-2]
            \arrow["{\mathsf{nat}\ \epsilon}"{description,yshift=-4pt}, draw=none, from=2, to=4]
            \arrow["{\mathsf{nat}\ \epsilon}"{description}, draw=none, from=4, to=7]
            \arrow["{\genfrac{}{}{0pt}{}{\mathsf{nat}\ \varphi}{+\mathsf{adj}}}"{description}, draw=none, from=6, to=2-5]
            \arrow["{\forget\epsilon_A \star \forget\epsilon_B}", from=1-1, to=1-6, rounded corners,
            to path={
            (\tikztostart.north)
            -- ([yshift=.7cm]\tikztostart.center)
            -- ([yshift=.7cm]\tikztotarget.center)
            \tikztonodes
            -- (\tikztotarget.north)}]
        \end{tikzcd}}
    \]
    Moreover, the definition of \(\mathfrak{a}_{\tu}\) implies that \(\varphi_0\from \tu \to \forget \tu\) is a morphism of algebras:
    \[
        \act_{\forget\tu} \circ T \varphi_0
        = \forget \epsilon_{\tu} \circ T \varphi_0
        = \varphi_0 \circ \varphi_0^{-1} \circ \forget \epsilon_{\tu} \circ T \varphi_0
        = \varphi_0 \circ \act_{\tu}.
    \]
    Since \(\U^T\) is strict closed in this case, we can set \(\psi = \varphi\) and \(\psi_0=\varphi_0\), and \((\U^T,\id,\id)  \circ (K,\psi_0,\psi) = (\forget,\varphi_0,\varphi)\) holds as closed functors.

    In the opposite direction, if \((T,s,\act_\tu)\) is a gabi-monad and \((\psi_0,\psi)\) is a strong closed structure on \(K \colon \cat[D] \to \cat[C]^T\), then setting \(\varphi = \psi\) and \(\varphi_0 = \psi_0\) yields a strong closed structure on \(\forget = \U^T \circ K\) for which the equality holds as closed functors.

    It remains to verify that the two constructions are inverse to each other. It is self-evident that if we start from a strong closed structure \((\varphi_0,\varphi)\) on \(\forget\), construct \((T,s,\act_\tu)\) and \((\psi_0,\psi)\) as above, and then endow \(\forget\) with the strong closed structure given by the composite \(\U^T \circ K\), we recover the starting strong closed structure \((\varphi_0,\varphi)\).
    Conversely, suppose that we begin with \((T,s,\act_\tu)\) and \((\psi_0,\psi)\).
    Then, the new algebra structure on \(\tu_{\cat[C]}\) is given by
    \[
      T\tu_{\cat{C}} \xrightarrow{T\psi_0} T\forget\tu_{\cat{D}} \xrightarrow{\forget\epsilon_{\tu_{\cat{D}}}} \forget\tu_{\cat{D}} \xrightarrow{\psi_0^{-1}} \tu_{\cat{C}}
    \]
    which coincides with \(\act_\tu\) because \(\psi_0\) is a morphism of algebras \((\tu_{\cat[C]},\act_{\tu_{\cat[C]}}) \to (\forget\tu_{\cat[D]},\forget\epsilon_{\tu_{\cat[D]}})\). Similarly, the new natural transformation \(s\) is given by
    \[[\eta_X,TY] \circ \psi_{\free X,\free Y} \circ \forget\epsilon_{[\free X,\free Y]} \circ T\psi_{\free X,\free Y}^{-1} \circ T[TX,\eta_Y].\]
    Since \(\psi_{A,B} \colon (\forget[A,B]_{\cat[D]},\forget\epsilon_{[A,B]_{\cat[D]}}) \to [(\forget A,\forget\epsilon_A),(\forget B,\forget\epsilon_B)]_T\) is a morphism of \(T\)-algebras for every \(A,B\) in \(\cat[D]\),
    \[
      \act_{[TX,TY]} \circ T\psi_{\free X,\free Y} = \psi_{\free X,\free Y} \circ \forget\epsilon_{[\free X,\free Y]},
    \]
    where
    \[\act_{[TX,TY]} = [TX,\mu_Y] \circ s_{TX,TY} \circ T[\mu_X,TY].\]
    Therefore, the new natural transformation \(s\) is given by
    \begin{align*}
        [\eta_X,TY] \circ & \psi_{\free X,\free Y} \circ \forget\epsilon_{[\free X,\free Y]} \circ T\psi_{\free X,\free Y}^{-1} \circ T[TX,\eta_Y] = [\eta_X,TY] \circ \act_{[TX,TY]} \circ T[TX,\eta_Y] \\
        & = [\eta_X,TY] \circ [TX,\mu_Y] \circ s_{TX,TY} \circ T[\mu_X,TY] \circ T[TX,\eta_Y] = s_{X,Y},
    \end{align*}
    whence we recover the one we started with.
  \end{proof}

\subsection{Closed Monads}\label{sec:closed-monads}

\begin{definition}\label{def:closed-monad}
  An adjunction \(\free \colon \cC  \rightleftarrows \cD \colon \forget\) between skew-closed categories is \emph{closed} if \((\free,\varphi_0,\varphi)\) and \((\forget,\psi_0,\psi)\) are closed functors and \(\eta,\epsilon\) are closed natural transformations.

  A monad \((T, \mu, \eta) \from \cat{C}\to \cat{C}\) on a skew-closed category \(\cat{C}\) is called \emph{closed}
  if there exist \(\xi\from T[\blank, \bblank] \to [T\blank, T\bblank]\) and \(\xi_0\from \tu \to T\tu\)
  such that \((T, \xi_0,\xi)\) is a closed functor,
  and \(\mu\) and \(\eta\) are closed natural transformations.
\end{definition}

Clearly, the monad of a closed adjunction is a closed monad.
However, closed monads do not necessarily yield closed adjunctions.

\begin{remark}
  Spelt out explicitly, the conditions from \cref{def:closed_natural_transformation} on the multiplication and unit on the monad \(T\)
  in \cref{def:closed-monad} state that the following diagrams commute, for all \(X,Y \in \cat{C}\):
  \begin{equation}\label{eq:closed-monad-mu}
    \begin{gathered}
      \begin{tikzcd}[ampersand replacement=\&]
        {T^2[X,Y]} \& {T[X,Y]} \& {[TX,TY]} \\
        {T[TX,TY]} \& {[T^2X, T^2Y]} \& {[T^2X,TY]}
        \arrow["{\mu_{[X,Y]}}", from=1-1, to=1-2]
        \arrow["{T\xi_{X,Y}}"', from=1-1, to=2-1]
        \arrow["{\xi_{X,Y}}", from=1-2, to=1-3]
        \arrow["{[\mu_X,TY]}", from=1-3, to=2-3]
        \arrow["{\xi_{TX,TY}}"', from=2-1, to=2-2]
        \arrow["{[T^2 X, \mu_Y]}"', from=2-2, to=2-3]
      \end{tikzcd}
    \end{gathered}
    \qquad
    \begin{gathered}
      \begin{tikzcd}[ampersand replacement=\&,cramped]
{\tu} \arrow[d, "{\xi_0}"']\arrow[r, "{\xi_0}"]\& {T\tu} \arrow[d, "{T\xi_0}"]\\
        {T\tu} \& {T^2\tu} \arrow[l, "{\mu_\tu}"]
\end{tikzcd}
    \end{gathered}
  \end{equation}
  \begin{equation}\label{eq:closed-monad-eta}
    \begin{tikzcd}[ampersand replacement=\&]
      {[X,Y]} \& {T[X,Y]} \&\&\& \tu \\
      {[X,TY]} \& {[TX,TY]} \&\& \tu \& T\tu
      \arrow["{\eta_{[X,Y]}}", from=1-1, to=1-2]
      \arrow["{[X,\eta_Y]}"', from=1-1, to=2-1]
      \arrow["{\xi_{X,Y}}", from=1-2, to=2-2]
      \arrow[equals, from=2-4, to=1-5]
      \arrow["{\xi_0}"', from=2-4, to=2-5]
      \arrow["{[\eta_X,TY]}", from=2-2, to=2-1]
      \arrow["{\eta_\tu}", from=1-5, to=2-5]
    \end{tikzcd}
  \end{equation}
  Notice that the right-hand side of \eqref{eq:closed-monad-mu} is redundant, since it follows from the right-hand side of \eqref{eq:closed-monad-eta}.
\end{remark}

\begin{proposition}\label{prop:normal-closed-is-gabi}
  Let \(\cat{C}\) be a skew-closed category, with \((T, \xi_0,\xi)\) a closed monad on \(\cat{C}\).
  If \(\xi_0\) is an isomorphism, then \(T\) is a gabi-monad.
\end{proposition}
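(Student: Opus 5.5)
The plan is to construct the gabi-monad data explicitly from \((\xi_0,\xi)\), using the natural candidate for the \(T\)-action on internal homs that closed monads provide (as in Kock's work). I will then verify the axioms of \cref{def:gabi_monad}, mostly by reducing them to the closed-functor and closed-natural-transformation axioms of \cref{def:closed-functor,def:closed_natural_transformation}.

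\textbf{Step 1: the unit algebra.} The right-hand diagram of \eqref{eq:closed-monad-eta} says \(\xi_0=\eta_\tu\), so \(\eta_\tu\) is invertible. I set \(\act_\tu\defeq \eta_\tu^{-1}\). The unit law \(\act_\tu\circ\eta_\tu=\id\) is immediate. From \(\mu_\tu\circ T\eta_\tu=\id\) and invertibility of \(T\eta_\tu\) we get \(\mu_\tu=(T\eta_\tu)^{-1}=T\act_\tu\), and hence \(\act_\tu\circ\mu_\tu=\act_\tu\circ T\act_\tu\). So (GM1) holds.

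\textbf{Step 2: the transformation \(s\).} I define
\[
s_{X,Y}\defeq [\eta_{TX}\circ\eta_X,\,TY]\circ\xi_{TX,Y}\from T[TX,Y]\to[T^2X,TY]\to[X,TY].
\]
Using naturality of \(\xi\) in its first variable, for algebras \(\mb M,\mb N\) one computes \(\act_M\star\act_N=[M,\act_N]\circ s_{M,N}\circ T[\act_M,N]\). After \(T\act_M\circ\eta_{TM}\circ\eta_M=\eta_M\), this equals \([\eta_M,\act_N]\circ\xi_{M,N}\), which is the familiar lifted action.

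For (GM2) I check \eqref{eq:lifting_homs}. The triangle follows from the left-hand square of \eqref{eq:closed-monad-eta}: \(\xi\circ\eta_{[TX,Y]}\) composed with \([\eta_{TX},TY]\) gives \([TX,\eta_Y]\), and then precomposing with \(\eta_X\) yields \([\eta_X,\eta_Y]\). The pentagon follows from the left-hand diagram of \eqref{eq:closed-monad-mu} (i.e.\ \(\mu\) closed), together with naturality of \(\xi\), \(\eta\) and \(\mu\) and the monad unit laws. This is a routine but somewhat long diagram chase.

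\textbf{Step 3: (GM3).} For the \(i\)-diagram I use the first closed-functor axiom, \(i_{T X}\circ[\xi_0,TX]\circ\xi_{\tu,X}=Ti_X\). I insert \(\xi_0=\eta_\tu=\act_\tu^{-1}\) and compare with \(i_{TX}\circ s_{\tu,X}\circ T[\act_\tu,X]\), using naturality of \(\xi\) in the first variable to move \(T[\act_\tu,X]\) across \(\xi\). Invertibility of \(\xi_0\) is exactly what makes these expressions match.

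The \(j\)-diagram follows from the second closed-functor axiom, \(\xi_{M,M}\circ Tj_M\circ\xi_0=j_{TM}\). I precompose with \(\act_\tu=\xi_0^{-1}\), use dinaturality of \(j\) along \(\act_M\) and \(\eta_M\), and again naturality of \(\xi\).

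\textbf{Step 4: the \(\Gamma\)-axiom, the expected main obstacle.} The remaining condition is \eqref{eq:sGamma}. Equivalently, by the computation in Step 2, it says that \(\Gamma^P_{M,N}\) is a morphism of \(T\)-algebras for the actions \([\eta_M,\act_N]\circ\xi_{M,N}\). I would first reduce \eqref{eq:sGamma} to this algebra-morphism statement for free algebras \(\mb M=(TX,\mu_X)\), \(\mb N=(TY,\mu_Y)\) and arbitrary \(\mb P\), pulled back along \(\eta\), in the same way as in the proof of \cref{thm:gabi-bijection}. I would then prove the algebra-morphism property from the third (\(\Gamma\)) axiom of a closed functor for \((T,\xi_0,\xi)\), together with dinaturality of \(\Gamma\) in the upper index, in order to replace \(TP\) by \(P\) via \(\act_P\) and \(\eta_P\).

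The delicate point is that the \(\Gamma\)-axiom for \(\xi\) involves \(\xi_{P,M}\) and \(\xi_{P,N}\) in both contravariant and covariant positions, whereas the lifted actions involve \(\eta_P\). Matching them will require the algebra identities \(\act_P\circ\eta_P=\id\) and \(\act_P\circ\mu_P=\act_P\circ T\act_P\) together with the closedness of \(\eta\) from \eqref{eq:closed-monad-eta}. If this chase becomes unwieldy, I would instead show directly that \(i\), \(j\) and \(\Gamma\) are \(T\)-algebra maps for the lifted hom \([\eta_M,\act_N]\circ\xi_{M,N}\), and then invoke \cref{thm:gabi-bijection} to conclude.
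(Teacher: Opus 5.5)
Your proposal is correct. For (GM1), (GM2) and the two diagrams of \eqref{eq:siotaj} it follows the paper: you use the same \(\act_\tu=\xi_0^{-1}=\eta_\tu^{-1}\), the same \(s_{X,Y}=[\eta_{TX}\circ\eta_X,TY]\circ\xi_{TX,Y}\), and the same closed-functor and closed-monad axioms. You also check (GM1) explicitly, which the paper leaves implicit.

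The genuine difference is your observation that, for algebras, \(\act_M\star\act_N=[\eta_M,\act_N]\circ\xi_{M,N}\). You then plan to handle \eqref{eq:sGamma} by showing that \(\Gamma^P_{M,N}\) is a \(T\)-algebra map for these actions and appealing to \cref{thm:gabi-bijection}. The paper does not use this simplification and instead chases \eqref{eq:sGamma} directly (\cref{fig:closed-monad-lift-gamma}). Note that the equivalence between \eqref{eq:sGamma} and the algebra-map property does not follow from Step~2 alone; it is exactly the content of \cref{thm:gabi-bijection}, and that is what makes your route legitimate.

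Step~4 goes through more easily than you anticipate:
\begin{enumerate}
\item Substituting the \(\Gamma\)-axiom of \((T,\xi_0,\xi)\) into \([\eta_{[P,M]},[\eta_P,\act_N]\circ\xi_{P,N}]\circ\xi_{[P,M],[P,N]}\circ T\Gamma^P_{M,N}\) gives \([\xi_{P,M}\circ\eta_{[P,M]},[\eta_P,\act_N]]\circ\Gamma^{TP}_{TM,TN}\circ\xi_{M,N}\).
\item Naturality of \(\Gamma\) absorbs \([TP,\act_N]\).
\item Dinaturality in the upper index converts \([\eta_P,N]\) into \([[\eta_P,TM],[P,N]]\).
\item The left square of \eqref{eq:closed-monad-eta} turns \([\eta_P,TM]\circ\xi_{P,M}\circ\eta_{[P,M]}\) into \([P,\eta_M]\).
\item Naturality once more yields \(\Gamma^P_{M,N}\circ[\eta_M,\act_N]\circ\xi_{M,N}\).
\end{enumerate}
So neither \(\act_P\) nor its associativity is needed, since the simplified action on \([P,M]\) does not depend on \(\act_P\). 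The \(i\)- and \(j\)-checks are equally short.

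The same simplification would also let you replace the long pentagon chase for (GM2). It suffices to verify briefly that \(([M,N],[\eta_M,\act_N]\circ\xi_{M,N})\) is an algebra and that \([f,g]\) preserves it, using \eqref{eq:closed-monad-mu}, \eqref{eq:closed-monad-eta} and naturality, and then invoke \cref{lem:gabi-functor-lift}.

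Your route buys brevity and transparency at the price of relying on the reconstruction theorem. The paper's direct chases are self-contained but considerably heavier.
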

\begin{proof}
  Define the action of the unit to be the inverse of \(\xi_0\): \(\act_\tu\from T \tu \xrightarrow{\ \xi_0^{-1}\ } \tu\) and set
  \begin{equation}
    s_{X,Y}\from T[TX, Y] \xrightarrow{\xi_{TX,Y}} [T^2X, TY] \xrightarrow{[\eta_{TX}, TY]} [TX, TY] \xrightarrow{[\eta_X, TY]} [X, TY].
  \end{equation}
  By naturality of \(\eta\), this is the same as the composition
  \begin{equation}
    T[TX, Y] \xrightarrow{\xi_{TX,Y}} [T^2X, TY] \xrightarrow{[T\eta_{X}, TY]} [TX, TY] \xrightarrow{[\eta_X, TY]} [X, TY].
  \end{equation}
  We will often use this fact in the sequel.

  The fact that the internal hom of \(\cat{C}\) lifts as a functor---that is, that \cref{eq:lifting_homs} is satisfied---follows by the commutativity of the following two diagrams:
  \[
    \begin{tikzcd}[ampersand replacement=\&]
      {[TX, Y]} \& {T[TX,Y]} \& {[T^2X,TY]} \\
      {[X,TY]} \&\& {[TX,TY]}
      \arrow["{\eta_{[TX,Y]}}", from=1-1, to=1-2]
      \arrow["{[\eta_X,\eta_Y]}"', from=1-1, to=2-1]
      \arrow[""{name=0, anchor=center, inner sep=0}, "{[TX,\eta_Y]}"{description}, from=1-1, to=2-3]
      \arrow["{\xi_{TX,Y}}", from=1-2, to=1-3]
      \arrow["{[\eta_{TX}, TY]}", from=1-3, to=2-3]
      \arrow["{[\eta_X,TY]}", from=2-3, to=2-1]
      \arrow["\eqref{eq:closed-monad-eta}"{description}, draw=none, from=0, to=1-3]
      \arrow["\equiv"{description}, draw=none, from=2-1, to=0]
    \end{tikzcd}
  \]
  \[
    \mathscale{.8}{
      \begin{tikzcd}[ampersand replacement=\&]
        {T^2[TX,Y]} \& {T[TX,Y]} \& {[T^2X,TY]} \&\&\& {[TX,TY]} \\
        {T^2[T^2X,Y]} \& {T[T^2X,TY]} \& {[T^3X,TY]} \& {[T^2X,TY]} \& {[TX,TY]} \& {[X,TY]} \\
        {T[T^3X,TY]} \&\& {[T^3X,T^2Y]} \& {[T^2X,T^2Y]} \& {[TX,T^2Y]} \& {[X,T^2Y]} \\
        {T[T^2X,TY]} \& {T[TX,TY]} \& {[T^2X,T^2Y]} \&\&\& {[TX,T^2Y]}
        \arrow["{\mu_{[TX,Y]}}", from=1-1, to=1-2]
        \arrow["{T^2[\mu_X,Y]}"', from=1-1, to=2-1]
        \arrow[""{name=0, anchor=center, inner sep=0}, "{T\xi_{TX,Y}}"{description}, from=1-1, to=2-2]
        \arrow["{\xi_{TX,Y}}", from=1-2, to=1-3]
        \arrow["\eqref{eq:closed-monad-mu}"{description}, draw=none, from=1-2, to=2-2]
        \arrow[""{name=1, anchor=center, inner sep=0}, "{[\eta_{TX}, TY]}", from=1-3, to=1-6]
        \arrow["{[\mu_{TX},TY]}"', from=1-3, to=2-3]
        \arrow[""{name=2, anchor=center, inner sep=0}, curve={height=-10pt}, equals, from=1-3, to=2-4]
        \arrow["{[\eta_X,TY]}", from=1-6, to=2-6]
        \arrow["{T \xi_{T^2X, Y}}"', from=2-1, to=3-1]
        \arrow[""{name=3, anchor=center, inner sep=0}, "{T[T\mu_X,TY]}"{description}, from=2-2, to=3-1]
        \arrow["{\xi_{T^2X,TY}}"{description}, from=2-2, to=3-3]
        \arrow[""{name=4, anchor=center, inner sep=0}, curve={height=-20pt}, equals, from=2-2, to=4-1]
        \arrow["{[\eta_{T^2X},TY]}"', from=2-3, to=2-4]
        \arrow["{[\eta_{TX},TY]}"', from=2-4, to=2-5]
        \arrow["{[\eta_X,TY]}"', from=2-5, to=2-6]
        \arrow["\equiv"{description}, draw=none, from=2-5, to=3-4]
        \arrow[""{name=5, anchor=center, inner sep=0}, "{T[\eta_{T^2X},TY]}"', from=3-1, to=4-1]
        \arrow["{[T^3X,\mu_Y]}"{description}, from=3-3, to=2-3]
        \arrow["{[\eta_{T^2X},T^2Y]}"', from=3-3, to=3-4]
        \arrow["{[T\eta_{TX}, T^2Y]}"', from=3-3, to=4-3]
        \arrow[""{name=6, anchor=center, inner sep=0}, "{[\eta_{TX},T^2Y]}"', from=3-4, to=3-5]
        \arrow["{[\eta_X,T^2Y]}"', from=3-5, to=3-6]
        \arrow["{[X,\mu_Y]}"', from=3-6, to=2-6]
        \arrow["{T[\eta_{TX},TY]}"', from=4-1, to=4-2]
        \arrow["{\mathsf{nat}\ \xi}"{description}, draw=none, from=4-2, to=2-2]
        \arrow["{\xi_{TX,TY}}"', from=4-2, to=4-3]
        \arrow[""{name=7, anchor=center, inner sep=0}, "{[\eta_{TX},T^2Y]}"', from=4-3, to=4-6]
        \arrow["{[\eta_X, T^2Y]}"', from=4-6, to=3-6]
        \arrow["\equiv"{description}, draw=none, from=1, to=2-5]
        \arrow["{\mathsf{nat}\ T\xi}"{description}, draw=none, from=3, to=0]
        \arrow["{\mathsf{monad}}"{description}, draw=none, from=2-3, to=2]
        \arrow["{\mathsf{monad}}"{description}, draw=none, from=5, to=4]
        \arrow["\mathsf{nat}\ \eta"{description}, draw=none, from=6, to=7]
        \arrow["{Ts_{TX,Y}}"{description}, from=2-1, to=4-2, rounded corners, color=red,
        to path={
          (\tikztostart.west)
          -- ([xshift=-2.1cm]\tikztostart.center)
          -- ([xshift=-5.7cm,yshift=-.7cm]\tikztotarget.center)
          \tikztonodes
          -- ([yshift=-.7cm, xshift=-0.1cm]\tikztotarget.center)
          -- ([xshift=-0.1cm]\tikztotarget.south)}]
        \arrow["{s_{X,Y}}"{description}, from=4-2, to=3-6, rounded corners, color=red,
        to path={
          ([xshift=.1cm]\tikztostart.south)
          -- ([xshift=.1cm, yshift=-0.7cm]\tikztostart.center)
          -- ([xshift=1.4cm,yshift=-2.2cm]\tikztotarget.center)
          \tikztonodes
          -- ([xshift=1.4cm]\tikztotarget.center)
          -- (\tikztotarget.east)}]
        \arrow["{s_{X,Y}}"{description}, from=1-2, to=2-6, rounded corners, color=red,
        to path={
          (\tikztostart.north)
          -- ([yshift=0.7cm]\tikztostart.center)
          -- ([xshift=1.4cm,yshift=2.2cm]\tikztotarget.center)
          \tikztonodes
          -- ([xshift=1.4cm]\tikztotarget.center)
          -- (\tikztotarget.east)}]
      \end{tikzcd}}
  \]

  It is left to verify that the coherence maps also lift from \(\cat{C}\) to \(\cat{C}^T\).
  \Cref{eq:siotaj} follows by the commutativity of
  \[
    \begin{tikzcd}[ampersand replacement=\&]
      {T[\tu,X]} \&\&\& TX \\
      \& {[T\tu,TX]} \&\& {[\tu,TX]} \\
      \\
      {T[T\tu,X]} \& {[T^2\tu,TX]} \&\& {[T\tu,TX]}
      \arrow["{Ti_X}", from=1-1, to=1-4]
      \arrow["{\xi_{\tu,X}}"{description}, from=1-1, to=2-2]
      \arrow[""{name=0, anchor=center, inner sep=0}, "{T[\act_\tu,X]}"', shift right=2, curve={height=6pt}, from=1-1, to=4-1]
      \arrow["{T\ \mathsf{closed}}"{description}, draw=none, from=2-2, to=1-4]
      \arrow[""{name=1, anchor=center, inner sep=0}, "{[\xi_0,TX]}"{description}, from=2-2, to=2-4]
      \arrow["{i_{TX}}"', from=2-4, to=1-4]
      \arrow[""{name=2, anchor=center, inner sep=0}, "{T[\xi_0,X]}"', shift right=2, curve={height=6pt}, from=4-1, to=1-1]
      \arrow["{\xi_{T\tu,X}}"', from=4-1, to=4-2]
      \arrow["{[T\xi_0,TX]}"', from=4-2, to=2-2]
      \arrow[""{name=3, anchor=center, inner sep=0}, "{[\eta_{T\tu},TX]}"', from=4-2, to=4-4]
      \arrow["{[\eta_\tu,TX]}"', from=4-4, to=2-4]
      \arrow["{\mathsf{def}}"{description}, draw=none, from=0, to=2]
      \arrow["{\genfrac{}{}{0pt}{}{\xi_0=\eta_\tu}{\eta\ \mathsf{nat}}}"{description}, draw=none, from=1, to=3]
      \arrow["{\mathsf{nat}\ \xi}"{description}, draw=none, from=2, to=4-2]
      \arrow["{s_{\tu,Y}}"{description}, from=4-1, to=2-4, rounded corners, color=red,
      to path={
        (\tikztostart.south)
        -- ([yshift=-0.7cm]\tikztostart.center)
        -- ([xshift=1.4cm,yshift=-2.9cm]\tikztotarget.center)
        \tikztonodes
        -- ([xshift=1.4cm]\tikztotarget.center)
        -- (\tikztotarget.east)}]
    \end{tikzcd}
  \]
  and
  \[
    \begin{tikzcd}[ampersand replacement=\&,column sep=35pt]
      T\tu \& \tu \&\&\& {[X,X]} \\
      \&\&\& {[X,TX]} \\
      \&\&\&\& {[X,TX]} \\
      {T[TX,TX]} \& {[T^2X,T^2X]} \& {[TX,T^2X]} \& {[X,T^2X]}
      \arrow[""{name=0, anchor=center, inner sep=0}, "{\act_\tu}"{description}, shift left=2, curve={height=-6pt}, from=1-1, to=1-2]
      \arrow[""{name=1, anchor=center, inner sep=0}, "{Tj_{TX}}"', from=1-1, to=4-1]
      \arrow[""{name=2, anchor=center, inner sep=0}, "{\xi_0}"{description}, shift left=2, curve={height=-6pt}, from=1-2, to=1-1]
      \arrow[""{name=3, anchor=center, inner sep=0}, "{j_X}", from=1-2, to=1-5]
      \arrow[""{name=4, anchor=center, inner sep=0}, "{j_{T^2X}}", from=1-2, to=4-2]
      \arrow["{[X,\eta_X]}"{description}, from=1-5, to=2-4]
      \arrow[""{name=5, anchor=center, inner sep=0}, equals, from=2-4, to=3-5]
      \arrow["{[X,\eta_{TX}]}"', from=2-4, to=4-4]
      \arrow[""{name=6, anchor=center, inner sep=0}, "{[X, \act_X]}"', from=3-5, to=1-5]
      \arrow["{\xi_{TX,TX}}"', from=4-1, to=4-2]
      \arrow["{[\eta_{TX},T^2X]}"', from=4-2, to=4-3]
      \arrow["{[\eta_X,T^2X]}"', from=4-3, to=4-4]
      \arrow["{[X, \mu_X]}"', from=4-4, to=3-5]
      \arrow["{\mathsf{def}}"{description}, draw=none, from=0, to=2]
      \arrow["{T\ \mathsf{closed}}"{description}, draw=none, from=1, to=4]
      \arrow["{j\ \mathsf{dinat}}", draw=none, from=3, to=4-2]
      \arrow["{\mathsf{act}}"{description}, draw=none, from=2-4, to=6]
      \arrow["{\mathsf{monad}}"{description}, draw=none, from=4-4, to=5]
    \end{tikzcd}
  \]

  Finally, \cref{eq:sGamma} follows by the commutativity of \cref{fig:closed-monad-lift-gamma}.
\end{proof}

\begin{remark}\label{rem:rainbow}
  Suppose that \(T\) is a closed monad on a closed monoidal category \(\cat{C}\).
  Then by \cref{prop:closed-mon-bij}, \(T\) is a lax monoidal functor.
  In fact, \(T\) is a lax monoidal monad;
  that is, the unit and multiplication of the monad are transformations of lax monoidal functors.

  Suppose that \(\cat{C}\) admits coreflexive equalisers.
  Essentially by \cite{Kock-StrongFunctors}, structures of a lax monoidal monad on \(T\) are in bijection with structures of a \emph{commutative strong monad} on \(T\). Thus, in view of \cite{Kock-ClosedEM}, \(\cat[C]^T\) canonically becomes a closed category, \(\free^T\) and \(\U^T\) canonically become closed functors, and \(\eta\) and \(\epsilon\) canonically become closed natural transformations. Here, the \(\cat[C]\)-functor structure (strength) \(\mathsf{st}_{A,B} \colon [A,B] \to [TA,TB]\) allows us to define the internal hom as the equaliser
  \[\begin{tikzcd}[ampersand replacement=\&,column sep=40pt]
{\overline{[\boldsymbol{A},\boldsymbol{B}]}} \arrow[r, "{e_{A,B}}"]\& {[A,B]} \arrow[rr, shift left=+0.5ex, "{[\act_A,B]}"]\arrow[rr, shift right=0.5ex, "{[TA,\act_B] \,\circ\, \mathsf{st}_{A,B}}"']\&\& {[TA,B].}
\end{tikzcd}
  \]
  The morphisms \(e_{A,B}\) and \(\eta_\tu\) induce the closed structure on \(\U^T\).
  At the same time, since \(T\) is lax monoidal, the Kleisli category \(\cat{C}_T\) is canonically monoidal.
  Suppose that \(\cat{C}\) additionally admits reflexive coequalisers,
  which are preserved by \(T\) and the tensor product on \(\cat{C}\).
  Then \(\cat{C}^T\) is a monoidal category such that the unique embedding \(\iota\from \cat{C}_T \to \cat{C}^T\)
  and \(\free^T\) are strong monoidal, see~\cite[Theorem~4.14]{aguiar18:monad} and~\cite[Section~3.2]{stroinski2024:reconstr}.
  If \(\phi \colon T(\blank) \otimes T(\bblank) \to T(\blank \otimes \bblank)\) is the lax monoidal structure, then the tensor product of \(T\)-algebras \(\boldsymbol{A},\boldsymbol{B}\) is given by the Linton coequaliser
  \[
    \begin{tikzcd}[ampersand replacement=\&]
{T(TA \otimes TB)} \arrow[rrr, shift right=0.5ex, "{T(\act_A \otimes \act_B)}"']\arrow[rrr, shift left=+0.5ex, "{\mu_{A \otimes B} \, \circ \, T(\phi_{A,B})}"]\& \& \& {T(A \otimes B)} \arrow[r, "{q}"]\& {A \otimes ^{\phi} B.}
\end{tikzcd}
  \]
  As \(\free^T\) is strong, the adjunction \(\free^T \dashv \U^T\) is lax monoidal with structures \((\phi_0,\phi)\) on \(\free^T\) and \((\psi_0,\psi)\) on \(\U^T\).
  In particular, \(\xi_0 = \U^T\phi_0 \circ \psi_0\), and since \(\phi_0\) is invertible, \(\xi_0\) is an isomorphism if and only if \(\psi_0\) is.
\end{remark}

\begin{corollary}\label{cor:nice-closed-gabi}
  We keep the notation of \cref{rem:rainbow}. Let \((T,\xi_0,\xi)\) be a closed right exact monad on an abelian closed monoidal category \(\cat{C}\).
  Then \(T\) admits a gabi-monad structure for which \(\xi_0\) is a morphism of \(T\)-algebras if and only if
  \(\psi_0\from \tu \to U^T\tu\) is an isomorphism.

  Moreover, if the monoidal unit of \( \cat{C}\) is a  generator, \(T\) is cocontinuous, and \( \cat{C}\) is cocomplete, \(T\) being gabi implies \(T\cong \Id\).
\end{corollary}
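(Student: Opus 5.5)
The plan is to reduce everything to the invertibility of \(\xi_0\) and then combine the identity \(\xi_0 = \U^T\phi_0\circ\psi_0\) from \cref{rem:rainbow} with \cref{prop:normal-closed-is-gabi}.

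\emph{Hypotheses of \cref{rem:rainbow}.} First I check that the remark applies. Since \(\cat{C}\) is abelian, it has all (co)reflexive equalisers and coequalisers. The functor \(T\) is right exact, so it preserves reflexive coequalisers. The tensor product preserves them in each variable because \(\blank\otimes X\) is a left adjoint; for the other variable I will rely on the closed monoidal (biclosed) hypothesis, or otherwise note that reflexive coequalisers of pairs are preserved by a functor of two variables that is right exact in each variable. Hence \(\phi_0\) is invertible and \(\xi_0=\U^T\phi_0\circ\psi_0\). In particular, \(\xi_0\) is an isomorphism if and only if \(\psi_0\) is. Note also that \eqref{eq:closed-monad-eta} gives \(\xi_0=\eta_\tu\).

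\emph{The equivalence.} If \(\psi_0\) is invertible, then so is \(\xi_0\). \Cref{prop:normal-closed-is-gabi} then yields a gabi-monad structure with \(\act_\tu=\xi_0^{-1}\). It remains to see that \(\xi_0\colon(\tu,\xi_0^{-1})\to(T\tu,\mu_\tu)\) is a morphism of algebras, that is, \(\mu_\tu\circ T\xi_0=\xi_0\circ\xi_0^{-1}=\id\). This holds because \(\xi_0=\eta_\tu\) and \(\mu\circ T\eta=\id\) by the monad axioms.

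Conversely, suppose \((T,s,\act_\tu)\) is a gabi-monad for which \(\eta_\tu=\xi_0\) is an algebra morphism \((\tu,\act_\tu)\to(T\tu,\mu_\tu)\). The unit axiom for the algebra gives \(\act_\tu\circ\eta_\tu=\id\). The morphism property gives \(\eta_\tu\circ\act_\tu=\mu_\tu\circ T\eta_\tu=\id\). So \(\xi_0\) is invertible, and hence so is \(\psi_0\).

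\emph{The ``moreover'' part.} Assume now that \(T\) is gabi and cocontinuous, that \(\tu\) is a generator, and that \(\cat{C}\) is cocomplete. By the above (reading ``gabi'' in the sense of the first part, i.e.\ with \(\xi_0\) an algebra morphism), \(\eta_\tu\) is an isomorphism. Since \(\tu\) is a generator in a cocomplete abelian category, every \(X\) admits a presentation \(\bigoplus_J\tu\to\bigoplus_I\tu\to X\to 0\) as a cokernel. Because \(T\) preserves coproducts, \(\eta_{\bigoplus_I\tu}\) is identified with \(\bigoplus_I\eta_\tu\) by naturality, so it is invertible, and likewise for \(J\). Naturality of \(\eta\) gives a morphism between the two right exact sequences; \(T\) preserves cokernels, so the five lemma (or the universal property of cokernels) shows that \(\eta_X\) is an isomorphism. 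Thus \(\eta\colon\Id\to T\) is a natural isomorphism.

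The main obstacle is the bookkeeping in the first step: verifying that the abelian and right-exactness assumptions really deliver the hypotheses of \cref{rem:rainbow}. The step that needs the most care is preservation of reflexive coequalisers by \(\otimes\) in both variables. Once \(\phi_0\) is known to be invertible, the rest is the short algebra-morphism computation above.
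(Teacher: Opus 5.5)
Your proposal is correct and follows the paper's proof essentially step by step. \cref{rem:rainbow} reduces everything to the invertibility of \(\xi_0=\eta_\tu\); \cref{prop:normal-closed-is-gabi} with \(\act_\tu=\xi_0^{-1}\) gives one direction; the identities \(\act_\tu\circ\eta_\tu=\id\) and \(\eta_\tu\circ\act_\tu=\mu_\tu\circ T\eta_\tu=\id\) give the converse; and a presentation of each object by sums of \(\tu\), together with the five lemma, gives \(T\cong\Id\). Your preliminary check of the hypotheses of \cref{rem:rainbow} is unnecessary, since the statement keeps that setting as standing assumptions, and as written it would not go through in a merely left closed category, where \(X\otimes\blank\) need not be right exact.
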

\begin{proof}
  By \cref{rem:rainbow}, \(\psi_0\) is invertible if and only if \(\xi_0\) is.
  By \cref{prop:normal-closed-is-gabi}, see \eqref{eq:closed-monad-eta}, the gabi-monad structure satisfies \(\xi_0=\eta_\tu\),
  and hence \(\xi_0\) is a morphism of \(T\)-algebras.

  Conversely, suppose such a gabi-monad structure exists for which \(\xi_0\) is a morphism of algebras.
  Then we immediately have \(\act_\tu\circ\xi_0=\id_\tu\),
  and one furthermore calculates
  \[
    \xi_0\circ\act_\tu
    = \mu_\tu\circ T\xi_0
    \stackrel{\eqref{eq:closed-monad-eta}}{=} \mu_\tu\circ T\eta_\tu
    = \id_{T\tu}.
  \]

  Now, suppose \(\tu\) is a generator and fix \(X \in \cat{C}\).
  There exists an exact sequence
  \begin{equation*}
    P_{0}\xrightarrow{f} P_{1}\xrightarrow{g} X \xrightarrow{} 0,
  \end{equation*}
  with both \(P_{0}, P_{1}\in \cat{C}\) of the form \(P_{i} = \oplus_{N_{i}} \tu\).
  Now consider the diagram below:
  \[
    \begin{tikzcd}[ampersand replacement=\&,cramped]
      {P_0} \&\& {P_1} \&\& M \&\& 0 \&\& 0 \\
      \\
      {T(P_0)} \&\& {T(P_1)} \&\& {T(M)} \&\& 0 \&\& 0
      \arrow["f", from=1-1, to=1-3]
      \arrow["{\eta_{P_0}}"', from=1-1, to=3-1]
      \arrow["g", from=1-3, to=1-5]
      \arrow["{\eta_{P_1}}"', from=1-3, to=3-3]
      \arrow[from=1-5, to=1-7]
      \arrow["{\eta_M}"', from=1-5, to=3-5]
      \arrow[from=1-7, to=1-9]
      \arrow["{\eta_0}"', from=1-7, to=3-7]
      \arrow["{\eta_0}"', from=1-9, to=3-9]
      \arrow["{T(f)}"', from=3-1, to=3-3]
      \arrow["{T(g)}"', from=3-3, to=3-5]
      \arrow[from=3-5, to=3-7]
      \arrow[from=3-7, to=3-9]
    \end{tikzcd}
  \]
  Its rows are exact and \(\eta_{P_{i}} = \oplus_{i \in N_{i}} \eta_{\tu}\) are isomorphisms.
  Thus, the 5-lemma implies that \(\eta_{M}\from \Id(M) \to T(M)\) is an isomorphism, natural in \(M\).
\end{proof}

\begin{example}
    Let \(\cat{C}\) be a cartesian closed category and \(E\) an object of \(\cat{C}\). Then \(E\) is a cocommutative comonoid and hence \(\blank \times E\) is a symmetric colax monoidal comonad.
    Therefore, \([E,\blank]\) is a symmetric lax monoidal monad on \(\cat{C}\) (as the right adjoint of \(\blank \times E\)), often called in the literature the \emph{reader monad}, \emph{environment monad}, or \emph{function monad}, see for example \cite{wadler93:monads}.
    The unit at any \(X\) in \(\cat{C}\) is the unique morphism
    \[
      \eta_X \colon X \to [E,X]
    \]
    that is the transpose of the projection \(\pi_X \colon X \times E \to X\).
    The multiplication at any \(X\) in \(\cat{C}\) is the unique morphism
    \[
      \mu_X \colon [E,[E,X]] \to [E,X]
    \]
    that, under the internal tensor--hom adjunction, corresponds to \([E \times E,X] \xrightarrow{[\Delta_E,X]} [E,X]\),
    where \(\Delta_E\from E \to E \times E\) is the diagonal at \(E\).
    The lax monoidal structure is given by
    \[
      \phi_0 \colon \star \cong [E,\star],
      \qquad\text{and}\qquad
      \phi_{X,Y} \colon [E,X] \times [E,Y] \cong [E,X \times Y],
    \]
    because \([E,\blank]\) preserves limits.
    Then, the reader monad \([E,\blank]\) is a commutative monad in the sense of \cite{Kock-Monads}, by \cite[Theorem 2.3]{Kock-StrongFunctors}, with \(\cat{C}\)-functor structure given by
    \[\Gamma^E_{X,Y} \colon [X,Y] \to [[E,X],[E,Y]]\]
    for all \(X,Y\) in \(\cat{C}\).
    At the same time, since \([E,\blank]\) is lax monoidal, it is a closed monad with respect to the structure morphisms
    \[\xi_0 \colon \star \cong [E,\star]\]
    (because \([E,\blank]\) preserves limits) and \(\xi_{X,Y} \colon [E,[X,Y]] \to [[E,X],[E,Y]]\) given by
    \[
        \begin{tikzcd}[ampersand replacement=\&,cramped,column sep=80pt,row sep=40pt]
{[E,[X,Y]]} \arrow[d, "{\coev_{[E,[X,Y]]}^{[E,X]}}"']\arrow[r, "{\xi_{X,Y}}"]\& {[[E,X],[E,Y]]} \\
            {[[E,X],[E,[X,Y]]\times[E,X]]} \arrow[r, "{[[E,X],\phi_{[X,Y], X}]}"']\& {[[E,X],[E,[X,Y]\times X]]} \arrow[u, "{[[E,X],[E,\ev_Y^X]]}"']
\end{tikzcd}
    \]
    by \cref{prop:closed-mon-bij}.
    Thus, \([E,\blank]\) is a gabi-monad by \cref{prop:normal-closed-is-gabi} and the Eilenberg--Moore category \(\cat{C}^{[E,\blank]}\) of ``\(E\)-contramodules'' (see \cite[Chapter III, \S5]{Eilenberg-Moore}) is a skew-closed category in such a way that the forgetful functor \(\cat{C}^{[E,\blank]} \to \cat{C}\) is strict closed. In general, this skew-closed structure is not left normal.

    Suppose, for example, that \(\cat[C] = \Set\) with its cartesian closed structure.
    Then
    \begin{align*}
      \eta_X\from X \to [E,X], \ x \mapsto (e \mapsto x), \quad
      \mu_X\from [E,[E,X]]\to [E,X], \ \varphi \mapsto (e \mapsto \varphi(e)(e)) = \check{\varphi} \Delta_E,
    \end{align*}
    where \(\check{\varphi}(x,y) \defeq \varphi(x)(y)\).
    If \(E = \{0,1\}\), then it is well known that \(\Set^{[E,-]}\) is the category of \emph{rectangular bands}: semigroups \(S\) such that \(sts = s\) for every \(s,t\in S\).
    Indeed, an object in \(\Set^{[E,-]}\) is a set \(S\) equipped with a map \(m\from [E,S] \cong S \times S \to S\),
    such that \(m \circ \eta_S = \id_S\) and \(m \circ \mu_S = m \circ [E,m]\).
    Note that the unit and multiplication may be rewritten as
    \(s \mapsto (s,s)\) and \(((s,t),(x,y)) \mapsto (s,y)\), respectively.
    If we write \(m(s,t) \defeq st\), then the compatibility conditions become \(ss = s\) and \((st)(xy) = sy\).
    As \(m\) is associative by
    \[
      s(tx) = (ss)(tx) = sx = (st)(xx) = (st)x,
    \]
    these conditions are equivalent to \(S\) being a rectangular band; see for example~\cite{kimura58}.

    Rectangular bands form a skew-closed category in the obvious way: \(\term = \{*\}\) is trivially a rectangular band and \([S,S'] = \Set(S,S')\) has the multiplication \((f\cdot g)(s) = f(s)g(s)\) for all \(s \in S\), \(f,g \in \Set(S,S')\).
    With this structure on \(\Set(S,S')\),
    as well as the idempotency of the multiplication, so that e.g.\ \(\Set^{[E,\blank]}(\term,\Set(S,S')) = \Set(S,S')\),
    we have that
    \begin{align*}
      i_S & \colon \Set(\term,S) \to S, &  f & \mapsto f(*), \\
      j_S & \colon \term \to \Set(S,S), & * & \mapsto \id_S, \\
      \Gamma_{S',S''}^S & \colon \Set(S',S'') \to \Set(\Set(S,S'),\Set(S,S'')), & f & \mapsto (g \mapsto f \circ g),\\
      \widehat{\jmath}_{S,S'} & \colon \Set^{[E,\blank]}(S,S') \to \Set^{[E,\blank]}(\term,\Set(S,S')) = \Set(S,S'), & f & \mapsto f,
    \end{align*}
    are morphisms of semigroups.
    It is immediate that \(\widehat{\jmath}\) is not surjective;
    for any non-commutative rectangular band \(S\), take \(S' = S\op\),
    the rectangular band with the same underlying set and the opposite multiplication.\footnote{%
      A commutative rectangular band \(S\) is trivial: given \(s,t\in S\) we have \(s= sts =ts\) and so \(s=t\).

      Moreover, for any set \(X\) with at least 2 elements, we can form a non-commutative rectangular band by defining the multiplication \(x\cdot y \eqdef y\).%
    }
    Then the identity map is not a map of rectangular bands and cannot be in the image of \(\widehat{\jmath}\).

    Note that the closed structure on \(\cat[C]^{[E,\blank]}\) coming from the gabi-monad structure on \([E,\blank]\) does not coincide with the one presented in \cite{Kock-ClosedEM}, where the internal hom is realised as the equaliser
    \[\begin{tikzcd}[ampersand replacement=\&,column sep=40pt]
{\overline{[\boldsymbol{A},\boldsymbol{B}]}} \arrow[r]\& {[A,B]} \arrow[rr, shift left=+0.5ex, "{[\act_A,B]}"]\arrow[rr, shift right=0.5ex, "{[[E,A],\act_B] \, \circ \, \Gamma^E_{A,B}}"']\&\& {[[E,A],B].}
\end{tikzcd} \qedhere
    \]
\end{example}

The following result is a direct consequence of \cite[Theorem~2.2]{Kelly-doctrines}.

\begin{proposition}\label{prop:windybridge}
  Suppose that \((T,\mu,\eta,s,\act_\tu)\) is a gabi-monad on a skew-closed category \((\cat[C],[\blank,\bblank],\tu,\Gamma,i,j)\).
  Denote by \(\free^T \colon \cC  \rightleftarrows \cC^T \colon \U^T\) the Eilenberg–Moore adjunction.
  Then the following are equivalent:
  \begin{enumerate}[label=\textnormal{(\alph*)},leftmargin=0.8cm]
    \item\label{item:closed_adjunction_2}
    The following natural arrows are isomorphisms in \(\cat[C]^T\) and \(\cat[C]\), respectively:
    \begin{align*}
      \phi_0 &\colon \free^T\tu = (T\tu,\mu_\tu) \xrightarrow{=} \free^T\U^T(\tu,\act_\tu) \xrightarrow{\epsilon_{\tu} = \act_\tu} (\tu,\act_\tu) \\
      \phi_{X,M} &\colon \U^T[\free^TX,\mb{M}]_T \xrightarrow{=} [\U^T\free^TX,M] = [TX,M] \xrightarrow{[\eta_X,M]} [X,M]
    \end{align*}
    \item\label{item:closed_adjunction_1}
    There exist \(\psi_0 \colon (\tu,\act_\tu) \to \free^T(\tu)=(T(\tu),\mu_\tu)\) and
    \[
      \psi_{X,Y} \colon (T[X,Y],\mu_{[X,Y]}) = \free^T[X,Y] \to [\free^TX,\free^TY]_T = ([TX,TY],\act_{[TX,TY]})
    \]
    in \(\cat[C]^T\), the latter natural in \(X,Y\) in \(\cat[C]\), such that \((\free^T,\psi_0,\psi)\) is a closed functor from \(\cat[C]\) to \(\cat[C]^T\) and the adjunction \(\free^T \colon \cat[C]  \rightleftarrows \cat[C]^T \cocolon \U^T\) is closed.
  \end{enumerate}
\end{proposition}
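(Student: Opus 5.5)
This is an instance of Kelly's doctrinal adjunction \cite[Theorem~2.2]{Kelly-doctrines}, applied to the 2-category of skew-closed categories, closed functors and closed natural transformations. By \cref{thm:gabi-bijection}, the right adjoint \(\U^T\) is strict closed, with structure \(\varphi_0=\id_\tu\) and \(\varphi=\id\). Doctrinal adjunction says the following. Given an adjunction \(F\dashv U\) with \(U\) carrying a (lax) closed structure, closed structures on \(F\) making the adjunction closed correspond to closed structures whose mates are inverse to those of \(U\). Such structures exist if and only if the canonical mates of \(U\)'s structure maps are invertible. So the plan is to identify these mates concretely as the arrows \(\phi_0\) and \(\phi_{X,M}\) of \ref{item:closed_adjunction_2}.

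First, I compute the mates. For the unit: the structure of \(\U^T\) at the unit is \(\varphi_0=\id\colon \tu\to\U^T(\tu,\act_\tu)\). Its mate under \(\free^T\dashv\U^T\) is the adjunct \(\free^T\tu\to(\tu,\act_\tu)\), namely \(\epsilon_{(\tu,\act_\tu)}\circ\free^T\varphi_0=\act_\tu\). This is \(\phi_0\).

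For the hom: the closed structure \(\U^T[\mb A,\mb B]_T\to[\U^T\mb A,\U^T\mb B]\) is the identity. Precomposing with \(\mb A=\free^T X\) and using the unit \(\eta_X\colon X\to \U^T\free^T X\) in the first variable gives the mixed transformation \(\U^T[\free^TX,\mb M]_T\to[X,\U^T\mb M]\). This is the parametric mate, and it is exactly \(\phi_{X,M}=[\eta_X,M]\). In Kelly's formulation for closed (as opposed to monoidal) structures, the relevant mate of \(\varphi\) is precisely this transformation with one contravariant variable transported along the adjunction. I will verify this by writing out how a closed structure \(\psi\) on \(\free^T\) would be reconstructed, as follows.

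Next, given \ref{item:closed_adjunction_2}, I define \(\psi_0\defeq\phi_0^{-1}\) and let \(\psi_{X,Y}\) be the adjunct of the composite
\[
[X,Y]\xrightarrow{[X,\eta_Y]}[X,TY]\xrightarrow{\phi_{X,\free^TY}^{-1}}\U^T[\free^TX,\free^TY]_T .
\]
I then check three things: that \((\free^T,\psi_0,\psi)\) satisfies the three closed-functor axioms, and that \(\eta\) and \(\epsilon\) are closed natural transformations. Each axiom reduces, after taking adjuncts, to the corresponding axiom for the strict closed \(\U^T\) together with the definitions, so this is formal.

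Conversely, given \ref{item:closed_adjunction_1}, closedness of \(\eta\) and \(\epsilon\) yields explicit inverses. For the unit, \(\psi_0\) is inverse to \(\phi_0\): one composite is the unit condition for \(\epsilon\), and the other is the unit condition for \(\eta\) combined with the triangle identity. For the hom, the inverse of \(\phi_{X,M}\) is
\[
[X,M]\xrightarrow{\eta}\U^T\free^T[X,M]\xrightarrow{\U^T\psi}\U^T[\free^TX,\free^TM]_T\xrightarrow{[\free^TX,\act_M]}\U^T[\free^TX,\mb M]_T .
\]
That this is a two-sided inverse follows from the hom condition for closed \(\eta\) (the right-hand square of \cref{def:closed_natural_transformation}) and from naturality of \(\epsilon\).

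The main obstacle is the bookkeeping in the hom component. Because closed structures have a contravariant variable, the mate \(\phi_{X,M}\) mixes \(\eta\) in the first argument with \(\epsilon\) in the second, and the inverse identities must be checked carefully against the dinaturality in the closed-functor axioms. I would first treat the case \(\mb M=\free^T Y\), then extend to all \(\mb M\) using naturality in \(\mb M\) and the free presentation \(\free^T\U^T\mb M\to\mb M\), relying on Kelly's general theorem to guarantee that the remaining coherences hold.
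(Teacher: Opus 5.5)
Your proposal is correct and takes the same route as the paper, which proves this proposition simply by citing Kelly's doctrinal adjunction theorem; your identification of $\phi_0$ and $\phi_{X,M}$ as the relevant mates, your formula for $\psi$, and your explicit inverses just unpack that citation. One small slip: for your proposed inverse $\theta$ of $\phi_{X,M}$, closedness of $\eta$ gives $\phi_{X,M}\circ\theta=\id$, but $\theta\circ\phi_{X,M}=\id$ needs the hom condition for $\epsilon$ to be a \emph{closed} natural transformation, together with the triangle identity $\epsilon_{\free^TX}\circ\free^T\eta_X=\id$, and not merely naturality of $\epsilon$.
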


\Cref{prop:when-free-is-closed} below offers an analogue of the interplay between opmonoidal monads and opmonoidal adjunctions, which govern reconstruction results for opmonoidal and Hopf monads, in the case of gabi-monads.

\begin{proposition}\label{prop:when-free-is-closed}
  Suppose that \((T,\mu,\eta,s,\act_\tu)\) is a gabi-monad on a skew-closed category \((\cat[C],[\blank,\bblank],\tu,\Gamma,i,j)\).
  Denote by \(\free^T \colon \cC  \rightleftarrows \cC^T \colon \U^T\) the Eilenberg–Moore adjunction.
  \begin{enumerate}[label=\textnormal{(\arabic*)},ref=(\arabic*),leftmargin=0.8cm]
    \item\label{item:FreeClosed1} If either of the equivalent conditions of \cref{prop:windybridge} holds,
    then there exist morphisms \(\xi_0 \colon \tu \to T\tu\) and \(\xi_{X,Y}\colon T[X,Y] \to [TX,TY]\) in \(\cat[C]^T\),
    the latter natural in \(X,Y\) in \(\cat[C]\), such that \((T,\xi_0,\xi)\) is a closed monad.
    \item\label{item:FreeClosed2} If \(T\) can be equipped with the structure morphisms \(\xi_0\) and \(\xi\) of a closed monad, which are morphisms in \(\cat{C}^T\), then \((\free^T,\xi_0,\xi)\) is a closed functor.
  \end{enumerate}
\end{proposition}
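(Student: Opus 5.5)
For \ref{item:FreeClosed1}, I use condition \ref{item:closed_adjunction_1} of \cref{prop:windybridge}. It gives a closed structure \((\psi_0,\psi)\) on \(\free^T\), morphisms in \(\cat{C}^T\), for which \(\eta\) and \(\epsilon\) are closed natural transformations. Since \(\U^T\) is strict closed by \cref{thm:gabi-bijection}, the composite \(T=\U^T\free^T\) is a closed functor with
\[
  \xi_0 \defeq \U^T\psi_0 = \psi_0 \colon \tu \to T\tu,
  \qquad
  \xi_{X,Y} \defeq \U^T\psi_{X,Y} = \psi_{X,Y} \colon T[X,Y] \to [TX,TY].
\]
Both are morphisms in \(\cat{C}^T\), because \(\psi_0\) and \(\psi\) are. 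The remark following \cref{def:closed-monad} observes that the monad of a closed adjunction is closed. So it only remains to check that \(\eta\) and \(\mu = \U^T\epsilon_{\free^T}\) are closed natural transformations. For \(\eta\) this is the hypothesis. For \(\mu\), closedness of \(\epsilon\) gives closedness of \(\U^T\epsilon\free^T\) by whiskering with the closed functors \(\U^T\) and \(\free^T\). I would check the whiskering by the two squares of \cref{def:closed_natural_transformation}, using that the closed structure on \(T^2\) is \(\xi_{T,T}\circ T\xi\). This recovers exactly \eqref{eq:closed-monad-mu} and \eqref{eq:closed-monad-eta}.

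For \ref{item:FreeClosed2}, I regard \(\xi_0\colon(\tu,\act_\tu)\to(T\tu,\mu_\tu)\) and \(\xi_{X,Y}\colon(T[X,Y],\mu_{[X,Y]})\to([TX,TY],\act_{[TX,TY]})\) as morphisms in \(\cat{C}^T\). This makes sense precisely by the hypothesis, since the codomains carry the free and lifted structures respectively. The three closed-functor axioms of \cref{def:closed-functor} for \(\free^T\) are equalities of morphisms in \(\cat{C}^T\). The constraints \(i,j,\Gamma\) of \(\cat{C}^T\) are those of \(\cat{C}\) (\cref{lem:gabi-monads-lift-skew-closed-structures}), and \(\U^T\) is faithful and strict closed. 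Hence each axiom holds in \(\cat{C}^T\) if and only if its image under \(\U^T\) holds in \(\cat{C}\), and those images are exactly the closed-functor axioms for \((T,\xi_0,\xi)\). Naturality of \(\xi\) is likewise inherited.

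The main obstacle is bookkeeping in \ref{item:FreeClosed1}. I must make sure the structure morphisms \(\U^T\psi_0\) and \(\U^T\psi\) agree with the \(\xi\) used in the monad axioms, with no extra composite coming from strictness of \(\U^T\). I must also verify that \(\mu\) is closed: I expect the triangle \(\xi_0=\eta_\tu\) and the square for \(\mu_\tu\) to follow from closedness of \(\eta\), \(\epsilon\) and the triangle identities. If the whiskering argument becomes unwieldy, I would instead check \eqref{eq:closed-monad-mu} directly. I would compose the square for \(\epsilon\) at \(\free^T X,\free^T Y\) with \(\U^T\), then precompose with \(T\xi_{X,Y}\) and use that \(\xi\) is an algebra map.
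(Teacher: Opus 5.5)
Your proposal is correct and follows essentially the same route as the paper. In (1) the paper also takes $\xi_0=\U^T\psi_0$ and $\xi=\U^T\psi$, reads closedness of $\eta$ off the hypothesis, and obtains closedness of $\mu$ from closedness of $\epsilon$ together with naturality of $\epsilon$ and strict closedness of $\U^T$; this is your whiskering argument written out as a diagram. In (2) the paper likewise sets $\psi_0=\xi_0$ and $\psi=\xi$ and checks the closed-functor axioms after applying the faithful, strict closed functor $\U^T$.
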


\begin{proof}
  \ref{item:FreeClosed1} Suppose that \ref{item:closed_adjunction_1} in \cref{prop:windybridge} holds.
  Then \(T\) becomes a closed functor via \(\xi_0 \colon \tu \xrightarrow{=} \U^T(\tu,\act_\tu) \xrightarrow{\U^T\psi_0} \U^T(T(\tu),\mu_\tu) = T(\tu)\) and
    \[
      \xi_{X,Y} \colon T[X,Y] = \U^T\free^T[X,Y] \xrightarrow{\U^T\psi_{X,Y}} \U^T[\free^TX,\free^TY]_T = [TX,TY],
    \]
    as a composition of closed functors (see \cite[Theorem~3.1, p.~434]{Eilenberg-Kelly}).
    It is clear that they are morphisms of \(T\)-algebras. Moreover, the adjunction \(\free^T \colon \cat[C]  \rightleftarrows \cat[C]^T \cocolon \U^T\) is closed if and only if the following diagrams commute
    \begin{gather*}
        \begin{gathered}
            \begin{tikzcd}[ampersand replacement=\&,cramped]
{\tu} \arrow[dr, draw=none, "{(i)}"{description}]\arrow[r, "{=}"]\arrow[d, "{=}"']\& {\tu} \arrow[d, "{\eta_\tu}"]\\
                {\U^T(\tu,\act_\tu)} \arrow[r, "{\U^T(\psi_0)}"']\& {\U^T\free^T(\tu)}
\end{tikzcd}
        \end{gathered}
        \qquad
        \begin{gathered}
            \begin{tikzcd}[ampersand replacement=\&,cramped]
{[X,Y]} \arrow[ddrr, draw=none, "{(ii)}"{description}]\arrow[dd, "{\eta_{[X,Y]}}"']\arrow[rr, "{=}"]\& \& {[X,Y]} \arrow[d, "{[X,\eta_Y]}"]\\
                 \& \& {[X,\U^T\free^TY]} \\
                {\U^T\free^T[X,Y]} \arrow[r, "{\U^T\psi_{X,Y}}"']\& {\U^T[\free^TX,\free^TY]_T} \arrow[r, "{=}"']\& {[\U^T\free^TX,\U^T\free^TY]} \arrow[u, "{[\eta_X,\U^T\free^TY]}"']
\end{tikzcd}
        \end{gathered} \\
        \begin{gathered}
            \begin{tikzcd}[ampersand replacement=\&,cramped]
{(T\tu,\mu_\tu)} \arrow[dr, draw=none, "{(iii)}"{description}]\arrow[r, "{=}"]\& {\free^T\U^T(\tu,\act_\tu)} \arrow[d, "{\epsilon_{\tu} = \act_\tu}"]\\
                {(\tu,\act_\tu)} \arrow[u, "{\psi_0}"]\arrow[r, "{=}"']\& {(\tu,\act_\tu)}
\end{tikzcd}
        \end{gathered}
        \qquad
        \begin{gathered}
            \begin{tikzcd}[ampersand replacement=\&,cramped]
{\free^T\U^T[\mb{M},\mb{N}]_T} \arrow[ddrr, draw=none, "{(iv)}"{description}]\arrow[dd, "{\epsilon_{[\mb{M},\mb{N}]_T}}"']\arrow[r, "{=}"]\& {\free^T[M,N]} \arrow[r, "{\psi_{M,N}}"]\& {[\free^TM,\free^TN]_T} \arrow[d, "{[\free^TM,\epsilon_{\mb{N}}]_T}"]\\
                 \& \& {[\free^TM,\mb{N}]_T} \\
                {[\mb{M},\mb{N}]_T} \arrow[rr, "{=}"']\& \& {[\mb{M},\mb{N}]_T} \arrow[u, "{[\epsilon_{\mb{M}},\mb{N}]_T}"']
\end{tikzcd}
        \end{gathered}
    \end{gather*}
    In fact, the commutativity of \((iii)\) follows from the commutativity of \((i)\) by applying \(\U^T\):
    if \((i)\) commutes, we have \(\U^T \psi_0 = \eta_{\tu}\), and so applying \(\U^T\) to \((iii)\) gives
    \[
      \U^T(\epsilon_\tu \circ \psi_0) = \U^T\epsilon_\tu \circ \U^T\psi_0 = \U^T\epsilon_\tu \circ \eta_{\tu} = \U^T\epsilon_\tu \circ \eta_{\U^T\tu} = \id_\tu,
    \]
    hence \((iii)\) commutes because \(\U^T\) is faithful.
    The commutativity of \((i)\) and \((ii)\) expresses the fact that \(\eta \colon \Id \to T\) is a closed natural transformation. At the same time, the commutativity of \((i)\) entails that \(\U^T\psi_0 = \eta_\tu\) and so the next square on the left-hand side commutes, while \((iv)\) can be used to show the commutativity of the right-hand side diagram:
    \[
    \begin{gathered}
            \begin{tikzcd}[ampersand replacement=\&,cramped]
{\tu} \arrow[d, "{\U^T\psi_0}"']\arrow[r, "{\U^T\psi_0}"]\& {T\tu} \arrow[d, "{T\U^T\psi_0}"]\arrow[dl, "{=}"']\\
                {T\tu} \& {T^2\tu} \arrow[l, "{\mu_\tu}"]\arrow[ul, draw=none, "{(i)}"{description}, pos=0.3]
\end{tikzcd}
        \end{gathered}
        \qquad
        \begin{gathered}
            \begin{tikzcd}[ampersand replacement=\&,cramped,column sep=40pt,row sep=40pt]
{T^2[X,Y]} \arrow[dd, "{\U^T\epsilon_{\free^T[X,Y]}}"']\arrow[r, "{T\U^T\psi_{X,Y}}"]\& {T[TX,TY]} \arrow[dr, draw=none, "{(iv)}"{description}]\arrow[d, "{\U^T\epsilon_{[\free^TX,\free^TY]_T}}"']\arrow[r, "{\U^T\psi_{TX,TY}}"]\& {[T^2X,T^2Y]} \arrow[d, "{[T^2X,\U^T\epsilon_{\free^TY}]}"]\\
                 \& {[TX,TY]} \arrow[r, "{\U^T[\epsilon_{\free^TX},\free^TY]_T}"']\& {[T^2X,TY]} \\
                {T[X,Y]} \arrow[urr, draw=none, "{\U^T \text{ closed}}"', pos=0.6]\arrow[uur, draw=none, "{\epsilon \text{ nat}}"]\arrow[ur, "{\U^T\psi_{X,Y}}"']\arrow[rr, "{\U^T\psi_{X,Y}}"']\& \& {[TX,TY]} \arrow[u, "{[\U^T\epsilon_{\free^TX},TY]}"']
\end{tikzcd}
        \end{gathered}
    \]
    that is, \(\mu\) is a closed natural transformation.

    \ref{item:FreeClosed2} In the opposite direction, suppose that
    \((T,\xi_0,\xi)\) is a closed structure on \(T\) such that \(\xi_0\) and \(\xi_{X,Y}\), for \(X,Y\) in \(\cat[C]\), are morphisms in \(\cat[C]^T\), and \(\eta\) and \(\mu\) are closed transformations.
    Since \(\U^T\) is strict closed, we can set \(\psi_0 \coloneqq \xi_0\) and \(\psi_{X,Y} \coloneqq \xi_{X,Y}\) for all \(X,Y\) in \(\cat[C]\).
    Then, all the diagrams
    \begin{gather*}
        \begin{tikzcd}[ampersand replacement=\&,cramped,column sep=50pt]
{(\tu,\act_\tu)} \arrow[r, "{\psi_0}"]\arrow[d, "{j^T_{\free^TX}}"']\& {\free^T \tu} \arrow[d, "{\free^T j_X}"]\\
            {{[\free^TX, \free^TX]_{T}}}
            \& {\free^T{[X, X]}}
            \arrow[l, "{\psi_{X,X}}"]
\end{tikzcd}
            \qquad
            \begin{tikzcd}[ampersand replacement=\&,cramped,column sep=50pt]
{\free^TX}
            \& {{[(\tu,\act_\tu), \free^TX]}_{T}} \arrow[l, "{i^T_{\free^TX}}"']\\
            {{{\free^T[\tu, X]}}} \arrow[r, "{\psi_{\tu, X}}"']\arrow[u, "{\free^T i_X}"]\&
            {{{[\free^T\tu, \free^TX]}_{T}}} \arrow[u, "{{[\psi_0, \free^TX]}_{T}}"']
\end{tikzcd}
            \\
            \begin{tikzcd}[ampersand replacement=\&,cramped,column sep=37.5pt]
{{{\free^T[X, Y]}}} \arrow[r, "{\free^T\Gamma^Z_{X, Y}}"]\arrow[d, "{\psi_{X,Y}}"']\&
            {{\free^T[[Z, X], [Z, Y]]}} \arrow[r, "{\psi_{[Z, X], [Z,Y]}}"]\&
            {{[\free^T[Z, X], \free^T[Z, Y]]_{T}}} \arrow[d, "{[\id, \psi_{Z, Y}]_T}"]\\
            {{[\free^TX, \free^TY]_{T}}} \arrow[r, "{\Gamma^{\free^TZ}_{\free^TX, \free^TY}}"']\&
            {{[[\free^TZ, \free^TX]_{T}, [\free^TZ, \free^TY]_{T}]_{T}}} \arrow[r, "{[\psi_{Z, X}, \id]_T}"']\&
            {{[\free^T[Z, X], [\free^TZ, \free^TY]_{T}]_{T}}}
\end{tikzcd}
    \end{gather*}
    commute because they commute after applying \(\U^T\): they become the diagrams expressing the fact that \((T,\xi_0,\xi)\) is a closed functor. Thus, \((\free^T,\psi_0,\psi)\) is a closed functor.
\end{proof}

\section{Normal gabi-monads and left Hopf monads}\label{sec:normal-gabi}

\Cref{ex:Hopf-monads-are-normal-gabi} showed that a left Hopf monad on a closed monoidal category is automatically a normal gabi-monad.
In this section, we concern ourselves with studying the converse of that observation.
To that end, we first classify lifts of the tensor--hom adjunction from a closed monoidal base category \( \cat{C} \) to the Eilenberg--Moore category \( \cat{C}^{T}\) of a monad \(T\) in  terms of maps that ``intertwine'' monads with (co)monads, see \cref{thm:all-liftings-data}.
This allows us to derive, in \cref{thm:lift-to-hopf-monads}, necessary and sufficient criteria for a gabi-monad \(T\) to be a left Hopf monad in terms of the invertibility of an arrow associated to \(s \from T[T \blank , \bblank] \to [ \blank , T \bblank]\).
In \cref{cor:gabi-in-the-com-case}, we apply this result to derive a shortened proof that a normal gabi-algebra over a commutative base ring is Hopf.
Examples of normal gabi-monads that are not Hopf are then discussed in~\cref{sec:gabi-not-hopf}.

\subsection{Lifting functors}\label{sec:lifting-functors}

Throughout this section, we fix monads \(R\), \(S\), and \(T\) on the categories \( \cat{C}\), \(\cat{D}\), and \( \cat{E}\).
\medskip

First, we recall the concept of ``extending'' functors and natural transformations to  Eilenberg--Moore categories following~\cite{Beck,Street-formal_theory}.

\begin{definition}
  A \emph{lift}  of \(F \from \cat{C} \to \cat{D}\) is a functor \(\widehat{F}\from \cat{C}^R \to \cat{D}^S\)
  satisfying \(\U^S \widehat{F} = F \U^R\).
\end{definition}

In order to study lifts of the tensor-hom adjunction, we also need to lift arrows between functors to Eilenberg--Moore categories.

\begin{definition}\label{def:morphisms-of-lifts}
  Let \(F, H \from \cat{C} \to \cat{D}\) be functors with lifts \(\widehat{F}, \widehat{H}\from \cat{C}^{R} \to \cat{D}^{S}\) and \(\alpha \from F\to H\) a natural transformation.
  A \emph{lift} of \(\alpha\) is a natural transformation  \(\widehat{\alpha} \from \widehat{F}\to \widehat{H}\) satisfying
  \begin{equation*}
    \U^{S}(\widehat{\alpha}_{\boldsymbol{M}})= \alpha_{\U^{R}(\boldsymbol{M})} \qquad \text{ for all } \boldsymbol{M} \in \cat{C}^{R}. \qedhere
  \end{equation*}
\end{definition}

Suppose \(\boldsymbol{M}\in \cat{C}^{R}\) is an \(R\)-algebra.
In order to  lift \(F \from \cat{C} \to \cat{D}\) to a functor \(\widehat{F}\from \cat{C}^{R} \to \cat{D}^{S}\), we need to impose an \(S\)-algebra structure on \(F M\).
One possible solution is to demand the existence of a suitably compatible map \(\chi \from SF \to F R\) that allows us to pull back the action.
In the definition below, we follow the terminology of~\cite[\S3.5]{BruguieresLackVirelizier}.

\begin{definition}\label{def:distributive-law}
  A \emph{lifting datum} for \(F\) along \(R\) and \(S\) is a natural arrow \(\chi \from SF \to F R\) rendering the below diagrams commutative:
    \begin{equation}\label{eq:monadic_distibutive_law}
        \begin{gathered}
            \begin{tikzcd}[ampersand replacement=\&,cramped]
\& {F} \arrow[dl, "{\eta^SF}"']\arrow[dr, "{F\eta^R}"]\& \\
                 {SF} \arrow[rr, "{\chi}"']\& \& {F R}
\end{tikzcd}
        \end{gathered}
        \qquad
        \qquad
        \begin{gathered}
            \begin{tikzcd}[ampersand replacement=\&,cramped]
{SSF} \arrow[r, "{S \chi}"]\arrow[d, "{\mu^S F}"']\& {SF R} \arrow[r, "{\chi R}"]\& {F RR} \arrow[d, "{F\mu^R}"]\\
                {SF} \arrow[rr, "{\chi}"']\& \& {F R}
\end{tikzcd}
        \end{gathered}\qedhere
    \end{equation}
\end{definition}

In \cite[\S1]{Street-formal_theory}, the pair \((F,\chi)\) is also called a \emph{monad functor}.
When \(F\) is an endofunctor and \(S=T\), \(\chi\) is also referred to as a \emph{distributive law} for the monad \(T\) over the endofunctor \(F\), see~\cite[\S1]{Beck}.
\smallskip

The next result is discussed for example in~\cite{Street-formal_theory}.
\begin{lemma}\label{lem:lifting-co-and-co}
  Let \(H \from \cat{C} \to \cat{D}\) be a functor.
  There is a bijection
  \begin{equation}\label{eq:lifting-condition}
    \begin{aligned}
      \left\{\text{lifts of }H\text{ to a functor }\cat{C}^R \to \cat{D}^S\right\} &\to \left\{\text{lifting data } \chi\from SH\to HR\right\}, \\
      \widehat{H} &\mapsto \chi, \\
      \big((M, \act_{M}) \mapsto (HM, H\act_{M} \circ \chi_{M})\big) &\mapsfrom \chi,
    \end{aligned}
  \end{equation}
  where \(\chi_C\) is given for all \(C \in \cat{C}\) by
  \[
    SHC \xrightarrow{SH \eta^R_C} SHRC = \U^S\free^S\U^S\widehat{H}\free^RC \xrightarrow{\U^S \epsilon^S \widehat{H} \free^R C} \U^S \widehat{H} \free^R C = H R C.
  \]

  Further, a natural transformation \(\alpha \colon H \to H'\) lifts to a natural transformation \(\widehat{\alpha} \colon \widehat{H} \to \widehat{H'}\) if and only if it makes the following diagram commute:
  \begin{equation}\label{eq:lifting-alpha}
  \begin{gathered}
    \begin{tikzcd}[ampersand replacement=\&,cramped]
{SH} \arrow[r, "{\chi}"]\arrow[d, "{S\alpha}"']\& {HR} \arrow[d, "{\alpha R}"]\\
        {SH'} \arrow[r, "{\chi'}"']\& {H'R}
\end{tikzcd}
  \end{gathered}
  \end{equation}
\end{lemma}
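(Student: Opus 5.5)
The plan is to treat the two directions of the bijection separately, then verify that they are mutually inverse, and finally handle the statement about natural transformations.

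\emph{From lifts to lifting data.} Start from a lift \(\widehat{H}\), so \(\U^S\widehat{H} = H\U^R\), and define \(\chi_C\) by the composite given in the statement. Naturality of \(\chi\) is immediate from the naturality of \(\eta^R\) and of \(\epsilon^S\). For the unit triangle in \eqref{eq:monadic_distibutive_law}, use naturality of \(\eta^S\) to rewrite \(\chi\circ\eta^S H\) as \(\U^S\epsilon^S\widehat{H}\free^R \circ \eta^S\U^S\widehat{H}\free^R \circ H\eta^R\). The triangle identity for \(\free^S\dashv\U^S\) then collapses this to \(H\eta^R\).

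For the multiplication square, the key observation is that \(\U^S\epsilon^S_{\widehat{H}\boldsymbol{M}}\) is exactly the \(S\)-action on \(HM\) carried by \(\widehat{H}\boldsymbol{M}\). I would first show that for every \(R\)-algebra \(\boldsymbol{M}=(M,\act_M)\) the action of \(\widehat{H}\boldsymbol{M}\) equals \(H\act_M\circ\chi_M\). To see this, note that \(\act_M\colon \free^R M\to\boldsymbol{M}\) is a morphism of \(R\)-algebras, so \(\widehat{H}\act_M\), whose underlying map is \(H\act_M\), is a morphism of \(S\)-algebras. Naturality of \(\epsilon^S\) along \(\widehat{H}\act_M\), combined with \(\act_M\circ\eta^R_M=\id\), then gives the claim. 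Applying this to the free algebra \(\boldsymbol{M}=\free^R RC\), and using that \(\widehat{H}\mu^R_C\) is an \(S\)-algebra morphism, yields the square \(F\mu^R\circ\chi R\circ S\chi = \chi\circ\mu^S F\) (with \(F=H\)).

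\emph{From lifting data to lifts.} Conversely, given \(\chi\), set \(\widehat{H}(M,\act_M)=(HM, H\act_M\circ\chi_M)\). The unit and associativity axioms for this \(S\)-action follow by pasting the two diagrams \eqref{eq:monadic_distibutive_law} with naturality of \(\chi\) and the algebra axioms for \(\act_M\). Functoriality comes from naturality of \(\chi\): if \(f\) is an \(R\)-algebra map, then \(Hf\) is an \(S\)-algebra map.

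\emph{Inverse bijections.} Composing lift \(\mapsto\) datum \(\mapsto\) lift returns the original lift by the action formula established above. Composing datum \(\mapsto\) lift \(\mapsto\) datum gives \(H\mu^R\circ\chi R\circ SH\eta^R\). Naturality of \(\chi\) rewrites this as \(H\mu^R\circ HR\eta^R\circ\chi = \chi\), so we recover \(\chi\).

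\emph{Natural transformations.} A natural transformation \(\alpha\colon H\to H'\) lifts exactly when every component \(\alpha_M\) is an \(S\)-algebra map, that is, when
\[
\alpha_M\circ H\act_M\circ\chi_M = H'\act_M\circ\chi'_M\circ S\alpha_M .
\]
If \eqref{eq:lifting-alpha} holds, this identity follows from naturality of \(\alpha\). Conversely, specialise to the free algebras \(\free^R C\) and precompose with \(SH\eta^R_C\); the left-hand side becomes \(\alpha R\circ\chi\) and the right-hand side becomes \(\chi'\circ S\alpha\).

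I expect the main obstacle to be the multiplicative axiom in the first direction. It needs the identification of the \(S\)-action on \(\widehat{H}\boldsymbol{M}\) with \(H\act_M\circ\chi_M\), which requires careful bookkeeping with the counit \(\epsilon^S\). Everything else is a routine diagram chase.
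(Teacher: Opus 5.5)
The paper gives no proof of this lemma; it refers to Street's formal theory of monads. There, \((H,\chi)\) is a monad functor \((\cat{C},R)\to(\cat{D},S)\), the square \eqref{eq:lifting-alpha} is the defining condition for a transformation of monad functors, and the correspondence is packaged in the Eilenberg--Moore 2-functor \(\mathsf{Mnd}(\Cat)\to\Cat\). Your proposal is correct and is instead a direct, self-contained verification.

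What the formal route buys is generality: it works in any 2-category with Eilenberg--Moore objects, and compatibility of lifts with composition and pasting comes for free. What your route buys is that it isolates the concrete mechanism. That mechanism is the identity \(\act_{\widehat{H}\boldsymbol{M}} = H\act_M\circ\chi_M\), obtained from the \(S\)-algebra map \(\widehat H\act_M\colon\widehat H\free^R M\to\widehat H\boldsymbol M\) and \(\act_M\circ\eta^R_M=\id\). It is combined with the trick of testing on free algebras and precomposing with \(SH\eta^R\). This is exactly how the paper itself passes between \(s\) and \(\act_M\star\act_N\) in \eqref{eq:ActfromS} and \eqref{eq:SfromAct}. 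Your argument also adapts almost verbatim to the mixed-variance situation of Proposition~\ref{prop:lifting-co-and-contra}.

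One correction in the multiplicative axiom concerns which instance of your key identity you use. You need \(\boldsymbol M=\free^R C=(RC,\mu^R_C)\), not \(\free^R RC\). Equivalently, you use that \(\widehat H\mu^R_C\colon \widehat H\free^R RC\to\widehat H\free^R C\) is an \(S\)-algebra map. This gives \(\act=H\mu^R_C\circ\chi_{RC}\), where \(\act\) is the action of \(\widehat H\free^R C\).

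The square then also needs associativity of that action and naturality of \(\mu^S\), which you did not mention:
\[
H\mu^R_C\circ\chi_{RC}\circ S\chi_C=\act\circ S\act\circ S^2H\eta^R_C=\act\circ\mu^S_{HRC}\circ S^2H\eta^R_C=\act\circ SH\eta^R_C\circ\mu^S_{HC}=\chi_C\circ\mu^S_{HC}.
\]
In the round trip from lift to datum and back, the two lifts agree on morphisms because \(\U^S\) is faithful. These are presentational points, not gaps.
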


Due to the contravariance of the internal-hom \([ \blank , \bblank]_{\cat{C}}\) of a (skew) closed category \( \cat{C}\) in its first variable, its lifts are parameterised by a slightly modified notion of lifting data.

\begin{definition}[{\cite[Definition~A.1]{Berger-Vercruysse-Saracco}}]\label{def:comonad-monad-morphism}
  A \emph{mixed lifting datum} for \(\G \from \cat{D}\op\times \cat{E}\to \cat{C}\) along \((S\op,T)\) and \(R\) consists of a natural transformation \(\nu \colon R\G(S\blank,\bblank) \to \G(\blank,T\bblank)\)
  such that the following diagrams
  \begin{gather}
    \label{eq:comonad-monad-morphism-1}\begin{tikzcd}[ampersand replacement=\&,cramped,sep=small]
      \& {R\G(SD,E)} \\
      {\G(SD,E)} \\
      \& {\G(D,TE)}
      \arrow["{\nu_{D,E}}", from=1-2, to=3-2]
      \arrow["{\eta^R_{\G(SD,E)}}", from=2-1, to=1-2]
      \arrow["{\G(\eta^S_D, \eta^T_E)}"', from=2-1, to=3-2]
    \end{tikzcd}
    \\
   \label{eq:comonad-monad-morphism-2}\begin{tikzcd}[ampersand replacement=\&,column sep=3.2em]
    	{R^2\G(SD,E)} \& {R^2\G(S^2D,E)} \& {R\G(SD,TE)} \& {\G(D,T^2E)} \\
    	{R\G(SD,E)} \&\&\& {\G(D,TE)}
    	\arrow["{{R^2\G(\mu^S_D,E)}}", from=1-1, to=1-2]
    	\arrow["{{\mu^R_{\G(SD,E)}}}"', from=1-1, to=2-1]
    	\arrow["{{R\nu_{SD,E}}}", from=1-2, to=1-3]
    	\arrow["{{\nu_{D,TE}}}", from=1-3, to=1-4]
    	\arrow["{{\G(D,\mu^T_E)}}", from=1-4, to=2-4]
    	\arrow["{{\nu_{D,E}}}"', from=2-1, to=2-4]
    \end{tikzcd}
  \end{gather}
  commute for all \(D\) in \(\cat[D]\) and \(E\) in \(\cat[E]\).
\end{definition}

We can think of a mixed lifting datum as a lifting datum that intertwines a comonad and monad structure on the domain with a monad structure on its codomain.
This differs from the \emph{mixed distributive laws} discussed, for example, in~\cite[\S5.3]{Wisbauer}, which, for a monad \(P\) and a comonad \(Q\) on the same category, are natural transformations \(\chi \from PQ \to QP\) such that \((Q,\chi)\) is a monad functor and \((P, \chi)\) is a comonad functor.

\begin{remark}
  Given an endofunctor \(R\) on a category \(\cat{C}\), let us denote its opposite endofunctor on \(\cat{C}\op\) by \(R\op\).
  Then \(R\) is a monad on \(\cat[C]\) if and only if \(R\op\) is a comonad on \(\cat[C]\op\).
  The Eilenberg--Moore category \((\cat[C]\op)^{R\op}\) of coalgebras for the comonad \(R\op\) is canonically isomorphic to the opposite \({(\cat[C]^R)}\op\) of the category of algebras for the monad \(R\).
\end{remark}

The next result is essentially~\cite[Theorem~A.2]{Berger-Vercruysse-Saracco}.

\begin{proposition}\label{prop:lifting-co-and-contra}
  Let \(\G \from \cat[D]\op\times\cat[E] \to\cat[C]\) be a functor.
  \begin{enumerate}[label=\textnormal{(\arabic*)},leftmargin=0.8cm]
    \item Any mixed lifting datum \(\nu \colon R\G(S\blank,\bblank) \to \G(\blank,T\bblank)\) induces a lift  of \(\G\)
    \begin{equation*}
      \widehat{\G} \from {\left({\cat[D]\op}\right)^{S\op}} \times \cat[E]^{T} \to\cat[C]^{R}, \quad \big((M, \act^{S}_{M}), (N, \act^{T}_{N}) \big) \mapsto (\G(M,N), \act_{\G(M,N)}),
    \end{equation*}
   where \(\act_{\G(M,N)}\) is given by
    \begin{equation}\label{eq:lift-via-mixed-lifting-datum}
      \begin{tikzcd}[ampersand replacement=\&,cramped]
        {R\G(M,N)} \&\& {R\G(SM,N)} \&\& {\G(M,TN)} \&\& {\G(M,N).}
        \arrow["{R\G(\act^{S}_M,N)}", from=1-1, to=1-3]
        \arrow["{\nu_{M,N}}", from=1-3, to=1-5]
        \arrow["{\G(M,\act^{T}_N)}", from=1-5, to=1-7]
      \end{tikzcd}
    \end{equation}
    \item Conversely, any lift \(
    \widehat{\G} \from {\left({\cat[D]\op}\right)^{S\op}} \times \cat[E]^{T} \to\cat[C]^{R}\) of \(\G\) gives rise to a mixed lifting datum \(\nu \from R\G(S \blank ,\bblank)\to \G(\blank, T\bblank)\) via
    \begin{equation}
      \begin{tikzcd}[ampersand replacement=\&,cramped, column sep=small]
        {R\G(SM,N)} \&\&\& {R\G(SM,TN)} \&\&\& {\G(SM,TN)} \&\& {\G(M,TN).}
        \arrow["{R\G(SM,\eta^T_N)}", from=1-1, to=1-4]
        \arrow["{\act^R_{\G(SM,TN)}}", from=1-4, to=1-7]
        \arrow["{\G(\eta^S_M,TN)}", from=1-7, to=1-9]
      \end{tikzcd}
    \end{equation}
  \end{enumerate}
  This yields\! a bijection between mixed lifting data \(\nu\colon\!R\G(S\blank,\bblank) \to \G(\blank,T\bblank)\)
  and lifts of \(\G\).
\end{proposition}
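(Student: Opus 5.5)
The plan is to check directly that the two constructions are well defined and mutually inverse, in the style of the proof of \cref{lem:lifting-co-and-co}, but now with the first variable treated contravariantly.

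\textbf{Part (1).} Given a mixed lifting datum \(\nu\), I will show that \(\act_{\G(M,N)}\) as in \eqref{eq:lift-via-mixed-lifting-datum} is an \(R\)-algebra structure.

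For the unit axiom, I precompose with \(\eta^R_{\G(M,N)}\). Naturality of \(\eta^R\) moves it past \(R\G(\act^S_M,N)\). Then \eqref{eq:comonad-monad-morphism-1} replaces \(\nu\circ\eta^R\) by \(\G(\eta^S_M,\eta^T_N)\). This leaves \(\G(\act^S_M\circ\eta^S_M,\act^T_N\circ\eta^T_N)=\id\).

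For the associativity axiom, I start from \(\act\circ\mu^R_{\G(M,N)}\):
\begin{itemize}
\item Naturality of \(\mu^R\) moves \(R\G(\act^S_M,N)\) to the left.
\item The identity \(\act^S_M\circ\mu^S_M=\act^S_M\circ S\act^S_M\) is inserted via \(R^2\G(\mu^S_M,N)\).
\item Diagram \eqref{eq:comonad-monad-morphism-2} is applied.
\item Naturality of \(\nu\) in both variables moves \(R\G(S\act^S_M,N)\) and \(\G(M,\act^T_N)\)-type maps into position.
\item Associativity of \(\act^T_N\) finishes the argument.
\end{itemize}
The result is \(\act\circ R\act\). Functoriality of \(\widehat\G\) on morphisms follows from naturality of \(\nu\), because \(\G(f,g)\) commutes with the actions whenever \(f,g\) are algebra maps.

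\textbf{Part (2).} Given a lift \(\widehat\G\), I will verify \eqref{eq:comonad-monad-morphism-1} and \eqref{eq:comonad-monad-morphism-2} for the displayed \(\nu\).

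For \eqref{eq:comonad-monad-morphism-1}, I move \(\eta^R\) past \(R\G(SM,\eta^T_N)\) by naturality and use the unit axiom of \(\act^R_{\G(SM,TN)}\). The key point here is that \(\G(SM,TN)=\widehat\G(\free^{S\op}M,\free^TN)\), whose action is a genuine algebra structure.

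For \eqref{eq:comonad-monad-morphism-2}, I use that \(\G(\mu^S_M,TN)\) and \(\G(SM,\mu^T_N)\), and more generally \(\G(f,g)\) for algebra maps \(f,g\), are \(R\)-algebra morphisms between lifted objects, since \(\widehat\G\) is a functor. This lets me commute the various \(\G(\eta^S,-)\), \(\G(-,\mu^T)\) past the actions, together with the monad unit laws \(\mu\circ \eta T=\id\) and \(\mu\circ T\eta=\id\). I expect this to be the main obstacle: it is a large diagram chase, and the bookkeeping of which maps are algebra maps (\(\mu^S_M\) yes, \(\eta^S_M\) no) is delicate. I would organise it by first proving the intermediate formula \(\act^R_{\G(M,N)}=\G(M,\act^T_N)\circ\nu_{M,N}\circ R\G(\act^S_M,N)\) for arbitrary algebras, which is itself the \(\G(\act^S_M,\act^T_N)\)-naturality argument, and then deducing the axiom from the associativity of \(\act^R\) on free objects.

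\textbf{Bijectivity.} The intermediate formula above shows that the lift-to-datum-to-lift round trip returns \(\widehat\G\). For the datum-to-lift-to-datum round trip I compute
\[
\G(\eta^S_M,TN)\circ\G(SM,\mu^T_N)\circ\nu_{SM,TN}\circ R\G(\mu^S_M,TN)\circ R\G(SM,\eta^T_N).
\]
Naturality of \(\nu\) slides \(\eta^T_N\) and \(\eta^S_M\) inside, turning them into \(T\eta^T_N\) and \(S\eta^S_M\). The unit laws then collapse the composite to \(\nu_{M,N}\).
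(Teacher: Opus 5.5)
Your proof is correct. The unit and associativity computations in (1) and the check of \eqref{eq:comonad-monad-morphism-1} in (2) all go through as you describe. So do the intermediate formula \(\act^R_{\G(M,N)}=\G(M,\act^T_N)\circ\nu_{M,N}\circ R\G(\act^S_M,N)\) and both round trips.

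The paper does not argue this proposition at all. It records it as essentially \cite[Theorem~A.2]{Berger-Vercruysse-Saracco} and moves on. Your diagram chase is therefore a self-contained replacement for that citation, in the same style as the paper's own proof of \cref{lem:mixed-lifting-param-lifting}. It buys independence from the reference at the cost of the bookkeeping.

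A few points to tighten.
\begin{enumerate}
\item In (2) you also need naturality of \(\nu\). It follows like everything else: \(Sf\) and \(Tg\) are maps of free algebras, so \(\G(Sf,TN)\) and \(\G(SM,Tg)\) are \(R\)-algebra maps.
\item The intermediate formula is not ``\(\G(\act^S_M,\act^T_N)\)-naturality'' as such. What you actually use is that \(\G(\act^S_M,TN)\) and \(\G(M,\act^T_N)\) are separately \(R\)-algebra maps out of \(\widehat{\G}(\boldsymbol{M},\free^T N)\). You also use \(\act^S_M\circ\eta^S_M=\id\) and \(\act^T_N\circ\eta^T_N=\id\).
\item The step you flag as the main obstacle is short. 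Let \(a\) denote the action on \(\widehat{\G}(\free^S M,\free^T N)=\G(SM,TN)\). Since \(\G(SM,\mu^T_N)\) and \(\G(\mu^S_M,TN)\) are \(R\)-algebra maps and \(\mu\circ\eta T=\id\), you get
\[
  \G(M,\mu^T_N)\circ\nu_{M,TN}=\G(\eta^S_M,TN)\circ a,
  \qquad
  \nu_{SM,N}\circ R\G(\mu^S_M,N)=a\circ R\G(SM,\eta^T_N).
\]
Hence the left side of \eqref{eq:comonad-monad-morphism-2} equals \(\G(\eta^S_M,TN)\circ a\circ Ra\circ R^2\G(SM,\eta^T_N)\). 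By naturality of \(\mu^R\), the right side equals \(\G(\eta^S_M,TN)\circ a\circ \mu^R_{\G(SM,TN)}\circ R^2\G(SM,\eta^T_N)\). The axiom is thus exactly associativity of \(a\), and the intermediate formula is needed only for bijectivity.
\end{enumerate}
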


The following definition is inspired by \cite[Remark~3.3]{Berger-Vercruysse-Saracco}.
\begin{definition}\label{def:param-lift-dat}
  Consider a functor \(\G \from \cat[D]\op\times\cat[E] \to\cat[C]\).
  A \emph{parametric lifting datum} consists of natural transformations
  \[
    {\{\,\chi^{\boldsymbol{M}}_E \colon R\G(M,E) \to \G(M,TE)\,\}}_{\boldsymbol{M} \, = \, (M,\act_M) \, \in \, \cat{D}^S, E \, \in \, \cat{E}}
  \]
  such that
  \begin{equation}\label{eq:param-lift}
    \begin{tikzcd}[ampersand replacement=\&]
      {\G(M,E)} \& {R\G(M,E)} \\
      \& {\G(M,TE)}
      \arrow["{\eta^R_{\G(M,E)}}", from=1-1, to=1-2]
      \arrow["{\G(M,\eta^T_E)}"', from=1-1, to=2-2]
      \arrow["{\chi^{\boldsymbol{M}}_E}", from=1-2, to=2-2]
    \end{tikzcd} \quad
    \begin{tikzcd}[ampersand replacement=\&]
      {R^2 \G(M,E)} \&\& {R\G(M,E)} \\
      {R\G(M,TE)} \& {\G(M,T^2E)} \& {\G(M,TE)}
      \arrow["{\mu^R_{\G(M,E)}}", from=1-1, to=1-3]
      \arrow["{R\chi^{\boldsymbol{M}}_E}"', from=1-1, to=2-1]
      \arrow["{\chi^{\boldsymbol{M}}_E}", from=1-3, to=2-3]
      \arrow["{\chi^{\boldsymbol{M}}_{TE}}"', from=2-1, to=2-2]
      \arrow["{\G(M,\mu^T_E)}"', from=2-2, to=2-3]
    \end{tikzcd}
  \end{equation}
  commute for all \(\boldsymbol{M} \in \cat{D}^S\) and \(E \in \cat{E}\).
\end{definition}

Any functor \(\cat[D]\op \times \cat[E] \to \cat[C]\) corresponds to a functor \(\cat[D]\op \to \big[\cat[E],\cat[C]\big]\), and therefore any lifting to a functor \({(\cat[D]^S)}\op \times \cat[E]^T \to \cat[C]^R\) can be uniquely identified with a functorial parametric lifting \({(\cat[D]^S)}\op \to \left[\cat[E]^T,\cat[C]^R\right]\).
The next proposition formalises this fact, along the lines of \cite[Remarks 3.3 and 3.5]{Berger-Vercruysse-Saracco}.

\begin{lemma}\label{lem:mixed-lifting-param-lifting}
    Let \(\G \from \cat[D]\op\times\cat[E] \to\cat[C]\) be a functor.
    \begin{enumerate}[label=\textnormal{(\arabic*)},leftmargin=0.8cm]
        \item Any mixed lifting datum \(\nu \colon R\G(S\blank,\bblank) \to \G(\blank,T\bblank)\) induces the (natural in \(\boldsymbol{M}\)) parametric lifting datum
        \begin{equation}\label{eq:parametric-lifting-datum-via-mixed-lifting-datum}
            \chi^{\boldsymbol{M}} \colon R\G(M,\blank) \xrightarrow{R\G(\act_M,\blank)} R\G(SM,\blank) \xrightarrow{\nu_{M,\blank}} \G(M,T\blank).
        \end{equation}

        \item Conversely, any parametric lifting datum \(\chi^{\boldsymbol{M}} \colon R\G(M,\blank) \to \G(M,T\blank)\) natural in \(\boldsymbol{M} \in \cat[D]^S\) gives rise to the mixed lifting datum
        \begin{equation}
          \nu_{D,E} \colon R\G(SD,E) \xrightarrow{\chi^{\free^S(D)}_E} \G(SD,TE) \xrightarrow{\G(\eta^S_D,TE)} \G(D,TE).
        \end{equation}

    \end{enumerate}
    These constructions yield a bijection between mixed lifting data \(\nu \colon R\G(S\blank,\bblank) \to \G(\blank,T\bblank)\)
    and natural parametric lifting data \(\chi^{\boldsymbol{M}}\).
\end{lemma}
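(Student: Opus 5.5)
The plan is to check, in order, that each construction produces the right kind of datum and then that the two constructions are mutually inverse. All verifications are diagram chases using only the monad axioms, the axioms \eqref{eq:comonad-monad-morphism-1}--\eqref{eq:comonad-monad-morphism-2}, the axioms \eqref{eq:param-lift}, and (di)naturality. One could shortcut everything by routing through \cref{prop:lifting-co-and-contra}: a lift of \(\G\) is the same as a family of lifts \(\G(M,\blank)\colon \cat{E}^T\to\cat{C}^R\), natural in \(\mb{M}\), and these correspond to lifting data by \cref{lem:lifting-co-and-co}. However, the direct chase is short enough that I would do it directly and use the lift picture only as intuition.

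For (1), let \(\nu\) be a mixed lifting datum. The unit triangle of \eqref{eq:param-lift} for \(\chi^{\mb{M}}\) follows in two steps. First use naturality of \(\eta^R\) to move \(R\G(\act_M,E)\) past \(\eta^R\). Then apply \eqref{eq:comonad-monad-morphism-1} to get \(\G(\eta^S_M,\eta^T_E)\circ\G(\act_M,E)\), and \(\act_M\circ\eta^S_M=\id\) reduces this to \(\G(M,\eta^T_E)\).

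For the multiplication square, I would expand \(\chi^{\mb{M}}_{TE}\circ R\chi^{\mb{M}}_E\) as \(\nu_{M,TE}\circ R\G(\act_M,TE)\circ R\nu_{M,E}\circ R^2\G(\act_M,E)\). Next, use naturality of \(\nu\) in its first variable along \(\act_M\colon SM\to M\) to rewrite \(R\G(\act_M,TE)\circ R\nu_{M,E}\) as \(R\nu_{SM,E}\circ R^2\G(S\act_M,E)\). The associativity \(\act_M\circ S\act_M=\act_M\circ\mu^S_M\) then turns the composite into the top row of \eqref{eq:comonad-monad-morphism-2} precomposed with \(R^2\G(\act_M,E)\). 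Applying \eqref{eq:comonad-monad-morphism-2} and then naturality of \(\mu^R\) yields \(\chi^{\mb{M}}_E\circ\mu^R_{\G(M,E)}\). Naturality of \(\chi^{\mb{M}}\) in \(\mb{M}\) with respect to algebra morphisms \(f\) is immediate from naturality of \(\nu\) and \(\act_N\circ Sf=f\circ\act_M\).

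For (2), let \(\chi\) be a natural parametric lifting datum. The unit axiom \eqref{eq:comonad-monad-morphism-1} follows from the unit triangle of \eqref{eq:param-lift} for \(\mb{M}=\free^S D\), together with naturality in the first variable of \(\G\). The multiplication axiom \eqref{eq:comonad-monad-morphism-2} is the step I expect to be the main obstacle. It mixes \(\chi^{\free^S D}\) and \(\chi^{\free^S SD}\), so the parameter has to be changed. The tool is naturality of \(\chi\) along the algebra morphism \(\mu^S_D\colon \free^S SD\to\free^S D\), which gives \(\G(\mu^S_D,TE)\circ\chi^{\free^S D}_E=\chi^{\free^S SD}_E\circ R\G(\mu^S_D,E)\). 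Combining this with naturality of \(\chi^{\free^S D}\) in \(E\), and with \(\mu^S_D\circ S\eta^S_D=\id\) and \(\mu^S\circ\eta^S S=\id\), should reduce the top row of \eqref{eq:comonad-monad-morphism-2} to \(\G(\eta^S_D,TE)\circ\G(SD,\mu^T_E)\circ\chi^{\free^S D}_{TE}\circ R\chi^{\free^S D}_E\). The multiplication square of \eqref{eq:param-lift} then finishes the argument. Naturality of \(\nu\) in \(D\) comes from naturality of \(\chi\) along \(\free^S g\).

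Finally, I would check that the two constructions are mutually inverse. Starting from \(\nu\), the round trip is \(\G(\eta^S_D,TE)\circ\nu_{SD,E}\circ R\G(\mu^S_D,E)\). Naturality of \(\nu\) along \(\eta^S_D\) rewrites this as \(\nu_{D,E}\circ R\G(\mu^S_D\circ S\eta^S_D,E)=\nu_{D,E}\). Starting from \(\chi\), the round trip is \(\G(\eta^S_M,TE)\circ\chi^{\free^S M}_E\circ R\G(\act_M,E)\). Naturality of \(\chi\) along the algebra morphism \(\act_M\colon\free^S M\to\mb{M}\) turns this into \(\G(\act_M\circ\eta^S_M,TE)\circ\chi^{\mb{M}}_E=\chi^{\mb{M}}_E\).
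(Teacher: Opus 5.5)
Your proposal is correct and follows the paper's route: direct diagram chases. Your verification of the multiplication square for \(\chi^{\mb{M}}\) uses naturality of \(\nu\) along \(\act_M\) together with associativity of \(\act_M\), and your verification of \eqref{eq:comonad-monad-morphism-2} for \(\nu\) uses naturality of \(\chi\) along the algebra map \(\mu^S_D\) together with \(\mu^S_D\circ\eta^S_{SD}=\id\); these are exactly the two diagrams the paper draws, and your unit, naturality and round-trip checks (which the paper leaves as routine) are also correct, the only quibble being that \(\mu^S_D\circ S\eta^S_D=\id\) and naturality in \(E\) are not actually needed for the multiplication axiom in (2).
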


\begin{proof}
  It is not hard to see that the two correspondences do assign to any mixed lifting datum, a natural parametric lifting datum and \emph{vice versa},
  that \eqref{eq:param-lift} is equivalent to \eqref{eq:comonad-monad-morphism-1} and \eqref{eq:comonad-monad-morphism-2},
  and that they are inverse to each other.
  For example, the equivalence of the second diagram in \eqref{eq:param-lift} with \eqref{eq:comonad-monad-morphism-2} follows by the following two diagrams:
  \[
    \begin{tikzcd}[ampersand replacement=\&,cramped]
      \& {R^2\G(SM,E)} \&\& {R\G(M,TE)} \\
      {R^2\G(M,E)} \&\& {R^2\G(S^2M,E)} \\
      \& {R^2\G(SM,E)} \&\& {R\G(SM,TE)} \\
      {R\G(M,E)} \& {R\G(SM,E)} \& {\G(M,TE)} \& {\G(M,T^2E)}
      \arrow["{R\nu_{M,E}}"{description}, from=1-2, to=1-4]
      \arrow["{R^2\G(S\act^S_M,E)}"{description}, from=1-2, to=2-3]
      \arrow["{R\G(\act^S_M,TE)}"{description}, from=1-4, to=3-4]
      \arrow["{R^2\G(\act^S_M,E)}"{description}, from=2-1, to=1-2]
      \arrow["{R^2\G(\act^S_M,E)}"{description}, from=2-1, to=3-2]
      \arrow["{\mu^R_{\G(M,E)}}"', from=2-1, to=4-1]
      \arrow["{R\nu_{SM,E}}", from=2-3, to=3-4]
      \arrow["{R^2\G(\mu^S_M,E)}"{description}, from=3-2, to=2-3]
      \arrow["{\mu^R_{\G(SM,E)}}"', from=3-2, to=4-2]
      \arrow["{\nu_{M,TE}}", from=3-4, to=4-4]
      \arrow["{R\G(\act^S_M,E)}"', from=4-1, to=4-2]
      \arrow["{\nu_{M,E}}"', from=4-2, to=4-3]
      \arrow["{\G(M,\mu^T_E)}", from=4-4, to=4-3]
      \arrow["\mathsf{alg}"{description}, draw=none, from=2-1, to=2-3]
      \arrow["\mathsf{nat}\ \nu"{description}, draw=none, from=2-3, to=1-4]
      \arrow["\mathsf{nat}\ \mu", draw=none, from=4-1, to=3-2]
      \arrow["{\eqref{eq:comonad-monad-morphism-2}}", draw=none, from=2-3, to=4-3]
      \arrow["{R\chi^{\boldsymbol{M}}_E}", from=2-1, to=1-4, rounded corners,
      to path={-- ([yshift=2.5cm]\tikztostart.center) -- ([yshift=1cm]\tikztotarget.center) \tikztonodes -- (\tikztotarget.north)}]
      \arrow["{\chi^{\boldsymbol{M}}_{TE}}", from=1-4, to=4-4, rounded corners,
      to path={-- ([xshift=2cm]\tikztostart.center) -- ([xshift=2cm]\tikztotarget.center) \tikztonodes -- (\tikztotarget.east)}]
      \arrow["{\chi^{\boldsymbol{M}}_{E}}", from=4-1, to=4-3, rounded corners,
      to path={-- ([yshift=-1cm]\tikztostart.center) -- ([yshift=-1cm]\tikztotarget.center) \tikztonodes -- (\tikztotarget.south)}]
    \end{tikzcd}
  \]
  \[
    \begin{tikzcd}[ampersand replacement=\&,cramped,column sep=24pt,row sep=30pt]
      {R^2\G(S^2D,E)} \& {R\G(S^2D,TE)} \&\&\& \\
      {R^2\G(SD,E)} \&\& {R\G(SD,TE)} \& {\G(SD,T^2E)} \& {\G(D,T^2E)} \\
      {R\G(SD,E)} \&\&\& {\G(SD,TE)} \& {\G(D,TE)}
      \arrow["{R\chi^{\free^S(SD)}_E}", from=1-1, to=1-2]
      \arrow["{R\G(\eta^S_{SD},TE)}"{description}, from=1-2, to=2-3]
      \arrow["{R^2\G(\mu^S_D,E)}"{description}, from=2-1, to=1-1]
      \arrow["{R\chi^{\free^S(D)}_E}"{description}, from=2-1, to=2-3]
      \arrow["{\mu^R_{\G(SD,E)}}"{description}, from=2-1, to=3-1]
      \arrow["{\chi^{\free^S(D)}_{TE}}", from=2-3, to=2-4]
      \arrow["{\G(\eta^S_D,T^2E)}", from=2-4, to=2-5]
      \arrow["{\G(SD,\mu^T_E)}"{description}, from=2-4, to=3-4]
      \arrow["{\G(D,\mu^T_E)}"{description}, from=2-5, to=3-5]
      \arrow["{\chi^{\free^S(D)}_E}"', from=3-1, to=3-4]
      \arrow["{\G(\eta^S_D,TE)}"', from=3-4, to=3-5]
      \arrow["\mathsf{nat}\ \chi"{description}, draw=none, from=2-1, to=1-2]
      \arrow["{\eqref{eq:param-lift}}"{description}, draw=none, from=2-1, to=3-4]
      \arrow["\equiv"{description}, draw=none, from=3-4, to=2-5]
      \arrow["{R\nu_{SD,E}}", from=1-1, to=2-3, rounded corners,
      to path={-- ([yshift=1cm]\tikztostart.center) -- ([xshift=-0.1cm,yshift=2.75cm]\tikztotarget.center) \tikztonodes -- ([xshift=-0.1cm]\tikztotarget.north)}]
      \arrow["{\nu_{D,TE}}", from=2-3, to=2-5, rounded corners,
      to path={-- ([yshift=1cm]\tikztostart.center) -- ([yshift=1cm]\tikztotarget.center) \tikztonodes -- (\tikztotarget.north)}]
      \arrow["{\nu_{D,E}}"', from=3-1, to=3-5, rounded corners,
      to path={-- ([yshift=-1cm]\tikztostart.center) -- ([yshift=-1cm]\tikztotarget.center) \tikztonodes -- (\tikztotarget.south)}]
    \end{tikzcd}
  \]
\end{proof}

Summing up, we have the following result.

\begin{theorem}\label{thm:all-liftings-data}
  Let \(R \in \End( \cat[C])\), \( S \in \End( \cat[D])\), and \(T \in \End(\cat[E])\) be monads, and suppose that \(\G \from \cat[D]\op\times\cat[E] \to\cat[C]\) is a functor.
  The assignments of \cref{prop:lifting-co-and-contra,lem:mixed-lifting-param-lifting} establish a bijective correspondence between
  \begin{enumerate}[label=\textnormal{(\arabic*)},leftmargin=0.8cm]
    \item lifts \(\widehat{\G} \from {\left(\cat[D]^S\right)\op} \times \cat[E]^{T} \to \cat[C]^{R}\) of \(\G\);
    \item mixed lifting data \(\nu \colon R\G(S\blank,\bblank) \to \G(\blank,T\bblank)\);
    \item parametric lifting data \(\left\{\chi^{\boldsymbol{M}} \colon R\G(M,\blank) \to \G(M,T\blank)\right\}\) natural in \(\boldsymbol{M}\) in \(\cat[D]^S\).
  \end{enumerate}

  Further, \(\alpha \colon \G(\blank,\bblank) \to \G'(\blank,\bblank)\) lifts to a natural transformation \(\widehat{\alpha} \colon \widehat{\G}(\blank,\bblank) \to \widehat{\G'}(\blank,\bblank)\)
  if and only if \(\alpha_{M,\blank} \colon \G(M,\blank) \to \G'(M,\blank)\) satisfies \eqref{eq:lifting-alpha} for every \(\boldsymbol{M}\) in \(\cat[D]^S\).
  Put differently, one requires commutativity of either of the following diagrams:
  \[
    \begin{gathered}
      \begin{tikzcd}[ampersand replacement=\&,column sep=large,row sep=large]
        {R\G(SD,E)} \& {\G(D,TE)} \\
        {R\G'(SD,E)} \& {\G'(D,TE)}
        \arrow["{\nu_{D,E}}", from=1-1, to=1-2]
        \arrow["{R\alpha_{SD,E}}"', from=1-1, to=2-1]
        \arrow["{\alpha_{D,TE}}", from=1-2, to=2-2]
        \arrow["{\nu'_{D,E}}"', from=2-1, to=2-2]
      \end{tikzcd}
    \end{gathered}
    \qquad\iff\qquad
    \begin{gathered}
      \begin{tikzcd}[ampersand replacement=\&,column sep=large,row sep=large]
        {R\G(M,E)} \& {\G(M,TE)} \\
        {R\G'(M,E)} \& {\G'(M,TE)}
        \arrow["{\chi^{\boldsymbol{M}}_E}", from=1-1, to=1-2]
        \arrow["{R\alpha_{M,E}}"', from=1-1, to=2-1]
        \arrow["{\alpha_{M,TE}}", from=1-2, to=2-2]
        \arrow["{{\chi'}^{\boldsymbol{M}}_E}"', from=2-1, to=2-2]
      \end{tikzcd}
    \end{gathered}
  \]
\end{theorem}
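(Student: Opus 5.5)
The plan is to compose the two bijections already established. \cref{prop:lifting-co-and-contra} identifies lifts \(\widehat{\G}\) with mixed lifting data \(\nu\). \cref{lem:mixed-lifting-param-lifting} identifies mixed lifting data with natural parametric lifting data \(\chi^{\boldsymbol{M}}\). Composing them gives (1)\(\leftrightarrow\)(2)\(\leftrightarrow\)(3). One consistency check is worth recording: the action on \(\G(M,N)\) obtained from \(\chi^{\boldsymbol{M}}\) should be \(\G(M,\act_N)\circ\chi^{\boldsymbol{M}}_N\). This follows at once from \eqref{eq:lift-via-mixed-lifting-datum} together with \eqref{eq:parametric-lifting-datum-via-mixed-lifting-datum}, so the three descriptions of the lifted action agree.

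For the statement about natural transformations, fix \(\boldsymbol{M}\in\cat[D]^S\). Consider the partial functors \(\G(M,\blank),\G'(M,\blank)\colon\cat[E]\to\cat[C]\). By \eqref{eq:param-lift}, \(\chi^{\boldsymbol{M}}\) is exactly a lifting datum in the sense of \cref{def:distributive-law} for \(\G(M,\blank)\) along \(T\) and \(R\). The induced lift is \(\widehat{\G}(\boldsymbol{M},\blank)\). A natural transformation \(\widehat{\alpha}\) lifting \(\alpha\) is determined by its components and exists if and only if each \(\alpha_{M,\blank}\) lifts to \(\widehat{\G}(\boldsymbol{M},\blank)\to\widehat{\G'}(\boldsymbol{M},\blank)\). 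Naturality in \(\boldsymbol{M}\) is then automatic, since \(\U^R\) is faithful and \(\alpha\) is natural. By \cref{lem:lifting-co-and-co}, each such lift exists precisely when the right-hand square commutes. This proves the characterisation via \(\chi\).

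It remains to show that the two squares are equivalent. To pass from the \(\chi\)-square to the \(\nu\)-square, take \(\boldsymbol{M}=\free^S D\). Then \(\nu_{D,E}=\G(\eta^S_D,TE)\circ\chi^{\free^SD}_E\), and by naturality of \(\alpha\) in the first variable we get \(\alpha_{D,TE}\circ\G(\eta^S_D,TE)=\G'(\eta^S_D,TE)\circ\alpha_{SD,TE}\). Pasting gives the \(\nu\)-square. Conversely, \(\chi^{\boldsymbol{M}}_E=\nu_{M,E}\circ R\G(\act_M,E)\), and \(R\alpha\) commutes with \(R\G(\act_M,E)\) by naturality of \(\alpha\). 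Pasting with the \(\nu\)-square at \(D=M\) gives the \(\chi\)-square.

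The only delicate point is keeping track of the variance in the first slot. I expect this to be purely bookkeeping, using naturality of \(\alpha\) in its contravariant variable.
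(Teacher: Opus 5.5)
Your proposal is correct and follows the paper's route. The paper states this theorem simply as a summary of \cref{prop:lifting-co-and-contra} and \cref{lem:mixed-lifting-param-lifting}, and its criterion for natural transformations is \cref{lem:lifting-co-and-co} applied to each partial functor $\G(M,\blank)$ with lifting datum $\chi^{\boldsymbol{M}}$, exactly as you use it. The paper leaves implicit why the $\nu$-square and the $\chi$-square are equivalent; your pasting argument, using naturality of $\alpha$ in the contravariant slot along $\eta^S_D$ and $\act_M$, fills this in correctly.
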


\subsection{Lifting parametric adjunctions}\label{sec:lifting-parametric-adjunctions}

Our goal is now to classify lifts of adjunctions of the form \(\cat{E}(\free(M,N),P) \cong \cat{C}(M,\forget(N,P))\) to Eilenberg--Moore categories.
This requires us to first study lifts of ordinary adjunctions.

\begin{definition}\label{def:lift-of-adjunction}
  Let \(S\) and \(T\) be monads on \( \cat{C}\) and \(\cat{D}\), respectively and let \(\adj{\free}{\forget}{\cat{C}}{\cat{D}}\) be an adjunction with unit \(\eta \from \Id_{ \cat{C}} \to \forget\free\) and counit \(\epsilon\from \free\forget \to \Id_{ \cat{D}}\).
  A \emph{lift of the adjunction} is a quadruple \((\widehat{\free}, \widehat{\forget}, \widehat{\eta}, \widehat{\epsilon})\) comprising an  adjunction \(\adj{\widehat{\free}}{\widehat{\forget}}{ \cat{C}^{S}}{ \cat{D}^{T}}\) with unit \(\widehat{\eta} \from \Id_{\cat{C}^{S}}\to \widehat{\forget}\widehat{\free}\) and counit \(\widehat{\epsilon} \from  \widehat{\free}\widehat{\forget}\to \Id_{ \cat{D}^{T}}\) such that \(\widehat{\free}\), \(\widehat{\forget}\), \(\widehat{\eta}\), \(\widehat{\epsilon}\) lift \(\free\),\(\forget\), \(\eta\), \(\epsilon\), respectively.
\end{definition}

As explained in the following definition and proposition,
in case we have a lift \((\widehat{\free},\widehat{\forget}, \widehat{\eta}, \widehat{\epsilon})\) of the adjunction \((\free, \forget, \eta, \epsilon)\), the triangle identities ensure that the lifting datum of \(\free\) is obtained from the lifting datum of \(\forget\) combined with the unit and counit of the adjunction.

\begin{definition}\label{def:mate}
  Consider an adjunction \(\adj{\free}{\forget}{ \cat{C}}{ \cat{D}}\) and endofunctors \(R \from \cat{C}\to \cat{C}\) as well as \(S \from \cat{D} \to \cat{D}\).
  The \emph{mate} of a natural transformation \(\xi \from R\forget\to \forget S\) is the natural transformation \(\xi^{\flat}\from \free R\to S\free\) given by
  \[
    \begin{tikzcd}[ampersand replacement=\&,cramped]
      {\free R(M)} \&\& {\free R\forget\free(M)} \&\& {\free\forget S\free(M)} \&\& {S\free(M).}
      \arrow["{\free R\eta_M}", from=1-1, to=1-3]
      \arrow["{\free\xi_{\free(M)}}", from=1-3, to=1-5]
      \arrow["{\epsilon_{S\free(M)}}", from=1-5, to=1-7]
    \end{tikzcd} \qedhere
  \]
\end{definition}

The next result follows immediately from \cite[Theorem~3.13]{BruguieresLackVirelizier}.
\begin{proposition}\label{prop:lifting-adjunctions}
  Consider monads \(R\) on \(\cat{C}\) and \(S\) on \(\cat{D}\), and let \(\adj{\free}{\forget}{\cat{C}}{\cat{D}}\) be an adjunction with unit \(\eta \from \Id_{ \cat{C}} \to \forget\free\) and counit \(\epsilon\from \free\forget \to \Id_{ \cat{D}}\).
  The bijection of \cref{lem:lifting-co-and-co} applied to \(\free\) and \(\forget\) induces bijections between
  \begin{enumerate}[label=\textnormal{(\arabic*)},leftmargin=0.8cm]
      \item lifts of \((\free, \forget, \eta, \epsilon)\);
      \item lifting data \(\zeta \colon S\free \to \free R\) and \(\xi \colon R\forget \to \forget S\) such that \eqref{eq:lifting-alpha} is satisfied by \(\eta\) and \(\epsilon\), with \(\zeta=(\xi^{\flat})^{-1}\);
      \item lifting data \(\zeta \colon S\free \to \free R\) which are natural isomorphisms;
      \item lifting data \(\xi \from R\forget \to \forget S\) such that its mate \(\xi^{\flat}\) is a natural isomorphism.
  \end{enumerate}
\end{proposition}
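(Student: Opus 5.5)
Our plan is to derive the statement from \cref{lem:lifting-co-and-co} together with the triangle identities, along the lines of \cite[Theorem~3.13]{BruguieresLackVirelizier}. We begin by recalling what a lift of the adjunction amounts to. By \cref{lem:lifting-co-and-co}, lifts \(\widehat{\free}\) and \(\widehat{\forget}\) correspond to lifting data \(\zeta\from S\free\to\free R\) and \(\xi\from R\forget\to\forget S\). Moreover, \(\eta\) and \(\epsilon\) lift exactly when \eqref{eq:lifting-alpha} holds for them, that is, when
\[
  \forget\zeta\circ\xi\free\circ R\eta=\eta R
  \qquad\text{and}\qquad
  \epsilon S\circ\free\xi\circ\zeta\forget=S\epsilon .
\]
Here \(\forget\free\) and \(\free\forget\) carry the composite lifting data. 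Once \(\widehat\eta\) and \(\widehat\epsilon\) exist, they automatically satisfy the triangle identities, because \(\U^R\) and \(\U^S\) are faithful and the underlying identities hold. This gives the bijection (1)\(\leftrightarrow\)(the two lifting data plus the two compatibility conditions).

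The core of the argument is to show that these two compatibility conditions are equivalent to \(\zeta\circ\xi^\flat=\id\) and \(\xi^\flat\circ\zeta=\id\), so that \(\zeta=(\xi^\flat)^{-1}\). First, precompose the \(\epsilon\)-condition with \(\free R\eta\). Naturality of \(\zeta\) and the triangle identity then rewrite the left-hand side as \(\xi^\flat\circ\zeta\) (whiskered appropriately) and the right-hand side as the identity. Dually, apply \(\free\) to the \(\eta\)-condition and postcompose with \(\epsilon R\); this yields \(\zeta\circ\xi^\flat=\id\). For the converse, assume \(\zeta\) and \(\xi^\flat\) are mutually inverse and reverse these manipulations. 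Equivalently, use the mate bijection \(\xi\leftrightarrow\xi^\flat\): the \(\eta\)-condition says precisely that the mate of \(\zeta\)'s inverse-type transformation is \(\xi\). This gives (1)\(\leftrightarrow\)(2).

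For (3) and (4) we argue by uniqueness. Given an invertible lifting datum \(\zeta\), set \(\xi\) to be the mate (in the reverse direction) of \(\zeta^{-1}\), namely \(\xi=\forget S\epsilon\circ\forget\zeta^{-1}\forget\circ\eta R\forget\). We then need \(\xi\) to be a lifting datum. This follows because the inverse of a lifting datum for \(\free\) is a ``colifting'' datum \(\free R\to S\free\) compatible with units and multiplications, and mates transport the monad-functor axioms across the adjunction; this is the standard doctrinal-adjunction argument. Symmetrically, given \(\xi\) with \(\xi^\flat\) invertible, put \(\zeta\eqdef(\xi^\flat)^{-1}\). Its lifting-datum axioms follow from those of \(\xi\) by inverting the corresponding mate axioms for \(\xi^\flat\). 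Both assignments are inverse to the forgetful maps from (2) by the mate bijection.

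We expect the main obstacle to be exactly this transport of the unit and multiplication axioms of \eqref{eq:monadic_distibutive_law} through mates and inverses. The cleanest route is to note that \(\xi\mapsto\xi^\flat\) is a bijection compatible with pasting: it sends the \(\eta^R\) and \(\mu^R\) composites to the corresponding composites for \(\xi^\flat\). The axioms for \(\xi\) are therefore equivalent to the ``opmonad-functor'' axioms for \(\xi^\flat\), and those are equivalent to the lifting-datum axioms for \((\xi^\flat)^{-1}\). We would verify this compatibility with one diagram chase per axiom using the triangle identities.
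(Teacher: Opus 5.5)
Your proposal is correct and is essentially the standard argument behind \cite[Theorem~3.13]{BruguieresLackVirelizier}, which the paper simply cites. The \(\eta\)- and \(\epsilon\)-conditions of \eqref{eq:lifting-alpha} are the adjunction-transposes of \(\zeta\circ\xi^{\flat}=\id\) and \(\xi^{\flat}\circ\zeta=\id\), and the lifting-datum axioms pass through mates and inverses as you describe. Two small slips need fixing: to obtain \(\xi^{\flat}\circ\zeta=\id\) you must whisker the \(\epsilon\)-condition by \(\free\) and precompose with \(S\free\eta\) (naturality of \(\zeta\) then produces \(\free R\eta\)), and in the \(\eta\)-step you postcompose with \(\epsilon\free R\) rather than \(\epsilon R\).
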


We now want to study adjunctions in two variables of the form
\[
  \cat[E]\left(\free(C,D),E\right) \cong \cat[C](C,\forget(D,E)),
\]
via their pointwise properties, by looking at their units and counits.

\begin{definition}\label{def:parametric-adjunction}
  A \emph{parametric adjunction} is a quadruple \((\free,\forget, \eta, \epsilon)\) consisting of functors \(\free \from \cat{C}\times \cat{D} \to \cat{E}\) and \(\forget \from \cat{D}\op \times \cat{E} \to \cat{C}\) and morphisms
  \begin{equation*}
    \eta^{(D)}_{C} \from C \to \forget(D,\free(C,D)), \qquad \epsilon^{(D)}_{E} \from \free(\forget(D,E),D) \to E, \qquad\quad C\in \cat{C}, D\in \cat{D}, E \in \cat{E},
  \end{equation*}
  natural in \(C\) and \(E\), and dinatural in \(D\) such that for all \(D\in \cat{D}\) we have an adjunction \(\free( \blank ,D) \dashv \forget(D, \blank)\) with unit \((\eta^{(D)}_{C})_{C \in \cat{C}}\) and counit \((\epsilon^{(D)}_{E})_{E \in \cat{E}}\).
\end{definition}

As discussed for example in \cite[\S~IV.7]{Maclane}, parametric adjunctions are determined by their pointwise properties.

\begin{proposition}\label{prop:parameter-theorem}
  Let \( \cat{C}, \cat{D}, \cat{E}\) be categories and let \(\forget \from \cat{D}\op \times \cat{E} \to \cat{C}\) be a functor.
  Suppose furthermore that for all \(D\in \cat{D}\) the functor \(\forget_{D}\eqdef \forget(D, \blank) \from \cat{E}\to \cat{C}\) has a left adjoint  \(\free_{D}\from \cat{C}\to \cat{E}\) with unit \(\eta_{D,\blank} \from \Id_{ \cat{C}} \to \forget_{D}\free_{D} \) and counit \(\epsilon_{D,\blank} \from \free_{D}\forget_{D}\to \Id_{ \cat{E}}\).
  Then there is a unique parametric adjunction \((\free,\forget, \eta, \epsilon)\) with \(\free\from \cat{C}\times \cat{D} \to \cat{E}\)
  such that \(\free(\blank ,D) = \free_{D}\), \(\eta_{D,\blank} = \eta^{(D)}\), and \(\epsilon_{D,\blank} = \epsilon^{(D)}\) for all \(D \in \cat{D}\).
\end{proposition}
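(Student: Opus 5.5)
The plan is the classical parameter theorem (Mac Lane, \S IV.7): the values of \(\free\) on objects are forced, and on morphisms in the parameter \(D\) they are forced by requiring dinaturality of the unit and counit. So on objects we set \(\free(C,D)\eqdef \free_D(C)\), and on morphisms in the first variable we set \(\free(c,D)\eqdef \free_D(c)\).

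For a morphism \(d\colon D\to D'\) in \(\cat{D}\), note that \(\forget(d,\blank)\colon \forget_{D'}\to \forget_D\) is a natural transformation between right adjoints. We define \(\free(C,d)\colon \free_D C\to \free_{D'}C\) as its mate, namely the composite
\[
  \free_D C \xrightarrow{\free_D\eta^{(D')}_C} \free_D\forget_{D'}\free_{D'}C \xrightarrow{\free_D\forget(d,\free_{D'}C)} \free_D\forget_D\free_{D'}C \xrightarrow{\epsilon^{(D)}_{\free_{D'}C}} \free_{D'}C.
\]
Equivalently, \(\free(C,d)\) is the unique morphism whose transpose under \(\free_D\dashv\forget_D\) is \(\forget(d,\free_{D'}C)\circ\eta^{(D')}_C\).

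The verifications then proceed in three steps.
\begin{enumerate}
  \item \emph{Naturality in \(C\).} This follows from naturality of \(\eta^{(D')}\) and \(\epsilon^{(D)}\), so \(\free(\blank,d)\) is a natural transformation \(\free_D\to\free_{D'}\).
  \item \emph{Functoriality in \(D\).} Mates are functorial: the mate of an identity is the identity, and the mate of a composite is the composite of the mates. Both facts are verified using the triangle identities. Because \(\forget\) is contravariant in \(D\), the mate construction reverses the direction of \(\forget(d,\blank)\), which yields a covariant functor in \(D\). Together with the first step, this makes \(\free\) a bifunctor \(\cat{C}\times\cat{D}\to\cat{E}\).
  \item \emph{Dinaturality.} By the defining transpose property, dinaturality of \(\eta\) in \(D\) reads \(\forget(D,\free(C,d))\circ\eta^{(D)}_C = \forget(d,\free(C,D'))\circ\eta^{(D')}_C\). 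This is exactly the statement that the transpose of \(\free(C,d)\) equals \(\forget(d,\free_{D'}C)\circ\eta^{(D')}_C\), which holds by construction. Dinaturality of \(\epsilon\) then follows dually: transpose both sides across the adjunction and apply the triangle identities.
\end{enumerate}

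For uniqueness, suppose \(\free'\) is another bifunctor with \(\free'(\blank,D)=\free_D\) and with the same units, and suppose \(\eta\) is dinatural with respect to \(\free'\). Then the transpose of \(\free'(C,d)\) is forced to equal \(\forget(d,\free_{D'}C)\circ\eta^{(D')}_C\), so \(\free'(C,d)=\free(C,d)\).

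The main obstacle is bookkeeping rather than anything conceptual: keeping the variance correct when showing that mate composition respects composition in \(D\), since this uses the triangle identities for two different adjunctions, \(\free_D\dashv\forget_D\) and \(\free_{D'}\dashv\forget_{D'}\). I would handle it by working entirely with the transpose characterisation, which reduces the functoriality check to the functoriality of \(\forget\) in its first variable together with naturality of the units.
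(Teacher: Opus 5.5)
Your proposal is correct and is essentially the argument the paper relies on. The paper gives no separate proof but defers to Mac Lane's \S IV.7, and your definition of \(\free(C,d)\) as the mate of \(\forget(d,\blank)\), characterised by its transpose \(\forget(d,\free_{D'}C)\circ\eta^{(D')}_C\) (equivalently, by dinaturality of the unit), is exactly the formula the paper records in \eqref{eq:param-induced-morphism}; from there existence, functoriality and uniqueness go through as you describe.
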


In the setting of \cref{prop:parameter-theorem}, for any given arrow \(g\from D \to D'\) in \(\cat{D}\) and object \(C \in \cat{C}\),
the morphism \(\free(C,g) \from \free(C,D) \to \free(C,D')\) in \(\cat[E]\) is uniquely determined by the following diagram:
\begin{equation}\label{eq:param-induced-morphism}
  \begin{tikzcd}[ampersand replacement=\&,cramped]
    {\free(C,D)} \&\& {\free(\forget(D',\free(C,D')),D)} \\
    \\
    {\free(C,D')} \&\& {\free(\forget(D,\free(C,D')),D)}
    \arrow["{\free\left(\eta^{(D')}_C,D\right)}", from=1-1, to=1-3]
    \arrow["{\free(C,g)}"', dashed, from=1-1, to=3-1]
    \arrow["{\free(\forget(g,\free(C,D')),D)}", from=1-3, to=3-3]
    \arrow["{\epsilon^{(D)}_{\free(C,D')}}", from=3-3, to=3-1]
  \end{tikzcd}
\end{equation}

Lifts of parametric adjunctions are also determined by pointwise lifts.
Given a parametric adjunction \((\free,\forget, \eta, \epsilon)\) and an object \(D \in \cat{D}\), we write \(\free_D \defeq \free(\blank,D)\) and \(\forget_D \defeq \forget(D,\blank)\) as in \cref{prop:parameter-theorem}.

\begin{lemma}\label{lem:lifts-of-parametric-adjunction}
  Suppose that \(R\), \(S\), and \(T\) are monads on \(\cat{C}\), \(\cat{D}\), and \(\cat{E}\), respectively, and that \(\free \from \cat{C}\times \cat{D} \to \cat{E}\) and \(\forget \from \cat{D}\op \times \cat{E} \to \cat{C}\) assemble into a  parametric adjunction \((\free, \forget, \eta, \epsilon)\).
  Furthermore, let \(\widehat{\forget} \from \left(\cat{D}^{S}\right)\op\times \cat{E}^{T} \to \cat{C}^{R}\) be a lift of \(\forget\).
  The following are equivalent:
  \begin{enumerate}[label=\textnormal{(\arabic*)},leftmargin=0.8cm]
    \item\label{item:pointwise-lift-of-adjs-1} there is a (necessarily unique) lift \(\big(\widehat{\free}, \widehat{\forget}, \widehat{\eta}, \widehat{\epsilon}\big)\) of the parametric adjunction \((\free,\forget, \eta, \epsilon)\);

    \item\label{item:pointwise-lift-of-adjs-2} there are families of lifts \(\big(\widehat{\free}_{\boldsymbol{D}}, \widehat{\forget}_{\boldsymbol{D}}, \widehat{\eta}^{\boldsymbol{D}}, \widehat{\epsilon}^{\boldsymbol{D}}\big)_{\boldsymbol{D}\in \cat{D}^{S}}\) of \((\free_{D}, \forget_{D}, \eta^{(D)}, \epsilon^{(D)})_{D\in \cat{D}}\).
  \end{enumerate}
\end{lemma}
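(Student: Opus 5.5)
The direction \ref{item:pointwise-lift-of-adjs-1}\(\Rightarrow\)\ref{item:pointwise-lift-of-adjs-2} is immediate: given a lift \(\big(\widehat{\free}, \widehat{\forget}, \widehat{\eta}, \widehat{\epsilon}\big)\) of the parametric adjunction, fix \(\boldsymbol{D} = (D,\act_D)\in \cat{D}^S\). Set \(\widehat{\free}_{\boldsymbol{D}} \defeq \widehat{\free}(\blank,\boldsymbol{D})\), \(\widehat{\forget}_{\boldsymbol{D}} \defeq \widehat{\forget}(\boldsymbol{D},\blank)\), and take \(\widehat{\eta}^{\boldsymbol{D}}\), \(\widehat{\epsilon}^{\boldsymbol{D}}\) to be the components of \(\widehat{\eta}\), \(\widehat{\epsilon}\) at \(\boldsymbol{D}\). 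The triangle identities hold pointwise, and each of these data lifts the corresponding datum of \((\free_D,\forget_D,\eta^{(D)},\epsilon^{(D)})\) because \(\U^T\), \(\U^R\) and \(\U^S\) commute with evaluation at \(\boldsymbol{D}\). So this direction is just restriction.

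For the converse, I would apply \cref{prop:parameter-theorem} to the lifted functor \(\widehat{\forget}\colon (\cat{D}^S)\op\times\cat{E}^T\to\cat{C}^R\). I take the pointwise left adjoints \(\widehat{\free}_{\boldsymbol{D}}\) with units \(\widehat{\eta}^{\boldsymbol{D}}\) and counits \(\widehat{\epsilon}^{\boldsymbol{D}}\), here assuming that \(\widehat{\forget}_{\boldsymbol{D}}\) in \ref{item:pointwise-lift-of-adjs-2} is \(\widehat{\forget}(\boldsymbol{D},\blank)\). Since a right adjoint determines its left adjoint, unit and counit up to unique isomorphism, and the lifts are required to lift \(\eta^{(D)}\) and \(\epsilon^{(D)}\) on the nose, these pointwise data are uniquely determined. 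This gives uniqueness. \Cref{prop:parameter-theorem} then produces a unique functor \(\widehat{\free}\colon \cat{C}^R\times\cat{D}^S\to\cat{E}^T\) with \(\widehat{\free}(\blank,\boldsymbol{D})=\widehat{\free}_{\boldsymbol{D}}\), together with \(\widehat{\eta}\), \(\widehat{\epsilon}\) that are dinatural in \(\boldsymbol{D}\). This makes \(\big(\widehat{\free},\widehat{\forget},\widehat{\eta},\widehat{\epsilon}\big)\) a parametric adjunction.

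It remains to check that \(\widehat{\free}\) lifts \(\free\), i.e.\ \(\U^T\widehat{\free}=\free(\U^R\blank,\U^S\blank)\), and that \(\widehat{\eta}\), \(\widehat{\epsilon}\) lift \(\eta\), \(\epsilon\). On objects, and on morphisms in the first variable, this holds by hypothesis. The key point, and the only real obstacle, is the second variable. For \(g\colon\boldsymbol{D}\to\boldsymbol{D}'\) in \(\cat{D}^S\), the morphism \(\widehat{\free}(\boldsymbol{C},g)\) is defined by diagram \eqref{eq:param-induced-morphism} computed in \(\cat{E}^T\). That diagram involves only \(\widehat{\free}_{\boldsymbol{D}}\), \(\widehat{\forget}(g,\blank)\), \(\widehat{\eta}^{\boldsymbol{D}'}\) and \(\widehat{\epsilon}^{\boldsymbol{D}}\). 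Applying the faithful functor \(\U^T\), each of these becomes the corresponding unlifted datum, since \(\widehat{\forget}\) lifts \(\forget\) and the pointwise data lift their counterparts. So \(\U^T\widehat{\free}(\boldsymbol{C},g)\) equals the composite defining \(\free(C,g)\) in \eqref{eq:param-induced-morphism}, which is \(\free(C,g)\). Hence \(\widehat{\free}\) is a lift of \(\free\).

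Dinaturality of \(\widehat{\eta}\) and \(\widehat{\epsilon}\) in \(\boldsymbol{D}\) is already supplied by \cref{prop:parameter-theorem}. Alternatively, it follows directly from that of \(\eta\) and \(\epsilon\), because \(\U^R\) and \(\U^T\) are faithful and the components agree after forgetting. This completes the equivalence.
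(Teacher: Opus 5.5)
Your proposal is correct and follows essentially the same route as the paper: (1)\(\Rightarrow\)(2) by restriction, and for the converse you assemble the pointwise lifted adjunctions into \(\widehat{\free}\) via \cref{prop:parameter-theorem}, then apply \(\U^T\) to the defining diagram \eqref{eq:param-induced-morphism} to show that \(\widehat{\free}\) lifts \(\free\) in the second variable, exactly as the paper does. Two small remarks: your uniqueness argument (the lifted unit pins down the algebra structure on \(\free_D M\)) is a detail the paper leaves implicit, and the computation only uses that \(\U^T\) is a functor, not that it is faithful.
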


\begin{proof}
  First, we note that \ref{item:pointwise-lift-of-adjs-1} \(\Rightarrow\) \ref{item:pointwise-lift-of-adjs-2}.
  Conversely,  suppose that we have a pointwise lift \((\widehat{\free}_{\boldsymbol{D}}, \widehat{\forget}_{\boldsymbol{D}}, \widehat{\eta}^{\boldsymbol{D}}, \widehat{\epsilon}^{\boldsymbol{D}})_{\boldsymbol{D}\in \cat{D}^{S}}\) of the adjunction \((\free_{D}, \forget_{D}, \eta^{(D)}, \epsilon^{(D)})_{D\in \cat{D}}\).
  By \cref{prop:parameter-theorem}, there is a unique way of assembling the \(\widehat{\free}_{\boldsymbol{D}}\)'s into a functor \(\widehat{\free}\from \cat{C}^{R}\times \cat{D}^{S}\to \cat{E}^{T}\) such that \((\widehat{\free}, \widehat{\forget}, \widehat{\eta}, \widehat{\epsilon})\) is a parametric adjunction and \(\widehat{\free}( \blank , \boldsymbol{D}) = \widehat{\free}_{\boldsymbol{D}}\) for all \(\boldsymbol{D}\in \cat{D}^{S}\).
  To show that \(\widehat{\free}\) is a lift of \(\free\), we fix morphisms \(f \in \cat{C}^{R}(\boldsymbol{M}, \boldsymbol{N})\) and \(g \in \cat{D}^{S}(\boldsymbol{P}, \boldsymbol{Q})\) and  compute
  \begin{align*}
    \U^{T}\widehat{\free}(f,g)
    & = \U^{T}(\widehat{\free}(f, \boldsymbol{Q})\widehat{\free}(\boldsymbol{M}, g)) = \U^{T}(\widehat{\free}_{\boldsymbol{Q}}f)\U^{T}(\widehat{\free}(\boldsymbol{M}, g)) \\
    \overset{\eqref{eq:param-induced-morphism}}&{=} \free_Q(\U^Rf) \U^{T}(
      \widehat{\epsilon}^{\boldsymbol{P}}_{\widehat{\free}(\boldsymbol{M}, \boldsymbol{Q})} \widehat{\free}(\widehat{\forget}(g, \widehat{\free}(\boldsymbol{M}, \boldsymbol{Q})), \boldsymbol{P}) \widehat{\free}(\widehat{\eta}^{\boldsymbol{Q}}_{\boldsymbol{M}}, \boldsymbol{P})) \\
    & = \free(\U^{R}f, Q) \epsilon^{(P)}_{\free(M,Q)}
      \free(\forget(\U^{S}g,\free(M,Q)),P)
      \free(\eta^{(Q)}_{M},P) \\
    \overset{\eqref{eq:param-induced-morphism}}&{=} \free(\U^{R}f, Q)\free(M,\U^{S}g) = \free(\U^{R}f, \U^{S}g).
  \end{align*}
  Therefore \(\widehat{\free}\) is a lift of \(\free\) and \((\widehat{\free}, \widehat{\forget}, \widehat{\eta}, \widehat{\epsilon})\) lifts the parametric adjunction \((\free,\forget, \eta, \epsilon)\).
\end{proof}

\begin{lemma}\label{lem:lifting-of-parametric-adjunctions}
  Consider monads \(R\), \(S\), and \(T\) on \(\cat{C}\), \(\cat{D}\), and \(\cat{E}\) respectively, and
  a parametric adjunction \((\free,\forget, \eta, \epsilon)\) where
  \(\free\from \cat{C}\times \cat{D}\to \cat{E}\) and \(\forget \from \cat{D}\op\times \cat{E} \to \cat{C}\).
  There are bijections between
  \begin{enumerate}[label=\textnormal{(\arabic*)},leftmargin=0.8cm]
    \item\label{item:lifting-of-parametric-adjunctions-1} lifts of \((\free,\forget, \eta, \epsilon)\) to a parametric adjunction \((\widehat{\free}, \widehat{\forget}, \widehat{\eta}, \widehat{\epsilon})\) with \(\widehat{\free}\from \cat{C}^{R}\times \cat{D}^{S}\to \cat{E}^{T}\) and \(\widehat{\forget} \from {\cat{D}^{S}}\op\times \cat{E}^{T} \to \cat{C}^{R}\);

    \item\label{item:lifting-of-parametric-adjunctions-2} mixed lifting data \(\nu \from R\forget(S \blank , \bblank) \to \forget( \blank, T \bblank)\) such that for all \(\boldsymbol{D} \in \cat{D}^{S}\)  the mate of
    \begin{equation*}
     \left(\chi^{\boldsymbol{D}}_E = \nu_{D,E} \circ R\forget(\act^{S}_{D}, E) \from R\forget(D, E) \to \forget(D, TE)\right)_{E \in \cat{E}}
    \end{equation*}
    is invertible;

    \item\label{item:lifting-of-parametric-adjunctions-3} parametric lifting data \(\left\{\chi^{\boldsymbol{D}} \colon R\forget_D \to \forget_DT\right\}_{\boldsymbol{D} \in \cat{D}^{S}}\), natural in \(\boldsymbol{D} \in \cat{D}^{S}\), such that for every \(\boldsymbol{D}\) the mate of \(\chi^{\boldsymbol{D}}\) is invertible.
  \end{enumerate}
\end{lemma}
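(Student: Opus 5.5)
The plan is to assemble the statement from the bijections already available, working one parameter at a time.

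First, \cref{thm:all-liftings-data} identifies lifts \(\widehat{\forget}\) of \(\forget\) with mixed lifting data \(\nu\), and also with natural parametric lifting data \(\chi^{\boldsymbol{D}}\). The formula \(\chi^{\boldsymbol{D}}_E = \nu_{D,E}\circ R\forget(\act^S_D,E)\) appearing in \ref{item:lifting-of-parametric-adjunctions-2} is exactly the one in \cref{lem:mixed-lifting-param-lifting}. Hence \ref{item:lifting-of-parametric-adjunctions-2}\(\Leftrightarrow\)\ref{item:lifting-of-parametric-adjunctions-3} is immediate once the invertibility conditions are seen to be the same condition; they are literally the same mates.

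The core is \ref{item:lifting-of-parametric-adjunctions-1}\(\Leftrightarrow\)\ref{item:lifting-of-parametric-adjunctions-3}. Fix \(\widehat{\forget}\), equivalently a natural parametric lifting datum \(\{\chi^{\boldsymbol{D}}\}\). By \cref{lem:lifts-of-parametric-adjunction}, a lift of the parametric adjunction with this \(\widehat{\forget}\) exists, and is then unique, precisely when for each \(\boldsymbol{D}\in\cat{D}^S\) the ordinary adjunction \(\free_D\dashv\forget_D\) lifts, with \(\widehat{\forget}_{\boldsymbol{D}} = \widehat{\forget}(\boldsymbol{D},\blank)\).

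Now fix \(\boldsymbol{D}\). By \cref{lem:lifting-co-and-co}, the lift \(\widehat{\forget}_{\boldsymbol{D}}\) of \(\forget_D\colon\cat{E}\to\cat{C}\) along \(T\) and \(R\) corresponds to the lifting datum \(R\forget_D\to\forget_D T\). Unwinding the construction in \cref{lem:lifting-co-and-co} together with \eqref{eq:lift-via-mixed-lifting-datum}, this datum is \(\chi^{\boldsymbol{D}}\): the action on \(\forget(D,E)\) is \(\forget(D,\act_E)\circ\chi^{\boldsymbol{D}}_E\).

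We then apply \cref{prop:lifting-adjunctions}, with the roles of the monads adjusted so that \(\free_D\colon\cat{C}\to\cat{E}\) is the left adjoint. Lifts of \(\free_D\dashv\forget_D\) extending the given \(\widehat{\forget}_{\boldsymbol{D}}\) exist, and are unique, exactly when the mate \((\chi^{\boldsymbol{D}})^\flat\colon \free_D R\to T\free_D\) is invertible. Note that \cref{prop:lifting-adjunctions} gives a bijection between lifts and those \(\xi\) with invertible mate, so both existence and uniqueness of \(\widehat{\free}_{\boldsymbol{D}}\), \(\widehat{\eta}\), \(\widehat{\epsilon}\) are determined by \(\xi=\chi^{\boldsymbol{D}}\).

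Combining these gives the bijection. A lift in \ref{item:lifting-of-parametric-adjunctions-1} determines \(\widehat{\forget}\), hence \(\chi\), and its pointwise restrictions force every mate to be invertible. Conversely, data \(\chi\) with invertible mates yield a unique \(\widehat{\forget}\) and pointwise lifts, which assemble by \cref{lem:lifts-of-parametric-adjunction}. The two composites are identities by the uniqueness clauses in \cref{lem:lifts-of-parametric-adjunction} and \cref{prop:lifting-adjunctions}.

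The step I expect to need care is the identification in the third paragraph. One must check that the lifting datum which \cref{lem:lifting-co-and-co} extracts from \(\widehat{\forget}(\boldsymbol{D},\blank)\) coincides with \(\chi^{\boldsymbol{D}}\) as produced by \cref{thm:all-liftings-data}. I would verify this by comparing the actions directly: both give the action \(\forget(D,\act_E)\circ\chi^{\boldsymbol{D}}_E\), and a lifting datum is recovered from the action uniquely. A secondary point is that the pointwise adjunction units and counits lifted this way are dinatural in \(\boldsymbol{D}\). This should follow from \cref{prop:parameter-theorem} applied in the Eilenberg--Moore categories, with faithfulness of \(\U^T\) and \(\U^R\) reducing the question to dinaturality downstairs.
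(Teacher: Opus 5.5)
Your proposal is correct and follows essentially the same route as the paper. You get (2)\(\Leftrightarrow\)(3) from \cref{thm:all-liftings-data}, then fix \(\widehat{\forget}\), reduce to pointwise lifts via \cref{lem:lifts-of-parametric-adjunction}, and decide each pointwise lift by invertibility of the mate of \(\chi^{\boldsymbol{D}}\) through \cref{prop:lifting-adjunctions}. Your check that the lifting datum of \(\widehat{\forget}(\boldsymbol{D},\blank)\) from \cref{lem:lifting-co-and-co} is \(\chi^{\boldsymbol{D}}\) (both induce the action \(\forget(D,\act_E)\circ\chi^{\boldsymbol{D}}_E\)) makes explicit a step the paper leaves implicit, and your dinaturality concern is already handled by \cref{lem:lifts-of-parametric-adjunction}.
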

\begin{proof}
  By \cref{thm:all-liftings-data}, mixed lifting data \(\nu\), are in bijection with parametric lifting data \((\chi^{\boldsymbol{D}})_{\boldsymbol{D} \in \cat{D}^{S}}\), where \(\chi^{\boldsymbol{D}}= \nu_{D, \blank} \circ R\forget(\act^S_D, \blank)\).
  Hence \ref{item:lifting-of-parametric-adjunctions-2} and \ref{item:lifting-of-parametric-adjunctions-3} describe the same data.

  Moreover, \cref{thm:all-liftings-data} shows that parametric lifting data correspond bijectively to lifts of \(\forget\) and it remains to show that the conditions stated in \ref{item:lifting-of-parametric-adjunctions-3} are necessary and sufficient to lift the  whole parametric adjunction.

  Fix a lift \(\widehat{\forget} \from {(\cat{D}^{S})}\op\times \cat{E}^{T} \to \cat{C}^{R}\) of \(\forget\), and let \((\chi^{\boldsymbol{D}}\colon R\forget_D\to \forget_DT)_{\boldsymbol{D} \in \cat{D}^{S}}\) be the corresponding parametric lifting datum.
  Applying \cref{prop:lifting-adjunctions} to the adjunction \(\free_D\dashv \forget_D\) shows that \(\widehat{\forget}_{\boldsymbol{D}}\) is the right adjoint in a lifted adjunction \(\widehat{\free}_{\boldsymbol{D}}\dashv \widehat{\forget}_{\boldsymbol{D}}\)
  if and only if the mate of \(\chi^{\boldsymbol{D}}\) is invertible.
  Note that the lifted adjunction is unique, because the lifting datum determines both lifted functors and the lifted unit and counit.
  Hence, there are pointwise lifted adjunctions \({(\widehat{\free}_{\boldsymbol{D}}, \widehat{\forget}_{\boldsymbol{D}}, \widehat{\eta}^{\boldsymbol{D}}, \widehat{\epsilon}^{\boldsymbol{D}})}_{\boldsymbol{D}\in\cat{D}^{S}}\),
  which, by \cref{lem:lifts-of-parametric-adjunction}, are equivalent to a lift \((\widehat{\free},\widehat{\forget},\widehat{\eta},\widehat{\epsilon})\) of the original parametric adjunction.
\end{proof}

Applying \cref{lem:lifting-of-parametric-adjunctions} to the tensor--hom adjunction of a closed monoidal category allows us to determine necessary and sufficient conditions for a gabi-monad to be Hopf.
Note in particular that no normality assumption is needed in the statement; it is instead implied by the invertibility of the parametric mate.

\begin{theorem}\label{thm:lift-to-hopf-monads}
  Let \(\cat[C]\) be a closed monoidal category and \((T, s , \mathfrak{a}_{\tu})\) a gabi-monad on \(\cat[C]\).
  Then \(T\) is a left Hopf monad if and only if for every object \(\boldsymbol{M} \in \cat[C]^T\) the mate of
  \begin{equation*}
    \left(\gamma^{\boldsymbol{M}}_X \eqdef s_{M,X} \circ T[\act_{M}, X] \from T[M,X] \to [M,TX]\right)_{X \in \cat{C}},
  \end{equation*}
  namely,
  \[
    TX\otimes M \xrightarrow{T \coev^{M}_X \otimes M} T[M,X\otimes M]\otimes M \xrightarrow{\gamma^{\boldsymbol{M}}_{X\otimes M}\otimes M} [M,T(X\otimes M)]\otimes M \xrightarrow{\ev_{T(X \otimes M)}^M} T(X\otimes M),
  \]
  is invertible.
\end{theorem}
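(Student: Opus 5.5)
The strategy is to apply \cref{lem:lifting-of-parametric-adjunctions} to the tensor--hom parametric adjunction of \(\cat{C}\), with \(\free = \blank\otimes\bblank\colon \cat{C}\times\cat{C}\to\cat{C}\), \(\forget = [\blank,\bblank]\colon \cat{C}\op\times\cat{C}\to\cat{C}\), and \(R=S=T\) all equal to the given monad, and then to invoke the characterisation of left Hopf monads from \cite{BruguieresLackVirelizier}. By \cref{thm:gabi-bijection}, the gabi-monad \((T,s,\act_\tu)\) determines the lift \([\blank,\bblank]_T\) of the internal hom, whose mixed lifting datum in the sense of \cref{prop:lifting-co-and-contra} is exactly \(s\). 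Through \cref{lem:mixed-lifting-param-lifting}, the associated parametric lifting datum is \(\chi^{\boldsymbol{M}}_X = s_{M,X}\circ T[\act_M,X] = \gamma^{\boldsymbol{M}}_X\). The displayed composite is just \cref{def:mate} written out for the adjunction \(\blank\otimes M\dashv[M,\blank]\), with unit \(\coev^M\) and counit \(\ev^M\). Hence \cref{lem:lifting-of-parametric-adjunctions} says that every mate is invertible if and only if the parametric adjunction lifts to \(\cat{C}^T\): there is then a functor \(\widehat{\otimes}\colon \cat{C}^T\times\cat{C}^T\to\cat{C}^T\) lifting \(\otimes\), together with lifted \(\coev\) and \(\ev\), such that \(\blank\,\widehat{\otimes}\,\boldsymbol{M}\dashv[\boldsymbol{M},\blank]_T\).

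For the direction in which \(T\) is left Hopf, I would argue as follows. By \cref{ex:T-alg_for_T_hopf_is_closed}, \(\cat{C}^T\) is closed monoidal with the lifted tensor product. Its internal hom is \([M,N]\) with action \([M,\act_N]\circ s^\ell_{M,N}\circ T[\act_M,N]\), and \(\coev\) and \(\ev\) lift. What remains is to check that this lifted hom coincides with the one coming from the gabi-structure \(s\). Both lifts make \(\forget^T\) strict closed with the same unit algebra \((\tu,\act_\tu)\): for a bimonad one has \(\act_\tu = T_0\), the comonoidal counit, and the closed structure induced by a monoidal lift has unit \((\tu,T_0)\). The identification then follows from the bijection in \cref{thm:gabi-bijection}. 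More cheaply, one can observe that the lifted hom is forced to be the right adjoint of \(\blank\,\widehat{\otimes}\,\boldsymbol{M}\), so it is determined by the monoidal lift. This step is delicate, because \(T\) a priori carries two lifts of \([\blank,\bblank]\). I would resolve it by \emph{defining} the Hopf structure's closed lift through the adjunction and noting that \(s\) is recovered from it via \eqref{eq:SfromAct}. Strictly speaking, this requires that the given \(s\) agrees with \(s^\ell\), which I expect to treat as part of the identification.

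For the converse, assume all mates are invertible, and obtain the lifted parametric adjunction described above. The lifted \(\widehat{\otimes}\) is a lift of \(\otimes\), so by \cref{lem:lifting-co-and-co} it corresponds to a lifting datum \(T(\blank\otimes\bblank)\to T\blank\otimes T\bblank\), which is a candidate comonoidal structure. I then need the associator and unitors of \(\cat{C}\) to lift, so that \(\cat{C}^T\) becomes monoidal with \(\forget^T\) strict monoidal; I would expect this to be the main obstacle. My plan is to transport the lift through \cref{thm:bijection_skew_closed_monoidal_structures}. The lifted category \(\cat{C}^T\) is skew-closed by \cref{lem:gabi-monads-lift-skew-closed-structures}, and it now carries lifted parametric adjunctions \(\blank\,\widehat{\otimes}\,\boldsymbol{M}\dashv[\boldsymbol{M},\blank]_T\). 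That theorem then produces a skew-monoidal structure on \(\cat{C}^T\) whose \(\lambda,\rho,\alpha\) are given by the formulas \eqref{eq:mon_closed}. These formulas are built only from \(\coev\), \(\ev\), \(i\), \(j\), and \(\Gamma\), all of which lift, so \(\forget^T\) maps the lifted \(\lambda,\rho,\alpha\) to those of \(\cat{C}\). Since the latter are invertible and \(\forget^T\) is conservative, the lifted structure is monoidal, and in particular normal, which is why no normality hypothesis is needed. The unit object of \(\cat{C}^T\) is \((\tu,\act_\tu)\).

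With this in hand, \(\cat{C}^T\) is monoidal and \(\forget^T\) is strict monoidal, so Moerdijk--McCrudden reconstruction makes \(T\) a bimonad. Moreover, each \(\blank\,\widehat{\otimes}\,\boldsymbol{M}\) has a right adjoint lifted along \(\forget^T\), so \(\cat{C}^T\) is left closed and \(\forget^T\) is strict closed. By \cite[Theorem~3.6]{BruguieresLackVirelizier}, this exactly means that \(T\) is left Hopf.
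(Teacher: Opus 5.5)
Your proposal is essentially the paper's proof. The converse runs through \cref{lem:lifting-of-parametric-adjunctions}, the formulas \eqref{eq:mon_closed}, and conservativity of $\U^T$ exactly as there. For the forward direction, the paper likewise tacitly reads ``$T$ is left Hopf'' as ``the Hopf structure induces the given $(T,s,\act_\tu)$'' (i.e.\ $s=s^\ell$), so that the lifted tensor--hom adjunction involves $[\blank,\bblank]_T$ and the lemma applies.

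You should state that reading as an assumption rather than try to derive it, because neither of your justifications works. Two strict-closed lifts with the same unit algebra need not coincide: \cref{thm:gabi-bijection} does not force this, and the claim $\act_\tu=T_0$ already presupposes the identification. Uniqueness of right adjoints also says nothing about whether the $s$-lift is the right adjoint of $\blank\,\widehat{\otimes}\,\boldsymbol{M}$.
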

\begin{proof}
  Theorem~7.1 of \cite{Moerdijk2002} and Theorem~3.6 of \cite{BruguieresLackVirelizier} show that \(T\) is left Hopf if and only if \( \cat{C}^{T}\) is closed monoidal and the forgetful functor \(\U^{T}\from \cat{C}^{T}\to \cat{C}\) is strict closed monoidal.
  \Cref{lem:lifting-of-parametric-adjunctions} shows that the tensor--hom adjunction of \(\cat{C}\) lifts to \(\cat{C}^{T}\) if and only if the mate of \((\gamma^{\boldsymbol{M}}_X \from T[M, X] \to[M, TX])_{X \in \cat{C}}\) is invertible for every \(\boldsymbol{M} \in \cat[C]^T\).

  First, suppose that \(T\) is left Hopf.
  Then \(\cat{C}^{T}\) is closed monoidal and \(\U^T\) is strict closed monoidal.
  In particular, the tensor--hom adjunction lifts to \(\cat{C}^{T}\),
  so \cref{lem:lifting-of-parametric-adjunctions} implies that all the above mates are invertible.

  Conversely, let all mates of \(\gamma^{\boldsymbol{M}}_X\) be invertible, so that the tensor--hom adjunction lifts as a parametric adjunction.
  By \cref{thm:bijection_skew_closed_monoidal_structures}, the tensor product determines a skew-monoidal structure on \(\cat{C}^{T}\)
  with associator and unitors given by the lifted skew-closed coherence morphisms and the unit and counit of the lifted adjunction, explicitly given by \eqref{eq:mon_closed}.
  The associator and unitors are thus lifted from \(\cat{C}\).
  Since \(\U^T\) is conservative and the coherence morphisms in \(\cat{C}\) are invertible,
  the associator and unitors in \(\cat{C}^{T}\) are invertible as well, making \(\cat{C}^T\) monoidal (not just skew-monoidal), and \(U^T\) strict monoidal.
  Hence, the lifted skew-closed structure is normal-closed by the normality statement in \cref{thm:bijection_skew_closed_monoidal_structures},
  \(\cat{C}^{T}\) is closed monoidal, and \(\U^T\) is strict closed monoidal.
  In other words, \(T\) is left Hopf.
\end{proof}

\cref{thm:lift-to-hopf-monads} allows us to refine the results from \cite[\S5.3]{Berger-Vercruysse-Saracco}; in particular, the relationship between normal gabi-algebras, Hopf algebras, and the invertibility of the plus-minus map.
Recall the definition of a gabi-algebra from \cref{def:gabi-algebra}.

\begin{corollary}\label{cor:gabi-in-the-com-case}
  Let \(k\) be a commutative ring.
  Given a \(k\)-algebra \(A\), we have a bijective correspondence between the following sets of data
  \begin{enumerate}[label=\textnormal{(\alph*)},leftmargin=0.8cm]
    \item\label{item:gabi-Hopf1} Hopf algebra structures on \(A\);
    \item\label{item:gabi-Hopf2} normal gabi-algebra structures on \(A\);
    \item\label{item:gabi-Hopf3} gabi-algebra structures on \(A\) for which the canonical map
    \begin{equation}
      \beta \colon A \otimes A \to A \otimes A, \qquad a\otimes b \mapsto a_+ \otimes a_-b
    \end{equation}
    is invertible.
  \end{enumerate}
\end{corollary}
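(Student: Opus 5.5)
Work with the monad \(T = A\otimes\blank\) on \(\cM\), the category of \(k\)-modules. Gabi-algebra structures \((\varepsilon,\delta)\) on \(A\) correspond to gabi-monad structures on \(A\otimes\blank\) (\cref{ex:gabi-algebras}); by \cref{thm:gabi-bijection} these are in turn skew-closed structures on \({}_A\cM\) lifted from \(\cM\) with strict closed forgetful functor. For \(T\) of this form, Eilenberg--Moore--Watts type arguments give the standard correspondences: bimonad structures are bialgebra structures, and left Hopf monad structures are Hopf algebra structures, since \(A\) is commutative-base and the fusion operator being invertible is equivalent to the existence of an antipode. The plan is to prove (a)\(\Rightarrow\)(b)\(\Rightarrow\)(c)\(\Rightarrow\)(a) at the level of structures, and to check that the composite assignments are mutually inverse.

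For (a)\(\Rightarrow\)(b): a Hopf algebra gives a Hopf monad \(A\otimes\blank\). By \cref{ex:Hopf-monads-are-normal-gabi} this is a normal gabi-monad, with \(\delta(a)=a_{(1)}\otimes S(a_{(2)})\) and the counit as augmentation. This is the same formula as in the example preceding \cref{def:gabi_monad}.

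For (b)\(\Rightarrow\)(c): normality gives that \({}_A\cM\) with the lifted hom is normal-closed. I would then exploit that the lifted hom \([M,\blank]\) on \({}_A\cM\) admits a left adjoint, namely \(\blank\boxtimes M\) of \cref{ex:gabi-algebra-forgetful-functor}. Left normality forces the unit comparison \(\lambda\) for \(\boxtimes\) to be invertible, and right normality forces \(\rho\) to be invertible. Taking \(M=N=A\) with the regular action, \(\rho_A\) should identify with (a twist of) the map \(\beta\). So the main computation is to write the unitor \(\rho_{A}\colon A \to A\boxtimes k\), or more usefully the comparison \(A\otimes X \to A\boxtimes (A\otimes X)\), explicitly in terms of \(a\mapsto a_+\otimes a_-\), and to see that its invertibility is exactly that of \(\beta\). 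This is the step I expect to be the main obstacle. If that identification is awkward, a fallback is to use \cref{thm:lift-to-hopf-monads} in the reverse direction. That requires first showing that, for \(\cM\), the mate of \(\gamma^{\boldsymbol M}\) is invertible whenever it is invertible on the free module \(\boldsymbol M=A\), since any module is a coequaliser of free ones and both sides are right exact in \(M\). It also requires showing that associative normality (N3), via \cref{thm:bijection_skew_closed_monoidal_structures}, makes \(\boxtimes\) agree with \(\otimes_k\) on underlying modules.

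For (c)\(\Rightarrow\)(a): compute the mate of \(\gamma^{\boldsymbol M}_X\) from \cref{thm:lift-to-hopf-monads} for \(T=A\otimes\blank\). Directly from the definitions it is
\[
A\otimes X\otimes M\to A\otimes X\otimes M,\qquad a\otimes x\otimes m\mapsto a_+\otimes x\otimes a_-m .
\]
Given \(\beta^{-1}\), write \(\beta^{-1}(1\otimes b)\) and use it to build an explicit inverse \(a\otimes x\otimes m\mapsto \beta^{-1}(a\otimes 1)\)-twisted expression, acting via the module structure on \(m\). Verifying that this is two-sided inverse uses only that \(\beta\) is a right \(A\)-linear bijection (\(\beta(a\otimes bc)=\beta(a\otimes b)c\)) and that \(\delta\) is multiplicative. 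Then \cref{thm:lift-to-hopf-monads} gives that \(A\otimes\blank\) is left Hopf, hence a bimonad of the form \(A\otimes\blank\) with invertible fusion map, i.e.\ a Hopf algebra. Because \(k\) is commutative, left Hopf implies Hopf for such monads, and the antipode is recovered as \(S(a)=\varepsilon(a_{+})a_-\)-type formula, in fact \(S(a)=a_-\) after using (GA1). Finally, the bijectivity of the correspondences follows because at every stage the structure on \({}_A\cM\) is the same lifted skew-closed structure, determined by \(s\) via \cref{thm:gabi-bijection}, and \(\delta\), \(\varepsilon\) are recovered from \(s\) and \(\act_\tu\) by the formulas of \cref{ex:gabi-algebras}.
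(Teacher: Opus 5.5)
Your steps (a)$\Rightarrow$(b) and (c)$\Rightarrow$(a) are sound. The latter follows the paper's argument via \cref{thm:lift-to-hopf-monads}. Where the paper reduces to the projective generators $k$ and $A$, you instead give an explicit inverse $a\otimes x\otimes m\mapsto a^{[1]}\otimes x\otimes a^{[2]}m$, where $\beta^{-1}(a\otimes 1)=a^{[1]}\otimes a^{[2]}$; only right $A$-linearity of $\beta$ is needed for it.

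The gap is (b)$\Rightarrow$(c), which is the substantive direction. The paper does not reprove it here; it cites \cite[Theorem~5.12]{Berger-Vercruysse-Saracco} for (a)$\Leftrightarrow$(b).

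Your main idea cannot work. Right normality is an internal condition that holds automatically for every lifted structure, since $i$ comes from $\cM$ and $\U^T$ is conservative. Concretely, $X\boxtimes k=(A\otimes k)\otimes_A X\cong A\otimes_A X\cong X$ by \eqref{eq:GA1}. So $\rho$ is invertible for every gabi-algebra and cannot detect $\beta$.

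Your fallback just assumes the crux, namely that (N3) makes $\boxtimes$ agree with $\otimes_k$ on underlying modules. This does not follow from \cref{thm:bijection_skew_closed_monoidal_structures} alone. The torsion-free monad of \cref{thm:normal-gaby-monad-torsion} is a normal gabi-monad whose lifted tensor product is $(M\otimes_R N)/\Tor(M\otimes_R N)$. So some feature specific to $A\otimes\blank$ has to enter.

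One way to supply it along your fallback:
\begin{enumerate}
\item Via the tensor--hom adjunction and co-Yoneda, (N3) for ${}_A\cM$ is equivalent to invertibility of $[X\boxtimes Y,Z]_T\to[X,[Y,Z]_T]_T$.
\item Since $\U^T$ is strict closed, the underlying map is, up to currying, $\Hom(\phi_{X,Y},\U^T Z)$. Here $\phi_{X,Y}\colon X\otimes Y\to X\boxtimes Y=(A\otimes Y)\otimes_A X$, $x\otimes y\mapsto(1\otimes y)\otimes x$, is the lax structure of \cref{prop:closed-mon-bij}.
\item Take $Z=A\otimes W$ free. The $k$-module $W$ is a retract of $A\otimes W$ via $w\mapsto 1\otimes w$ and $\varepsilon\otimes W$. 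This is where the augmentation is used, and exactly what is unavailable for the torsion-free monad.
\item Hence $\Hom(\phi_{X,Y},W)$ is a retract of a bijection for every $k$-module $W$, so $\phi_{X,Y}$ is invertible by Yoneda.
\item Under $A\boxtimes A=(A\otimes A)\otimes_A A\cong A\otimes A$, $(b\otimes c)\otimes a\mapsto ba_+\otimes a_-c$, the map $\phi_{A,A}$ is precisely $\beta$.
\end{enumerate}
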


\begin{proof}
    The bijective correspondence between \ref{item:gabi-Hopf1} and \ref{item:gabi-Hopf2} is the content of \cite[Theorem 5.12]{Berger-Vercruysse-Saracco}, and any Hopf algebra \(A\) is a gabi-algebra with invertible \(\beta\colon a \otimes b \mapsto a_{(1)} \otimes S(a_{(2)})b\).

    Then let \((A,\varepsilon,\delta)\) be a gabi-algebra. In \cref{thm:lift-to-hopf-monads}, take \(\cat{C} = \cM\), the category of \(k\)-modules, and \(T = A \otimes \blank\), with \(\act_\tu(\blank \otimes 1_k) = \varepsilon\) and
    \[s_{V,W} \colon A \otimes \Hom(A \otimes V,W) \to \Hom(V, A \otimes W), \qquad a \otimes f \mapsto \left(v \mapsto a_+ \otimes f(a_- \otimes v)\right)\]
    as in \cref{ex:gabi-algebras}.
    For every \(\boldsymbol{M} = (M,\act_M)\) in \(\prescript{}{A}{\cM}\) and \(W\) in \(\cM\), the associated lifting datum
    \[\gamma^{\boldsymbol{M}}_W \coloneqq s_{M,W}\circ (A \otimes \Hom(\act_M,W)) \colon A \otimes \Hom(M,W) \to \Hom(M,A \otimes W)\]
    is given by
    \[\gamma^{\boldsymbol{M}}_W(a \otimes f)(m) = a_+ \otimes f(a_- \cdot m)\]
    for every \(a \in A\), \(m \in M\), \(f \in \Hom(M,W)\).
    The mate of \(\gamma^{\boldsymbol{M}}_W\) is the natural transformation
    \[\tau^{\boldsymbol{M}}_V \colon (A \otimes V) \otimes M \to A \otimes (V \otimes M), \qquad a \otimes v \otimes m \mapsto a_+ \otimes v \otimes a_-\cdot m,\]
    (notation as in \cite[Remark 3.9]{Berger-Vercruysse-Saracco}).
    Now, by \cref{thm:lift-to-hopf-monads}, \(A \otimes \blank\) is a left Hopf monad if and only if \(\tau^{\boldsymbol{M}}\) is a natural isomorphism for every \((M,\act_M)\) in \(\prescript{}{A}{\cM}\). Since \(k\) is a projective generator in \(\cM\) and \((A,m_A)\) is a projective generator in \(\prescript{}{A}{\cM}\), by taking \(V = k\) and \(\boldsymbol{M} = (A,m_A)\), we conclude that \(A \otimes -\) is a Hopf monad if and only if the canonical map
    \[\beta \colon A \otimes A \to A \otimes A, \qquad a\otimes b \mapsto a_+ \otimes a_-b\]
    from \cite[\S5, (5.1)]{Berger-Vercruysse-Saracco} is invertible.

    By adapting \cite[Proposition 5.4]{BruguieresLackVirelizier} and \cite[\S3.4]{Berger-Vercruysse-Saracco}, Hopf and gabi-algebra structures on an algebra \(A\) are in bijection with left Hopf monad structures and gabi-monad structures on \(A \otimes \blank\), respectively.
    Hence, \ref{item:gabi-Hopf1} and \ref{item:gabi-Hopf3} are in bijective correspondence.
\end{proof}

Summing up, an algebra \(A\) is a Hopf algebra if and only if \(A \otimes \blank\) is a left Hopf monad, if and only if \(A \otimes \blank\) is a gabi-monad for which the \(\tau^{\boldsymbol{M}}\)'s are invertible, if and only if \(A\) is a gabi-algebra whose \(\beta\) is invertible. Thus, a gabi-algebra \(A\) is a Hopf algebra if and only if \(\beta\) is invertible. Although \cite[\S5.3]{Berger-Vercruysse-Saracco} seems to suggest that invertibility of \(\beta\) is not enough, the last claim can also be proved by an elementary argument. Indeed, write \(\beta^{-1}(a \otimes b) = a_{(1)} \otimes a_{(2)}b\), so that
\[
  a_{+(1)} \otimes a_{+(2)}a_- = a \otimes 1 = a_{(1)+} \otimes a_{(1)-}a_{(2)}
\]
for every \(a \in A\). Then
\[a = a_{+(1)} \varepsilon( a_{+(2)}a_-) = a_{+(1)} \varepsilon( a_{+(2)})\varepsilon(a_-)  \stackrel{\eqref{eq:GA1}}{=} a_{(1)} \varepsilon( a_{(2)}) \]
and
\[a = a_{(1)+} a_{(1)-}a_{(2)} \stackrel{\eqref{eq:GA2}}{=} \varepsilon(a_{(1)})a_{(2)}.\]
Therefore, one can apply \cite[Theorem 5.3]{Berger-Vercruysse-Saracco} to conclude that \(\Delta \colon a \mapsto a_{(1)} \otimes a_{(2)}\) is coassociative and counital, making \(A\) a Hopf algebra.

\section{Dyadic gabi-monads and \(*\)-autonomous categories}\label{sec:dyadic-gabi}

The notion of a \(*\)-autonomous category originally appeared in \cite{Barr-autonomous} in the symmetric closed monoidal setting and was later extended to the non-symmetric case in~\cite{Barr-nonsymmetric-autonomous}.
This concept was later revisited by Boyarchenko and Drinfeld~\cite{Boyarchenko-Drinfeld}, who called such categories \emph{Grothendieck--Verdier} due to examples coming from Verdier duality.
We shall always work with the non-symmetric version, unless otherwise stated.

In a \(*\)-autonomous category, it is possible to recover the tensor product purely from the internal-hom functors; see \cref{cor:tp-in-gv}.
This further simplifies the conditions for being Hopf, see \cref{thm:dyadic-gabi}.

\begin{definition}
  Let \(\cat[C]\) be a monoidal category.
  An object \(\dobj\) is said to be a \emph{dualising object} if for every \(X\) in \(\cat[C]\),
  the functor \(\cat[C](\blank \otimes X, \dobj)\) is representable by some \(DX \in \cat[C]\),
  and the resulting functor \(D \from \cat[C]\op \to \cat[C]\) is an equivalence of categories.
  That is, we have the following natural isomorphisms:
  \[
    \cat[C](\blank \otimes X, \dobj) \cong \cat[C](\blank, DX),
    \qquad\qquad
    \cat[C](X \otimes \blank, \dobj) \cong \cat[C](X, D\blank) \cong \cat[C](\blank, D^{-1}X).
  \]
  A \emph{\textup(non-symmetric\textup) \(*\)-autonomous} structure on \(\cat[C]\)
  is the datum of a dualising object \(\dobj \in \cat[C]\).
\end{definition}

Prominent examples of \(*\)-autonomous categories include rigid monoidal categories with the monoidal unit as dualising object.
Here, the dualising functor is monoidal, which is peculiar to this class of examples and does not extend to all \(*\)-autonomous categories.
However, what does extend is that \(*\)-autonomous categories are left and right closed.

\begin{proposition}
  Every non-symmetric \(*\)-autonomous category is left and right closed, with internal homs given by
  \({[X, \blank]} = D(X \otimes D^{-1}(\blank))\), right adjoint to \(\blank \otimes X\), and \({\{X, \blank\}} = D^{-1}(D(\blank) \otimes X) \), right adjoint to \(X \otimes \blank\), for all \(X \in \cat[C]\).
\end{proposition}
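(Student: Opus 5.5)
The plan is a Yoneda argument. We chain the defining natural isomorphisms of the dualising object \(\dobj\) to show that \(D(X \otimes D^{-1}(\blank))\) represents \(\cat[C](\blank \otimes X, \blank)\), and symmetrically for \(\{X,\blank\}\).

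First we record the two bijections that characterise \(D\) and \(D^{-1}\):
\[
  \cat[C](A \otimes B, \dobj) \cong \cat[C](A, DB),
  \qquad
  \cat[C](A \otimes B, \dobj) \cong \cat[C](B, D^{-1}A),
\]
both natural in \(A\) and \(B\). The second one comes from the definition \(\cat[C](X\otimes \blank,\dobj)\cong \cat[C](\blank,D^{-1}X)\). Since \(D\) is an equivalence \(\cat[C]\op\to\cat[C]\) with quasi-inverse \(D^{-1}\), we may use \(Y \cong D D^{-1} Y\) and the fact that \(D\) induces bijections \(\cat[C](B,B')\cong\cat[C](DB',DB)\).

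For the left internal hom, fix \(X\) and compute, naturally in \(A\) and \(Y\):
\[
  \cat[C](A\otimes X, Y) \cong \cat[C](D^{-1}Y, D^{-1}(A\otimes X)) \cong \cat[C]\big((A\otimes X)\otimes D^{-1}Y,\dobj\big).
\]
The first step uses that \(D^{-1}\) is fully faithful contravariant. The second uses the defining bijection of \(D^{-1}\) with \(B = D^{-1}Y\). Applying the associator \(\alpha\), which is invertible since \(\cat[C]\) is monoidal, this becomes
\[
  \cat[C]\big(A\otimes (X\otimes D^{-1}Y),\dobj\big)\cong \cat[C]\big(A, D(X\otimes D^{-1}Y)\big),
\]
which gives \(\blank\otimes X\dashv D(X\otimes D^{-1}\blank)\).

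The right internal hom is the mirror computation:
\[
  \cat[C](X\otimes A, Y)\cong \cat[C](DY, D(X\otimes A)) \cong \cat[C]\big(DY\otimes (X\otimes A),\dobj\big).
\]
Reassociating to \((DY\otimes X)\otimes A\) and using the \(D^{-1}\)-bijection gives \(\cat[C](A, D^{-1}(DY\otimes X))\).

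By the parameter theorem (\cref{prop:parameter-theorem}), these pointwise adjunctions assemble into functors in two variables. We then take \(\tu\) as unit and obtain the closed structures through \cref{thm:bijection_skew_closed_monoidal_structures}, which makes \(\cat[C]\) left closed monoidal and likewise right closed.

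The main point needing care is naturality. Each isomorphism used is natural in all variables, and the contravariance of \(D\) and \(D^{-1}\) is accounted for. Provided we track this, the resulting hom-set bijection is natural in \(A\) and \(Y\), so Yoneda applies and the construction is functorial in \(X\). No further obstacle is expected.
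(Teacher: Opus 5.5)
Your proof is correct and is essentially the paper's argument: the paper proves both adjunctions by the same chain of natural bijections, namely full faithfulness of \(D\) (resp.\ \(D^{-1}\)), the two defining bijections of the dualising object, and a reassociation (which the paper suppresses in its notation). Your closing appeal to \cref{prop:parameter-theorem} and \cref{thm:bijection_skew_closed_monoidal_structures} is harmless but not needed. The formulas \(D(X\otimes D^{-1}(\blank))\) and \(D^{-1}(D(\blank)\otimes X)\) are already bifunctors, and ``left and right closed'' in the statement only asks for these right adjoints to exist.
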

\begin{proof}
  One computes
  \begin{align*}
    \cat[C](X \otimes Y, Z)
    & \cong \cat[C](DZ,D(X \otimes Y))
    \cong \cat[C](DZ \otimes X \otimes Y,\dobj)
    \cong \cat[C](DZ \otimes X,DY) \\
    & \cong \cat[C](Y,D^{-1}(DZ \otimes X)) = \cat[C](Y,\{X,Z\}) \\
    \cat[C](X \otimes Y, Z)
    & \cong \cat[C](X \otimes Y \otimes D^{-1}Z, \dobj)
    \cong \cat[C](X, D(Y \otimes D^{-1}Z))
    \cong \cat[C](X, {[Y,Z]}).\qedhere
  \end{align*}
\end{proof}

A monoidal category which is left and right closed is called \emph{biclosed monoidal}.

\begin{corollary}\label{cor:tp-in-gv}
  The tensor product of a non-symmetric \(*\)-autonomous category \(\cat{C}\) satisfies
  \[
    D^{-1}[X,DY] \cong X \otimes Y \cong D\{Y,D^{-1}X\}.
  \]
\end{corollary}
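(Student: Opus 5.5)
We obtain both isomorphisms directly from the two adjunctions established in the preceding proposition, together with the Yoneda lemma, using the fact that \(D\) is an equivalence with quasi-inverse \(D^{-1}\).

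For the first isomorphism \(D^{-1}[X,DY] \cong X \otimes Y\), we compute, naturally in \(Z\),
\[
  \cat[C](D^{-1}[X,DY], Z) \cong \cat[C](DZ, [X,DY]) \cong \cat[C](DZ \otimes X, DY) \cong \cat[C](DZ \otimes X \otimes Y, \dobj) \cong \cat[C](X \otimes Y, D^{-1}DZ).
\]
The first step uses that \(D\) is a contravariant equivalence, so \(\cat[C](D^{-1}A,Z)\cong\cat[C](DZ,A)\). The second is the tensor--hom adjunction \(\blank\otimes X \dashv [X,\blank]\). The third is the defining representability \(\cat[C](\blank\otimes Y,\dobj)\cong\cat[C](\blank,DY)\), applied to \(DZ\otimes X\), up to the associator. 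The last uses the second defining isomorphism \(\cat[C](W\otimes\blank,\dobj)\cong\cat[C](\blank,D^{-1}W)\) with \(W=DZ\). Since \(D^{-1}DZ\cong Z\), we conclude that \(\cat[C](D^{-1}[X,DY],\blank)\cong\cat[C](X\otimes Y,\blank)\) naturally, and Yoneda gives the isomorphism.

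For the second isomorphism we argue dually with the right closed structure \(X\otimes\blank\dashv\{X,\blank\}\) and the first form of representability:
\[
  \cat[C](Z, D\{Y,D^{-1}X\}) \cong \cat[C](\{Y,D^{-1}X\}, D^{-1}Z) \cong \cdots
\]
It is cleaner, though, to go through a representable in the first variable. We have
\[
  \cat[C](Z\otimes X\otimes Y,\dobj)\cong\cat[C](Z\otimes X, DY)
\]
and we also want to express this in terms of \(\{Y,\cdot\}\). Alternatively, apply the proposition's formula \(\{Y,W\}=D^{-1}(DW\otimes Y)\) with \(W=D^{-1}X\). This gives \(D\{Y,D^{-1}X\}\cong DD^{-1}(DD^{-1}X\otimes Y)\cong X\otimes Y\), which is immediate. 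The same approach handles the first isomorphism even more directly: \([X,DY]=D(X\otimes D^{-1}DY)\cong D(X\otimes Y)\), hence \(D^{-1}[X,DY]\cong X\otimes Y\).

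So the plan is simply to substitute into the explicit formulas for the internal homs and cancel \(DD^{-1}\cong\Id\) and \(D^{-1}D\cong\Id\). The only point requiring care, which is mild, is keeping track of which of \(D\) and \(D^{-1}\) arises from which representability, so that the cancellations are legitimate. These unit/counit isomorphisms of the adjoint equivalence are natural, so the resulting isomorphisms are natural in \(X\) and \(Y\).
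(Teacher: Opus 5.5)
Your proposal is correct and is essentially the paper's argument: the paper states this corollary without proof as an immediate consequence of the preceding proposition, and substituting \(Z=DY\) into \([X,Z]\cong D(X\otimes D^{-1}Z)\) and \(Z=D^{-1}X\) into \(\{Y,Z\}\cong D^{-1}(DZ\otimes Y)\), then cancelling \(D^{-1}D\cong\Id\cong DD^{-1}\), is exactly the intended reasoning. Your Yoneda computation for the first isomorphism is also valid, though it merely re-derives the proposition; in a written version, delete the abandoned ``\(\cong\cdots\)'' line for the second isomorphism.
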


Before we can discuss gabi-monads that lift \(*\)-autonomous structures, we need to study how the two internal homs interact.

\begin{lemma}\label{lem:biclosed-is-double-the-fun}
  Let \( \cat{C}\) be a biclosed monoidal category with unit \(\tu\) and left, respectively right internal homs \([ \blank , \bblank], \{ \blank , \bblank \} \from \cat{C}\op\times \cat{C} \to \cat{C}\).
  Then, there is a family of isomorphisms \((\xi_{X,Y,Z} \from  [X, \{Y, Z\}] \to \{Y, [X, Z]\})_{X,Y,Z \in \cat{C}}\), natural in all three variables, given by
  \begin{equation}\label{eq:interchange-between-internal-homs}
    \begin{tikzcd}[ampersand replacement=\&,cramped]
      {[X, \{Y, Z\}]  } \&\& {\{Y,Y \otimes [X,\{Y,Z\}]\}} \\
      \\
      \&\& {\{Y,[X,Y \otimes [X,\{Y,Z\}] \otimes X]\}} \\
      \\
      {\{Y, [X, Z]\}} \&\& {\{Y,[X,Y \otimes \{Y,Z\}]\}}
      \arrow["{\coev^{r,Y}_{[X, \{Y, Z\}]}}", from=1-1, to=1-3]
      \arrow["{\xi_{X,Y,Z}}"', dashed, from=1-1, to=5-1]
      \arrow["{\{Y, \coev^{\ell,X}_{Y\otimes [X,\{Y,Z\}]}\}}", from=1-3, to=3-3]
      \arrow["{\{Y,[X,Y\otimes \ev^{\ell,X}_{\{Y,Z\}}]\}}", from=3-3, to=5-3]
      \arrow["{\{Y,[X,\ev^{r,Y}_Z]\}}", from=5-3, to=5-1]
    \end{tikzcd}
  \end{equation}
\end{lemma}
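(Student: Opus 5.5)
The cleanest route is Yoneda rather than a direct diagram chase. For every object \(W\) there are bijections, natural in \(W,X,Y,Z\):
\[
  \cat{C}(W,[X,\{Y,Z\}]) \cong \cat{C}(W\otimes X,\{Y,Z\}) \cong \cat{C}(Y\otimes (W\otimes X), Z),
\]
\[
  \cat{C}(W,\{Y,[X,Z]\}) \cong \cat{C}(Y\otimes W,[X,Z]) \cong \cat{C}((Y\otimes W)\otimes X, Z).
\]
The associator \(\alpha_{Y,W,X}\) links the two right-hand sides, giving a natural bijection \(\cat{C}(W,[X,\{Y,Z\}])\cong\cat{C}(W,\{Y,[X,Z]\})\). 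By the Yoneda lemma this bijection is induced by a unique natural isomorphism \([X,\{Y,Z\}]\to\{Y,[X,Z]\}\). Naturality in \(X,Y,Z\) follows from naturality of the adjunction isomorphisms and of \(\alpha\), since each bijection is natural (or dinatural, in the parametrised case) in the parameter variables.

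It remains to identify this isomorphism with the composite \eqref{eq:interchange-between-internal-homs}. Evaluate the Yoneda bijection at \(W=[X,\{Y,Z\}]\) on \(\id_W\), and track \(\id_W\) through the four adjunction steps.

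1. Transposing \(\id_W\) under \(\blank\otimes X\dashv[X,\blank]\) gives \(\ev^{\ell,X}_{\{Y,Z\}}\colon W\otimes X\to\{Y,Z\}\).
2. Transposing under \(Y\otimes\blank\dashv\{Y,\blank\}\) gives \(\ev^{r,Y}_Z\circ(Y\otimes \ev^{\ell,X}_{\{Y,Z\}})\colon Y\otimes W\otimes X\to Z\). Here associators are suppressed by coherence, which is exactly where \(\alpha\) enters.
3. Transposing back, first along \(X\), produces \([X,\ev^{r,Y}_Z\circ(Y\otimes\ev^{\ell,X})]\circ\coev^{\ell,X}_{Y\otimes W}\colon Y\otimes W\to[X,Z]\).
4. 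Transposing along \(Y\) then yields
\[
  \{Y,[X,\ev^{r,Y}_Z]\}\circ\{Y,[X,Y\otimes\ev^{\ell,X}_{\{Y,Z\}}]\}\circ\{Y,\coev^{\ell,X}_{Y\otimes W}\}\circ\coev^{r,Y}_W,
\]
which is precisely the displayed composite \(\xi_{X,Y,Z}\).

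This computation is a routine application of the standard formula for transposition, \(f\mapsto [X,f]\circ\coev\) and \(g\mapsto \ev\circ(g\otimes X)\), together with functoriality of the homs. The inverse is obtained symmetrically, starting from \(\id\) on \(\{Y,[X,Z]\}\), but it is not needed explicitly, since invertibility already comes from Yoneda.

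The main care point is bookkeeping of the associator. The paper suppresses it in writing \(Y\otimes[X,\{Y,Z\}]\otimes X\), so I would either invoke Mac Lane coherence to work strictly or insert \(\alpha\) explicitly in step 2. Since \(\cat{C}\) is genuinely monoidal, so that \(\alpha\) is invertible, the bijection is well defined and hence \(\xi\) is an isomorphism. This invertibility of \(\alpha\) is the only place the non-skew hypothesis is used.
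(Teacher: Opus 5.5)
Your proposal is correct and follows the paper's own argument. The paper also obtains \(\xi\) from the chain of adjunction bijections \(\cat{C}(A,[X,\{Y,Z\}])\cong\cat{C}(A\otimes X,\{Y,Z\})\cong\cat{C}(Y\otimes A\otimes X,Z)\cong\cat{C}(A,\{Y,[X,Z]\})\) together with Yoneda; your tracking of the identity through the transpositions, with the associator made explicit, spells out the identification with the displayed composite that the paper leaves to the reader.
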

\begin{proof}
  Diagram~\eqref{eq:interchange-between-internal-homs} is obtained by considering for each \(A \in \cat{C}\) the chain of adjunctions
  \begin{equation*}
    \cat{C}(A, [X, \{Y,Z\}])
    \cong \cat{C}(A \otimes X, \{Y,Z\})
    \cong \cat{C}(Y \otimes A \otimes X, Z)
    \cong \cat{C}(A, \{Y, [X,Z]\}).\qedhere
  \end{equation*}
\end{proof}

Given a monad \(T\) on a  biclosed monoidal category \(\cat{C}\), there are two different closed structures one may consider lifting in the sense of \cref{thm:gabi-bijection}:
the adjoint closed structure of \(\blank \otimes X\), and that of \(X \otimes \blank\).
In these cases, we speak of \emph{left}, \emph{right} gabi-monads.

\begin{definition}\label{def:double-gabi-monad}
  A \emph{dyadic gabi-monad} on a biclosed monoidal category \(\cat{C}\) is a quadruple \((T,s,t, \act_{\tu})\) such that \((T,s,\act_{\tu})\) and \((T,t,\act_{\tu})\) are left and right gabi-monads, respectively, and
  the following diagram commutes for every \(X,Y,Z\) in \(\cat{C}\):
  \begin{equation}\label{eq:dyadic}
    \begin{tikzcd}[ampersand replacement=\&,column sep=3.5em]
      {T[TX,\{TY,Z\}]} \& {T\{TY,[TX,Z]\}} \\
      {[X,T\{TY,Z\}]} \& {\{Y,T[TX,Z]\}} \\
      {[X,\{Y,TZ\}]} \& {\{Y,[X,TZ]\}}
      \arrow["{{T\xi_{TX,TY,Z}}}", from=1-1, to=1-2]
      \arrow["{{s_{X,\{TY,Z\}}}}"', from=1-1, to=2-1]
      \arrow["{{t_{Y,[TX,Z]}}}", from=1-2, to=2-2]
      \arrow["{{[X,t_{Y,Z}]}}"', from=2-1, to=3-1]
      \arrow["{{\{Y,s_{X,Z}\}}}", from=2-2, to=3-2]
      \arrow["{{\xi_{X,Y,TZ}}}"', from=3-1, to=3-2]
    \end{tikzcd}
  \end{equation}

  A dyadic gabi-monad is \emph{normal} if its left and right gabi-monad structures are normal.
\end{definition}

\begin{proposition}\label{prop:dgas-recover-nice-biclosed-structures}
  Consider a monad \(T\) on a biclosed monoidal category \( \cat{C}\).
  There is a bijection between
  \begin{enumerate}[label=\textnormal{(\arabic*)},leftmargin=0.8cm]
    \item dyadic gabi-monad structures on \(T\) and
    \item pairs \(\big([ \blank , \bblank]_{T}, \{ \blank , \bblank \}_{T}\big)\) of skew-closed structures on \( \cat{C}^{T}\), such that:
    \begin{enumerate}[label=\textnormal{(\alph*)}]
      \item \(\U^{T}\) is strict closed with respect to both skew-closed structures,
      \item \([ \blank , \bblank]_{T}\) and \(\{ \blank , \bblank \}_{T}\) share the same closed unit \((\tu, \act_\tu)\), and
      \item there is a natural isomorphism
      \begin{equation*}
        \boldsymbol{\xi}_{\boldsymbol{X}, \boldsymbol{Y}, \boldsymbol{Z}}\from
        [\boldsymbol{X}, \{\boldsymbol{Y}, \boldsymbol{Z}\}_{T}]_{T} \to
        \{\boldsymbol{Y}, [\boldsymbol{X}, \boldsymbol{Z}]_{T}\}_{T},
      \end{equation*}
      with \(\U^{T}(\boldsymbol{\xi}_{\boldsymbol{X}, \boldsymbol{Y}, \boldsymbol{Z}}) = \xi_{X,Y,Z}\).
    \end{enumerate}
  \end{enumerate}

  Under this correspondence, normal dyadic gabi-monad structures on \(T\) correspond to
  normal closed structures \([ \blank , \bblank]_{T}\) and \(\{ \blank , \bblank \}_{T}\).
\end{proposition}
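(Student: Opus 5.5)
The plan is to reduce everything to \cref{thm:gabi-bijection}, applied twice, and then to translate the compatibility condition (c) into the dyadic axiom \eqref{eq:dyadic} using the lifting calculus of \cref{thm:all-liftings-data}.

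First, \cref{thm:gabi-bijection} applied to the left closed structure \([\blank,\bblank]\) gives a bijection between left gabi-monad structures \((T,s,\act_\tu)\) and skew-closed structures \([\blank,\bblank]_T\) on \(\cat{C}^T\) for which \(\U^T\) is strict closed. The same theorem applied to \(\{\blank,\bblank\}\) gives a bijection between right gabi-monad structures \((T,t,\act'_\tu)\) and skew-closed structures \(\{\blank,\bblank\}_T\) with \(\U^T\) strict closed. The lifted closed unit is \((\tu,\act_\tu)\), respectively \((\tu,\act'_\tu)\). Hence condition (b) is exactly the requirement \(\act_\tu=\act'_\tu\). So pairs satisfying (a) and (b) correspond to quadruples \((T,s,t,\act_\tu)\) in which \((T,s,\act_\tu)\) and \((T,t,\act_\tu)\) are left and right gabi-monads.

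Second, I treat condition (c). Since \(\U^T\) is faithful and \(\xi\) is already invertible in \(\cat{C}\), (c) is equivalent to requiring that each \(\xi_{X,Y,Z}\) be a morphism of \(T\)-algebras. In other words, \(\xi\), viewed as a natural transformation between the two functors \(\cat{C}\op\times\cat{C}\op\times\cat{C}\to\cat{C}\), must lift to the composite lifted functors \([\blank,\{\blank,\bblank\}_T]_T\) and \(\{\blank,[\blank,\bblank]_T\}_T\). Its inverse then lifts automatically, because the inverse of an algebra isomorphism is an algebra morphism. Concretely, the condition reads
\[
\xi\circ(\act_X\star(\act_Y\star\act_Z)) = (\act_Y\star(\act_X\star\act_Z))\circ T\xi
\]
for all \(T\)-algebras \(\mb X,\mb Y,\mb Z\). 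Here the composite actions are those of \eqref{eq:ActfromS}, built from \(s\) on the outer hom and \(t\) on the inner hom (and conversely on the other side). The task is therefore to show that this family of equations is equivalent to \eqref{eq:dyadic}, in the spirit of the equivalence of the two squares in \cref{thm:all-liftings-data}.

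For ``\eqref{eq:dyadic} \(\Rightarrow\) (c)'', I precompose \eqref{eq:dyadic} with \(T[\act_X,\{\act_Y,Z\}]\) and postcompose with \(\{Y,[X,\act_Z]\}\). Then I use naturality of \(\xi\) in all three variables, naturality of \(s\) and \(t\), and the identity \([X,\act_{\{Y,Z\}}]\circ[X,t]=\ldots\) to unfold both composite actions. The key point is that the inner action on \(\{Y,Z\}\) is \(\{Y,\act_Z\}\circ t_{Y,Z}\circ T\{\act_Y,Z\}\), and that this can be commuted past \(s\) by naturality of \(s\) in its second argument.

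For ``(c) \(\Rightarrow\) \eqref{eq:dyadic}'', I specialise to free algebras \(\mb X=\free^T X\), \(\mb Y=\free^T Y\), \(\mb Z=\free^T Z\). I precompose with \(T[TX,\{TY,\eta_Z\}]\) and postcompose with \(\{\eta_Y,[\eta_X,TZ]\}\) (equivalently with the corresponding \(\xi\)-shifted maps). Then the recovery formula \eqref{eq:SfromAct}, in the form already used in the proof of \cref{thm:gabi-bijection} (the identities \(s'=s\) and \(t'=t\)), turns both sides into the two legs of \eqref{eq:dyadic}. This step is the main obstacle. The two actions are nested, and one must carefully move \(T\eta\), \(\mu\) and the unit laws through \(s\), \(t\) and \(\xi\) using \eqref{eq:lifting_homs} and its right analogue for \(t\). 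I would draw the large diagram in two stages: first strip off the outer \(s\) (respectively \(t\)), then the inner one. Finally, I check that the two assignments are mutually inverse; this is inherited from \cref{thm:gabi-bijection}, because (c) is a property of the pair and not extra data.

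The normality statement is then immediate. By \cref{def:normal_gabi_monad} and \cref{def:double-gabi-monad}, normality of the dyadic gabi-monad means precisely that the two lifted skew-closed structures \([\blank,\bblank]_T\) and \(\{\blank,\bblank\}_T\) are normal-closed, so under the bijection above the normal dyadic structures correspond to the normal pairs.
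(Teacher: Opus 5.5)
Your proposal is correct and follows essentially the same route as the paper: two applications of \cref{thm:gabi-bijection} handle (a) and (b), and (c) reduces to $\xi$ being a morphism of $T$-algebras. The direction (c)$\Rightarrow$\eqref{eq:dyadic} is obtained exactly as in the paper's figure, by specialising to free algebras, precomposing with $T[TX,\{TY,\eta_Z\}]$ and postcomposing with $\{\eta_Y,[\eta_X,TZ]\}$; your explicit unfolding of the converse via naturality of $s$, $t$ and $\xi$ fills in what the paper only calls a straightforward check, and it does go through.
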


\begin{proof}
  Applying~\cref{thm:gabi-bijection} twice shows that there is a bijection between
  \begin{enumerate}[leftmargin=*]
    \item quadruples \((T,s,t,\act_{\tu})\) such that \((T,s,\act_{\tu})\) is a left gabi-monad and \((T,t, \act_{\tu})\) is a right gabi-monad, and
    \item  pairs \(\big([ \blank , \bblank]_{T}, \{ \blank , \bblank \}_{T}\big)\) of skew-closed structures on \( \cat{C}^{T}\) that share a closed unit and \(\U^{T}\) is strict closed with respect to both of them.
  \end{enumerate}
  Hence it remains to show that the canonical isomorphism \(\xi_{X,Y,Z}\from [X, \{Y,Z\}] \to \{Y, [X,Z]\}\) \eqref{eq:interchange-between-internal-homs} lifts to \( \cat{C}^{T}\)
  if and only if Equation~\eqref{eq:dyadic} holds, which is a straightforward check.
  For instance, \cref{fig:dyadic-gabimonad-lift-xi} shows how \eqref{eq:dyadic} descends from the fact that \(\xi\) is a morphism of \(T\)-algebras.
\end{proof}

The next lemma is a reformulation,
in a setup more convenient for our purposes,
of some known necessary and sufficient conditions for a closed category to be \(*\)-autonomous.
See \cite[Definition~2.4 and Section~6]{Barr-nonsymmetric-autonomous}.
The proof is added for the sake of completeness.

\begin{lemma}\label{lem:grothendieck-verdier-from-biclosed}
  Let \( \cat{C}\) be a category with two normal-closed structures \([ \blank , \bblank], \{\blank, \bblank\} \from \cat{C}\op\times \cat{C} \to \cat{C}\) that share a closed unit \(\tu \in \cat{C}\) and satisfy \([X, \{Y, Z\}] \cong \{Y, [X, Z]\}\) naturally for all \(X, Y, Z \in \cat{C}\).
  For every \(\dobj\in \cat{C}\) the following are equivalent:
  \begin{enumerate}[label=\textnormal{(\arabic*)},leftmargin=0.8cm]
    \item\label{item:GVbiclosed1} \([ \blank ,\dobj]\from \cat{C}\op\to \cat{C}\) is an equivalence of categories with quasi-inverse \(\{ \blank , \dobj\} \from \cat{C}\op\to \cat{C}\).
    \item\label{item:GVbiclosed2} \((\cat{C},\dobj)\) is a non-symmetric \(*\)-autonomous category with internal hom \([ \blank , \bblank]\) and \(\{ \blank , \bblank\}\).
  \end{enumerate}
\end{lemma}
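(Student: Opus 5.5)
The plan is to prove both implications by comparing hom-sets. The tools are left normality \ref{item:N1} of both structures, the interchange isomorphism \([X,\{Y,Z\}]\cong\{Y,[X,Z]\}\), and, for the implication \ref{item:GVbiclosed1}\(\Rightarrow\)\ref{item:GVbiclosed2}, the formula of \cref{cor:tp-in-gv} used as a \emph{definition} of the tensor product.

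\ref{item:GVbiclosed2}\(\Rightarrow\)\ref{item:GVbiclosed1}. This direction is short. In a non-symmetric \(*\)-autonomous category the tensor–hom adjunctions give natural bijections
\[
  \cat{C}(W,[X,\dobj])\cong\cat{C}(W\otimes X,\dobj)\cong\cat{C}(W,DX).
\]
By Yoneda, \([\blank,\dobj]\cong D\). Symmetrically, \(\cat{C}(W,\{X,\dobj\})\cong\cat{C}(X\otimes W,\dobj)\cong\cat{C}(W,D^{-1}X)\), so \(\{\blank,\dobj\}\cong D^{-1}\). Since \(D\) is an equivalence with quasi-inverse \(D^{-1}\), statement \ref{item:GVbiclosed1} follows.

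\ref{item:GVbiclosed1}\(\Rightarrow\)\ref{item:GVbiclosed2}. Write \(D\defeq[\blank,\dobj]\), so that \(D^{-1}=\{\blank,\dobj\}\). First I derive a ``Galois'' bijection. Using \ref{item:N1} for both structures together with the interchange isomorphism,
\[
  \cat{C}(A,\{B,\dobj\})\cong\cat{C}(\tu,[A,\{B,\dobj\}])\cong\cat{C}(\tu,\{B,[A,\dobj]\})\cong\cat{C}(B,[A,\dobj]),
\]
naturally in \(A\) and \(B\). Replacing \(\dobj\) by an arbitrary \(Z\), the same chain gives \(\cat{C}(A,\{B,Z\})\cong\cat{C}(B,[A,Z])\).

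Next, guided by \cref{cor:tp-in-gv}, I define
\[
  X\otimes Y\defeq D^{-1}[X,DY]=\{[X,[Y,\dobj]],\dobj\}.
\]
I then check that \(\blank\otimes Y\dashv[Y,\blank]\) and \(X\otimes\blank\dashv\{X,\blank\}\). Since \(D^{-1}\) is a contravariant equivalence,
\[
  \cat{C}(X\otimes Y,Z)\cong\cat{C}(DZ,[X,DY]).
\]
The Galois bijection and the interchange turn this into
\[
  \cat{C}(X,\{DZ,DY\})\cong\cat{C}(X,[Y,D^{-1}DZ])\cong\cat{C}(X,[Y,Z]).
\]
For the middle step I need an internal duality isomorphism \(\{DZ,DY\}\cong[Y,Z]\). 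I will obtain it by Yoneda from
\[
  \cat{C}(W,\{DZ,DY\})\cong\cat{C}(DZ,[W,DY])\cong\cat{C}(DZ,D(\cdots))
\]
combined with full faithfulness of \(D\). Concretely, I use \([W,[Y,\dobj]]\cong[W\otimes' Y,\dobj]\)-type rewrites, which are available through associative normality \ref{item:N3}. The right adjunction \(X\otimes\blank\dashv\{X,\blank\}\) is handled by the mirror computation, using \(D\{Y,D^{-1}X\}\).

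With the adjunctions \(\blank\otimes Y\dashv[Y,\blank]\) in hand, \cref{thm:bijection_skew_closed_monoidal_structures} produces a skew-monoidal structure on \((\cat{C},\otimes,\tu)\) whose coherence maps are invertible, because \([\blank,\bblank]\) is normal-closed. Hence \(\cat{C}\) is biclosed monoidal with the given internal homs. Finally, \(\cat{C}(W\otimes X,\dobj)\cong\cat{C}(W,[X,\dobj])=\cat{C}(W,DX)\), and \(D\) is an equivalence by \ref{item:GVbiclosed1}, so \(\dobj\) is dualising.

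The main obstacle is the internal duality step \(\{DZ,DY\}\cong[Y,Z]\), equivalently the identification \(D(X\otimes Y)\cong[X,DY]\) compatible with both homs. This is exactly where the hypotheses must be used beyond the external hom-set normality. My first attempt will be to compare representable functors via the interchange isomorphism applied with \(Z=\dobj\). If that does not close, I will fall back on \ref{item:N3} to express \(\cat{C}(U,[X,[Y,Z]])\) as a coend and transport it through \(D\).
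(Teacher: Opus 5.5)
Your argument is the paper's. It uses the same Galois bijection \(\cat{C}(A,\{B,Z\})\cong\cat{C}(B,[A,Z])\) from left normality and the interchange, the same definition \(X\otimes Y\defeq D^{-1}[X,DY]\), and the same chain of hom-set bijections, with (2)\(\Rightarrow\)(1) handled by Yoneda as you describe.

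The step you flag as the ``main obstacle'' is not one. Applying the interchange with \((Y,DZ,\dobj)\) in place of \((X,Y,Z)\) gives directly \(\{DZ,DY\}=\{DZ,[Y,\dobj]\}\cong[Y,\{DZ,\dobj\}]=[Y,D^{-1}DZ]\cong[Y,Z]\). This is exactly the identity the paper uses, and it is already the interchange step in your own chain. You should therefore drop the (N3)/\(\otimes'\) fallback, which would be circular because no tensor product is available at that stage.

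Likewise, \(X\otimes\blank\dashv\{X,\blank\}\) needs no mirror computation: \(\cat{C}(X\otimes Y,Z)\cong\cat{C}(X,[Y,Z])\cong\cat{C}(Y,\{X,Z\})\) by your Galois bijection.
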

\begin{proof}
  We begin by showing \(\ref{item:GVbiclosed1} \Rightarrow \ref{item:GVbiclosed2}\).
  Let us write  \(D= [ \blank ,\dobj] \) and \(R =\{ \blank ,\dobj\}\).
  Note that
  \begin{equation}\label{eq:D_closed}
      \{DY,DX\} = \{DY,[X,d]\} \cong [X,\{DY,d\}] \cong [X,Y]
  \end{equation}
  and that by left normality \ref{item:N1} we obtain
    \begin{equation}\label{eq:swap_int_homs}
      \cat{C}(X, [Y, Z])
      \cong \cat{C}(\tu, \{X, [Y,Z]\})
      \cong \cat{C}(\tu, [Y,\{X,Z\}])
      \cong \cat{C}(Y,\{X,Z\}).
    \end{equation}

  Define \(X\otimes Y \eqdef R[X, DY]\).
  Then
  \begin{align*}
    \cat{C} (X\otimes Y, Z)
    & = \cat{C}(R[X,DY],Z)
      \cong \cat{C}(DZ, [X, DY])
      \stackrel{\eqref{eq:swap_int_homs}}{\cong} \cat{C}(X,\{DZ, DY\}) \stackrel{\eqref{eq:D_closed}}{\cong} \cat{C}(X, [Y,Z]).
  \end{align*}
  Therefore, we have for every \(Y\in \cat{C}\) an adjunction \( \blank \otimes Y \dashv[Y, \blank]\), turning \( \cat{C}\) into a closed monoidal category by \cref{thm:bijection_skew_closed_monoidal_structures}.
  In addition, \(\cat[C](X \otimes Y,\dobj) \cong \cat[C](X, [Y,\dobj]) = \cat[C](X, DY)\).
  Moreover, by left normality again
  \[
    \cat{C}(X \otimes Y, Z)
    \cong \cat{C}(X, [Y, Z])
    \cong \cat{C}(\tu, \{X, [Y, Z]\})
    \cong \cat{C}(\tu, [Y, \{X, Z\}])
    \cong \cat{C}(Y, \{X, Z\})
  \]

  If \(( \cat{C}, \dobj)\) is \(*\)-autonomous, it is in particular biclosed monoidal and hence \(\ref{item:GVbiclosed2} \Rightarrow \ref{item:GVbiclosed1}\).
\end{proof}

Let \(\cat{C}\) be a biclosed monoidal category.
A Hopf monad structure \((\varphi_0,\varphi,s^\ell,s^r)\) on a monad \(T\) on \(\cat{C}\) induces left and right gabi-monad structures \((T,s^\ell,\varphi_0)\) and \((T,s^r,\varphi_0)\), where \(s^\ell\) and \(s^r\) are the binary left and right antipodes and \(\varphi_0 \colon T\tu \to \tu\) is the compatibility with the unit object of the bimonad structure.
By \eqref{eq:ActfromS} and \cref{thm:gabi-bijection}, these are precisely the gabi-monad structures determined by the left and right closed structures induced on \(\cat{C}^{T}\).

\begin{theorem}\label{thm:dyadic-gabi}
  Let \(T\) be a monad on a \(*\)-autonomous category \((\cat{C},\dobj)\) and suppose that
  \(\mb{\dobj}=(\dobj,\act_{\dobj})\) is a \(T\)-algebra with underlying object \(d\).
  Then \(T\) admits a Hopf monad structure if and only if it admits a normal dyadic gabi-monad structure.

  In either case, \(\mb{\dobj}\) is dualising in \(\cat{C}^{T}\),
  and \(\U^T\) strictly preserves the \(*\)-autonomous structure.
\end{theorem}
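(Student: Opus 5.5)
The forward direction is the easy one. A Hopf monad \(T\) on the biclosed monoidal category \(\cat{C}\) has left and right antipodes. By the remark preceding the theorem, these give left and right gabi-monad structures \((T,s^\ell,\varphi_0)\) and \((T,s^r,\varphi_0)\), and both share the unit action \(\varphi_0\). By \cite[Theorem~3.6]{BruguieresLackVirelizier} (and its right-handed analogue), \(\cat{C}^T\) is biclosed monoidal and \(\U^T\) is strict monoidal and strict closed on both sides. Hence both lifted skew-closed structures are normal-closed, by \cref{thm:bijection_skew_closed_monoidal_structures}.

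The interchange isomorphism \(\xi\) of \cref{lem:biclosed-is-double-the-fun} is built from the units and counits of the two tensor--hom adjunctions, and these lift to \(\cat{C}^T\). So \(\xi\) lifts, and \cref{prop:dgas-recover-nice-biclosed-structures} gives a normal dyadic gabi-monad. Note that this direction does not use the \(*\)-autonomous structure.

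For the converse, start from a normal dyadic gabi-monad \((T,s,t,\act_\tu)\). By \cref{prop:dgas-recover-nice-biclosed-structures}, \(\cat{C}^T\) carries two normal-closed structures \([\blank,\bblank]_T\) and \(\{\blank,\bblank\}_T\). They share the unit \((\tu,\act_\tu)\), they are related by a lifted natural isomorphism \(\boldsymbol{\xi}\), and \(\U^T\) is strict closed for both. The plan is to apply \cref{lem:grothendieck-verdier-from-biclosed} in \(\cat{C}^T\) with dualising candidate \(\mb{\dobj}\). We must check that \([\blank,\mb{\dobj}]_T\colon (\cat{C}^T)\op\to\cat{C}^T\) is an equivalence with quasi-inverse \(\{\blank,\mb{\dobj}\}_T\).

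In \(\cat{C}\), the functors \(D=[\blank,\dobj]\) and \(D^{-1}\cong\{\blank,\dobj\}\) are mutually quasi-inverse. The unit and counit of this equivalence are the canonical maps \(X\to\{[X,\dobj],\dobj\}\) and \(X\to[\{X,\dobj\},\dobj]\). These can be written purely in closed-category terms: transpose the identity of \([X,\dobj]\) through the bijection \eqref{eq:swap_int_homs}, which uses left normality, \(j\), \(i\), and the interchange \(\xi\). The lifted closed structures have coherence data \(i,j,\Gamma,\xi\) lying over those of \(\cat{C}\), and the bijection \(\widehat{\jmath}\) lifts by normality in \(\cat{C}^T\). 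So these canonical maps are morphisms of \(T\)-algebras. They are invertible in \(\cat{C}\), and \(\U^T\) is conservative, so they are invertible in \(\cat{C}^T\).

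This yields condition \ref{item:GVbiclosed1} of \cref{lem:grothendieck-verdier-from-biclosed}. Condition \ref{item:GVbiclosed2} then follows: \(\cat{C}^T\) is \(*\)-autonomous with dualising object \(\mb{\dobj}\) and internal homs \([\blank,\bblank]_T\), \(\{\blank,\bblank\}_T\). The tensor product is \(\mb{X}\otimes_T\mb{Y}=\{[\mb{X},[\mb{Y},\mb{\dobj}]_T]_T,\mb{\dobj}\}_T\). Its underlying object is \(D^{-1}[X,DY]\cong X\otimes Y\) by \cref{cor:tp-in-gv}. So \(-\otimes_T\mb{Y}\dashv[\mb{Y},-]_T\) lifts the tensor--hom adjunction of \(\cat{C}\).

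The main obstacle is making that last point strict: \(\U^T\) must be strict monoidal and the adjunction must lift on the nose. I would handle this by transport of structure along the canonical isomorphism \(D^{-1}[X,DY]\cong X\otimes Y\) in \(\cat{C}\), putting the \(T\)-action on \(X\otimes Y\) itself. The parametric tensor--hom adjunction then lifts, so by \cref{lem:lifting-of-parametric-adjunctions} all mates of \(\gamma^{\mb M}\) are invertible. \Cref{thm:lift-to-hopf-monads} then gives that \(T\) is left Hopf, and the symmetric argument with \(\{\blank,\bblank\}\) gives right Hopf. Finally, \(\U^T\) preserves \(\dobj\), both internal homs, and the tensor strictly, which is the last assertion of the theorem.
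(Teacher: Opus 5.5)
Your proposal is correct and follows essentially the paper's proof. The forward direction goes through \cite[Theorem~3.6]{BruguieresLackVirelizier} and \cref{prop:dgas-recover-nice-biclosed-structures}. The converse lifts the duality between \([\blank,\dobj]\) and \(\{\blank,\dobj\}\) to \(\cat{C}^T\) using left normality, the lifted interchange and conservativity of \(\U^T\), then invokes \cref{lem:grothendieck-verdier-from-biclosed}.

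The remaining differences are cosmetic. The paper gets strictness by replacing \(\otimes\) on \(\cat{C}\) with \(R[X,DY]\) from the outset, where you transport the action. It concludes via \cite[Theorem~3.6]{BruguieresLackVirelizier} directly, where you go through \cref{thm:lift-to-hopf-monads}, which is proved by that same route. For the dualising claim in the Hopf case it cites \cite[Theorem~5.9]{hasegawa-lemay2018:LinearHopf}; your converse argument also yields this once the forward direction has produced the normal dyadic structure.
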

\begin{proof}
  If \(T\) is a Hopf monad, then the result follows immediately from \cite[Theorem~3.6]{BruguieresLackVirelizier}, \cref{prop:dgas-recover-nice-biclosed-structures}, and \cite[Theorem~5.9]{hasegawa-lemay2018:LinearHopf}.

  Suppose that \((T,s,t,\act_{\tu})\) is a normal dyadic gabi-monad.
  To keep our argument lucid, we note that due to \cref{cor:tp-in-gv}, we can replace the tensor product of \( \cat{C}\) with the monoidally equivalent tensor product
  \begin{equation*}
    X\otimes Y \eqdef R[X, DY],\qquad X, Y \in \cat{C}.
  \end{equation*}

  Using \cref{prop:dgas-recover-nice-biclosed-structures}, let \([\blank,\bblank]_T\) and \(\{\blank,\bblank\}_T\) be the two induced closed structures on \(\cat{C}^T\).
  Define functors
  \[
    D_T=[\blank,\mb{\dobj}]_T \from (\cat{C}^T)\op \to \cat{C}^T
    \qquad\text{and}\qquad
    R_T=\{\blank,\mb{\dobj}\}_T\from (\cat{C}^T)\op \to \cat{C}^T.
  \]
  Left normality of the two closed structures and the lifted interchange give natural
  bijections
  \begin{align*}
    \cat{C}^T(\mb{X},D_T\mb{Y})
    & \cong \cat{C}^T(\mb{\tu},\{\mb{X},[\mb{Y},\mb{\dobj}]_T\}_T)
      \cong \cat{C}^T(\mb{\tu},[\mb{Y},\{\mb{X},\mb{\dobj}\}_T]_T)
      \cong \cat{C}^T(\mb{Y},R_T\mb{X})\\
    & \cong {(\cat{C}^T)}\op(R_T\op\mb{X},\mb{Y}),
  \end{align*}
  and so \(R_T\op \dashv D_T\).
  The unit and counit of this adjunction are natural transformations
  \((\blank) \longrightarrow D_TR_T\) in \(\cat{C}^T\)
  and \(R_TD_T \longrightarrow (\blank)\) in \((\cat{C}^T)\op\).
  Since \(\U^{T}\) strictly preserves both closed structures and the interchange,
  it sends these to the unit and counit of the adjunction \(R\op = \{\blank,d\}\op \dashv [\blank, d] = D\).
  Since these maps are isomorphisms and \(\U^T\) is conservative, \(D_T\) and \(R_T\) must be equivalences of categories as well.
  By \cref{lem:grothendieck-verdier-from-biclosed}, we can thus define a monoidal structure on \(\cat{C}^T\) by
  \[
    \mb{X} \otimes_T \mb{Y} \defeq R_T[\mb{X},D_T\mb{Y}]_T.
  \]
  Since \(\U^T\) is strict closed and \(\U^T \mb{\dobj} = \dobj\), we have
  \begin{align*}
    \U^T(\mb{X} \otimes_T \mb{Y})
    & = \U^TR_T[\mb{X},D_T\mb{Y}]_T
      = \U^T\{[\mb{X},D_T\mb{Y}]_T,\mb{\dobj}\}_T
      = \{[X,[Y,\dobj]],\dobj\} \\
    & = R[X,DY]
      = X \otimes Y
      = \U^T \mb{X} \otimes \U^T \mb{Y}.
  \end{align*}
  From this, we conclude that the tensor--hom adjunction of \( \cat{C}\) lifts to \( \cat{C}^{T}\)
  and so invoking \cref{thm:bijection_skew_closed_monoidal_structures} shows that the coherence morphisms of \( \cat{C}\) lift as well,
  turning \(U^{T}\) into a strict monoidal functor.
  In particular, \(\U^T\) is strict monoidal and so, due to~\cite[Theorem~3.6]{BruguieresLackVirelizier}, \(T\) admits a Hopf monad structure.
  By \cref{eq:ActfromS,thm:gabi-bijection}, the induced gabi-monad structures are the original ones, so this Hopf monad structure induces \((T,s,t,\act_{\tu})\).
  This concludes the proof.
\end{proof}

\section{Examples of (normal) gabi-monads that are not Hopf}\label{sec:gabi-not-hopf}

We will now study skew-closed full subcategories \(\cat{D}\) of skew-closed monoidal categories \(\cat{C}\).
In case the canonical inclusion \(\iota \from \cat{D} \to \cat{C}\) has a left adjoint, \(\cat{D}\) can be identified with the algebras \(\cat{C}^{T}\) of a gabi-monad \(T\) on \(\cat{C}\), see \cref{thm:recognition-of-reflectiveness} and \cref{lem:recognition-of-idempotent-gabi-monads}.
In this setting, \(\cat{D} \simeq \cat{C}^{T}\) even admits a skew-closed monoidal structure, but the tensor product differs substantially from that of \(\cat{C}\), providing us with ample examples of gabi-monads that are not Hopf, including normal ones and ones whose Eilenberg--Moore category is skew-closed monoidal.
In the following, to better fit the examples we are going to discuss, we use the term \emph{full subcategory} to denote a pair \((\cat{D}, \iota)\) of a category \( \cat{D}\) and a fully faithful functor \(\iota \from \cat{D} \to \cat{C}\).

\subsection{Idempotent monads}\label{sec:idempotent-monads}

The following exposition of reflective subcategories and idempotent monads is based on \cite[Chapter 3.5]{borceux1994:HandbookCategoricalAlgebra1} and \cite[Chapter 4.2]{borceux1994:HandbookCategoricalAlgebra2}, respectively.

\begin{definition}\label{def:reflective-subcategory}
  A full subcategory \(\iota \from \cat[D]\to \cat[C] \) is \emph{reflective} if the canonical inclusion \(\iota \from \cat[D] \to \cat[C]\) admits a left adjoint \(\free\from \cat[C] \to \cat[D]\) called the \emph{reflector} of \(\iota\).
  We shall often denote reflective subcategories by \((\cat{D}, \iota, \free)\).
\end{definition}

The category \(\mathsf{Ab}\) of abelian groups is a reflective subcategory of the category \(\mathsf{Grp}\) of all groups.
The reflector is given by mapping a group \(\forget\) to its abelianisation \(\forget/[\forget,\forget]\).
This is an idempotent operation in the sense that \(A \cong A/[A,A]\) for all \(A\in \mathsf{Ab}\).
Equivalently, we observe that the counit of this reflector--inclusion adjunction is an isomorphism.

\begin{definition}\label{def:idemptotent-monad}
A monad \((T,\mu, \eta)\) on a category \( \cat[C]\) is \emph{idempotent} if \(\mu \from T^{2}\to T\) is a natural isomorphism.
\end{definition}

Since the multiplication of a monad associated with an adjunction originates from the counit of the adjunction itself, reflector-inclusion adjunctions give rise to idempotent monads. The following result, formalising this observation, corresponds to Corollary~4.2.4 of \cite{borceux1994:HandbookCategoricalAlgebra2}.

\begin{proposition}\label{thm:recognition-of-reflectiveness}
  Let \(\cat{C}\) be a category.
  There is a bijection
  \begin{equation}\label{eq:recog}
    \begin{aligned}
      \{\text{reflective subcategories } (\cat{D}, \iota)\}/{\sim} & \to\{{\text{\(\cat{C}^T\) for an idempotent monad \(T\) on }} \cat{C}\}/{\sim} \\
      ( \cat{D}, \iota) & \mapsto \cat{C}^{\iota \free}, \qquad \text{ where } \free \text{ is the reflector of } \iota, \\
      ( \cat{C}^{T}, \U^{T}) & \mapsfrom \cat{C}^T,
    \end{aligned}
  \end{equation}
  where both sides are taken up to equivalence of categories.
\end{proposition}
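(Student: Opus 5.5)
The plan is to construct the two assignments of \eqref{eq:recog} explicitly and then check that they are mutually inverse up to equivalence.

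\emph{From reflective subcategories to idempotent monads.} Given a reflective subcategory \((\cat{D},\iota,\free)\) with unit \(\eta\) and counit \(\epsilon\from \free\iota\to\Id_{\cat D}\), set \(T\defeq\iota\free\). Its multiplication is \(\mu=\iota\epsilon\free\). Since \(\iota\) is fully faithful, the counit \(\epsilon\) is an isomorphism. This is the standard fact, proved by Yoneda: \(\cat D(\free\iota A,B)\cong\cat C(\iota A,\iota B)\cong\cat D(A,B)\), naturally in \(B\). Hence \(\mu\) is invertible and \(T\) is idempotent. It then remains to show that the comparison functor \(K\from\cat D\to\cat C^T\), \(A\mapsto(\iota A,\iota\epsilon_A)\), is an equivalence. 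It is fully faithful because \(\U^TK=\iota\) is fully faithful and \(\U^T\) is faithful: any \(T\)-algebra map between \(K A\) and \(K B\) is \(\iota f\) for a unique \(f\). For essential surjectivity, I will use the key lemma for idempotent monads. If \((M,\act_M)\) is a \(T\)-algebra, then \(\act_M\circ\eta_M=\id\). Also \(\eta_{TM}=T\eta_M\), because both are inverse to the isomorphism \(\mu_M\). Using these, I would show \(\eta_M\circ\act_M=\id_{TM}\):
\[
\eta_M\act_M=T\act_M\circ\eta_{TM}=T\act_M\circ T\eta_M=\id .
\]
So \(\eta_M\) is an isomorphism, and \(M\cong TM=\iota\free M\) with its free algebra structure, which is \(K\free M\). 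Thus \(\cat D\simeq\cat C^{\iota\free}\).

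\emph{From idempotent monads to reflective subcategories.} Given an idempotent monad \(T\), the forgetful functor \(\U^T\) is faithful and has left adjoint \(\free^T\). I must show it is full, so that \((\cat C^T,\U^T)\) is a full subcategory. Let \(f\from M\to N\) be any morphism between algebras. The lemma above gives \(\act_M=\eta_M^{-1}\), and then naturality of \(\eta\) yields
\[
f\act_M=f\eta_M^{-1}=\eta_N^{-1}Tf=\act_N\,Tf,
\]
so \(f\) is an algebra morphism. Hence \((\cat C^T,\U^T,\free^T)\) is reflective.

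\emph{Well-definedness and mutual inverseness.} Both assignments respect equivalence. The composite starting from \(\cat D\) returns \(\cat C^{\iota\free}\simeq\cat D\) by the first step. The composite starting from \(T\) returns \(\cat C^{\U^T\free^T}=\cat C^T\) on the nose.

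The only nontrivial point is the identity \(\eta_M\act_M=\id\) for algebras over an idempotent monad, in particular the equality \(\eta_T=T\eta\). I expect this to be the main place requiring care; everything else is formal.
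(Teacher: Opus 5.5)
Your argument is correct, and it is the standard proof behind \cite[Corollary~4.2.4]{borceux1994:HandbookCategoricalAlgebra2}, which the paper simply cites. The paper adds only the remark that \(\mu=\iota\epsilon\free\) is invertible because the counit of a reflection is; your key lemma (\(\eta_{TM}=T\eta_M\), hence \(\act_M=\eta_M^{-1}\) and \(\U^T\) is full) is the content of the cited result.

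One small point is worth stating explicitly. The map \(\eta_M\colon(M,\act_M)\to(TM,\mu_M)\) is itself an algebra morphism, since \(\eta_M\act_M=\id_{TM}=\mu_M\circ T\eta_M\). So the isomorphism \(M\cong K\free M\) holds in \(\cat{C}^T\), not just in \(\cat{C}\).
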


Let \(\cat{C}\) be a skew-closed category.
A subcategory \((\cat{D}, \iota)\) of \(\cat{C}\) is called \emph{skew-closed} if \(\cat{D}\) is skew-closed and \(\iota \from \cat{D} \to \cat{C}\) is strong closed.
Part of the forthcoming \cref{lem:recognition-of-idempotent-gabi-monads}, that is, that reflective subcategories of skew-monoidal categories are skew-monoidal, already appears in \cite{Lack-Street}.
However, its proof becomes more elementary in the context we are interested in, where the focus is on closed structures, hence it is reported for the sake of completeness.

\begin{theorem}\label{lem:recognition-of-idempotent-gabi-monads}
  Let \(\cat{C}\) be a skew-closed category.
  The bijection of \cref{eq:recog} extends to a bijection between
  equivalence classes of \(\cat{C}^T\) for an idempotent gabi-monad \(T\) on \(\cat{C}\)
  and equivalence classes of reflective skew-closed subcategories of \(\cat{C}\).

  Let \(\cat{C}\) be skew-closed monoidal, and let \((\cat{D},\iota,\free)\) be a reflective skew-closed subcategory of \(\cat{C}\).
  Denote by \(\eta\) and \(\epsilon\) the unit and counit of \(\free \dashv \iota\), and let \((\psi_0,\psi)\) be the closed structure of \(\iota\).
  Then \(\cat{D}\) is skew-closed monoidal with respect to the reflected tensor product
  \[
    A \otimes_{\cat{D}} B \coloneqq \free(\iota A \otimes_{\cat{C}} \iota B).
  \]
  In this case, the inclusion \(\iota \colon \cat{D} \to \cat{C}\) is lax monoidal via
  \begin{equation*}
    \phi_{A,B}\from  \iota A \otimes_{\cat{C}} \iota B \xrightarrow{\eta_{\iota A \otimes_{\cat{C}} \iota B}} \iota\left(A \otimes_{\cat{D}} B\right)
    \qquad \text{and} \qquad
    \phi_0 \from  \tu_{\cat{C}} \xrightarrow{\psi_0} \iota(\tu_{\cat{D}}),
  \end{equation*}
  and the reflector \(\free\) is colax monoidal via
  \begin{equation*}
    \varphi_{X,Y} \colon \free(X\otimes_{\cat{C}} Y) \xrightarrow{\free(\eta_X \otimes_{\cat{C}} \eta_Y)} \free X \otimes_{\cat{D}} \free Y \qquad \text{and} \qquad
    \varphi_0 \colon \free(\tu_{\cat{C}}) \xrightarrow{\free(\psi_0)} \free\iota(\tu_{\cat{D}}) \xrightarrow{\epsilon_{\tu_{\cat{D}}}} \tu_{\cat{D}}.
  \end{equation*}
  In particular, the category \(\cat{C}^{T}\) of algebras of any idempotent gabi-monad \(T\) on \(\cat{C}\) is skew-closed monoidal, with tensor product given by
  \begin{equation*}
    \boldsymbol{X} \otimes_{T} \boldsymbol{Y} \coloneqq \free^{T}(\U^T \boldsymbol{X} \otimes \U^T \boldsymbol{Y}),
  \end{equation*}
  and the comparison functor \(K \colon \cat{D} \to \cat{C}^T\) is strong monoidal.
\end{theorem}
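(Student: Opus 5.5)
For the first assertion, the plan is to combine \cref{thm:recognition-of-reflectiveness} with \cref{prop:gabi-moand-VS-strong-closed}. Let \((\cat{D},\iota,\free)\) be a reflective skew-closed subcategory, so that \(\iota\) is strong closed. Applying \cref{prop:gabi-moand-VS-strong-closed} to \(\free\dashv\iota\) gives a gabi-monad structure on \(T=\iota\free\), which is idempotent. It also makes the comparison functor \(K\) strong closed, with \(\U^TK=\iota\) as closed functors. Since \(\iota\) is fully faithful with a left adjoint, \(K\) is an equivalence. Conversely, let \(T\) be an idempotent gabi-monad. Then \(\U^T\) is fully faithful (idempotency makes \(\mu\) invertible) and has the left adjoint \(\free^T\). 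By \cref{thm:gabi-bijection}, \(\U^T\) is strict closed for the lifted structure, so \((\cat{C}^T,\U^T,\free^T)\) is a reflective skew-closed subcategory. These assignments refine those of \eqref{eq:recog}, so they are mutually inverse up to equivalence. The one point to check here is that a strong closed equivalence \(K\) transports skew-closed structures. This is routine: choose an adjoint equivalence and transport \(\psi_0,\psi\).

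For the monoidal part, the approach is to stay on the closed side and use \cref{thm:bijection_skew_closed_monoidal_structures}. We only need a family of adjunctions \(\blank\otimes_{\cat{D}}B\dashv[B,\blank]_{\cat{D}}\). We have
\[
\cat{D}(\free(\iota A\otimes\iota B),C)\cong\cat{C}(\iota A\otimes\iota B,\iota C)\cong\cat{C}(\iota A,[\iota B,\iota C])\cong\cat{C}(\iota A,\iota[B,C]_{\cat{D}})\cong\cat{D}(A,[B,C]_{\cat{D}}),
\]
using successively the reflection, the tensor--hom adjunction in \(\cat{C}\), the isomorphism \(\psi^{-1}\) from strong closedness, and full faithfulness of \(\iota\). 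This is natural in \(A\) and \(C\), and \cref{thm:bijection_skew_closed_monoidal_structures} then yields the skew-monoidal structure on \(\cat{D}\) with \(\otimes_{\cat{D}}\) as stated.

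For the lax structure on \(\iota\), apply \cref{prop:closed-mon-bij} to the closed functor \((\iota,\psi_0,\psi)\) and compute the induced \(\phi_{A,B}\). Unwinding the adjunction above, \(\coev\) in \(\cat{D}\) is the transpose of \(\eta\) through \(\psi\). Combined with \(\ev\) in \(\cat{C}\), this collapses to \(\eta_{\iota A\otimes\iota B}\), and \(\phi_0=\psi_0\). The colax structure on \(\free\) is then the doctrinal-adjunction mate of \((\phi_0,\phi)\) along \(\free\dashv\iota\). Computing this mate should give \(\free(\eta_X\otimes\eta_Y)\), composed with the counit identification \(\free\iota(\free X\otimes\free Y)\)-type isomorphisms, and \(\varphi_0=\epsilon\circ\free\psi_0\).

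The last claims follow by taking \(\cat{D}=\cat{C}^T\) via the first part. For strong monoidality of \(K\), note that \(K\) is an equivalence commuting with the reflectors up to isomorphism, so \(K(A\otimes_{\cat{D}}B)\cong\free^T(\U^TKA\otimes\U^TKB)\). The main obstacle I expect is the explicit identification of the coherence and structure maps, that is, that \(\phi\) is literally \(\eta\) and that the mate has the stated form. This requires carefully tracing \(\coev\) and \(\ev\) through \eqref{eq:mon_closed} and the proof of \cref{prop:closed-mon-bij}.
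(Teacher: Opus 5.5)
Your proposal is correct and follows essentially the same route as the paper: the first part via \cref{thm:recognition-of-reflectiveness} and \cref{prop:gabi-moand-VS-strong-closed}, the monoidal structure via the same chain of natural bijections fed into \cref{thm:bijection_skew_closed_monoidal_structures}, the lax structure on \(\iota\) via \cref{prop:closed-mon-bij}, and the colax structure on \(\free\) as the doctrinal mate. The step you flag as the main obstacle closes cleanly. Once \(\phi_{\free X,\free Y}=\eta_{\iota\free X\otimes\iota\free Y}\), the triangle identity \(\epsilon_{\free Z}\circ\free\eta_Z=\id_{\free Z}\) shows the mate is exactly \(\free(\eta_X\otimes\eta_Y)\), with no additional isomorphism. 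Likewise \(K\free=\free^T\) holds on the nose, and the unit comparison of \(K\) is \(\psi_0\).
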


\begin{proof}
  The first claim follows by combining \cref{thm:recognition-of-reflectiveness} with \cref{prop:gabi-moand-VS-strong-closed}:
    If \(\cat{D}\) is a skew-closed reflective subcategory of \(\cat{C}\), then \(\iota\from \cat{D} \to \cat{C}\) is strong closed by definition,
    and by \cref{thm:recognition-of-reflectiveness} the adjunction \(\free \dashv \iota\) is monadic, so that the idempotent monad \(T = \iota \free\) is a gabi-monad.
    The other direction is immediate.

    Concerning the second claim, the fact that the reflected tensor product induces a skew-closed monoidal structure on \(\cat{D}\) follows from the following computation and \cref{thm:bijection_skew_closed_monoidal_structures}:
    \begin{align*}
        \cat{D}\left(A \otimes_{\cat{D}} B,C\right) & = \cat{D}\left(\free\left(\iota A \otimes_{\cat{C}} \iota B\right),C\right) \cong \cat{C}\left(\iota A \otimes_{\cat{C}} \iota B, \iota C\right) \\
        & \cong \cat{C}\left(\iota A, \left[ \iota B, \iota C \right]_{\cat{C}}\right) \cong \cat{C}\left(\iota A, \iota \left[ B, C \right]_{\cat{D}}\right) \cong \cat{D}\left(A, \left[ B, C \right]_{\cat{D}}\right),
    \end{align*}
    natural in all entries.
    The coevaluation map \(\coev_A^B \colon A \to [B, A \otimes_{\cat{D}} B]_{\cat{D}}\)
    is the unique morphism
    \begin{equation}\label{eq:coev_exp_ideal}
      \begin{tikzcd}[column sep=80pt]
        \iota A & {\iota[B, A \otimes_{\cat{D}} B]_{\cat{D}}} \\
        {[\iota B, \iota A \otimes_{\cat{C}} \iota B]_{\cat{C}}} & {[\iota B, \iota(A \otimes_{\cat{D}} B)]_{\cat{C}}}
        \arrow["{\coev_{\iota A}^{\iota B}}"', from=1-1, to=2-1]
        \arrow["{\iota \coev_A^B}", dotted, from=1-1, to=1-2]
        \arrow["{\psi^{-1}_{B,A \otimes_{\cat{D}} B}}"', to=1-2, from=2-2]
        \arrow["{[\iota B, \eta_{\iota A \otimes_{\cat{C}} \iota B}]}"', from=2-1, to=2-2]
      \end{tikzcd}
    \end{equation}
    The evaluation map \(\ev_A^B \colon [B, A]_{\cat{D}}  \otimes_{\cat{D}} B \to A\)
    is given by the commutativity of the diagram
    \begin{equation}\label{eq:ev_exp_ideal}
      \begin{gathered}
        \begin{tikzcd}[ampersand replacement=\&,cramped]
{[B,A]_{\cat{D}} \otimes_{\cat{D}} B} \arrow[dd, dotted, "{\ev_A^B}"']\arrow[r, equal]\& {\free\left(\iota [B,A]_{\cat{D}} \otimes_{\cat{C}} \iota B \right)} \arrow[d, "{\free(\psi_{B,A} \otimes_{\cat{C}} \iota B)}"]\\
          \& {\free\left([\iota B,\iota A]_{\cat{C}} \otimes_{\cat{C}} \iota B \right)} \arrow[d, "{\free(\ev_{\iota A}^{\iota B})}"]\\
          {A} \& {\free \iota A} \arrow[l, "{\epsilon_A}"]
\end{tikzcd}
      \end{gathered}
    \end{equation}

    The functor \(\iota\) is strong closed by hypothesis, with structure isomorphisms
    \begin{align*}
      \psi_{A,B}  \colon \iota[A,B]_{\cat{D}} \to [\iota A,\iota B]_{\cat{C}}
      \qquad\text{and}\qquad
      \psi_0  \colon \tu_{\cat{C}} \to \iota(\tu_{\cat{D}}).
    \end{align*}
    Hence, by \cref{prop:closed-mon-bij}, \(\iota\) is lax monoidal with
    \[
      \phi_{A,B} \colon \iota A \otimes \iota B \xrightarrow{\iota\coev_A^B \otimes \iota B} \iota[B,A \otimes B] \otimes \iota B \xrightarrow{\psi_{B,A \otimes B} \otimes \iota B} [\iota B, \iota(A \otimes B)] \otimes \iota B \xrightarrow{\ev_{\iota(A \otimes B)}^{\iota B}} \iota(A \otimes B),
    \]
    which coincides with \(\eta_{\iota A \otimes_{\cat{C}} \iota B}\) by the commutativity of
    \[
        \mathscale{0.9}{\begin{tikzcd}[ampersand replacement=\&,column sep=3.15em,row sep=2.25em]
        	{\iota A \otimes_{\cat{C}} \iota B} \&\&\& {\iota [B,A \otimes_{\cat{D}} B] \otimes_{\cat{C}} \iota B} \\
        	{\left[\iota B,\iota A \otimes_{\cat{C}} \iota B\right] \otimes_{\cat{C}} \iota B} \&\& {\left[\iota B,\iota \free\left( \iota A \otimes_{\cat{C}} \iota B\right)\right] \otimes_{\cat{C}} \iota B} \& {\left[\iota B,\iota \left(A \otimes_{\cat{D}} B\right)\right] \otimes_{\cat{C}} \iota B} \\
        	{\iota A \otimes_{\cat{C}} \iota B} \&\& {\iota \free\left( \iota A \otimes_{\cat{C}} \iota B\right)} \& {\iota \left(A \otimes_{\cat{D}} B\right)}
        	\arrow["{\iota\coev_A^B \otimes_{\cat{C}} \iota B}", from=1-1, to=1-4]
        	\arrow["{\coev_{\iota A}^{\iota B} \otimes_{\cat{C}} \iota B}"{description}, from=1-1, to=2-1]
        	\arrow["{\psi_{B,A \otimes_{\cat{D}} B} \otimes_{\cat{C}}\iota B}"{description}, from=1-4, to=2-4]
        	\arrow["{\left[\iota B,\eta_{\iota A \otimes_{\cat{C}} \iota B}\right] \otimes_{\cat{C}} \iota B}", from=2-1, to=2-3]
        	\arrow["{\ev_{\iota A \otimes_{\cat{C}} \iota B}^{\iota B}}"{description}, from=2-1, to=3-1]
        	\arrow[equals, from=2-3, to=2-4]
        	\arrow["{\ev_{\iota \free(\iota A \otimes_{\cat{C}} \iota B)}^{\iota B}}"{description}, from=2-3, to=3-3]
        	\arrow["{\ev_{\iota(A \otimes_{ \cat{D}} B)}^{\iota B}}"{description}, from=2-4, to=3-4]
        	\arrow["{\eta_{\iota A \otimes_{\cat{C}} \iota B}}"', from=3-1, to=3-3]
        	\arrow[equals, from=3-3, to=3-4]
        \end{tikzcd}}
    \]
    and \(\phi_0 \coloneqq \psi_0 \colon \tu_{\cat{C}} \to \iota(\tu_{\cat{D}})\).
    Since \(\free\) is left adjoint to \(\iota\), it is naturally a colax monoidal functor with
    \[
      \mathscale{.9}{
      \varphi_{X,Y} \colon \free\left(X \otimes_{\cat{C}} Y\right) \xrightarrow{\free\left(\eta_X \otimes_{\cat{C}} \eta_Y\right)} \free\left(\iota \free X \otimes_{\cat{C}} \iota \free Y\right) \xrightarrow{\free\left(\phi_{\free X,\free Y}\right)} \free \iota\left(\free X \otimes_{\cat{D}} \free Y\right) \xrightarrow{\epsilon_{\free X \otimes_{\cat{D}} \free Y}}\free X \otimes_{\cat{D}} \free Y}
    \]
    and \(\varphi_0 \colon \free(\tu_{\cat{C}}) \xrightarrow{\free(\psi_0)} \free \iota(\tu_{\cat{D}}) \xrightarrow{\epsilon_{\tu_{\cat{D}}}} \tu_{\cat{D}}\), which is an isomorphism.
    Since \(\phi_{\free X,\free Y} = \eta_{\iota \free X \otimes_{\cat{C}} \iota \free Y}\) and \(\epsilon_{\free X \otimes_{\cat{D}} \free Y} = \epsilon_{\free(\iota \free X \otimes_{\cat{C}} \iota \free Y)}\), we have that \(\varphi_{X,Y} = \free(\eta_X \otimes_{\cat{C}} \eta_Y)\).

    To conclude, observe that the unit coherence of \(K\) is \(\psi_0\), and
    \[
      K(A \otimes_{\cat{D}} B) = K\free(\iota A \otimes \iota B) = \free^T(\iota A \otimes \iota B) = \free^T(\U^TK A \otimes \U^T K B) = KA \otimes_T KB.\qedhere
    \]
\end{proof}

\begin{definition}\label{def:exp-ideal}
  Let \(\cat{C}\) be a skew-closed monoidal category and \(\cat{D}\) a subcategory of \(\cat{C}\) with inclusion functor \(\iota\colon \cat{D} \to \cat{C}\).
  One calls \(\cat{D}\) an \emph{exponential ideal} (in the monoidal sense) if, for every \(X\) in \(\cat{C}\) and \(A\) in \(\cat{D}\),
  the object \([X,\iota A]_{\cat{C}}\) lies in the essential image of \(\iota\);
  i.e., there exists an object \(H_{X,A}\) in \(\cat{D}\) (unique up to isomorphism) such that \([X,\iota A]_{\cat{C}} \cong \iota H_{X,A}\).
\end{definition}

\begin{remark}
  If \((\cat{D},\iota,\free)\) is a reflective subcategory and an exponential ideal in a skew-closed monoidal category \(\cat{C}\), then we call it a \emph{reflective exponential ideal} for the sake of brevity.
  Since \(\cat{D}\) is reflective, being an exponential ideal is equivalent to \(\eta_{[X,\iota A]_{\cat{C}}}\) being an isomorphism for every \(X\) in \(\cat{C}\) and every \(A\) in \(\cat{D}\).
  In this case, one may take \(H_{X,A} = \free[X,\iota A]_{\cat{C}}\).
\end{remark}

The proof of \cref{thm:day-reflection-thm} below follows the same pattern as that of Day's Reflection Theorem~\cite{Day-reflection}
and its skew analogue~\cite{Lack-Street}, with some small adjustments dictated by the perspective we adopted.
It is reported for the convenience of the reader.

\begin{proposition}\label{thm:day-reflection-thm}
  Let \(\cat{C}\) be a skew-closed monoidal category and let \((\cat{D},\iota,\free)\) be a reflective exponential ideal of \(\cat{C}\)
  such that \(\tu_{\cat{C}}\) lies in the essential image of \(\iota\).
  Then \(\cat{D}\) is a skew-closed reflective subcategory of \(\cat{C}\) and is skew-closed monoidal with respect to the reflected tensor product.
  The reflector \(\free \colon \cat{C} \to \cat{D}\) is strong monoidal if and only if \([\eta_X,\iota A]_{\cat{C}} \colon [\iota \free X,\iota A]_{\cat{C}} \to [X,\iota A]_{\cat{C}}\) is an isomorphism for every \(X\) in \(\cat{C}\) and every \(A\) in \(\cat{D}\).
\end{proposition}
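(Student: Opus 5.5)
Three things must be shown. First, \(\cat{D}\) carries a skew-closed structure for which \(\iota\) is strong closed, which makes it a skew-closed reflective subcategory. Second, \cref{lem:recognition-of-idempotent-gabi-monads} then supplies the reflected tensor product. Third, the strong monoidality criterion for \(\free\).

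\textbf{Skew-closed structure on \(\cat{D}\).} Set \([A,B]_{\cat{D}} \defeq \free[\iota A,\iota B]_{\cat{C}}\), and choose \(\tu_{\cat{D}}\) with an isomorphism \(\psi_0\colon \tu_{\cat{C}}\cong \iota\tu_{\cat{D}}\). Because \(\cat{D}\) is an exponential ideal, \(\eta_{[\iota A,\iota B]}\) is invertible. So \(\psi_{A,B}\defeq \eta_{[\iota A,\iota B]}^{-1}\colon \iota[A,B]_{\cat{D}}\to[\iota A,\iota B]_{\cat{C}}\) is an isomorphism, natural in \(A,B\). Since \(\iota\) is fully faithful, the data \(i,j,\Gamma\) of \(\cat{C}\) transport uniquely along \(\psi,\psi_0\) to \(\cat{D}\). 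For instance,
\[
\iota\Gamma^{A}_{B,C}\defeq [\psi_{A,B},\psi^{-1}_{A,C}]\circ\psi^{-1}_{[A,B],[A,C]}\circ\Gamma^{\iota A}_{\iota B,\iota C}\circ\psi_{B,C},
\]
and \(i,j\) are defined analogously, using \(\psi_0\). The axioms (C1)--(C5) in \(\cat{D}\) become, after applying the faithful functor \(\iota\), the axioms in \(\cat{C}\) conjugated by isomorphisms, so they hold. By construction, the closed functor axioms for \((\iota,\psi_0,\psi)\) hold, so \(\iota\) is strong closed. \cref{lem:recognition-of-idempotent-gabi-monads} then makes \(\cat{D}\) skew-closed monoidal with \(A\otimes_{\cat{D}}B=\free(\iota A\otimes\iota B)\) and colax structure \(\varphi_{X,Y}=\free(\eta_X\otimes\eta_Y)\). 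Moreover, \(\varphi_0\) is invertible because \(\psi_0\) is.

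\textbf{Criterion for strong monoidality.} Since \(\varphi_0\) is already invertible, \(\free\) is strong monoidal if and only if every \(\varphi_{X,Y}\) is invertible. By Yoneda in \(\cat{D}\), this holds if and only if \(\cat{D}(\varphi_{X,Y},A)\) is a bijection for all \(A\). Using the adjunctions \(\free\dashv\iota\) and \(\blank\otimes Y\dashv[Y,\blank]\), this map identifies with
\[
\cat{C}(\iota\free X\otimes\iota\free Y,\iota A)\to\cat{C}(X\otimes Y,\iota A),
\qquad\text{precomposition with } \eta_X\otimes\eta_Y.
\]
Factor \(\eta_X\otimes\eta_Y=(\iota\free X\otimes\eta_Y)\circ(\eta_X\otimes Y)\). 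Under the tensor--hom adjunction, precomposition with \(\blank\otimes\eta_Y\) corresponds to postcomposition with \([\eta_Y,\iota A]\). Precomposition with \(\eta_X\otimes Y\) corresponds to \(\cat{C}(\eta_X,[Y,\iota A])\). Since \([Y,\iota A]\cong\iota H_{Y,A}\) lies in \(\cat{D}\), the universal property of \(\eta_X\) makes this last map always bijective. Hence \(\varphi_{X,Y}\) is invertible for all \(X,Y\) if and only if \(\cat{C}(\iota\free X,[\eta_Y,\iota A])\) is bijective for all \(X,Y,A\).

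\textbf{The two directions.} If \([\eta_Y,\iota A]\) is an isomorphism, the bijection is clear. Conversely, suppose \(\cat{C}(\iota\free X,[\eta_Y,\iota A])\) is bijective for all \(X\). Both \([\iota\free Y,\iota A]\) and \([Y,\iota A]\) lie in the essential image of \(\iota\), by the exponential ideal property. Take \(X\) to run over the objects \(\iota B\), \(B\in\cat{D}\), noting \(\iota\free\iota B\cong\iota B\). Then \(\cat{D}(B,\blank)\) applied to the corresponding morphism in \(\cat{D}\) is bijective for all \(B\), so Yoneda in \(\cat{D}\) shows that this morphism is an isomorphism. Since \(\iota\) is fully faithful, \([\eta_Y,\iota A]\) is an isomorphism.

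I expect the main obstacle to be the identification of \(\cat{D}(\varphi_{X,Y},A)\) with precomposition by \(\eta_X\otimes\eta_Y\) through the reflected tensor product. This needs naturality of the adjunction isomorphism from the proof of \cref{lem:recognition-of-idempotent-gabi-monads}. The coherence transport in the first step is routine but notationally heavy.
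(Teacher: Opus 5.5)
Your proposal is correct and follows essentially the paper's proof. You transport the structure to \(\cat{D}\) along \(\psi=\eta^{-1}\), take the reflected tensor product from \cref{lem:recognition-of-idempotent-gabi-monads}, and test \(\free(\eta_X\otimes\eta_Y)\) by Yoneda through a factorisation. The one difference is that you factor as \((\iota\free X\otimes\eta_Y)\circ(\eta_X\otimes Y)\), whereas the paper uses \((\eta_X\otimes\iota\free Y)\circ(X\otimes\eta_Y)\); the paper's order lets \(X\) range over all of \(\cat{C}\), so Yoneda in \(\cat{C}\) applies directly and your extra Yoneda step in \(\cat{D}\) is not needed.

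One slip: your formula for \(\iota\Gamma^{A}_{B,C}\) does not typecheck as written. It should read \(\psi^{-1}_{[A,B]_{\cat{D}},[A,C]_{\cat{D}}}\circ[\psi_{A,B},\psi^{-1}_{A,C}]\circ\Gamma^{\iota A}_{\iota B,\iota C}\circ\psi_{B,C}\).
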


\begin{proof}
  The category \(\cat{D}\) is naturally a skew-closed category with respect to
  \[
    [A,B]_{\cat{D}} \coloneqq \free[\iota A,\iota B]_{\cat{C}}
    \qquad \text{and} \qquad
    \tu_{\cat{D}} \coloneqq \free(\tu_{\cat{C}}),
  \]
  and with respect to
  \begin{align*}
    i_A & \coloneqq \left( [\tu_{\cat{D}},A]_{\cat{D}} = \free[\iota \free\tu_{\cat{C}},\iota A]_{\cat{C}} \xrightarrow{\free[\eta_{\tu_{\cat{C}}},\iota A]_{\cat{C}}} \free[\tu_{\cat{C}},\iota A]_{\cat{C}} \xrightarrow{\free i_{\iota A}} \free\iota A \xrightarrow{\epsilon_A} A \right), \\
    j_A & \coloneqq \left(\tu_{\cat{D}} = \free \tu_{\cat{C}} \xrightarrow{F j_{\iota A}} \free[\iota A,\iota A]_{\cat{C}} = [A,A]_{\cat{D}}\right),
  \end{align*}
  and \(\Gamma_{B,C}^A \colon [B,C]_{\cat{D}} \to [[A,B]_{\cat{D}},[A,C]_{\cat{D}}]_{\cat{D}}\) given by
  \[
    \mathscale{.95}{
      \free[\iota B,\iota C]_{\cat{C}} \xrightarrow{\free\Gamma_{\iota B,\iota C}^{\iota A}} \free[[\iota A,\iota B]_{\cat{C}},[\iota A, \iota C]_{\cat{C}}]_{\cat{C}} \xrightarrow{\free\left[\eta^{-1}_{[\iota A,\iota B]_{\cat{C}}},\eta^{\phantom{-1}}_{[\iota A, \iota C]_{\cat{C}}}\right]_{\cat{C}}} \free[\iota \free[\iota A,\iota B]_{\cat{C}},\iota \free[\iota A, \iota C]_{\cat{C}}]_{\cat{C}}}.
  \]
  In particular, \((\cat{D},\iota)\) becomes a skew-closed reflective subcategory with respect to
  \[
    \mathscale{.95}{
      \psi_{A,B} \coloneqq \left(\iota[A,B]_{\cat{D}} = \iota \free[\iota A,\iota B]_{\cat{C}} \xrightarrow{\eta^{-1}_{[\iota A,\iota B]_{\cat{C}}}} [\iota A,\iota B]_{\cat{C}}\right)
      \quad \text{and} \quad
      \psi_0 \coloneqq \left(\tu_{\cat{C}} \xrightarrow{\eta_{\tu_{\cat{C}}}} \iota \free \tu_{\cat{C}} = \iota \tu_{\cat{D}}\right)}.
  \]
  In view of \cref{lem:recognition-of-idempotent-gabi-monads}, we already know that \(\cat{D}\) is a skew-closed monoidal category with respect to the reflected tensor product and that \(\free\) is colax monoidal with structure morphisms \(\free(\eta_X \otimes_{\cat{C}} \eta_Y)\) and \(\epsilon_{\tu_{\cat{D}}} \circ \free(\psi_0)\), and the latter is the identity.

  Now, we claim that \(\free(\eta_X \otimes_{\cat{C}} \eta_Y)\) is an isomorphism for every \(X,Y\) in \(\cat{C}\) if and only if \([\eta_X,\iota A]_{\cat{C}} \colon [\iota \free X,\iota A]_{\cat{C}} \to [X,\iota A]_{\cat{C}}\) is an isomorphism for every \(X\) in \(\cat{C}\) and every \(A\) in \(\cat{D}\).
  Consider the commutative diagram
  \[
    \begin{tikzcd}[ampersand replacement=\&,column sep=7em]
      {\cat{D}\left(\free\left(\iota \free X \otimes_{\cat{C}} Y\right),B\right)} \& {\cat{D}\left(\free\left(X \otimes_{\cat{C}} Y\right),B\right)} \\
      {\cat{C}\left(\iota \free X \otimes_{\cat{C}} Y,\iota B\right)} \& {\cat{C}\left(X \otimes_{\cat{C}} Y,\iota B\right)} \\
      {\cat{C}\left(\iota \free X , \left[Y,\iota B\right]_{\cat{C}}\right)} \& {\cat{C}\left(X , \left[Y,\iota B\right]_{\cat{C}}\right)} \\
      {\cat{C}\left(\iota \free X , \iota \free\left[Y,\iota B\right]_{\cat{C}}\right)} \& {\cat{C}\left(X , \iota \free\left[Y,\iota B\right]_{\cat{C}}\right).}
      \arrow["{\cat{D}\left(\free\left(\eta_X \otimes_{\cat{C}} Y\right),B\right)}", from=1-1, to=1-2]
      \arrow["\cong"', from=1-1, to=2-1]
      \arrow["\cong", from=1-2, to=2-2]
      \arrow["{\cat{C}\left(\eta_X \otimes_{\cat{C}} Y,\iota B\right)}"', from=2-1, to=2-2]
      \arrow["\cong"', from=2-1, to=3-1]
      \arrow["\cong", from=2-2, to=3-2]
      \arrow["{\cat{C}\left(\eta_X , \left[Y,\iota B\right]_{\cat{C}}\right)}"', from=3-1, to=3-2]
      \arrow["{\cat{C}\left(\iota \free X,\eta_{[Y,\iota B]_{\cat{C}}}\right)}"',"\cong", from=3-1, to=4-1]
      \arrow["{\cat{C}\left(X,\eta_{[Y,\iota B]_{\cat{C}}}\right)}","\cong"', from=3-2, to=4-2]
      \arrow["{\cat{C}\left(\eta_X , \iota \free\left[Y,\iota B\right]_{\cat{C}}\right)}"', from=4-1, to=4-2]
    \end{tikzcd}
  \]
  The bottom arrow is an isomorphism because \(\iota\) is fully faithful:
  the composition
  \[
    \cat{D}(\free X, A) \xrightarrow{\ \iota_{\free X,A}\ } \cat{C}(\iota \free X, \iota A) \xrightarrow{\cat{C}(\eta_X, \iota A)} \cat{C}(X, \iota A)
  \]
  sends \(f\) to \(\iota(f) \circ \eta_X\).
  Since \(\iota_{\free X,A}\) is also a bijection, \(\cat{C}(\eta_X, \iota A)\) is one, too.
  Hence, since the Yoneda embedding is conservative, \(\free(\eta_X \otimes_{\cat{C}} Y)\) is an isomorphism for every \(X,Y\) in \(\cat{C}\).

  Now, consider also the commutative diagram
  \[
    \begin{tikzcd}[ampersand replacement=\&,column sep=7em]
      {\cat{D}\left(\free\left(X \otimes_{\cat{C}} \iota \free Y\right),A\right)} \& {\cat{D}\left(\free\left(X \otimes_{\cat{C}} Y\right),A\right)} \\
      {\cat{C}\left(X \otimes_{\cat{C}} \iota \free Y,\iota A\right)} \& {\cat{C}\left(X \otimes_{\cat{C}} Y,\iota A\right)} \\
      {\cat{C}\left(X , \left[\iota \free Y,\iota A\right]_{\cat{C}}\right)} \& {\cat{C}\left(X , \left[Y,\iota A\right]_{\cat{C}}\right).}
      \arrow["{\cat{D}\left(\free\left(X \otimes_{\cat{C}} \eta_Y\right),A\right)}", from=1-1, to=1-2]
      \arrow["\cong"', from=1-1, to=2-1]
      \arrow["\cong", from=1-2, to=2-2]
      \arrow["{\cat{C}\left(X \otimes_{\cat{C}} \eta_Y,\iota A\right)}"', from=2-1, to=2-2]
      \arrow["\cong"', from=2-1, to=3-1]
      \arrow["\cong", from=2-2, to=3-2]
      \arrow["{\cat{C}\left(X , \left[\eta_Y,\iota A\right]_{\cat{C}}\right)}"', from=3-1, to=3-2]
    \end{tikzcd}
  \]
  The bottom arrow is an isomorphism for every \(X,Y\) in \(\cat{C}\) and every \(A\) in \(\cat{D}\) if and only if so is the top one. Therefore, as above, \(\free(X \otimes_{\cat{C}} \eta_Y)\) is an isomorphism for every \(X,Y\) in \(\cat{C}\) if and only if \([\eta_Y,\iota A]_{\cat{C}} \colon [\iota \free Y,\iota A]_{\cat{C}} \to [Y,\iota A]_{\cat{C}}\) is an isomorphism for every \(Y\) in \(\cat{C}\) and every \(A\) in \(\cat{D}\).

  Finally, we conclude by observing that
  \[
    \free(\eta_X \otimes_{\cat{C}} \eta_Y) = \free(\eta_X \otimes_{\cat{C}} \iota \free Y) \circ \free(X \otimes_{\cat{C}} \eta_Y). \qedhere
  \]
\end{proof}

\begin{corollary}\label{cor:refl_strong_for_sym_mon}
  Let \(\cat{C}\) be a closed braided monoidal category and let \((\cat{D},\iota,\free)\) be a reflective exponential ideal of \(\cat{C}\)
  such that \(\tu_{\cat{C}}\) lies in the essential image of \(\iota\).
  Then the reflector \(\free \from \cat{C} \to \cat{D}\) is strong monoidal.
\end{corollary}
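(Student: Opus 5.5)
The plan is to reduce the statement to the criterion of \cref{thm:day-reflection-thm}. Its hypotheses are satisfied: \(\cat{C}\) is closed monoidal, hence skew-closed monoidal, and \((\cat{D},\iota,\free)\) is a reflective exponential ideal with \(\tu_{\cat{C}}\) in the essential image of \(\iota\). By \cref{thm:day-reflection-thm}, the reflector \(\free\) is strong monoidal if and only if
\[
  [\eta_X,\iota A]_{\cat{C}} \colon [\iota \free X,\iota A]_{\cat{C}} \to [X,\iota A]_{\cat{C}}
\]
is invertible for all \(X \in \cat{C}\) and \(A \in \cat{D}\). So the entire proof is to establish this invertibility, and for that I would use the braiding \(c\) to move the variable being reflected into a position where the exponential-ideal property applies.

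I would argue by Yoneda. Fix \(X \in \cat{C}\) and \(A \in \cat{D}\), and let \(Y \in \cat{C}\) be arbitrary. I would build the following chain of bijections, each natural in \(Y\):
\begin{align*}
  \cat{C}(Y,[\iota\free X,\iota A])
  &\cong \cat{C}(Y\otimes \iota\free X,\iota A)
   \cong \cat{C}(\iota\free X\otimes Y,\iota A)
   \cong \cat{C}(\iota\free X,[Y,\iota A]) \\
  &\xrightarrow{\cat{C}(\eta_X,[Y,\iota A])} \cat{C}(X,[Y,\iota A])
   \cong \cat{C}(X\otimes Y,\iota A)
   \cong \cat{C}(Y\otimes X,\iota A)
   \cong \cat{C}(Y,[X,\iota A]).
\end{align*}
The unlabelled isomorphisms are the tensor--hom adjunction together with precomposition by a braiding component \(c\) or its inverse.

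The middle arrow is where the exponential-ideal hypothesis is used. Since \(\cat{D}\) is an exponential ideal, \([Y,\iota A]\cong \iota B\) for some \(B\in\cat{D}\). The proof of \cref{thm:day-reflection-thm} already shows that \(\cat{C}(\eta_X,\iota B)\) is a bijection, because \(\iota\) is fully faithful and \(\eta\) is the universal arrow. Transporting along the isomorphism \([Y,\iota A]\cong \iota B\), the map \(\cat{C}(\eta_X,[Y,\iota A])\) is a bijection as well.

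Next I would check that the whole composite equals \(\cat{C}(Y,[\eta_X,\iota A])\). This holds because every outer isomorphism in the chain is natural in the slot occupied by \(X\) or \(\iota\free X\), namely naturality of the adjunction bijections and of \(c\). Hence the composite is obtained from the single middle step by precomposing with \(\eta_X\) in that slot, and the identification is a routine naturality check. Since \(\cat{C}(Y,[\eta_X,\iota A])\) is then a bijection for every \(Y\), the Yoneda lemma gives that \([\eta_X,\iota A]\) is an isomorphism. \cref{thm:day-reflection-thm} then shows that \(\free\) is strong monoidal.

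The only step requiring care is the naturality bookkeeping: one must confirm that moving \(\eta_X\) across the braiding and across the two adjunctions produces exactly \([\eta_X,\iota A]\), and not a map that merely has the same source and target. I expect this to follow directly from naturality of \(c\) in its second argument.
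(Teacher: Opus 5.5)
Your proposal is correct and is essentially the paper's argument: both use the braiding to move the reflected variable into the slot where the exponential-ideal property makes precomposition with \(\eta\) bijective, and both then conclude via the criterion of \cref{thm:day-reflection-thm}. The difference is only where the swap is carried out: the paper works on the monoidal side, writing \(F(X\otimes\eta_Y)=F(\mathfrak{c}^{-1}_{X,\iota F Y})\circ F(\eta_Y\otimes X)\circ F(\mathfrak{c}_{X,Y})\) with \(F(\eta_Y\otimes X)\) invertible by the proof of that proposition, whereas you transport the same swap across the tensor--hom adjunction and check via Yoneda that \([\eta_X,\iota A]\) is invertible.
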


\begin{proof}
    In the proof of \cref{thm:day-reflection-thm}, we proved that \(\free(\eta_X \otimes_{\cat{C}} Y)\) is an isomorphism for every \(X,Y\) in \(\cat{C}\). If \(\cat{C}\) is also braided with braiding \(\mathfrak{c}\), then this implies that
    \[
      \free(X \otimes_{\cat{C}} \eta_Y) = \free(\mathfrak{c}^{-1}_{X,\iota \free Y}) \circ \free(\eta_Y \otimes_{\cat{C}} X) \circ \free(\mathfrak{c}_{X,Y})
    \]
    is an isomorphism for every \(X,Y\) in \(\cat{C}\), and so \(\free(\eta_X \otimes_{\cat{C}} \eta_Y)\) also has to be an isomorphism.
\end{proof}

\begin{remark}
  Note that there exists a notion of braiding for skew-monoidal categories, see~\cite[Definition~2.2]{bourke20:braid}.
  Instead of a natural isomorphism in two variables, one requires a natural isomorphism
  \(
    (X \otimes A) \otimes B \to (X \otimes B) \otimes A
  \)
  satisfying certain coherence diagrams.
  In this more general setting, \cref{cor:refl_strong_for_sym_mon} does not hold:
  let \(\cat{C} \defeq \{\,0 \to 1 \to 2\,\}\) be a poset category.
  Notice that all isomorphisms in \(\cat{C}\) are automatically identities.
  For \(X,Y \in \cat{C}\), define a skew-monoidal structure by
  \[
    X \otimes Y \defeq \begin{cases} X,&Y=2,\\ 0,&Y\neq 2. \end{cases}
  \]
  The monoidal unit is \(2\).
  The structure morphisms
  \[
    \rho_X\from X \xrightarrow{=} X \otimes 2
    \qquad\text{and}\qquad
    \alpha_{X,Y,Z}\from (X \otimes Y) \otimes Z \xrightarrow{=} X \otimes (Y \otimes Z)
  \]
  are identities,
  and \(\lambda_X\from 2 \otimes X \to X\) is given by the unique morphism.
  Notably, \(\lambda\) is not an identity, so the skew-monoidal structure is not left normal.
  The skew-monoidal braiding \((X \otimes A) \otimes B \xrightarrow{=} (X \otimes B) \otimes A\) is also given by the identity.
  Overall, \(\cat{C}\) is a braided skew-monoidal category.
  Notice that there does not exist a braiding of the form \(\sigma_{XY}\from X \otimes Y \xrightarrow{=} Y \otimes X\), as for example \(1 \otimes 2 = 1\), but \(2 \otimes 1 = 0\).
  It is easy to see that the following internal hom turns \(\cat{C}\) into a skew-closed monoidal category:
  \[
    [X,Y] \defeq \begin{cases} Y,&X=2,\\ 2,&X\neq 2. \end{cases}
  \]

  Consider the full subcategory \((\cat{D} \defeq \{0 \to 2\},\, \iota\from \cat{D} \to \cat{C})\).
  This is a reflective subcategory: the left adjoint \(\free\from \cat{C}\to \cat{D}\) of \(\iota\) is given by \(\free(0) \defeq 0\) and \(\free(1) \defeq \free(2) \defeq 2\).
  In fact, \(\cat{D}\) is a reflective exponential ideal: for \(C \in \cat{C}\) and \(D \in \cat{D}\), the internal hom \([C,D]\) is either \(0\) or \(2\), both of which are in \(\cat{D}\).
  However, the reflector \(\free\) is not strong monoidal:
  \[
    \free(1 \otimes 1) = \free(0) = 0 \neq 2 = 2 \otimes 2 = \free(1) \otimes \free(1).\qedhere
  \]
\end{remark}

The subsequent \cref{prop:closed-subcat-is-of-same-type} is a useful consequence of \cref{thm:day-reflection-thm}, \cref{cor:refl_strong_for_sym_mon}, and the fact that a fully faithful functor is conservative.

\begin{proposition}\label{prop:closed-subcat-is-of-same-type}
  Let \(\cat[C]\) be a skew-closed category and let \((\cat[D],\iota)\) be a full subcategory which is also a skew-closed subcategory.
  If \(\cat[C]\) is left or right normal, then \(\cat[D]\) is left or right normal, respectively.
  If, additionally, \(\cat{C}\) is closed braided monoidal and \((\cat{D},\iota,\free)\) is a reflective exponential ideal, then \(\cat{D}\) is closed monoidal.
\end{proposition}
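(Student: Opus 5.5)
For the normality transfer I will use the strong closed structure \((\psi_0,\psi)\) of \(\iota\) together with the fact that \(\iota\) is fully faithful, hence conservative. For the second part I will combine \cref{cor:refl_strong_for_sym_mon} with \cref{thm:bijection_skew_closed_monoidal_structures}.

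\emph{Right normality.} Suppose \(i^{\cat{C}}\) is invertible. The first closed-functor axiom for \(\iota\) reads
\[
  i^{\cat{C}}_{\iota A}\circ[\psi_0,\iota A]\circ\psi_{\tu,A}=\iota i^{\cat{D}}_A .
\]
On the left, \(\psi_{\tu,A}\) and \([\psi_0,\iota A]\) are isomorphisms because \(\iota\) is strong closed, and \(i^{\cat{C}}_{\iota A}\) is an isomorphism by assumption. Hence \(\iota i^{\cat{D}}_A\) is invertible, and since \(\iota\) is conservative, so is \(i^{\cat{D}}_A\).

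\emph{Left normality.} Suppose \(\widehat{\jmath}^{\cat{C}}\) is a bijection. I will compare \(\widehat{\jmath}^{\cat{D}}_{A,B}\colon\cat{D}(A,B)\to\cat{D}(\tu_{\cat{D}},[A,B])\) with \(\widehat{\jmath}^{\cat{C}}_{\iota A,\iota B}\) through the commuting square
\[
  \cat{D}(A,B)\xrightarrow{\iota}\cat{C}(\iota A,\iota B)\xrightarrow{\widehat{\jmath}^{\cat{C}}}\cat{C}(\tu_{\cat{C}},[\iota A,\iota B])
\]
versus
\[
  \cat{D}(A,B)\xrightarrow{\widehat{\jmath}^{\cat{D}}}\cat{D}(\tu_{\cat{D}},[A,B])\xrightarrow{\iota}\cat{C}(\iota\tu_{\cat{D}},\iota[A,B])\xrightarrow{\cat{C}(\psi_0,\psi_{A,B})}\cat{C}(\tu_{\cat{C}},[\iota A,\iota B]).
\]
The square commutes by the second closed-functor axiom (\(\psi_{A,A}\circ\iota j_A\circ\psi_0=j_{\iota A}\)) together with naturality of \(\psi\) in the first variable applied to \(f\colon A\to B\). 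All the other arrows are bijections: the two copies of \(\iota\) because \(\iota\) is fully faithful, \(\cat{C}(\psi_0,\psi_{A,B})\) because \(\psi_0\) and \(\psi_{A,B}\) are isomorphisms, and \(\widehat{\jmath}^{\cat{C}}\) by hypothesis. It follows that \(\widehat{\jmath}^{\cat{D}}\) is a bijection. The only real work in the whole proof is writing down this square and checking its commutativity via dinaturality of \(j\).

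\emph{Second claim.} Assume \(\cat{C}\) is closed braided monoidal and \((\cat{D},\iota,\free)\) is a reflective exponential ideal; then \(\cat{D}\) is skew-closed monoidal with the reflected tensor product by \cref{lem:recognition-of-idempotent-gabi-monads}. The natural route to applying \cref{cor:refl_strong_for_sym_mon} is to observe that \(\tu_{\cat{C}}\cong\iota\tu_{\cat{D}}\) via \(\psi_0\), so the unit lies in the essential image of \(\iota\). That corollary then makes \(\free\) strong monoidal. By \cref{thm:bijection_skew_closed_monoidal_structures}, it suffices to show that the skew-monoidal constraints \(\lambda,\rho,\alpha\) of \(\cat{D}\) are invertible. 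By construction (see \eqref{eq:mon_closed}, \eqref{eq:coev_exp_ideal}, \eqref{eq:ev_exp_ideal}) these constraints are the reflections \(\free\) of the corresponding constraints of \(\cat{C}\), composed with the strong monoidal comparison maps \(\free(\eta\otimes\eta)\), which are now invertible, and with counits. Since the constraints of \(\cat{C}\) are invertible, so are those of \(\cat{D}\), and therefore \(\cat{D}\) is closed monoidal. An alternative for this last step is to invoke the first part for left and right normality and then only check associative normality, again using that \(\free\) is strong monoidal.
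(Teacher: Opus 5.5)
Your proposal is correct and follows the paper's proof. Left normality uses the same comparison of the two composites \(\cat{D}(A,B)\to\cat{C}(\tu_{\cat{C}},[\iota A,\iota B])\), via naturality of \(\psi\) and the \(j\)-axiom. Right normality uses the same \(i\)-axiom together with conservativity of \(\iota\). For the second claim, \(\psi_0\) puts \(\tu_{\cat{C}}\) in the essential image so that \cref{cor:refl_strong_for_sym_mon} applies, exactly as you do.

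The paper finishes with what you call the ``alternative'': left and right normality come from the first part, and only the associator is checked. It is exhibited as \(\free\alpha\) conjugated by the isomorphisms supplied by \cref{thm:day-reflection-thm,cor:refl_strong_for_sym_mon}.
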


\begin{proof}
    Write \(\varphi_0\) and \(\varphi_{A,B}\) for the structure isomorphisms of the strong closed functor \(\iota\).
    Suppose that \(\cat[C]\) is left normal and consider the compositions
    \begin{gather*}
        \cat[D](A,B) \xrightarrow{\iota_{A,B}} \cat[C](\iota A,\iota B) \xrightarrow{\widehat\jmath_{\iota A,\iota B}} \cat[C](\tu_{\cat[C]},[\iota A,\iota B]_{\cat[C]}) \qquad \text{and} \\
        \cat[D](A,B) \xrightarrow{\widehat\jmath_{A,B}} \cat[D](\tu_{\cat[D]},[A,B]_{\cat[D]}) \xrightarrow{\iota_{\tu_{\cat[D]},[A,B]_{\cat[D]}}} \cat[C](\iota \tu_{\cat[D]},\iota[A,B]_{\cat[D]}) \xrightarrow{\cat[C](\varphi_0,\varphi_{A,B})} \cat[C](\tu_{\cat[C]},[\iota A,\iota B]_{\cat[C]}).
    \end{gather*}
    The first one maps \(f \colon A \to B\) in \(\cat[D]\) to \([\iota f,\iota B]_{\cat[C]} \circ j_{\iota B}\). The second one maps the same \(f\) to
    \begin{align*}
        \varphi_{A,B} \circ \iota\left([f,B]_{\cat[D]}\right) \circ \iota(j_B) \circ \varphi_0 & = [\iota f,\iota B]_{\cat[C]} \circ \varphi_{B,B} \circ \iota(j_B) \circ \varphi_0 = [\iota f,\iota B]_{\cat[C]} \circ j_{\iota B}.
    \end{align*}
    Thus, they are equal, and so \(\widehat\jmath_{A,B}\) is a bijection when \(\widehat\jmath_{\iota A,\iota B}\) is so.

    If \(\cat[C]\) is right normal, then
    \[\iota\left(i_A\right) = i_{\iota A} \circ [\varphi_0,\iota A]_{\cat[C]} \circ \varphi_{\tu_{\cat[D]},A}\]
    is an isomorphism and so is \(i_A\), since \(\iota\) is conservative.

    Finally, suppose that \(\cat{D}\) is a reflective exponential ideal and that \(\cat{C}\) is closed braided monoidal.
    Let \(\free\from \cat{C} \to \cat{D}\) be the left adjoint to the inclusion \(\iota\).
    The isomorphism \(\varphi_0 \colon \tu_{\cat{C}} \to \iota\tu_{\cat{D}}\) ensures that the unit condition in \cref{thm:day-reflection-thm} is satisfied.
    By \cref{thm:bijection_skew_closed_monoidal_structures}, a skew-closed category is associative normal if and only if the associated skew-monoidal structure is.
    The associator for \(\otimes_{\cat{D}}\) is
    \begin{align*}
      (X \otimes_{\cat{D}} Y) \otimes_{\cat{D}} Z
      & = \free(\iota \free(\iota X \otimes \iota Y) \otimes \iota Z)
        \cong \free((\iota X \otimes \iota Y) \otimes \iota Z) \\
      & \xrightarrow{\free\alpha} \free(\iota X \otimes (\iota Y \otimes \iota Z))
        \cong \free(\iota X \otimes \iota \free(\iota Y \otimes \iota Z))\\
      & = X \otimes_{\cat{D}} (Y \otimes_{\cat{D}} Z),
    \end{align*}
    where the unnamed isomorphisms follow by \cref{thm:day-reflection-thm,cor:refl_strong_for_sym_mon}.
    Hence, \(\cat{D}\) is closed monoidal.
\end{proof}

Summing up, we have proven the following general result.

\begin{theorem}\label{thm:refl_subcats_give_gabi}
  Let \(\cat{C}\) be a closed braided monoidal category and let \((\cat{D},\iota,\free)\) be a reflective exponential ideal of \(\cat{C}\)
  such that \(\tu_{\cat{C}}\) lies in the essential image of \(\iota\).
  Then \(\cat{D} \simeq \cat{C}^T\) for the idempotent gabi-monad \(T = \iota \free\).
  Moreover, the reflector \(\free\) is strong monoidal, \(\cat{C}^T\) is closed monoidal, and \(T\) is a normal gabi-monad.
\end{theorem}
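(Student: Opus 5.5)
The plan is to assemble the theorem from the results just proved, checking that their hypotheses are met and that the lifted closed structure on \(\cat{C}^T\) agrees with the one on \(\cat{D}\).

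\emph{Step 1: \(\cat{D}\) is skew-closed and \(T\) is an idempotent gabi-monad.}
By \cref{thm:day-reflection-thm}, the hypotheses make \(\cat{D}\) a skew-closed reflective subcategory of \(\cat{C}\). Concretely, the hom is \([A,B]_{\cat{D}} = \free[\iota A,\iota B]_{\cat{C}}\), the unit is \(\tu_{\cat{D}}=\free\tu_{\cat{C}}\), and \(\iota\) is strong closed via \(\eta^{-1}_{[\iota A,\iota B]}\) and \(\eta_{\tu}\). Both of these are invertible, by the exponential-ideal property and because \(\tu_{\cat{C}}\) lies in the essential image of \(\iota\).

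\Cref{lem:recognition-of-idempotent-gabi-monads} then says that \(T=\iota\free\) is an idempotent gabi-monad and \(\cat{D}\simeq\cat{C}^T\). In more detail, \cref{thm:recognition-of-reflectiveness} makes the comparison functor \(K\) an equivalence. \Cref{prop:gabi-moand-VS-strong-closed}, applied to the strong closed structure on \(\iota\), produces the gabi-monad structure on \(T\) together with a strong closed structure on \(K\) satisfying \(\U^T K=\iota\) as closed functors.

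\emph{Step 2: \(\free\) is strong monoidal.}
This is exactly \cref{cor:refl_strong_for_sym_mon}, which applies because \(\cat{C}\) is closed braided monoidal.

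\emph{Step 3: \(\cat{D}\), and hence \(\cat{C}^T\), is closed monoidal.}
By \cref{prop:closed-subcat-is-of-same-type}, \(\cat{D}\) is closed monoidal: left and right normality are inherited from \(\cat{C}\) along \(\iota\), and associative normality comes from the reflected associator being invertible, using Step 2. Since \(K\) is a strong closed equivalence, and \(K\) is strong monoidal for the reflected tensor product by \cref{lem:recognition-of-idempotent-gabi-monads}, the skew-closed structure on \(\cat{C}^T\) making \(\U^T\) strict closed (\cref{thm:gabi-bijection}) is transported from \(\cat{D}\).

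The normality conditions \ref{item:N1}–\ref{item:N3} are invariant under strong closed equivalences. For instance, \(\widehat\jmath\) is conjugated by the bijections on hom-sets induced by \(K\) and its structure maps, exactly as in the proof of \cref{prop:closed-subcat-is-of-same-type}; similar conjugations handle \(i\) and \(\hat\Gamma\). So \(\cat{C}^T\) is normal closed, i.e.\ \(T\) is a normal gabi-monad by \cref{def:normal_gabi_monad}, and \(\cat{C}^T\) is closed monoidal with \(\otimes_T\).

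\emph{Main obstacle.}
The delicate point is the transfer of normality along \(K\). In particular, \ref{item:N3} is an external condition involving a coend, so we must check that a strong (not strict) closed equivalence preserves the bijectivity of \(\hat\Gamma\). I would handle this by using \cref{thm:bijection_skew_closed_monoidal_structures} to pass to the monoidal side. There, \(K\) is a strong monoidal equivalence, and invertibility of associators and unitors is manifestly preserved, which avoids the coend computation entirely.
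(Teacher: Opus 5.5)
Your proposal is correct and follows essentially the same route as the paper. The paper assembles the result from \cref{thm:day-reflection-thm}, \cref{cor:refl_strong_for_sym_mon}, \cref{lem:recognition-of-idempotent-gabi-monads}, \cref{thm:gabi-bijection} and \cref{prop:closed-subcat-is-of-same-type}, and then transports closed monoidality along \(\cat{C}^T \simeq \cat{D}\); the only difference is that you spell out why normality, in particular \ref{item:N3} via the monoidal side and the strong monoidal equivalence \(K\), survives this transport, which the paper leaves implicit.
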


\begin{proof}
  In view of \cref{thm:day-reflection-thm,cor:refl_strong_for_sym_mon}, \(\cat{D}\) is a reflective skew-closed subcategory of \(\cat{C}\) and is in fact skew-closed monoidal with the reflected tensor product.
  Moreover, the reflector \(\free \colon \cat{C} \to \cat{D}\) is strong monoidal.
  By \cref{lem:recognition-of-idempotent-gabi-monads}, the associated idempotent monad \(T = \iota \free\) is a gabi-monad whose category of algebras is skew-closed monoidal.
  Finally,
  \cref{lem:recognition-of-idempotent-gabi-monads,thm:gabi-bijection}, \cref{prop:closed-subcat-is-of-same-type},
  and the equivalence \(\cat{C}^T \simeq \cat{D}\) show that \(\cat{C}^T\) is closed monoidal.
  Hence, \(T\) is a normal gabi-monad.
\end{proof}

\subsection{Torsion submodules}\label{sec:torsion-submodules}

In the following, \(R\) denotes an integral domain with unit \(1\).

\begin{definition}\label{def:torsion}
  An element \(m\in M\) of an \(R\)-module \(M \in \hmodM[R]\) is \emph{torsion} if there exists some non-zero \(r \in R\) such that \(rm =0\).

  We write \(\Tor(M)\) for the set of torsion elements of \(M\) and call \(M\) a \emph{torsion module} if \(M = \Tor(M)\).
  Similarly, we say \(M\) is \emph{torsion-free} if \(\Tor(M)=\{0\}\).

  We denote by \(\hmodM[R][][\mathrm{tf}]\) the full subcategory of \(\hmodM[R]\) consisting of torsion-free modules.
\end{definition}

Since \(R\) is an integral domain, the set \(\Tor(M)\) of torsion elements of a module \(M\) forms a submodule of \(M\) and the quotient \(M/\Tor(M)\) is torsion-free.
Thus, we obtain a functor \(L \from \hmodM[R] \to \hmodM[R][][\mathrm{tf}]\).

\begin{lemma}\label{lem:torsion-free-is-a-reflexive-subcategory}
  The category \(\hmodM[R][][\mathrm{tf}]\) is a reflective subcategory of \(\hmodM[R]\) with reflector
  \[
    L \from  \hmodM[R] \to \hmodM[R][][\mathrm{tf}],\qquad L(M) = M/\Tor(M).
  \]
\end{lemma}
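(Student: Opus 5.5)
We check the universal property of the quotient map \(\eta_M \colon M \to M/\Tor(M)\) directly: it is a reflection of \(M\) into \(\hmodM[R][][\mathrm{tf}]\). The plan has three steps.

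First, we confirm that \(L\) is well defined on objects. Because \(R\) is an integral domain, \(\Tor(M)\) is a submodule. If \(m, m'\) are killed by nonzero \(r, r'\), then \(rr' \neq 0\) kills \(m+m'\) and \(sm\) for every \(s \in R\). Next, \(M/\Tor(M)\) is torsion-free. If \(r\bar m = 0\) with \(r \neq 0\), then \(rm \in \Tor(M)\), so \(s r m = 0\) for some \(s \neq 0\). Since \(sr \neq 0\), this gives \(m \in \Tor(M)\), so \(\bar m = 0\).

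Second, we prove the universal property. Let \(N\) be torsion-free and \(f \colon M \to N\) an \(R\)-linear map. If \(rm = 0\) with \(r \neq 0\), then \(r f(m) = 0\), so \(f(m) = 0\). Hence \(f(\Tor(M)) = 0\), and \(f\) factors uniquely through the surjection \(\eta_M\) as \(\bar f \colon M/\Tor(M) \to N\). Uniqueness holds because \(\eta_M\) is an epimorphism.

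Third, we assemble the adjunction. The assignment \(f \mapsto \bar f\) gives a bijection
\[
  \hmodM[R](M, \iota N) \cong \hmodM[R][][\mathrm{tf}](L M, N).
\]
By the standard pointwise construction of adjoints (universal arrows), \(L\) extends uniquely to a functor with \(L(g)\) induced by \(g\). This agrees with the functor already described, since \(g(\Tor M) \subseteq \Tor M'\). The bijection is natural, so \(L \dashv \iota\), and the subcategory is reflective in the sense of \cref{def:reflective-subcategory}.

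There is no real obstacle in this argument. The only point needing care is that the integral-domain hypothesis is used twice: for closure of \(\Tor(M)\) under addition, and for torsion-freeness of the quotient.
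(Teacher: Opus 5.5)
Your proof is correct and follows the same route as the paper: a torsion element is sent to a torsion element, hence to zero in a torsion-free target, so every map \(M \to N\) factors uniquely through \(M/\Tor(M)\). This gives the natural bijection exhibiting \(L\) as left adjoint to the inclusion; you also write out the well-definedness of \(L\) (that \(\Tor(M)\) is a submodule and that the quotient is torsion-free), which the paper leaves as a direct computation.
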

\begin{proof}
  A direct computation shows that \(L\) is well-defined.

  To show that \(L\) is left adjoint to the canonical inclusion \(\forget \from \hmodM[R][][\mathrm{tf}] \to \hmodM[R]\), we first consider an \(R\)-linear map \(f\from M \to N\) with target a torsion-free module.
  As the image \(f(m)\) of any torsion element \(m \in M\) must again be a torsion element, we obtain a commuting diagram
  \begin{equation*}
    \begin{tikzcd}[ampersand replacement=\&,cramped]
      M \&\& N \\
      \& {M/\Tor(M)}
      \arrow["f", curve={height=-18pt}, from=1-1, to=1-3]
      \arrow[two heads, from=1-1, to=2-2]
      \arrow["{\exists! f'}"', dashed, from=2-2, to=1-3]
    \end{tikzcd}
  \end{equation*}
  Thus, there exists a natural isomorphism
  \begin{equation*}
    \phi_{M,N} \from \Hom_{\hmodM[R]}(M,\forget N) \to \Hom_{\hmodM[R][][\mathrm{tf}]}(LM,N), \qquad \phi_{M,N}(f)[m] = f(m). \qedhere
  \end{equation*}
\end{proof}

Concerning the next definition, see for example~\cite[Theorem~4.2]{chase61:direc}.

\begin{definition}\label{def:Prufer-domain}
  An integral domain \(R\) is \emph{Pr\"ufer} if the tensor product of two torsion-free modules is torsion-free
\end{definition}

By \cref{thm:recognition-of-reflectiveness}, the torsion-free modules over \(R\) coincide with the algebras of the monad \(T\from \hmodM[R] \to \hmodM[R]\), \(M\mapsto M/\Tor(M)\).
Its multiplication is given by the identity map \(\mu_{M}=\id_{TM} \from T^{2}(M)\to T(M)\) and the unit is the canonical projection \(\eta_{M} = \pi_{M} \from M \to T(M)\).

\begin{theorem}\label{thm:normal-gaby-monad-torsion}
  The monad \(T\from \hmodM[R] \to \hmodM[R]\), \(M\mapsto M/\Tor(M)\) is a normal idempotent gabi-monad whose category of algebras is closed monoidal.
  Its structure morphisms are given for all \(M, N \in\hmodM[R]\) by
  \begin{equation}\label{eq:structure-morphisms-pruefer-example}
    \begin{gathered}
      s_{M,N} \from T \Hom_{\hmodM[R]}(TM,N) \to \Hom_{\hmodM[R]}(M,TN), \qquad s_{M,N}(f) = \pi_{N}f \pi_{M}, \\
      \act_{R} = \id_{R} \from T(R) = R \to R.
    \end{gathered}
  \end{equation}

  Moreover, \(T\) is Hopf if and only if \(R\) is a Pr\"ufer domain.
\end{theorem}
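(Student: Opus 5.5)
The plan is to apply \cref{thm:refl_subcats_give_gabi} to the reflective subcategory \((\hmodM[R][][\mathrm{tf}],\forget,L)\) of the symmetric closed monoidal category \(\hmodM[R]\). \Cref{lem:torsion-free-is-a-reflexive-subcategory} already gives reflectivity. The unit \(R\) is torsion-free, since \(R\) is a domain, so it lies in the image of \(\forget\).

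The remaining hypothesis is that we have an exponential ideal. For this, take \(N\) torsion-free and \(f\in\Hom_R(M,N)\) with \(rf=0\) for some \(r\neq0\). Then \(rf(m)=0\) for every \(m\), so \(f(m)\in\Tor(N)=0\), and hence \(f=0\). Thus \(\Hom_R(M,N)\) is torsion-free. \Cref{thm:refl_subcats_give_gabi} then says that \(T=\forget L\) is an idempotent normal gabi-monad and that \(\hmodM[R]^T\simeq\hmodM[R][][\mathrm{tf}]\) is closed monoidal.

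Next I identify the structure maps. By \cref{prop:gabi-moand-VS-strong-closed}, \(\act_R\) is \(\varphi_0^{-1}\circ\forget\epsilon\circ T\varphi_0\), with \(\varphi_0=\eta_R=\pi_R=\id_R\), so \(\act_R=\id_R\). The map \(s\) is given by \eqref{eq:Sclosed}, equivalently \eqref{eq:SfromAct}: first \(T[TM,\pi_N]\), then the action on \([TM,TN]\), then \([\pi_M,TN]\). The hom \([TM,TN]\) is torsion-free, and its algebra structure is the inverse of \(\pi_{[TM,TN]}\), which is the identity on representatives. So a class \(f\) goes to \(\pi_N\circ f\circ\pi_M\), and this is well defined on \(T\Hom(TM,N)\) because torsion maps to zero in \(\Hom(M,TN)\).

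For the Hopf statement I use \cref{thm:lift-to-hopf-monads}. That theorem says \(T\) is left Hopf if and only if the tensor–hom adjunction lifts, with \(\U^T\) strict monoidal for the lifted tensor product. If \(T\) is Hopf, then for torsion-free \(M,N\) the lifted tensor \(\mb M\otimes_T\mb N\) has underlying module \(M\otimes_R N\), which is therefore torsion-free; so \(R\) is Prüfer by \cref{def:Prufer-domain}. Conversely, suppose \(R\) is Prüfer and \(M,N\) are torsion-free. Then \(M\otimes_R N\) is torsion-free, so it is a \(T\)-algebra with \(\act=\pi^{-1}\), and this gives a lift of \(\otimes\). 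The tensor–hom adjunction lifts because \(\hmodM[R][][\mathrm{tf}]\) is full in \(\hmodM[R]\). Hence the parametric mates are invertible by \cref{lem:lifting-of-parametric-adjunctions}, and \(T\) is Hopf.

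Alternatively, one can observe directly that the reflected tensor \(L(M\otimes N)\) agrees with \(M\otimes N\) precisely when the latter is torsion-free. I expect the main delicate point to be this last step: checking that the reflected monoidal structure is strictly preserved by \(\U^T\) exactly in the Prüfer case, that is, that \(\eta_{M\otimes N}\) is the identity. Everything else is bookkeeping.
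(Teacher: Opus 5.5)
Your proposal is correct and follows the paper's proof essentially step by step: the exponential-ideal check feeds into \cref{thm:refl_subcats_give_gabi}, the structure maps are read off from \eqref{eq:SfromAct} using that a torsion-free module carries exactly one \(T\)-algebra structure, and Hopf \(\iff\) Pr\"ufer is obtained via strict monoidality of \(\U^T\). The differences are cosmetic. For the converse you go through the parametric mates of \cref{thm:lift-to-hopf-monads} rather than citing Theorem~3.6 of Brugui\`eres--Lack--Virelizier directly, but that is how \cref{thm:lift-to-hopf-monads} is proved anyway; and the step you flag as delicate is immediate, since \(\pi_X\) is the identity whenever \(X\) is torsion-free.
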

\begin{proof}
  We begin by showing that the Eilenberg--Moore category \(\hmodM[R][][ \mathrm{tf}]\) of \(T\) is a closed subcategory of \(\hmodM[R]\) by proving the stronger condition that for every module \(M\) and torsion-free module \(N\) the hom-space \(\Hom_{\hmodM[R]}(M,N)\) is torsion-free.

  Consider a  map \(f\from M \to N\) such that there is an \(r\in R\setminus\{0\}\) with \(rf=0\).
  Thus, for any \(m\in M\), we have \(0 = (rf)m = r(f(m))\).
  Since \( N\) is torsion-free and \(0\neq r\), it follows that  \(f(m) = 0\) for all \(m\in M\).
  In other words, \(f\) is the zero morphism and \(\Hom_{\hmodM[R]}(M,N)\) is torsion-free.

  As a consequence, \(\hmodM[R][][\mathrm{tf}]\) is a reflective exponential ideal in the closed symmetric monoidal category \(\hmodM[R]\) and therefore \cref{prop:closed-subcat-is-of-same-type} entails that \(\hmodM[R][][\mathrm{tf}]\) is a normal closed subcategory of \(\hmodM[R]\).
  By \cref{thm:refl_subcats_give_gabi}, \(T\) is a normal idempotent gabi-monad and its Eilenberg--Moore category \(\hmodM[R][][ \mathrm{tf}]\) is closed monoidal with reflected tensor product
  \begin{equation*}
    M\otimes_{\mathrm{tf}} N \eqdef (M\otimes_{R}N)/\Tor(M\otimes_{R}N)
  \end{equation*}
  for all \(M,N \in \hmodM[R][][ \mathrm{tf}]\).
  Its structure morphisms, stated in \eqref{eq:structure-morphisms-pruefer-example}, are obtained via a direct computation using \eqref{eq:SfromAct} and the fact that any \(M\in \hmodM[R]\) has at most one \(T\)-algebra structure, respectively.

  Now \(T\) being Hopf implies that the inclusion \(\U^{T}\from \hmodM[R][][ \mathrm{tf}] \to \hmodM[R]\) is strict monoidal.
  Therefore, the tensor product of torsion-free modules must be torsion-free and \(R\) is a Pr\"ufer domain.

  Conversely, if \(R\) is Pr\"ufer, the inclusion \(\U^{T}\from \hmodM[R][][ \mathrm{tf}] \to \hmodM[R]\) is strict monoidal and \(T\) therefore is a bimonad.
  Theorem~3.6 of~\cite{BruguieresLackVirelizier} now shows that \(T\) is a Hopf monad.
\end{proof}

The following result is proven in \cite[Theorem~2.6]{brewer06:multip}.
\begin{proposition}\label{rmk:integral-not-pruefer}
  An integral domain \(R\) is a field if and only if \(R[x]\) is a Prüfer domain.
\end{proposition}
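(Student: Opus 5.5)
We prove the two implications separately, working throughout with the definition of Prüfer domain used in \cref{def:Prufer-domain}: the tensor product of two torsion-free modules is torsion-free.

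For the direction ``\(R\) a field \(\Rightarrow\) \(R[x]\) Prüfer'', note that \(R[x]\) is then a principal ideal domain. Over a principal ideal domain every finitely generated torsion-free module is free. An arbitrary torsion-free module \(M\) is the filtered colimit of its finitely generated submodules, which are torsion-free and hence free; so \(M\) is flat.

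Now let \(M,N\) be torsion-free. Since \(N\) is flat, \(M\otimes N\hookrightarrow M\otimes N\otimes Q\), where \(Q\) is the fraction field of \(R[x]\): indeed \(M\hookrightarrow M\otimes Q\) because \(M\) is torsion-free, and tensoring with the flat module \(N\) preserves this injection. The module \(M\otimes N\otimes Q\) is a \(Q\)-vector space, hence torsion-free. Therefore its submodule \(M\otimes N\) is torsion-free, as required.

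For the converse, suppose \(R\) is not a field and pick a nonzero non-unit \(a\in R\). Consider the ideal \(I=(a,x)\subseteq R[x]\); being a submodule of \(R[x]\), it is torsion-free. We will exhibit a nonzero torsion element of \(I\otimes_{R[x]}I\), namely
\[
  \tau \eqdef a\otimes x - x\otimes a .
\]

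First, \(\tau\) is killed by \(a\). Using that \(a,x\in I\), we compute
\[
  a(a\otimes x)=a\otimes ax=a\otimes xa=x(a\otimes a)=xa\otimes a=a(x\otimes a),
\]
so \(a\tau=0\). Since \(a\neq 0\), it only remains to show \(\tau\neq 0\).

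To see this, let \(\mathfrak m=(a,x)\) and \(\bar R=R[x]/\mathfrak m\cong R/(a)\), which is nonzero because \(a\) is not a unit. Base change along \(R[x]\to\bar R\) gives a surjection
\[
  I\otimes_{R[x]} I\twoheadrightarrow (I/\mathfrak m I)\otimes_{\bar R}(I/\mathfrak m I).
\]
It therefore suffices to show that \(I/\mathfrak m I\) is free over \(\bar R\) on the classes of \(a\) and \(x\). Then the image of \(\tau\) is \(\bar a\otimes\bar x-\bar x\otimes\bar a\). This is a difference of two distinct basis vectors of a free \(\bar R\)-module of rank \(4\), and so it is nonzero.

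The freeness claim is the main technical check. We have \(I=aR[x]+xR[x]\) and \(\mathfrak m I=(a^2,ax,x^2)\). Suppose \(ra+sx\in\mathfrak m I\) for some \(r,s\in R\). Comparing constant terms forces \(ra\in a^2R\), hence \(r\in aR\) because \(R\) is a domain. Comparing coefficients of \(x\) then forces \(s\in aR\). Thus \(I/\mathfrak m I\cong \bar R\,\bar a\oplus \bar R\,\bar x\).

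Hence \(I\otimes I\) has nonzero torsion, so \(R[x]\) is not Prüfer. I expect the only delicate point in the whole argument to be this coefficient comparison; everything else is formal.
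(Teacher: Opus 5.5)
Your proof is correct, and it takes a different route from the paper's. The paper gives no argument of its own. It cites \cite[Theorem~2.6]{brewer06:multip}, where Pr\"ufer domains are understood ideal-theoretically: nonzero finitely generated ideals are invertible, equivalently all localisations at maximal ideals are valuation domains. The link to the tensor-product formulation in \cref{def:Prufer-domain} is then Chase's theorem \cite[Theorem~4.2]{chase61:direc}.

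In the ideal-theoretic language both directions are short:
\begin{itemize}
\item If \(R\) is a field, then \(R[x]\) is a PID, hence Dedekind, hence Pr\"ufer.
\item If \(a\) is a nonzero non-unit, then \(ax\notin(a^2,x^2)\), by comparing coefficients of \(x\). On the other hand, \((b,c)^2=(b^2,c^2)\) holds in every Pr\"ufer domain, because it holds locally in valuation domains.
\end{itemize}

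Your argument instead works directly with the definition the paper uses:
\begin{itemize}
\item In the forward direction you reprove the relevant special case of Chase's theorem: torsion-free modules over a PID are flat, and then \(M\otimes N\hookrightarrow M\otimes N\otimes Q\).
\item In the converse you give an explicit nonzero torsion element \(a\otimes x-x\otimes a\in I\otimes I\), and you detect that it is nonzero by base change to \(R/(a)\). This is a concrete witness that \((a,x)\) is not flat, the module-theoretic counterpart of its failure to be invertible.
\end{itemize}
Your coefficient comparison showing that \(I/\mathfrak m I\) is free on \(\bar a,\bar x\) is sound.

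Your route is self-contained and matches the paper's definition exactly. The cited route is shorter, but only after importing the equivalence between the various characterisations of Pr\"ufer domains.
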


By combining \cref{thm:normal-gaby-monad-torsion} with \cref{rmk:integral-not-pruefer}, we obtain a first set of examples of non-Hopf but normal gabi-monads.
\begin{corollary}\label{cor:non-Hopf}
  Consider an integral domain \(R\).
  The normal gabi-monad
  \begin{equation*}
    T \from \hmodM[ R[x]] \to \hmodM[ R[x]], \qquad M \mapsto M/\Tor(M)
  \end{equation*}
  is Hopf if and only if \(R\) is a field.
\end{corollary}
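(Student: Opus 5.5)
The statement is \cref{cor:non-Hopf}, and I would derive it by chaining \cref{thm:normal-gaby-monad-torsion} with \cref{rmk:integral-not-pruefer}. The only preliminary work is to check that the hypotheses of the former are met when the base ring is \(R[x]\) rather than \(R\).

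First, I would note that if \(R\) is an integral domain, then so is \(R[x]\). This follows from the degree argument: the leading coefficient of a product of two nonzero polynomials is the product of their leading coefficients, which is nonzero in a domain. Since \(R[x]\) is also commutative with unit, \cref{thm:normal-gaby-monad-torsion} applies verbatim with \(R[x]\) in place of \(R\). It shows that \(T\colon M\mapsto M/\Tor(M)\) on \(\hmodM[R[x]]\) is a normal idempotent gabi-monad, which justifies the adjective ``normal'' in the statement. The same theorem shows that \(T\) is Hopf if and only if \(R[x]\) is a Pr\"ufer domain.

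Second, I would invoke \cref{rmk:integral-not-pruefer}, which says that \(R[x]\) is Pr\"ufer if and only if \(R\) is a field. Composing the two equivalences gives that \(T\) is Hopf if and only if \(R\) is a field.

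There is no genuine obstacle here, since the argument is purely a composition of earlier results. The only point needing care is confirming that \(R[x]\) is again an integral domain, so that \(\Tor\) is a submodule and the reflection onto torsion-free \(R[x]\)-modules is well defined. This is exactly what \cref{lem:torsion-free-is-a-reflexive-subcategory} and \cref{thm:normal-gaby-monad-torsion} require.
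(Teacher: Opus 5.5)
Your proposal is correct and is the paper's argument: apply \cref{thm:normal-gaby-monad-torsion} to the integral domain \(R[x]\) and combine it with \cref{rmk:integral-not-pruefer}. Your explicit check that \(R[x]\) is again an integral domain is the one hypothesis the paper leaves implicit.
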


\begin{remark}\label{rmk:torsion}
  The situation of \cref{lem:torsion-free-is-a-reflexive-subcategory} may be generalised in the following way.
  Let \(\cat{A}\) be an abelian category, and \(\cat{T}\) and \(\cat{\free}\) full additive subcategories.
  The pair \((\cat{T}, \cat{\free})\) is called a \emph{torsion theory} if the following conditions hold:
  \begin{itemize}[leftmargin=0.8cm]
    \item \(\cat{A}(T,\free)=0\) for all \(T \in \cat{T}\) and \(\free \in \cat{\free}\).
    \item If \(\cat{A}(T,Y) = 0\) for all \(T \in \cat{T}\), then \(Y \in \cat{\free}\).
    \item If \(\cat{A}(X,\free) = 0\) for all \(\free \in \cat{\free}\), then \(X \in \cat{T}\).
    \item For all \(A \in \cat{A}\), there exists some subobject \(T \subseteq A\) with \(T \in \cat{T}\) and \(A/T \in \cat{\free}\).
  \end{itemize}

  Given \(A \in \cat{A}\), it is well known that the subobject \(T\) satisfying the last condition is unique.
  In particular, the torsion-free subcategory is reflective, with left adjoint to the inclusion given by
  \[
    L\from \cat{A} \to \cat{\free},
    \qquad A \mapsto A/T,\ \text{for}\ T \subseteq A, T \in \cat{T},
  \]
  and so the resulting monad \(\iota L\) on \(\cat{A}\) is idempotent.
  If \(\cat{A}\) is closed braided monoidal and \(\cat{\free}\) is a reflective exponential ideal containing \(\tu_{\cat{A}}\), then \(\iota L\) is even a normal gabi-monad by \cref{thm:refl_subcats_give_gabi}.
  However, as seen in \cref{thm:normal-gaby-monad-torsion}, in general \(\iota L\) will not be Hopf.
\end{remark}

\begin{remark}\label{rmk:torsiontu}
  In the setting of \cref{rmk:torsion}, note that \(\cat{\free}\) being an exponential ideal is equivalent to \(\cat{T}\) being a monoidal ideal,
  in the sense that \(T \in \cat{T}, A \in \cat{A} \implies T \otimes A \in \cat{T}\):
  indeed, for \(\free \in \cat{\free}\) we have
  \[
    \cat{A}(T,[A,\free]) \cong \cat{A}(T \otimes A, \free) = 0.
  \]

  The condition that \(\tu \defeq \tu_{\cat{A}} \in \cat{\free}\) is equivalent to the fact that \(\tu\) has no nonzero torsion subobject.
  That is, the unique \(T \subseteq \tu\) such that \(T \in \cat{T}\) and \(\tu/T \in \cat{\free}\) is \(0\);
  see for example~\cite[Proposition~VI.1.4]{assem06:elemen}.
\end{remark}

\subsection{Preorders}\label{sec: preorders}
The present example is inspired by~\cite[Example~3.9]{Garner-Lack}.
Definitions and results are well known, and they are reported here mainly for the convenience of the reader.

\begin{definition}\label{def:digraph}
  A \emph{digraph} (or \emph{directed graph}) \(D\) consists of a set of \emph{vertices} \(V\) and a set of \emph{arrows} \(A \subseteq V \times V\).
  A \emph{morphism of digraphs} from a digraph \(D = (V,A)\) to a digraph \(D'= (V',A')\) is a pair of functions \(\varphi_V \colon V \to V'\) and \(\varphi_A \colon A \to A'\) such that the following diagram commutes
  \begin{equation}\label{eq:mor-of-refl-dig}
    \begin{tikzcd}
      A & {V \times V} \\
      {A'} & {V' \times V'}
      \arrow["\subseteq", from=1-1, to=1-2]
      \arrow["{\varphi_A}"', from=1-1, to=2-1]
      \arrow["{\varphi_V \times \varphi_V}", from=1-2, to=2-2]
      \arrow["\subseteq"', from=2-1, to=2-2]
    \end{tikzcd}
  \end{equation}
  We denote the category of digraphs by \(\plgraph\).
\end{definition}

Note that our terminology here is non-standard:
usually it is required that a directed graph \((V,A)\) does not contain any self-loops;
i.e., \(A \subseteq (V \times V) \setminus \{(v,v) \mid v\in V\}\).
However, we are most interested in the case where all vertices admit loops.

\begin{definition}\label{def:morphism-of-digraphs}
  A digraph \((V,A)\) is \emph{reflexive} if \((v,v)\in A\) for all vertices \(v\in V\);
  that is, there is an inclusion \(i\from V \to A, v \mapsto (v,v)\).
  The full subcategory of \(\plgraph\) whose objects are reflexive digraphs will be denoted by \(\rgraph\).
\end{definition}
Consider a digraph \((V,A)\).
The canonical projections \(\mathfrak{s}, \mathfrak{t} \from A \to V\), given by  \(\mathfrak{s} (v,w) = v\) and \(\mathfrak{t}(v,w) = w\), determine the \emph{source} and \emph{target} of the edge \((v,w)\).
Subsequently, we call \(\mathfrak{s}\) and \(\mathfrak{t}\) the \emph{source} and \emph{target maps} of \((V,A)\), respectively.

\begin{definition}\label{def:preorder}
  Denote by \(A \mathop{_{\mathfrak{s}}\times_{\mathfrak{t}}} A\) the pullback of \(A\) along \(\mathfrak{s}\) and \(\mathfrak{t}\).
  A \emph{preorder} is a reflexive digraph with an additional \emph{composition map}
  \[
    c \colon A \mathop{_{\mathfrak{s}}\times_{\mathfrak{t}}} A \to A,
  \]
  such that
  \begin{gather*}
    \mathfrak{s}(c(g,f)) = \mathfrak{s}(f), \qquad \mathfrak{t}(c(g,f)) = \mathfrak{t}(g), \\
    c(f,i(\mathfrak{s}(f))) = f = c(i(\mathfrak{t}(f)),f), \qquad c(h,c(g,f)) = c(c(h,g),f). \qedhere
  \end{gather*}
\end{definition}

\begin{remark}
  A preorder in the sense of \cref{def:preorder} is simply a category internal to the category of sets,
  such that the source and target maps are jointly monic, ensuring there is at most one morphism between any two objects.
  That is, it coincides with the usual definition of a preorder as a reflexive and transitive relation on a set.
\end{remark}

Instead of a structure, being a preorder can also be understood as a certain completeness property for reflexive digraphs.
More precisely, while reflexive digraphs correspond to reflexive relations, preorders are the reflexive and transitive relations.
This leads to the following observation.

\begin{lemma}\label{lem:existence-and-uniqueness-of-preorders}
  Any reflexive digraph \((V,A)\) admits at most one structure of a preorder.
  Moreover, \((V,A)\) is a preorder if and only if for every pair of arrows
  \((w,x)\in A\) and \((v,w)\in A\) we have \((v,x)\in A\).
\end{lemma}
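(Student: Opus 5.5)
The plan is to treat the two assertions separately, working directly with the data of \cref{def:preorder}: source and target are the two projections \(A \subseteq V\times V \to V\), the identity map is \(i(v)=(v,v)\), and a composition is a map \(c\) on composable pairs satisfying the displayed axioms.

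\emph{Uniqueness.} Suppose \(c\) is any composition map and \((g,f)\in A \mathop{_{\mathfrak{s}}\times_{\mathfrak{t}}} A\), say \(f=(v,w)\) and \(g=(w,x)\). The first two axioms give \(\mathfrak{s}(c(g,f))=v\) and \(\mathfrak{t}(c(g,f))=x\). Since \(A\subseteq V\times V\), an arrow is determined by its source and target, so necessarily \(c(g,f)=(v,x)\). Hence \(c\) is forced by the digraph, and at most one preorder structure exists. This is the only conceptual point: source and target are jointly monic, so any two candidate composition maps agree on every pair.

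\emph{Characterisation.} If \((V,A)\) is a preorder, the computation above shows \((v,x)=c(g,f)\in A\) whenever \((w,x),(v,w)\in A\), which is exactly transitivity. Conversely, assume this transitivity condition. I would define \(c((w,x),(v,w)) \coloneqq (v,x)\), which lies in \(A\) by hypothesis, and then check the axioms. The source and target identities hold by construction. For the unit laws, \(c(f,i(\mathfrak{s}f)) = c((v,w),(v,v)) = (v,w)\), and symmetrically on the other side. For associativity, both \(c(h,c(g,f))\) and \(c(c(h,g),f)\) have source \(\mathfrak{s}(f)\) and target \(\mathfrak{t}(h)\), so by joint monicity of \((\mathfrak{s},\mathfrak{t})\) they coincide.

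I do not expect any real obstacle. Every verification reduces to comparing the endpoints of two arrows, and the only thing to keep in mind is that the convention in \cref{def:preorder} writes \(c(g,f)\) with \(\mathfrak{s}(g)=\mathfrak{t}(f)\), so the order of the arguments has to be respected throughout.
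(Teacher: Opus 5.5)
Your proof is correct: the composition is forced to be \(c((w,x),(v,w))=(v,x)\) because source and target are jointly monic, and the converse construction satisfies all the axioms of \cref{def:preorder}. The paper states this lemma without proof as well known, but the remark just before it singles out exactly this point (that \(A\subseteq V\times V\) makes source and target jointly monic), so your argument is the intended one.
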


Suppose that \((V,A,c)\) and \((V',A', c')\) are preorders.
By \cref{lem:existence-and-uniqueness-of-preorders}, any morphism \((\varphi_{V}, \varphi_{A})\) between the underlying reflexive digraphs \((V,A)\) and \((V', A')\) is automatically compatible with the composition maps \(c\) and \(c'\).
Thus,  the category \(\preord\) of preorders and their morphisms is a full subcategory of the category \(\rgraph\).
The inclusion functor \(\forget \colon \preord \to \rgraph\) simply forgets the composition map \(c\);
we shall often suppress it in notation.

\begin{remark}\label{rmk:reflector}
  To any reflexive digraph \(D = (V,A)\), we associate its \emph{transitive closure} \(R(D)\).
  It is a preorder with the same set of vertices \(V\) and an arrow from a vertex \(v\) to a vertex \(w\) if and only if there is a path from \(v\) to \(w\).

  Moreover, we can extend any morphism of reflexive digraphs \(\varphi \colon D \to D'\) to a morphism of preorders \(R(\varphi) \from R(D) \to R(D')\) as follows.
  On vertices we have \((R\varphi)_{V} \defeq \varphi_{V} \from V \to V'\).
  Given an arrow \((v,w) \in R(D)\), one easily verifies that \((\varphi_{V}(v), \varphi_{V}(w))\) is also an arrow in \(R(D')\) and we set \((R\varphi)_{A}(v,w) \eqdef (\varphi_{V}(v), \varphi_{V}(w))\).

  In particular, taking the transitive closure defines a functor \(R \from \rgraph \to \preord\).
\end{remark}

\begin{example}
  If we represent a reflexive digraph \(D\) in the plane, then taking its transitive closure \(R(D)\) amounts to adding edges for all paths in the graph as can be observed below.
  \medskip

  \begin{equation*}
    \begin{gathered}
      \begin{tikzcd}[ampersand replacement=\&,cramped,row sep=25pt,column sep=normal,every matrix/.append style={inner sep=0pt}]
{\bullet} \arrow[r]\arrow[loop, in=125, out=55, distance=7mm,overlay]\arrow[d, bend left=12]\& {\bullet} \arrow[loop, in=125, out=55, distance=7mm,overlay]\arrow[d]\\
        {\bullet} \arrow[u, bend left=12]\arrow[loop, in=305, out=235, distance=7mm,overlay]\& {\bullet} \arrow[loop, in=305, out=235, distance=7mm,overlay]
\end{tikzcd}
    \end{gathered}
    \quad \xmapsto{\;\;\;R\;\;\;} \quad
    \begin{gathered}
      \begin{tikzcd}[ampersand replacement=\&,cramped,row sep=25pt,column sep=normal,every matrix/.append style={inner sep=0pt}]
{\bullet} \arrow[r]\arrow[loop, in=125, out=55, distance=7mm,overlay]\arrow[d, bend left=12]\& {\bullet} \arrow[loop, in=125, out=55, distance=7mm,overlay]\arrow[d]\\
        {\bullet} \arrow[u, bend left=12]\arrow[r]\arrow[ur]\arrow[loop, in=305, out=235, distance=7mm,overlay]\& {\bullet} \arrow[loop, in=305, out=235, distance=7mm,overlay]\arrow[from=1-1, to=2-2, crossing over]
\end{tikzcd}
    \end{gathered}
  \end{equation*}
\end{example}

Note that we can define natural transformations \(\{\eta_{D}\from D \to \forget R(D)\}_{D \in \rgraph}\), given by the canonical inclusion, and \(\{\epsilon_{P} \from R\forget(P) \to P\}_{P\in \preord}\), given by the identity.
This yields an adjunction \(\adj{R}{\forget}{\rgraph}{\preord}\).

\begin{proposition}\label{prop:preord-refl-rgraph}
  The category \(\preord\) is a reflective subcategory of \(\rgraph\).
\end{proposition}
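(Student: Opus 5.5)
To prove that \(\preord\) is a reflective subcategory of \(\rgraph\), I will show directly that the transitive closure \(R\) of \cref{rmk:reflector} is left adjoint to the inclusion \(\forget\), with unit \(\eta\) and counit \(\epsilon\) as described just before the statement. By \cref{def:reflective-subcategory}, this is all that is needed. The fact that \(\preord\) is a full subcategory of \(\rgraph\) was already observed after \cref{lem:existence-and-uniqueness-of-preorders}, so \(\forget\) is fully faithful.

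The first step is to check that the data are well defined.
\begin{itemize}
\item \(R(D)\) is a preorder by \cref{lem:existence-and-uniqueness-of-preorders}. It is reflexive because \(D\) is, and it is transitive because paths concatenate.
\item \(R\varphi\) sends arrows to arrows because a digraph morphism sends a path \(v=v_0\to v_1\to\dots\to v_n=w\) in \(D\) to a path in \(D'\).
\item Functoriality of \(R\) is immediate, since \(R\varphi\) is determined by \(\varphi_V\).
\item \(\eta_D\colon D\to \forget R(D)\) is the identity on vertices and includes each arrow as a path of length one. It is therefore a morphism of reflexive digraphs.
\item \(\eta\) is natural because both composites in the naturality square agree on vertices, and morphisms of digraphs are determined by their vertex maps (arrows are pairs of vertices).
\item \(\epsilon_P\) is the identity, which is legitimate because a preorder is already transitively closed, so \(R\forget(P)=P\).
\end{itemize}

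The second step is to verify the triangle identities. Both reduce to equalities of vertex maps, since every map involved is the identity on vertices and morphisms are determined by their vertex components. Concretely, \(\epsilon_{RD}\circ R\eta_D\) and \(\forget\epsilon_P\circ\eta_{\forget P}\) are identities on vertices, hence identities.

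Alternatively, and perhaps more cleanly, I could establish the universal property of \(\eta_D\). Given a morphism \(f\colon D\to \forget P\) into a preorder, the vertex map \(f_V\) extends to \(R(D)\to P\): a path from \(v\) to \(w\) maps to a chain of arrows in \(P\), which composes by transitivity to an arrow \((f_V v,f_V w)\). The extension is unique because it is forced on vertices.

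I expect no real obstacle here. The only point needing care is that morphisms of digraphs are determined by their vertex maps, because \(A\subseteq V\times V\) and \eqref{eq:mor-of-refl-dig} forces \(\varphi_A=\varphi_V\times\varphi_V\) restricted to \(A\). Stating this explicitly first makes every naturality and uniqueness claim trivial.
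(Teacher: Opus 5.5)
Your proposal is correct and follows the paper's own argument: the transitive-closure functor \(R\) is left adjoint to the full inclusion \(\forget\), with unit the canonical inclusion \(D \to \forget R(D)\) and counit the identity \(R\forget(P) = P\). The paper merely asserts this adjunction, and you supply the routine checks it leaves implicit, namely well-definedness, naturality, and the triangle identities, which are easy once you note that morphisms of digraphs are determined by their vertex maps.
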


We will now establish a closed monoidal structure on the category \(\rgraph\).

\begin{definition}\label{def:cartesian-product-and-internal hom}
  Consider two reflexive digraphs \(D\) and \(D'\).
  \begin{enumerate}[label=(\arabic*),leftmargin=0.8cm]
    \item Their \emph{cartesian product} \(D \square D'\) has \(V \times V'\) as its set of vertices and \(\big((v,v'),(w,w')\big)\) is an arrow if and only if
    \begin{itemize}
      \item \(v=w\) and \((v',w')\in A'\) is an arrow of \(D'\), or
      \item \(v'= w'\) and \((v,w)\in A\) is an arrow of \(D\).
    \end{itemize}
    \item The \emph{exponential digraph} \([D, D']\) has \(\rgraph(D,D')\) as its set of vertices.  Given two graph homomorphisms \(\varphi, \psi \in \rgraph(D,D')\), we have an arrow \((\varphi, \psi)\) of \([D, D']\) if and only if for all \(v\in V\) the tuple \((\varphi(v), \psi(v))\in A'\) is an arrow of \(D'\). \qedhere
  \end{enumerate}
\end{definition}

It is well known that the cartesian product defines a symmetric monoidal structure on the category of reflexive digraphs, see for example \cite{ImrichPeterin} or, more generally, \cite[\S4.1]{HammackImrichKlavzar}, and \cite{BoikoCunoImrichLehnerWoestijne}, as well as \cite{Feigenbaum}.

\newcommand{\vwrefldig}{\begin{tikzcd}[cramped, column sep=tiny, ampersand replacement=\&]v \& w
    \arrow[from=1-1, to=1-1, loop, shift right, in=160, out=200, distance=5mm]
    \arrow[from=1-1, to=1-2]
    \arrow[from=1-2, to=1-2, loop, shift right, in=340, out=20 , distance=5mm]\end{tikzcd}}
\newcommand{\bulletrefldig}{\begin{tikzcd}[every cell/.append style={inner sep=3pt}, ampersand replacement=\&]
    \bullet
    \arrow[loop, shift right, in=160, out=200, distance=5mm]
  \end{tikzcd}}
\begin{example}\label{ex:rgraph-mon}
  Consider the reflexive digraph \(D=\!\!\vwrefldig\!\!\).
  Taking the cartesian product of \(D\) with itself yields
  \begin{equation}\label{eq:tp-of-interval-graphs}
    \begin{gathered}
      \begin{tikzcd}[ampersand replacement=\&,cramped,row sep=25pt,column sep=normal,every matrix/.append style={inner sep=0pt}]
\& {w} \arrow[loop, in=340, out=20, distance=5mm]\\
        {v} \arrow[ur]\arrow[loop, in=160, out=200, distance=5mm]\&
\end{tikzcd}
    \end{gathered} \quad \square \quad \begin{gathered}
      \begin{tikzcd}[ampersand replacement=\&,cramped,row sep=25pt,column sep=normal,every matrix/.append style={inner sep=0pt}]
\& {w'} \arrow[loop, in=340, out=20, distance=5mm]\\
        {v'} \arrow[ur]\arrow[loop, in=160, out=200, distance=5mm]\&
\end{tikzcd}
    \end{gathered}
    \qquad =
    \begin{gathered}
      \begin{tikzcd}[ampersand replacement=\&,cramped,row sep=24pt,column sep=20pt,every matrix/.append style={inner sep=0pt}]
\& {(w,v')} \arrow[loop, in=125, out=55, distance=7mm]\arrow[rr]\& \& {(w,w')} \arrow[loop, in=125, out=55, distance=7mm]\\
        {(v,v')} \arrow[ur]\arrow[loop, in=305, out=235, distance=7mm]\arrow[rr]\& \& {(v,w')} \arrow[ur]\arrow[loop, in=305, out=235, distance=7mm]\&
\end{tikzcd}
    \end{gathered}
  \end{equation}
  \smallskip

  Note that there are three endomorphisms of \(D\), determined by the maps
  \[
    \mathrm{pr}_{v}, \mathrm{pr}_{w}, \id \from\{v,w\} \to\{v,w\},
  \]
  \begin{align*}
    &\mathrm{pr}_{v}(v) = v, &&\mathrm{pr}_{w}(v)= w, &&&\id(v) = v, \\
    &\mathrm{pr}_{v}(w) = v, &&\mathrm{pr}_{w}(w)= w, &&&\id(w) = w.
  \end{align*}
  Thus, we have
  \begin{equation*}
    [D,D] \quad = \begin{gathered}\begin{tikzcd}[ampersand replacement=\&,cramped]
      {\mathrm{pr}_v} \&\& \id \\
      \\
      {\mathrm{pr}_w}
      \arrow[from=1-1, to=1-1, loop, in=55, out=125, distance=10mm]
      \arrow[from=1-1, to=1-3]
      \arrow[from=1-1, to=3-1]
      \arrow[from=1-3, to=1-3, loop, in=35, out=325, distance=10mm]
      \arrow[from=1-3, to=3-1]
      \arrow[from=3-1, to=3-1, loop, in=305, out=235, distance=10mm]
    \end{tikzcd}\end{gathered}\qedhere
  \end{equation*}
\end{example}

\begin{proposition}\label{prop:RGraph_symm_closed_mon}
  The category \(\rgraph\) becomes symmetric closed monoidal when endowed with the cartesian product as tensor product, the digraph \(\!\!\bulletrefldig\!\!\) as monoidal unit, and taking exponential digraphs as internal hom.

  Further, \(\preord\) is a reflective exponential ideal of \(\rgraph\), and so a closed subcategory.
\end{proposition}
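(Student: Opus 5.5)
The proof splits into two parts: the closed monoidal structure on \(\rgraph\), and the reflective exponential ideal statement for \(\preord\).

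\textbf{Closed monoidal structure on \(\rgraph\).} I would not verify the coherence of \(\square\) by hand. Instead, I would exhibit the tensor--hom adjunction \(\blank \square D' \dashv [D',\blank]\) directly and let \cref{thm:bijection_skew_closed_monoidal_structures} produce the rest.

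\begin{itemize}
  \item For reflexive digraphs \(D, D', E\), a vertex map \(f \from V\times V' \to V_E\) is a digraph morphism \(D\square D' \to E\) if and only if two conditions hold. First, for each \(v\), the map \(f(v,\blank)\) is a morphism \(D'\to E\). Second, for each arrow \((v,w)\in A\) and each \(v'\), the pair \((f(v,v'),f(w,v'))\) is an arrow of \(E\).
  \item By \cref{def:cartesian-product-and-internal hom}, these two conditions say exactly that the curried map \(v\mapsto f(v,\blank)\) is a morphism \(D\to [D',E]\). Reflexivity of \(D\) is what makes loops in \([D',E]\) correspond to arrows of the form \((v,v)\).
  \item The unit \(\bulletrefldig\) is the terminal reflexive digraph. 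The associator, unitors and symmetry are the set-level bijections on vertices; one checks they preserve arrows, which is routine.
  \item The resulting category is honestly monoidal, not merely skew. So by \cref{thm:bijection_skew_closed_monoidal_structures} the associated closed structure is normal-closed, and \(\rgraph\) is symmetric closed monoidal.
\end{itemize}

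\textbf{\(\preord\) is a reflective exponential ideal.} Reflectivity is \cref{prop:preord-refl-rgraph}. For the exponential ideal property, I would use \cref{lem:existence-and-uniqueness-of-preorders}: it suffices to show that \([D,P]\) is transitive whenever \(D\) is a reflexive digraph and \(P\) is a preorder.

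\begin{itemize}
  \item Suppose \((\varphi,\psi)\) and \((\psi,\chi)\) are arrows of \([D,P]\).
  \item Then for every vertex \(v\), both \((\varphi(v),\psi(v))\) and \((\psi(v),\chi(v))\) are arrows of \(P\).
  \item By transitivity of \(P\), \((\varphi(v),\chi(v))\in A_P\) for every \(v\), so \((\varphi,\chi)\) is an arrow of \([D,P]\).
  \item Reflexivity of \([D,P]\) holds because \((\varphi(v),\varphi(v))\) is a loop in \(P\).
\end{itemize}

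The unit \(\bulletrefldig\) is trivially a preorder, so it lies in the image of the inclusion. Finally, \cref{thm:day-reflection-thm} (equivalently \cref{thm:refl_subcats_give_gabi}) shows that \(\preord\) is a skew-closed reflective subcategory, i.e.\ a closed subcategory.

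\textbf{Main obstacle.} Nothing here is deep. The only step needing care is matching the arrow conditions of \(\square\) with those of \([D',E]\) in the adjunction. The asymmetry between "one coordinate fixed" in \(\square\) and "pointwise arrows" in the exponential works out only because every object is reflexive.
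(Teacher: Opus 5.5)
Your argument is correct and matches the paper's. The paper obtains the symmetric closed monoidal structure on \(\rgraph\) by citing (an adaptation of) Kapulkin--Kershaw, which you instead verify directly through the currying bijection and \cref{thm:bijection_skew_closed_monoidal_structures}; the exponential-ideal step is the same pointwise transitivity check for \([D,P]\), followed by \cref{thm:day-reflection-thm}. One small correction to your commentary: the currying bijection holds for arbitrary digraphs, and reflexivity is what keeps \(D \square D'\) and \([D',E]\) inside \(\rgraph\) and makes the unitors \(D \square \tu \cong D \cong \tu \square D\) invertible.
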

\begin{proof}
  By adapting \cite[Proposition~2.5]{Kapulkin-Kershaw}, \(\rgraph\) becomes closed symmetric monoidal when endowed with the cartesian product and the internal hom given by taking exponential digraphs. Therefore, we only need to verify that \(\preord\) is an exponential ideal.

  First, note that \(\!\!\bulletrefldig\!\!\) is a preorder.
  Given a reflexive digraph \(D\), a preorder \(D'\), and a sequence of morphisms \(\varphi_{0}, \dots , \varphi_{n} \from D \to D'\) such that \((\varphi_{i}, \varphi_{i+1})\) is an arrow in \([D, D']\), for any \(v \in V\) there is a path \((\varphi_{0}(v), \dots , \varphi_{n}(v))\) in \(D'\).
  As \(D'\) is a preorder, this implies that \((\varphi_{0}(v), \varphi_{n}(v))\) is also an arrow in \(D'\) and thus \((\varphi_{0}, \varphi_{n})\) is an arrow in \([D,D']\).
  Thus, we conclude by \cref{thm:day-reflection-thm,prop:closed-subcat-is-of-same-type}.
\end{proof}

Combining \cref{prop:RGraph_symm_closed_mon,lem:recognition-of-idempotent-gabi-monads}, we obtain a monoidal structure on \(\preord\).

\begin{corollary}\label{cor:closed-mon-structure-on-preorders}
  Let \(\times \from \preord \times \preord \to \preord\), \((D, D') \mapsto R(D\square D')\) be the functor that sends two preorders \(D\) and \(D'\) to the transitive hull \(R(D \square D')\) of the cartesian product of their underlying digraphs.
  Then, \(\preord\) becomes closed monoidal with tensor product \(\times\), monoidal unit \(\!\!\bulletrefldig\!\!\), and exponential digraphs as internal hom.
\end{corollary}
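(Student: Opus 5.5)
This is a direct application of \cref{lem:recognition-of-idempotent-gabi-monads}, via \cref{prop:RGraph_symm_closed_mon}. By \cref{prop:RGraph_symm_closed_mon}, \(\rgraph\) is symmetric closed monoidal with tensor \(\square\), unit \(\!\!\bulletrefldig\!\!\) and exponential digraphs as internal hom. The same proposition shows that \(\preord\) is a reflective exponential ideal with reflector \(R\) (\cref{prop:preord-refl-rgraph}), and that it contains the unit \(\!\!\bulletrefldig\!\!\). Since \(\preord\) is a full subcategory closed under exponentials, the internal hom of \(\preord\) can be taken to be the exponential digraph itself, and the inclusion \(\forget\) is then strict, in particular strong, closed. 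Hence \((\preord,\forget,R)\) is a reflective skew-closed subcategory.

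First I apply \cref{lem:recognition-of-idempotent-gabi-monads}. It gives a skew-closed monoidal structure on \(\preord\) with reflected tensor product \(D\times D' = R(\forget D\,\square\,\forget D')\), which is exactly the functor in the statement. The monoidal unit is the closed unit \(\!\!\bulletrefldig\!\!\). Note that \(R\) fixes this unit, because it is already a preorder.

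It remains to upgrade ``skew'' to genuinely closed monoidal. For this I invoke \cref{prop:closed-subcat-is-of-same-type}, or equivalently \cref{thm:refl_subcats_give_gabi}. The category \(\rgraph\) is closed braided (indeed symmetric) monoidal and \(\preord\) is a reflective exponential ideal containing the unit, so \(\preord\) is closed monoidal. Left and right normality are inherited from \(\rgraph\) along the full inclusion. Associative normality comes from \cref{cor:refl_strong_for_sym_mon}, which makes \(R\) strong monoidal, so that \(R(\forget R(-\square-)\square -)\cong R(-\square-\square-)\) and the associator is \(R\alpha\) conjugated by isomorphisms.

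The only point needing care is that the closed structure used in those results is the one the statement names, i.e.\ exponential digraphs rather than \(R\) applied to them. By the exponential-ideal property \(\eta_{[D,D']}\) is the identity, so \(R[D,D']=[D,D']\) and the two agree. I expect no genuine obstacle; the proof is essentially the bookkeeping of matching these identifications.
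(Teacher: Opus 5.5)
Your proposal is correct and follows the paper's route. The paper obtains the corollary by combining \cref{prop:RGraph_symm_closed_mon}, whose proof already invokes \cref{thm:day-reflection-thm,prop:closed-subcat-is-of-same-type} to upgrade to a genuinely closed (normal) structure, with \cref{lem:recognition-of-idempotent-gabi-monads}, which supplies the reflected tensor product \(R(D\square D')\); your observation that \(R\) and \(\eta\) act as identities on preorders, so the reflected internal hom is literally the exponential digraph, only makes explicit what the paper leaves implicit.
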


Note that \(\preord\) is equivalent to the category \(\thinCat\) of (small) thin categories.\footnote{\,A category \( \cat{C}\) is \emph{thin} if \( | \cat{C}(x,y) | \leq 1\) for all \(x,y\in \cat{C}\).}
From this point of view, the tensor product and internal hom of \(\preord\) discussed above coincide with the restriction of the cartesian closed monoidal structure of \(\Cat\) to \(\thinCat\).

\begin{theorem}\label{thm:s-map-reflexive-graphs}
  Consider the monad \(P=\forget R \from \rgraph \to \rgraph\). Then:
  \begin{enumerate}[label=\textnormal{(\arabic*)},leftmargin=0.8cm]
    \item\label{item:reflgraphs1} The comparison functor \(K \from \preord \to \rgraph^{P}\) is an equivalence of categories.
    \item\label{item:reflgraphs2} Set \(\mathfrak{a}_{\tu}\from P(\tu) \to \tu\) to be the identity map and, for all \(D, D' \in \rgraph\),
    define \(s_{D,D'}\colon P[P(D),D'] \to [D,P(D')]\) to be the map that is uniquely determined by
    \begin{align*}
      \rgraph(R(D), D') &\to \rgraph(D,R(D')), \\
      (\phi_V,\phi_A) &\mapsto \big(\phi_V,(\eta_{D'})_A \circ \phi_A \circ (\eta_D)_A\big).
    \end{align*}
    Then \((P,s,\mathfrak{a}_{\tu})\) is a normal gabi-monad whose category of algebras is closed monoidal and \(K \from \preord \to \rgraph^{P}\) is strict closed monoidal.
    \item\label{item:reflgraphs3} The gabi-monad \((P,s, \mathfrak{a}_{\tu})\) is not Hopf.
  \end{enumerate}
\end{theorem}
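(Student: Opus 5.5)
Part \ref{item:reflgraphs1} follows from general facts. By \cref{prop:preord-refl-rgraph}, \(\preord\) is a reflective full subcategory of \(\rgraph\) with reflector \(R\) and inclusion \(\forget\). By \cref{thm:recognition-of-reflectiveness} (Borceux's Corollary~4.2.4), the induced monad \(P=\forget R\) is idempotent and the comparison functor \(K\) is an equivalence. Concretely, a reflexive digraph carries a \(P\)-algebra structure exactly when \(\eta_D\) is invertible, i.e.\ when \(D\) is transitive, and any such structure is unique. This can be checked directly from \cref{lem:existence-and-uniqueness-of-preorders}.

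For \ref{item:reflgraphs2}, first note that \(\rgraph\) is symmetric closed monoidal and that \(\preord\) is a reflective exponential ideal containing the unit \(\bulletrefldig\); both facts are in \cref{prop:RGraph_symm_closed_mon}. Then \cref{thm:refl_subcats_give_gabi} gives that \(P\) is a normal idempotent gabi-monad, that \(\rgraph^P\simeq\preord\) is closed monoidal with the reflected product of \cref{cor:closed-mon-structure-on-preorders}, and (via \cref{lem:recognition-of-idempotent-gabi-monads}) that \(K\) is strong monoidal and strong closed.

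Next I identify the structure maps through \eqref{eq:SfromAct}. Since \(\tu\) is a preorder, \(P\tu=\tu\) and \(\act_\tu=\id\). For \(s\), the algebra structure on \([PD,PD']\) is the inverse of \(\eta\), which on underlying sets is the identity. So \(s_{D,D'}\) is the composite \([\eta_D,PD']\circ[PD,\eta_{D'}]\). On vertices this sends \(\phi\colon R(D)\to D'\) to \(\eta_{D'}\phi\eta_D\), which has the same vertex map \(\phi_V\); this is the stated formula. Strictness of \(K\) holds because the internal hom of \(\preord\) is literally the exponential digraph, the unit is literally \(\bulletrefldig\), and \(R(D\square D')\) is literally the product of thin categories. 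Hence the structure isomorphisms \(\psi\), \(\psi_0\) of \cref{thm:day-reflection-thm} are identities here, since \(\eta\) is the identity on preorders.

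For \ref{item:reflgraphs3}, I argue by contradiction. If \(P\) were Hopf, the lifted monoidal structure on \(\rgraph^P\) would make \(\U^P\) strict monoidal (\cite[Theorem~3.6]{BruguieresLackVirelizier}). Since the closed structure determines the tensor product up to unique isomorphism (\cref{thm:bijection_skew_closed_monoidal_structures}), that product would be the reflected one. So \(D\square D'\) would have to be transitive for all preorders \(D,D'\). Example~\ref{ex:rgraph-mon} refutes this: \(D=\vwrefldig\) is a preorder, yet \(D\square D\) contains \((v,v')\to(v,w')\to(w,w')\) without the arrow \((v,v')\to(w,w')\). Equivalently, \(\eta_{D\square D}\) is not invertible.

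The main obstacle I expect is making the Hopf-implies-strict-monoidal step rigorous, namely ensuring that the Hopf monoidal structure coincides with the one induced by the gabi closed structure. I would handle this as in \cref{thm:lift-to-hopf-monads}: Hopf means the tensor--hom adjunction lifts with \(\U^P\) strict, and the lifted left adjoint of \([\blank,\mb M]_P\) is unique up to isomorphism. Its underlying object would then be the non-transitive digraph \(D\square D\), which contradicts that \(\U^P\) lands in preorders.
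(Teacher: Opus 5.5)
Your proposal is correct and follows essentially the same route as the paper. Part (1) comes from \cref{thm:recognition-of-reflectiveness}, part (2) from \cref{thm:refl_subcats_give_gabi,lem:recognition-of-idempotent-gabi-monads,thm:gabi-bijection} with \(s\) read off from \eqref{eq:SfromAct}, and part (3) from the observation that \(\forget R(D\square D)\neq D\square D\) for the one-arrow digraph \(D\), so \(\U^P\) cannot be strict monoidal. The issue you flag in your last paragraph is not actually needed: since \(P\) is idempotent, an object carries at most one \(P\)-algebra structure, so a strict monoidal \(\U^P\) would directly force \(D\square D\) to be a preorder.
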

\begin{proof}
  Claim \ref{item:reflgraphs1} follows immediately from \cref{thm:recognition-of-reflectiveness},
  and claim \ref{item:reflgraphs2} follows from
  \cref{thm:refl_subcats_give_gabi,lem:recognition-of-idempotent-gabi-monads,thm:gabi-bijection}.
  The map \(s_{D,D'}\from P[P(D), D']\to [D, P(D')]\) is now obtained by applying \cref{eq:SfromAct}.
  Furthermore, we have \(\mathfrak{a}_{\tu} \from P\tu = \forget R\tu\xrightarrow{\;\;\epsilon_{\tu}\;\;}\tu\).

  Finally, in order to conclude that \((P, s, \mathfrak{a}_{\tu})\) is not Hopf, it suffices to show  that the inclusion functor \(\forget \from \preord \to \rgraph\) is not strict monoidal.
  To that end, we set \(D\defeq\!\!\vwrefldig\!\!\) and observe that by \cref{eq:tp-of-interval-graphs}, we get \(\forget(D \times D) = \forget R(D\square D) \neq D\square D\).
\end{proof}

\subsection{Simplicial complexes}\label{sec:simplicial complexes}
Based on \cite[Chapter IV, \S7]{Eilenberg-Kelly}, we will now interpret the cartesian closed category of sets as algebras of a Hopf monad on simplicial complexes.

\begin{definition}\label{def:simplicial-complex}
  A \emph{simplicial complex} is a set \(V\), whose elements are called \emph{vertices}, and a set \(\Sigma \subseteq \cP_{\mathrm{fin}}^{+}(V)\) of finite non-empty subsets of \(V\), called \emph{simplices}, such that
  \begin{enumerate}[leftmargin=0.8cm]
    \item if \(\sigma \in \Sigma\) is a simplex and \(\tau \subseteq \sigma\), \(\tau \neq \varnothing\), then \(\tau \in \Sigma\),
    \item every singleton \(\{v\}\) for \(v \in V\) is a simplex.
  \end{enumerate}

  A \emph{morphism} \((V,\Sigma) \to (V',\Sigma')\) of simplicial complexes is a function \(f \colon V \to V'\) such that whenever \(\sigma \in \Sigma\), also \(f(\sigma) \in \Sigma'\).

  Simplicial complexes and their morphisms form the category  \({\sf SC}\).
\end{definition}

There is an obvious forgetful functor \(L \colon {\sf SC} \to {\sf Set}\), sending a simplicial complex \((V, \Sigma)\) to its set of vertices \(V\).
It admits a fully faithful right adjoint \(R \colon {\sf Set} \to {\sf SC}\) given by \(R(X) = (X,\cP_{\mathrm{fin}}^{+}(X))\) on objects and by \(R(f) = f\) on morphisms. The unit is explicitly given by the canonical embedding
\[
  \eta_{(V,\Sigma)} \colon (V,\Sigma) \xrightarrow{\id_V} (V,\cP_{\mathrm{fin}}^{+}(V))
\]
and the counit is the identity map
\[
  \epsilon_{X} \colon X \xrightarrow{\id_X} X.
\]

This implies the following result.
\begin{lemma}\label{lem:reflective-subcategory}
  The monad  \(H = RL \from \mathsf{SC}\to \mathsf{SC}\), \((V, \Sigma) \mapsto (V, \cP_{\mathrm{fin}}^{+}(V))\) is idempotent and the comparison functor establishes an equivalence \(\mathsf{SC}^H \simeq \Set\).
\end{lemma}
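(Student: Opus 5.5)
The plan is to recognise \(R\) as the inclusion of a reflective subcategory and then invoke \cref{thm:recognition-of-reflectiveness}. We have the adjunction \(L \dashv R\) with \(R \colon \Set \to \mathsf{SC}\) fully faithful. Since the left adjoint is \(L\) and \(R\) is the full inclusion, \(\Set\) is identified (via \(R\)) with the full subcategory of \(\mathsf{SC}\) on the complexes \((X,\cP_{\mathrm{fin}}^{+}(X))\). Here \(L\) plays the role of the reflector \(\free\) and \(R\) that of the inclusion \(\iota\), so \((\Set, R, L)\) is a reflective subcategory in the sense of \cref{def:reflective-subcategory}.

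Concretely, the steps are as follows.

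\begin{enumerate}
\item Check that \(R\) is a well-defined functor. Any function \(f \colon X \to X'\) sends finite non-empty subsets to finite non-empty subsets, so \(R(f)=f\) is a morphism of simplicial complexes.
\item Check that \(R\) is fully faithful. Morphisms \(R X \to R X'\) are exactly the functions \(X \to X'\), since every finite non-empty image is again a simplex; hence the hom-sets agree.
\item Verify the triangle identities for the stated \(\eta\) and \(\epsilon\). Both are identities on underlying sets, so the triangle identities reduce to \(\id \circ \id = \id\). One also checks that \(\eta_{(V,\Sigma)}\) is a morphism, which holds because \(\Sigma \subseteq \cP_{\mathrm{fin}}^{+}(V)\).
\end{enumerate}

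For idempotency, the multiplication of \(H = RL\) is \(\mu = R\epsilon L\), and this is the identity on \(H^{2}(V,\Sigma) = (V,\cP_{\mathrm{fin}}^{+}(V))\). In particular \(\mu\) is invertible, so \(H\) is idempotent in the sense of \cref{def:idemptotent-monad}. Equivalently, the counit is invertible, which is the standard criterion for the right adjoint to be fully faithful.

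Finally, apply \cref{thm:recognition-of-reflectiveness} (or directly the fact that a reflective inclusion is monadic) to get that the comparison functor \(K \colon \Set \to \mathsf{SC}^H\) is an equivalence. For an explicit check, note two things. First, an \(H\)-algebra structure \(a \colon (V,\cP_{\mathrm{fin}}^{+}(V)) \to (V,\Sigma)\) must satisfy \(a\circ\eta=\id\), so \(a=\id_V\); this forces \(\Sigma=\cP_{\mathrm{fin}}^{+}(V)\). Second, algebra morphisms between such algebras are arbitrary functions. These together show that \(K\) is essentially surjective and fully faithful.

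None of the steps is a real obstacle. The only point needing care is matching the convention in \cref{thm:recognition-of-reflectiveness}: the reflector there is a left adjoint of the full inclusion, and here that inclusion is \(R\) with left adjoint \(L\).
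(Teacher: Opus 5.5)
Your proposal is correct and follows the same route as the paper, which observes that \(L \dashv R\) with \(R\) fully faithful (unit the canonical embedding, counit the identity) and deduces the lemma from this via \cref{thm:recognition-of-reflectiveness}. Your explicit check, that any \(H\)-algebra structure is forced to be \(\id_V\) with \(\Sigma = \cP_{\mathrm{fin}}^{+}(V)\), adds the small bonus that the comparison functor is in fact an isomorphism of categories.
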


We can endow \({\sf SC}\) with the structure of a cartesian closed category as follows.
The product of \((V,\Sigma_V)\) and \((W,\Sigma_W)\) is \((V \times W, \Sigma_{V \times W})\) where a finite subset \(\forget \subseteq V \times W\) is in \(\Sigma_{V \times W}\) if and only if \(\pi_V(\forget) \in \Sigma_V\) and \(\pi_W(\forget) \in \Sigma_W\).
The internal hom \([(V,\Sigma_V),(W,\Sigma_W)]_{{\sf SC}}\) is \(({\sf SC}((V,\Sigma_V),(W,\Sigma_W)),\Sigma_{[V,W]})\) where \(\{f_1,\ldots,f_n\} \in \Sigma_{[V,W]}\) if and only if for every \(\sigma \in \Sigma_V\), \(f_1(\sigma) \cup f_2(\sigma) \cup \cdots \cup f_n(\sigma) \in \Sigma_W\).

\begin{theorem}\label{thm:Hopf-monad-set}
  The category \(\mathsf{SC}\) is cartesian closed.
  Moreover, if we endow \(\Set\) with its cartesian closed structure,  the inclusion \(R\from \Set\to \mathsf{SC}\) becomes strict closed monoidal, endowing \(H=RL\) with the structure of a Hopf monad.
\end{theorem}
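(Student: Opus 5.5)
We first show that \(\mathsf{SC}\) is cartesian closed by verifying the universal property directly. We check that the described \((V\times W,\Sigma_{V\times W})\) is a product; this is immediate because a finite set \(U\subseteq V\times W\) is a simplex exactly when both of its projections are, so the projections are morphisms and the pairing of morphisms is again a morphism. The terminal object is the one-point complex.

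For the exponential we establish a natural bijection \(\mathsf{SC}(X\times Y,Z)\cong \mathsf{SC}(X,[Y,Z]_{\mathsf{SC}})\), sending \(g\) to its curry \(x\mapsto g(x,\blank)\). Two checks are needed.
\begin{itemize}
\item Each \(g(x,\blank)\) is a morphism, since \(\{x\}\times\tau\) is a simplex of \(X\times Y\) for every \(\tau\in\Sigma_Y\).
\item The curried map sends a simplex \(\sigma=\{x_1,\dots,x_n\}\) to a simplex of \([Y,Z]\). Indeed \(\bigcup_i g(x_i,\tau)=g(\sigma\times\tau)\), and \(\sigma\times\tau\) is a simplex of the product.
\end{itemize}
Conversely, given \(h\colon X\to[Y,Z]\), a simplex \(U\) of \(X\times Y\) lies in \(\pi_X(U)\times\pi_Y(U)\). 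So \(g(U)\subseteq\bigcup_{x\in\pi_X U}h(x)(\pi_Y U)\), which is a simplex because \(h(\pi_X U)\in\Sigma_{[Y,Z]}\), and simplices are closed under nonempty subsets. Naturality is clear, since the underlying maps are the usual set-theoretic currying. The only slightly delicate point is that \(\Sigma_{[Y,Z]}\) contains every singleton, which holds because each \(f\) is a morphism.

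Next we show that \(R\colon\Set\to\mathsf{SC}\) is strict closed monoidal for the cartesian structures. Products are preserved on the nose: a finite subset of \(X\times Y\) always has finite nonempty projections, so \(R(X\times Y)=RX\times RY\). The point \(R\{\ast\}\) is the terminal complex. For internal homs, every function \(X\to Y\) is a morphism \(RX\to RY\), so the vertex set of \([RX,RY]_{\mathsf{SC}}\) is \(\Set(X,Y)\). Every finite family of such functions is a simplex, since any finite union of finite sets is finite and nonempty. Hence \([RX,RY]_{\mathsf{SC}}=R\Set(X,Y)\) literally, and the closed constraints \(i,j,\Gamma\) agree because they are given by evaluation, identity and postcomposition on underlying sets. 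So \(R\) is strict closed and strict monoidal, with compatible coherence data.

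Finally we deduce that \(H\) is Hopf. By \cref{lem:reflective-subcategory}, the comparison functor \(K\colon\Set\to\mathsf{SC}^H\) is an equivalence, and \(R=\U^H K\). The cartesian closed structure of \(\Set\) therefore transports to a closed monoidal structure on \(\mathsf{SC}^H\) for which \(\U^H\) is strict monoidal and strict closed. Theorem~3.6 of \cite{BruguieresLackVirelizier}, as used in the proof of \cref{thm:lift-to-hopf-monads}, then states that \(H\) is a left Hopf monad; by symmetry of the cartesian structure it is also right Hopf, hence Hopf.

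One technical point is the transport along \(K\). It requires the structure on \(\mathsf{SC}^H\) to be chosen so that \(\U^H\) is strict rather than merely strong. This is possible because \(K\) is injective on objects up to the unique \(H\)-algebra structure: \(H\) is idempotent, so each object carries at most one algebra structure, and we can define the product and internal hom of algebras as those of \(\mathsf{SC}\). Alternatively, \cref{thm:refl_subcats_give_gabi} together with the observation that the reflected tensor agrees with the original one gives the same conclusion. The main obstacle is the exponential bijection, specifically verifying that the curried map of a morphism preserves simplices.
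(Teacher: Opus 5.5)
Your proof is correct and follows the same route as the paper. Both check that \(R\) preserves the unit, products and internal homs on the nose, and then invoke Theorem~3.6 of \cite{BruguieresLackVirelizier}.

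The differences are only in detail. The paper cites \cite[Chapter IV, \S7]{Eilenberg-Kelly} for cartesian closedness of \(\mathsf{SC}\), whereas you verify the currying bijection by hand. You also make explicit why the transported structure makes \(\U^H\) strict rather than merely strong: idempotence of \(H\) forces every \(H\)-algebra to be some \(RX\) with the identity structure, so \(K\) is in fact an isomorphism onto \(\mathsf{SC}^H\). The paper leaves this point implicit.
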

\begin{proof}
  That \(\mathsf{SC}\) is cartesian closed is proven in~\cite[Chapter IV, \S7]{Eilenberg-Kelly}.

  In order to observe that \(R \from \Set\to \mathsf{SC}\) is strict closed monoidal, we first observe that the monoidal unit of \(\mathsf{SC}\) is given by \((\{\ast\}, \cP_{\mathrm{fin}}^{+}(\{\ast\})) = R(\{\ast\})\).
  Moreover, for two sets \(V, W\) we have
  \begin{align*}
    R(V\times W) &= (V\times W, \cP_{\mathrm{fin}}^{+}(V\times W)) = (V, \cP_{\mathrm{fin}}^{+}(V)) \times (W, \cP_{\mathrm{fin}}^{+}(W)) = R(V) \times R(W),\\
    R([V,W]) &= (\Set(V,W), \cP_{\mathrm{fin}}^{+}(\Set(V,W))) = [(V, \cP_{\mathrm{fin}}^{+}(V)), (W, \cP_{\mathrm{fin}}^{+}(W))] = [R(V), R(W)].
  \end{align*}
  The claim now follows from \cite[Theorem~3.6]{BruguieresLackVirelizier}.
\end{proof}

\begin{remark}
  Let \(\mathsf{SC}_{\text{fin}}\) be the full subcategory of \(\mathsf{SC}\) whose objects are simplicial complexes with finitely many vertices.
  Analogous to the preceding discussion, we consider the category of finite sets \(\Set_{\text{fin}}\) as a reflective full subcategory of \(\mathsf{SC}_{\text{fin}}\) with inclusion and reflection
  \begin{align*}
    L \from \mathsf{SC}_{\text{fin}}& \leftrightarrows \Set_{\text{fin}} \colon R \\
    (V, \Sigma)& \mapsto V \\
    (W, \cP_{\mathrm{fin}}^{+}(W)) &\mapsfrom W
  \end{align*}
  We now define a functor \(\langle \blank , \bblank \rangle \from \mathsf{SC}_{\text{fin}}\op\times \mathsf{SC}_{\text{fin}}\to \mathsf{SC}_{\text{fin}}\) by setting
  \begin{equation*}
    \langle (V,\Sigma_{V}), (W, \Sigma_{W}) \rangle = \big(\mathsf{SC}_{\text{fin}}((V,\Sigma_V),(W,\Sigma_W)),(\Sigma_{\langle V,W\rangle}) \big),
  \end{equation*}
  where \(\{f_{1}, \dots , f_{n}\}\in \Sigma_{\langle V, W \rangle}\) if and only if either \(n=1\) or \(f_{1}(V) \cup \dots \cup f_{n}(V) \in \Sigma_{W}\).
  As shown  in \cite[page 561]{Eilenberg-Kelly}, this defines a closed but non-monoidal structure on \(\mathsf{SC}_{\text{fin}}\).

  Given two finite sets \(V\) and \(W\), we observe that
  \begin{equation*}
    \langle R(V), R(W) \rangle
    = \langle (V, \cP_{\mathrm{fin}}^{+}(V)),(W, \cP_{\mathrm{fin}}^{+}(W))\rangle
    = (\Set(V,W), \cP_{\mathrm{fin}}^{+}(\Set(V,W)))
    = R([V,W]).
  \end{equation*}
  Thus, the monad \(H=RL \from \mathsf{SC}_{\text{fin}} \to \mathsf{SC}_{\text{fin}}\) can be endowed with the structure of a gabi-monad (with respect to the internal hom \(\langle \blank , \bblank \rangle\) on \(\mathsf{SC}_{\text{fin}}\)).
  Moreover, in contrast to \((\mathsf{SC}_{\text{fin}}, \langle \blank , \bblank\rangle)\),  its Eilenberg--Moore category \(\mathsf{SC}_{\text{fin}}^{H} \cong \Set_{\text{fin}}\) is closed monoidal with the cartesian product of sets as tensor product.
\end{remark}

\subsection{Almost an example: pointed sets}

Suppose that \(\cat{C}\) is a skew-closed monoidal category and that \(\cat{D}\) is an exponential ideal in \(\cat{C}\) with inclusion functor \(\iota\colon \cat{D} \to \cat{C}\).
Assume, moreover, that the objects \(H_{X,A}\) from \cref{def:exp-ideal} can be chosen such that there is a lift
\[
  H\from \cat[C]\op \times \cat[D] \to \cat[D],
  \qquad \iota H(X,A) \cong [X,\iota A]_{\cat{C}}.
\]
As we observed in the proof of \cref{thm:day-reflection-thm}, restricting \(H\) along \(\iota\op \times \id_{\cat[D]}\) induces an internal hom construction on \(\cat{D}\) by setting \( [A,B]_{\cat{D}} \defeq H(\iota A,B)\).
If, in addition, \(\iota\) is monadic, then one can transfer such an internal hom to the Eilenberg--Moore category of algebras of the corresponding monad.
In spite of this, in general this is not enough to obtain a gabi-monad, as we show here.

\medskip

Consider the category \(\Set_\term\) of pointed sets and point-preserving maps. Since \(\term\) is a monoid in \((\Set,\sqcup,\varnothing)\), the functor \((\blank)_+ \defeq \star \sqcup \blank\) is naturally a monad on \(\Set\), called the \emph{maybe monad}, whose Eilenberg--Moore category of algebras is exactly \(\Set_\term\). Thus, we also write \(\boldsymbol{X} \defeq (X, x_0) \in \Set_\term\).

The category \(\Set_\term\) is an exponential ideal in \(\Set\), because if \((Y,y_0)\) is a pointed set and \(X\) is a set, then \([X,Y] = \Set(X,Y)\) becomes pointed via the constant map \(c_{y_0}(x) \defeq y_0\).
For a map \(f\colon X' \to X\) of sets and a point-preserving map \(g\colon (Y,y_0) \to (Y',y'_0)\), the induced map
\[
  [f,g]\colon [X,Y] \to [X',Y'],
  \qquad\qquad
  h \mapsto g \circ h \circ f,
\]
is point-preserving, since \([f,g](c_{y_0})=c_{y'_0}\).
Thus, these choices define a functorial lift of the internal hom,
and \(\Set_\term\) becomes skew-closed when equipped with
\begin{itemize}[leftmargin=0.8cm]
    \item the object \(\boldsymbol{1} \defeq (\{0,1\}, 0)\),

    \item the functor
    \[\llbracket \blank,\bblank \rrbracket\from {\Set_\term}\op \times \Set_\term \to \Set_\term, \qquad ((X,x_0),(Y,y_0)) \mapsto (\Set(X,Y),c_{y_0}),\]

    \item the natural transformation \(\boldsymbol{i}_{\boldsymbol{X}} \colon \llbracket \boldsymbol{1},\boldsymbol{X} \rrbracket \to \boldsymbol{X}\) given by \(f \mapsto f(1)\),

    \item the dinatural transformation \(\boldsymbol{j}_{\boldsymbol{X}} \colon \boldsymbol{1} \to \llbracket \boldsymbol{X},\boldsymbol{X} \rrbracket\)
    given by \(0 \mapsto c_{x_0}\) and \(1 \mapsto \id_X\), and

    \item the (di)natural transformation \(\boldsymbol{\Gamma}_{\boldsymbol{Y},\boldsymbol{Z}}^{\boldsymbol{X}} \colon \llbracket \boldsymbol{Y},\boldsymbol{Z} \rrbracket \to \llbracket \llbracket \boldsymbol{X},\boldsymbol{Y} \rrbracket,\llbracket \boldsymbol{X},\boldsymbol{Z} \rrbracket \rrbracket\)
    given by \(\Gamma_{Y,Z}^X\), i.e., post-composition of functions.
\end{itemize}
The underlying functor \(\forget \colon \Set_\star \to \Set\) is closed, with structure morphisms
\[
    \varphi_0\from \term \xrightarrow{\ \ast \, \mapsto \, 1\ } \{0,1\} = \forget\boldsymbol{1}
    \qquad\text{and}\qquad
    \varphi_{\boldsymbol{X},\boldsymbol{Y}} \from \forget\llbracket\boldsymbol{X},\boldsymbol{Y}\rrbracket \xrightarrow{\ \id_{[X,Y]}\ } [\forget\boldsymbol{X},\forget\boldsymbol{Y}],
\]
but not strong closed, because \(\varphi_0\) is not bijective: the maybe monad is not a gabi-monad, despite its Eilenberg--Moore category of algebras being closed monoidal with the internal hom lifted from \(\Set\).

\printbibliography

\clearpage

\pagestyle{empty}

\appendix

\begin{amssidewaysfigure}\vspace{1cm}
    \[\hspace{-2.5cm}\mathscale{0.65}{
        \begin{tikzcd}[ampersand replacement=\&,column sep=large,row sep=large]
        {T[TX,TY]} \&\&\&\& {T[[TP,TX],[TP,TY]]} \& {T[T[TP,TX],[TP,TY]]} \\
        \\
        {T\forget[\free X,\free Y]} \& {T\forget[[\free P,\free X],[\free P,\free Y]]} \&\& {T[\forget[\free P,\free X],\forget[\free P,\free Y]]} \& {T[[TP,TX],\forget[\free P,\free Y]]} \\
        \\
        {\forget[\free X,\free Y]} \& {\forget[[\free P,\free X],[\free P,\free Y]]} \&\& {T[T\forget[\free P,\free X],\forget[\free P,\free Y]]} \& {T[T[TP,TX],\forget[\free P,\free Y]]} \\
        \\
        \& {[\forget[\free P,\free X],\forget[\free P,\free Y]]} \&\& {[T\forget[\free P,\free X],\forget[\free P,\free Y]]} \& {T[T[TP,TX],T\forget[\free P,\free Y]]} \& {T[T[TP,TX],T[TP,TY]]} \\
        \\
        \& {[\forget[\free P,\free X],[TP,TY]]} \& {[[TP,TX],\forget[\free P,\free Y]]} \& {[T[TP,TX],\forget[\free P,\free Y]]} \& {[T[TP,TX],T\forget[\free P,\free Y]]} \& {[T[TP,TX],T[TP,TY]]} \\
        \\
        \&\&\& {[T[TP,TX],[TP,TY]]} \&\& {[[TP,TX],T[TP,TY]]} \\
        \\
        {[TX,TY]} \& {[[TP,TX],[TP,TY]]} \&\& {[[TP,TX],[TP,TY]]} \& {[[TP,TX],\forget[\free P,\free Y]]} \& {[[TP,TX],T\forget[\free P,\free Y]]}
        \arrow["{{{{{T\Gamma_{TX,TY}^{TP}}}}}}", from=1-1, to=1-5]
        \arrow["{{{{{T\varphi^{-1}_{\free X,\free Y}}}}}}"{description}, from=1-1, to=3-1]
        \arrow["\begin{array}{c} \substack{\forget \, \mathsf{closed} \\[1pt] [\blank,\bblank] \, \mathsf{funct}} \end{array}"{description}, draw=none, from=1-1, to=3-5]
        \arrow["{{{{{T[\act_{[TP,TX]},\mathsf{id}]}}}}}", from=1-5, to=1-6]
        \arrow["{{{T[\mathsf{id},\varphi^{-1}_{\free P,\free Y}]}}}"{description}, from=1-5, to=3-5]
        \arrow["\begin{array}{c} \substack{\eta \, \mathsf{nat} \\[1pt] [\blank,\bblank] \, \mathsf{funct}} \end{array}"{description}, draw=none, from=1-5, to=7-6]
        \arrow["{{{T[\mathsf{id},\eta_{[TP,TY]}]}}}"{description}, from=1-6, to=7-6]
        \arrow["{{{{{T\forget\Gamma_{\free X,\free Y}^{\free P}}}}}}", from=3-1, to=3-2]
        \arrow["{{{{{\forget\epsilon_{[\free X,\free Y]}}}}}}"{description}, from=3-1, to=5-1]
        \arrow["{\epsilon \, \mathsf{nat}}"{description}, draw=none, from=3-1, to=5-2]
        \arrow["{{{T\varphi_{[\free P,\free X],[\free P,\free Y]}}}}", from=3-2, to=3-4]
        \arrow["{{{{{\forget\epsilon_{[[\free P,\free X],[\free P,\free Y]]}}}}}}"{description}, from=3-2, to=5-2]
        \arrow["{{{T[\forget\epsilon_{[\free P,\free X]},\forget[\free P,\free Y]]}}}"{description}, from=3-4, to=5-4]
        \arrow["{\mathsf{def} \, \act}"{description}, draw=none, from=3-4, to=5-5]
        \arrow[""{name=0, anchor=center, inner sep=0}, "{{{\act_{[\forget[\free P,\free X],\forget[\free P,\free Y]]}}}}"{description}, from=3-4, to=7-2]
        \arrow["{{{T[\varphi_{\free P,\free X},\mathsf{id}]}}}"', from=3-5, to=3-4]
        \arrow["{{{T[\act_{[TP,TX]},\mathsf{id}]}}}"{description}, from=3-5, to=5-5]
        \arrow["{{{{{\forget\Gamma_{\free X,\free Y}^{\free P}}}}}}"', from=5-1, to=5-2]
        \arrow["{{{{{\varphi_{\free X,\free Y}}}}}}"{description}, from=5-1, to=13-1]
        \arrow["{\forget \, \mathsf{closed}}"{description, pos=0.45}, draw=none, from=5-1, to=13-2]
        \arrow["{{{{{\varphi_{[\free P,\free X],[\free P,\free Y]}}}}}}"{description}, from=5-2, to=7-2]
        \arrow["{{{T[T\varphi^{-1}_{\free P,\free X},\mathsf{id}]}}}", from=5-4, to=5-5]
        \arrow["{{{\act_{[T\forget[\free P,\free X],\forget[\free P,\free Y]]}}}}"{description}, from=5-4, to=7-4]
        \arrow["{\act \, \mathsf{nat}}"{pos=0.4}, draw=none, from=5-4, to=9-5]
        \arrow["{{{T[\mathsf{id},\eta_{\forget[\free P,\free Y]}]}}}"{description}, from=5-5, to=7-5]
        \arrow["{{{[\forget\epsilon_{[\free P,\free X]},\mathsf{id}]}}}", curve={height=-6pt}, from=7-2, to=7-4]
        \arrow["{{{{{[\mathsf{id},\varphi_{\free P,\free Y}]}}}}}"', from=7-2, to=9-2]
        \arrow["{{{[\varphi^{-1}_{\free P,\free X},\mathsf{id}]}}}"{description}, from=7-2, to=9-3]
        \arrow["{\eta \, \mathsf{nat}}"{description, pos=0.6}, draw=none, from=7-2, to=9-4]
        \arrow["{{{[\eta_{\forget[\free P,\free X]},\mathsf{id}]}}}", curve={height=-6pt}, from=7-4, to=7-2]
        \arrow["{{{[T\varphi^{-1}_{\free P,\free X},\mathsf{id}]}}}"{description}, from=7-4, to=9-4]
        \arrow["{{{\act_{[T[TP,TX],T\forget[\free P,\free Y]]}}}}"{description}, from=7-5, to=9-5]
        \arrow["{\act \, \mathsf{nat}}"{description}, draw=none, from=7-5, to=9-6]
        \arrow["{{{T[\mathsf{id},T\varphi^{-1}_{\free P,\free Y}]}}}"', from=7-6, to=7-5]
        \arrow["{{{\act_{[T[TP,TX],T[TP,TY]]}}}}"{description}, from=7-6, to=9-6]
        \arrow["{{{{{[\varphi^{-1}_{\free P,\free X},\mathsf{id}]}}}}}"', from=9-2, to=13-2]
        \arrow[""{name=1, anchor=center, inner sep=0}, "{{{[\mathsf{id},\varphi_{\free P,\free Y}]}}}"{description}, from=9-3, to=13-2]
        \arrow["{[\blank,\bblank] \, \mathsf{funct}}"'{pos=0.4}, draw=none, from=9-3, to=13-4]
        \arrow["{{{[\eta_{[TP,TX]},\mathsf{id}]}}}"', from=9-4, to=9-3]
        \arrow["{{{[\mathsf{id},\eta_{\forget[\free P,\free Y]}]}}}", curve={height=-6pt}, from=9-4, to=9-5]
        \arrow["{{{[\mathsf{id},\varphi_{\free P,\free Y}]}}}"{description}, from=9-4, to=11-4]
        \arrow["{{{[\mathsf{id},\forget\epsilon_{[\free P,\free Y]}]}}}", curve={height=-6pt}, from=9-5, to=9-4]
        \arrow["{{{[\mathsf{id},T\varphi^{-1}_{\free P,\free Y}]}}}"', from=9-6, to=9-5]
        \arrow[""{name=2, anchor=center, inner sep=0}, "{[T[TP,TX],\act_{[TP,TY]}]}"{description}, from=9-6, to=11-4]
        \arrow["{{{[\eta_{[TP,TX]},\mathsf{id}]}}}"{description}, from=9-6, to=11-6]
        \arrow["{[\blank,\bblank] \, \mathsf{funct}}"{description}, draw=none, from=11-4, to=11-6]
        \arrow["{{{[\eta_{[TP,TX]},\mathsf{id}]}}}"{description}, from=11-4, to=13-4]
        \arrow[""{name=3, anchor=center, inner sep=0}, "{[[TP,TX],\act_{[TP,TY]}]}"{description}, from=11-6, to=13-4]
        \arrow["{{{[\mathsf{id},T\varphi^{-1}_{\free P,\free Y}]}}}"{description}, from=11-6, to=13-6]
        \arrow["{{{{{\Gamma_{TX,TY}^{TP}}}}}}"', from=13-1, to=13-2]
        \arrow["{{{=}}}"{description}, no head, from=13-2, to=13-4]
        \arrow["{{{[\mathsf{id},\varphi_{\free P,\free Y}]}}}", from=13-5, to=13-4]
        \arrow["{{{[\mathsf{id},\forget\epsilon_{[\free P,\free Y]}]}}}", from=13-6, to=13-5]
        \arrow["{\mathsf{def} \, \act}"{description}, draw=none, from=3-2, to=0]
        \arrow["{\act \, \mathsf{nat}}"{description}, draw=none, from=0, to=7-4]
        \arrow["{[\blank,\bblank] \, \mathsf{funct}}"{description}, draw=none, from=9-2, to=1]
        \arrow["{\mathsf{def} \, \act}"'{pos=0.4}, draw=none, from=9-4, to=2]
        \arrow["{\mathsf{def} \, \act}"{pos=0.7}, draw=none, from=3, to=13-6]
            \arrow["{\act_{[TX,TY]}}"{description}, from=1-1, to=13-1, rounded corners, color=red,
              to path={-- ([xshift=-2cm]\tikztostart.center) -- ([xshift=-2cm]\tikztotarget.center) \tikztonodes -- (\tikztotarget.west)}]
            \arrow["{s_{[TP,TX],[TP,TY]}}"{description}, from=1-6, to=11-6, rounded corners, color=red,
              to path={-- ([xshift=3cm]\tikztostart.center) -- ([xshift=3cm,yshift=.1cm]\tikztotarget.center) \tikztonodes -- ([yshift=.1cm]\tikztotarget.east)}]
            \arrow["{[[TP,TX],\act_{[TP,TY]}]}", from=11-6, to=13-4, rounded corners, color=red,
              to path={let \p1=([xshift=3cm]\tikztostart.center), \p2=([yshift=-1cm]\tikztotarget.center) in
                       ([yshift=-.1cm]\tikztostart.east) -- ([yshift=-.1cm]\p1) -- (\x1,\y2) -- (\p2) \tikztonodes -- (\tikztotarget.south)}]
          \end{tikzcd}
        }
    \]
    \caption{\(\Gamma^{TP}_{TX,TY}\) is a morphism of \(T\)-algebras}%
    \label{fig:GammaTalgs}%
  \end{amssidewaysfigure}

\clearpage

\vspace*{\fill}

\begin{figure}[htbp]
  \[
    \begin{tikzcd}[column sep=60pt, row sep=25pt]
      {T^2[TX,Y]} & & {T^2[T^2X,Y]} \\
      {T[TX,Y]} & {T^2[TX,TY]} & {T^2[T^2X,TY]} \\
      {T[TX,TY]} & & \\
      & {T[TX,TY]} & {T[T^2X,TY]} \\
      & & {T[TX,TY]} \\
      & {T[TX,TY]} & {T[TX,T^2Y]} \\
      {[TX,TY]} & & {[TX,T^2Y]} \\
      {[X,TY]} & & {[X,T^2Y]}
      \arrow["{\mu_{[TX,Y]}}"', from=1-1, to=2-1]
      \arrow["{T^2[\mu_X,Y]}", from=1-1, to=1-3]
      \arrow["{T^2[TX,\eta_Y]}"{description}, from=1-1, to=2-2]
      \arrow["{T^2[T^2X,\eta_Y]}", from=1-3, to=2-3]
      \arrow["{[-,=]\,\mathsf{functor}}"{description, pos=0.4}, draw=none, from=1-3, to=2-1]
      \arrow["{T[TX,\eta_Y]}"', from=2-1, to=3-1]
      \arrow["{T^2[\mu_X,TY]}", from=2-2, to=2-3]
      \arrow["{T\act_{[TX,TY]}}"', from=2-2, to=4-2]
      \arrow[""{name=0, anchor=center, inner sep=0}, "{\mu_{[TX,TY]}}"{description}, from=2-2, to=3-1]
      \arrow["{\mu\,\mathsf{nat}}"{description}, draw=none, from=1-1, to=0]
      \arrow["{T\act_{[T^2X,TY]}}", from=2-3, to=4-3]
      \arrow["{\act\,\mathsf{nat}}"{description}, draw=none, from=2-3, to=4-2]
      \arrow["{T\mbox{-}\mathsf{alg}}"{description}, draw=none, from=3-1, to=6-2]
      \arrow["{\act_{[TX,TY]}}"', from=3-1, to=7-1]
      \arrow["{T[\mu_X,TY]}", from=4-2, to=4-3]
      \arrow[equals, from=4-2, to=6-2]
      \arrow["{T[\eta_{TX},TY]}", from=4-3, to=5-3]
      \arrow["{T[TX,\eta_{TY}]}", from=5-3, to=6-3]
      \arrow[""{name=1, anchor=center, inner sep=0}, "{\act_{[TX,TY]}}"', from=6-2, to=7-1]
      \arrow["{\act_{[TX,T^2Y]}}", from=6-3, to=7-3]
      \arrow["{T[TX,\mu_Y]}", from=6-3, to=6-2]
      \arrow["{\act\,\mathsf{nat}}"{description}, shift left=3, draw=none, from=6-3, to=1]
      \arrow["{[\eta_X,TY]}"', from=7-1, to=8-1]
      \arrow["{[\eta_X,T^2Y]}", from=7-3, to=8-3]
      \arrow["{[TX,\mu_Y]}", from=7-3, to=7-1]
      \arrow["{[-,=]\,\mathsf{functor}}"{description}, draw=none, from=7-3, to=8-1]
      \arrow["{[X,\mu_Y]}", from=8-3, to=8-1]
      \arrow["{Ts_{TX,Y}}", from=1-3, to=5-3, rounded corners, color=red,
      to path={-- ([xshift=2cm]\tikztostart.center) -- ([xshift=2cm,yshift=.1cm]\tikztotarget.center) \tikztonodes -- ([yshift=.1cm]\tikztotarget.east)}]
      \arrow["{s_{X,TY}}", from=5-3, to=8-3, rounded corners, color=red,
      to path={([yshift=-.1cm]\tikztostart.east) -- ([xshift=2cm,yshift=-.1cm]\tikztostart.center) -- ([xshift=2cm]\tikztotarget.center) \tikztonodes -- (\tikztotarget.east)}]
      \arrow["{s_{X,Y}}"', from=2-1, to=8-1, rounded corners, color=red,
      to path={-- ([xshift=-2cm]\tikztostart.center) -- ([xshift=-2cm]\tikztotarget.center) \tikztonodes -- (\tikztotarget.west)}]
    \end{tikzcd}
  \]
  \caption{Commutativity of the second diagram of \eqref{eq:lifting_homs} in \cref{prop:gabi-moand-VS-strong-closed}.}
  \label{fig:gabi-moand-VS-strong-closed-1}
\end{figure}

\vspace*{\fill}

\clearpage

\begin{amssidewaysfigure}\vspace{3.5cm}
  \[
    \hspace{-2.5cm}\mathscale{0.8}{\begin{tikzcd}[ampersand replacement=\&, row sep=large]
        {T[\tu, X]} \&\&\& TX \&\&\& {[\tu,TX]} \\
        \& {T[\tu, TX]} \& T^2X \& {\forget[\tu,\free X]} \& {[\forget\tu,TX]} \& {[\forget\tu,TX]} \\
        \\
        {T[\forget\tu, X]} \& {T[\forget\tu, TX]} \& {T[\forget\tu, TX]} \& {T\forget[\tu,\free X]} \& {\forget[\tu,\free X]} \& {[T\forget\tu,TX]} \& {[T\tu,TX]} \\
        {T[T\forget\tu,X]} \&\& {T[T\forget\tu, TX]} \& {T\forget[\free\forget\tu,\free X]} \& {\forget[\free\forget\tu,\free X]} \&\& {\forget[\free\tu,\free X]} \\
        {T[T\tu, X]} \&\& {T[T\tu, TX]} \& {T\forget[\free\tu, \free X]} \&\&\& {T\forget[\free\tu, \free X]}
        \arrow[""{name=0, anchor=center, inner sep=0}, "{{T i_X}}", from=1-1, to=1-4]
        \arrow["{{T[\tu,\eta_X]}}"{description}, from=1-1, to=2-2]
        \arrow["{{T[\varphi_0^{-1},X]}}"', from=1-1, to=4-1]
        \arrow["{{T\eta_X}}"{description}, curve={height=6pt}, from=1-4, to=2-3]
        \arrow[""{name=1, anchor=center, inner sep=0}, "{{i_{TX}}}"', from=1-7, to=1-4]
        \arrow[""{name=2, anchor=center, inner sep=0}, "{{Ti_{TX}}}", from=2-2, to=2-3]
        \arrow["{{T[\varphi_0^{-1},X]}}", from=2-2, to=4-2]
        \arrow["{{\forget\epsilon_{\free X}}}"{description}, curve={height=6pt}, from=2-3, to=1-4]
        \arrow["{{\forget i_{\free X}}}"', from=2-4, to=1-4]
        \arrow[""{name=3, anchor=center, inner sep=0}, "{{\varphi_{\tu,\free X}}}"', from=2-4, to=2-5]
        \arrow[equals, from=2-5, to=2-6]
        \arrow["{{[\varphi_0, TX]}}"{description}, from=2-6, to=1-7]
        \arrow["{{[\forget\epsilon_\tu,TX]}}"{description}, from=2-5, to=4-6]
        \arrow["{{[{-},{=}]\ \mathsf{functor}}}"{description}, draw=none, from=4-1, to=2-2]
        \arrow[""{name=4, anchor=center, inner sep=0}, "{{T[\forget\tu, \eta_X]}}", from=4-1, to=4-2]
        \arrow["{{T[\forget\epsilon_\tu, X]}}"', from=4-1, to=5-1]
        \arrow[equals, from=4-2, to=4-3]
        \arrow[""{name=5, anchor=center, inner sep=0}, "{{T\varphi^{-1}_{\tu,\free X}}}", from=4-3, to=4-4]
        \arrow["{{T[\forget\epsilon_\tu,TX]}}"', from=4-3, to=5-3]
        \arrow[""{name=6, anchor=center, inner sep=0}, "{{T\forget i_{\free X}}}", from=4-4, to=2-3]
        \arrow["{{\forget\epsilon_{[\tu,\free X]}}}"', from=4-4, to=2-4]
        \arrow[""{name=7, anchor=center, inner sep=0}, "{{\forget\epsilon_{[\tu,\free X]}}}", from=4-4, to=4-5]
        \arrow["{{T\forget[\epsilon_\tu, \free X]}}"{description}, from=4-4, to=5-4]
        \arrow[""{name=8, anchor=center, inner sep=0}, "{{\varphi_{\tu,\free X}}}", from=4-5, to=2-5]
        \arrow["{{\forget[\epsilon_\tu,\free X]}}"{description}, from=4-5, to=5-5]
        \arrow[""{name=9, anchor=center, inner sep=0}, "{[\eta_{\forget\tu}, T X]}"{description}, from=4-6, to=2-6]
        \arrow[""{name=10, anchor=center, inner sep=0}, "{{[T\varphi_0, TX]}}"', from=4-6, to=4-7]
        \arrow[""{name=11, anchor=center, inner sep=0}, "{{[\eta_\tu, TX]}}"', from=4-7, to=1-7]
        \arrow[""{name=12, anchor=center, inner sep=0}, "{{T[T\forget\tu, \eta_X]}}"{description}, from=5-1, to=5-3]
        \arrow["{{T[T\varphi_0,X]}}"', from=5-1, to=6-1]
        \arrow[""{name=13, anchor=center, inner sep=0}, "{{T\varphi^{-1}_{\free\forget\tu, \free X}}}", from=5-3, to=5-4]
        \arrow["{{T[T\varphi_0, TX]}}"', from=5-3, to=6-3]
        \arrow[""{name=14, anchor=center, inner sep=0}, "{{\forget\epsilon_{[\free\forget\tu,\free X]}}}", from=5-4, to=5-5]
        \arrow["{{T[\free\varphi_0, \free X]}}"{description}, from=5-4, to=6-4]
        \arrow["{{\varphi_{\free\forget\tu,\free X}}}"{description}, from=5-5, to=4-6]
        \arrow[""{name=15, anchor=center, inner sep=0}, "{{\forget[\free\varphi_0, \free X]}}", from=5-5, to=5-7]
        \arrow["{{\varphi_{\free\tu,\free X}}}"', from=5-7, to=4-7]
        \arrow[""{name=16, anchor=center, inner sep=0}, "{{T[T\tu, \eta_X]}}"', from=6-1, to=6-3]
        \arrow[""{name=17, anchor=center, inner sep=0}, "{{T\varphi^{-1}_{\free\tu,\free X}}}"', from=6-3, to=6-4]
        \arrow[equals, from=6-4, to=6-7]
        \arrow[""{name=18, anchor=center, inner sep=0}, "{{\forget\epsilon_{[\free\tu,\free X]}}}"', from=6-7, to=5-7]
        \arrow["{{\mathsf{nat}}}"{description}, draw=none, from=0, to=2]
        \arrow["{{\forget\ \mathsf{closed}}}"{description}, draw=none, from=1, to=3]
        \arrow["{{\forget\ \mathsf{closed}}}"{description}, draw=none, from=2, to=4-3]
        \arrow["\equiv"{description}, draw=none, from=3, to=7]
        \arrow["{{[\blank, \bblank] \mathsf{ functor}}}"{description}, draw=none, from=4, to=12]
        \arrow["{{\mathsf{nat}}}"{description}, draw=none, from=5, to=13]
        \arrow["{{\mathsf{nat}}}"{description}, draw=none, from=6, to=2-4]
        \arrow["{{\mathsf{nat}}}"{description}, draw=none, from=7, to=14]
        \arrow["{{\mathsf{nat}}}"{description}, draw=none, from=8, to=4-6]
        \arrow["{{\mathsf{nat}}}"{description}, draw=none, from=9, to=11]
        \arrow["{{\mathsf{nat}}}"{description}, draw=none, from=10, to=15]
        \arrow["{{[\blank, \bblank] \mathsf{ functor}}}"{description}, draw=none, from=12, to=16]
        \arrow["{{\mathsf{nat}}}"{description}, draw=none, from=13, to=17]
        \arrow["{{\mathsf{nat}}}"{description}, draw=none, from=5-5, to=18]
        \arrow["{T[\act_{\tu},X]}"', from=1-1, to=6-1, rounded corners, color=red,
        to path={-- ([xshift=-2cm]\tikztostart.center) -- ([xshift=-2cm]\tikztotarget.center) \tikztonodes -- (\tikztotarget.west)}]
        \arrow["{s_{\tu,X}}"', from=6-1, to=1-7, rounded corners, color=red,
        to path={let \p1=([yshift=-1cm]\tikztostart.center), \p2=([xshift=2cm]\tikztotarget.center) in
          -- (\p1) -- (\x2, \y1) -- (\p2) \tikztonodes -- (\tikztotarget.east)}]
      \end{tikzcd}}
  \]
  \caption{Commutativity of the leftmost diagram of \cref{eq:siotaj} in \cref{prop:gabi-moand-VS-strong-closed}.}%
  \label{fig:badmagic}%
\end{amssidewaysfigure}
\vspace*{\fill}

\begin{figure}[htbp]
  \[
    \begin{tikzcd}[column sep=65pt, row sep=50pt]
      {T\tu} & {T[TM,TM]} & {T[TM,TTM]} \\
      {T\forget\tu} & {T\forget[\free M,\free M]} & {T\forget[\free M,\free TM]} \\
      {\forget\tu} & {\forget[\free M,\free M]} & {\forget[\free M,\free TM]} \\
      \tu & {[TM,TM]} & {[TM,TTM]} \\
      {[M,M]} & {[M,TM]} & {[M,TTM]}
      \arrow["{Tj_{TM}}", from=1-1, to=1-2]
      \arrow["{T\varphi_0}"', from=1-1, to=2-1]
      \arrow["\mathsf{closed}"{description}, draw=none, from=1-1, to=2-2]
      \arrow["{T[TM,\eta_{TM}]}", from=1-2, to=1-3]
      \arrow["{T\varphi^{-1}_{\free M,\free M}}", from=1-2, to=2-2]
      \arrow["{\varphi\,\mathsf{nat}}"{description}, draw=none, from=1-2, to=2-3]
      \arrow["{T[TM,\forget\epsilon_{\free M}]}", dotted, curve={height=-15pt}, from=1-3, to=1-2]
      \arrow["{T\varphi^{-1}_{\free M,\free TM}}", from=1-3, to=2-3]
      \arrow["{T\forget j_{\free M}}", from=2-1, to=2-2]
      \arrow["{\forget\epsilon_{\tu}}"', from=2-1, to=3-1]
      \arrow["{\epsilon\,\mathsf{nat}}"{description}, draw=none, from=2-1, to=3-2]
      \arrow["{\forget\epsilon_{[\free M,\free M]}}", from=2-2, to=3-2]
      \arrow["{\epsilon\,\mathsf{nat}}"{description}, draw=none, from=2-2, to=3-3]
      \arrow["{T\forget[\free M,\epsilon_{\free M}]}"', from=2-3, to=2-2]
      \arrow["{\forget\epsilon_{[\free M,\free TM]}}", from=2-3, to=3-3]
      \arrow["{\forget j_{\free M}}", from=3-1, to=3-2]
      \arrow["{\varphi_0^{-1}}"', from=3-1, to=4-1]
      \arrow["\mathsf{closed}"{description}, draw=none, from=3-1, to=4-2]
      \arrow["{\varphi_{\free M,\free M}}", from=3-2, to=4-2]
      \arrow["{\varphi\,\mathsf{nat}}"{description}, draw=none, from=3-2, to=4-3]
      \arrow["{\forget[\free M,\epsilon_{\free M}]}"', from=3-3, to=3-2]
      \arrow["{\varphi_{\free M,\free TM}}", from=3-3, to=4-3]
      \arrow["{j_{TM}}", from=4-1, to=4-2]
      \arrow["{j_M}"', from=4-1, to=5-1]
      \arrow["{j\,\mathsf{dinat}}"{description}, draw=none, from=4-1, to=5-2]
      \arrow["{[\eta_M,TM]}", from=4-2, to=5-2]
      \arrow["{[-,=]\,\mathsf{functor}}"{description}, draw=none, from=4-2, to=5-3]
      \arrow["{[TM,\forget\epsilon_{\free M}]}"', from=4-3, to=4-2]
      \arrow["{[\eta_M,TTM]}", from=4-3, to=5-3]
      \arrow["{[M,\eta_M]}", dotted, curve={height=-15pt}, from=5-1, to=5-2]
      \arrow["{[M,\act_M]}", from=5-2, to=5-1]
      \arrow["{[M,\forget\epsilon_{\free M}]}"', from=5-3, to=5-2]
      \arrow["{\act_{\tu}}"', from=1-1, to=4-1, rounded corners, color=red,
        to path={-- ([xshift=-1.25cm]\tikztostart.center) -- ([xshift=-1.25cm]\tikztotarget.center) \tikztonodes -- (\tikztotarget.west)}]
      \arrow["{s_{M,TM}}", from=1-2, to=5-3, rounded corners, color=red,
        to path={let \p1=([yshift=1cm]\tikztostart.center), \p2=([xshift=2.5cm]\tikztotarget.center) in
                 -- (\p1) -- (\x2, \y1) -- (\p2) \tikztonodes -- (\tikztotarget.east)}]
    \end{tikzcd}
  \]
  \caption{Commutativity of the rightmost diagram of \eqref{eq:siotaj} in \cref{prop:gabi-moand-VS-strong-closed}.}
  \label{fig:gabi-moand-VS-strong-closed-2}
\end{figure}

\vspace*{\fill}

\clearpage

\begin{amssidewaysfigure}\vspace{1.3cm}
    \[\hspace{-2.8cm}\mathscale{0.725}{
        \begin{tikzcd}[ampersand replacement=\&]
        	{T[[P,TX],[P,Y]]} \&[-10pt]\& {T[TX,Y]} \&[-10pt]\&\&\& {T[TX,TY]} \\
        	\\
        	{T[[P,TX],[TP,Y]]} \&[-10pt]\& {T[[TP,TX],[TP,Y]]} \&[-10pt]\&\& {T[[TP,TX],[TP,TY]]} \\
        	\\
        	{T[[TP,TX],[TP,Y]]} \&[-10pt]\& {T[T[TP,TX],[TP,Y]]} \&[-10pt]\&\& {T[T[TP,TX],[TP,TY]]} \\
        	\\
        	\&[-10pt] {T[[P,TX],[TP,Y]]} \&\&[-10pt] {[[TP,TX],T[TP,Y]]} \&\& {[[TP,TX],T[TP,TY]]} \& {[TX,TY]} \\
        	{T[T[TP,TX],[TP,Y]]} \&[-10pt]\& {T[T[TP,X],[TP,Y]]} \&[-10pt]\&\& {[[TP,TX],[TP,TY]]} \\[-10pt]
        	\&[-10pt]\&\&[-10pt]\& {[[TP,TX],[P,TY]]} \\[-10pt]
        	\&[-10pt] {T[T[P,TX],[TP,Y]]} \&\&[-10pt] {[[TP,X],T[TP,Y]]} \&\&\& {[X,TY]} \\
        	\&[-10pt]\&\&[-10pt]\&\& {[[TP,X],[TP,TY]]} \\
        	{T[T[TP,X],[TP,Y]]} \&[-10pt]\& {T[T[P,X],[TP,Y]]} \&[-10pt]\& {[[TP,X],[P,TY]]} \\
        	\\
        	{[[TP,X],T[TP,Y]]} \&[-10pt]\&\&[-10pt] {[[P,X],T[TP,Y]]} \& {[[P,X],[P,TY]]}
        	\arrow["{{{{T[\id,[\act_P,Y]]}}}}"{description}, from=1-1, to=3-1]
        	\arrow["{{{{T\Gamma_{TX,Y}^P}}}}"{description}, from=1-3, to=1-1]
        	\arrow["{{T[TX,\eta_Y]}}"{description}, from=1-3, to=1-7]
        	\arrow["{{{\Gamma\,\mathsf{dinat}}}}"{description}, draw=none, from=1-3, to=3-1]
        	\arrow["{{{{T\Gamma_{TX,Y}^{TP}}}}}", from=1-3, to=3-3]
        	\arrow["{{{{T\Gamma_{TX,TY}^{TP}}}}}"{description}, from=1-7, to=3-6]
        	\arrow["{{\act_{[TX,TY]}}}"{description}, from=1-7, to=7-7]
        	\arrow["{{\eqref{eq:GammaTalgsFree}}}"{description, pos=0.4}, draw=none, from=1-7, to=8-6]
        	\arrow["{T[[\eta_P,TX],[TP,Y]]}"{description}, from=3-1, to=5-1]
        	\arrow["{{{\Gamma\,\mathsf{nat}}}}"{description}, draw=none, from=3-3, to=1-7]
        	\arrow["{T[[\act_P,TX],\id]}"{description}, from=3-3, to=3-1]
        	\arrow["{{T[\id,[TP,\eta_Y]]}}"{description}, from=3-3, to=3-6]
        	\arrow["{{{{T[\act_{[TP,TX]},\id]}}}}", from=3-3, to=5-3]
        	\arrow["{{\mathsf{funct}}}"{description}, draw=none, from=3-3, to=5-6]
        	\arrow["{T[[\act_P,TX],\id]}"', from=3-3, to=7-2]
        	\arrow["{{T[\act_{[TP,TX]},\id]}}"', from=3-6, to=5-6]
        	\arrow["{T\,\mathsf{alg}}"{description}, draw=none, from=5-1, to=3-3]
        	\arrow["{T[[\act_P,TX],\id]}"{description}, from=5-1, to=7-2]
        	\arrow["{{{{T[\act_{[TP,TX]},\id]}}}}"{description}, from=5-1, to=8-1]
        	\arrow["{{T[\id,[TP,\eta_Y]]}}"{description}, from=5-3, to=5-6]
        	\arrow["{{s_{[TP,TX],[TP,Y]}}}"{description}, from=5-3, to=7-4]
        	\arrow[""{name=0, anchor=center, inner sep=0}, "{{{{T[T[TP,\eta_X],\id]}}}}"{description}, from=5-3, to=8-3]
        	\arrow["{{{s\,\mathsf{nat}}}}"{description}, draw=none, from=5-3, to=14-4]
        	\arrow["{{s_{[TP,TX],[TP,TY]}}}"{description}, from=5-6, to=7-6]
        	\arrow[""{name=1, anchor=center, inner sep=0}, "{T[\act_{[P,TX]},\id]}"{description}, from=7-2, to=10-2]
        	\arrow["{{{s\,\mathsf{nat}}}}"{description}, draw=none, from=7-4, to=5-6]
        	\arrow["{{[\id,T[TP,\eta_Y]]}}"{description}, from=7-4, to=7-6]
        	\arrow["{{\mathsf{def} \, s}}"{description}, draw=none, from=7-4, to=8-6]
        	\arrow["{{{{[\id,s_{P,Y}]}}}}"{description}, from=7-4, to=9-5]
        	\arrow["{{[[TP,\eta_X],\id]}}"{description}, from=7-4, to=10-4]
        	\arrow["{{[\id,\act_{[TP,TY]}]}}"{description}, from=7-6, to=8-6]
        	\arrow["{{{{\Gamma_{TX,TY}^{TP}}}}}"{description}, from=7-7, to=8-6]
        	\arrow["{{[\eta_X,TY]}}"{description}, from=7-7, to=10-7]
        	\arrow["{{\Gamma\,\mathsf{nat}}}"{description}, draw=none, from=7-7, to=11-6]
        	\arrow["\def\arraystretch{0.8}\begin{array}{c} \act\,\mathsf{nat} \\ \act_P\,\mathsf{of\,algs} \end{array}"{description}, draw=none, from=8-1, to=7-2]
        	\arrow[""{name=2, anchor=center, inner sep=0}, "{{{{T[T[\act_P,TX],\id]}}}}"{description}, from=8-1, to=10-2]
        	\arrow["{{{{T[T[TP,\eta_X],\id]}}}}"{description}, from=8-1, to=12-1]
        	\arrow["{{{{T[T[\act_P,X],\id]}}}}"{description, pos=0.6}, from=8-3, to=12-3]
        	\arrow["{{[\id,[\eta_P,TY]]}}"{description}, from=8-6, to=9-5]
        	\arrow["{{[[TP,\eta_X],\id]}}"{description}, from=8-6, to=11-6]
        	\arrow["{{\mathsf{funct}}}"{description}, draw=none, from=9-5, to=11-6]
        	\arrow["{{[[TP,\eta_X],\id]}}"{description}, from=9-5, to=12-5]
        	\arrow["{T[T[P,\eta_X],\id]}"{description}, from=10-2, to=12-3]
        	\arrow["{[[\act_P,X],\id]}"{description}, from=10-4, to=14-4]
        	\arrow["{{{{\Gamma_{X,TY}^{TP}}}}}"{description}, from=10-7, to=11-6]
        	\arrow["{{[\id,[\eta_P,TY]]}}"{description}, from=11-6, to=12-5]
        	\arrow[""{name=3, anchor=center, inner sep=0}, "{{{{T[T[\act_P,X],\id]}}}}"{description, pos=0.4}, from=12-1, to=12-3]
        	\arrow["{{{{s_{[TP,X],[TP,Y]}}}}}"{description, pos=0.6}, from=12-1, to=14-1]
        	\arrow["{{{{s_{[P,X],[TP,Y]}}}}}"{description, pos=0.4}, from=12-3, to=14-4]
        	\arrow["{[[\act_P,X],\id]}"', curve={height=12pt}, from=12-5, to=14-5]
        	\arrow["{{{s\,\mathsf{nat}}}}"{description}, draw=none, from=14-1, to=12-3]
        	\arrow["{{{{[[\act_P,X],\id]}}}}"{description}, from=14-1, to=14-4]
        	\arrow["{{{\mathsf{funct}}}}"{description}, draw=none, from=14-4, to=9-5]
        	\arrow["{{{{[\id,s_{P,Y}]}}}}"', from=14-4, to=14-5]
        	\arrow["{{\Gamma\,\mathsf{dinat}}}"', draw=none, from=14-5, to=10-7]
        	\arrow["{[[\eta_P,X],\id]}"', dotted, curve={height=12pt}, from=14-5, to=12-5]
        	\arrow["\def\arraystretch{0.8}\begin{array}{c} \act\,\mathsf{nat} \\ \act_P\,\mathsf{of\,algs} \end{array}"{description}, draw=none, from=1, to=0]
        	\arrow["{\mathsf{funct}}"{description}, draw=none, from=2, to=3]
              \arrow["{T[s_{P,X},\id]}"{description,pos=0.45}, from=3-1, to=12-1, rounded corners, color=red,
                to path={-- ([xshift=-2.5cm]\tikztostart.center) -- ([xshift=-2.5cm]\tikztotarget.center) \tikztonodes -- (\tikztotarget.west)}]
              \arrow["{s_{X,Y}}"{description}, from=1-3, to=10-7, rounded corners, color=red,
                to path={let \p1=([yshift=1cm]\tikztostart.center), \p2=([xshift=1.5cm]\tikztotarget.center) in
                         -- (\p1) -- (\x2, \y1) -- (\p2) \tikztonodes -- (\tikztotarget.east)}]
              \arrow["{\Gamma^P_{X,TY}}", from=10-7, to=14-5, rounded corners, color=black,
                to path={let \p1=([yshift=-1cm]\tikztostart.center), \p2=([xshift=2cm]\tikztotarget.center) in
                         -- (\p1) -- (\x1, \y2) -- (\p2) \tikztonodes -- (\tikztotarget.east)}]
        \end{tikzcd}
    }\]
  \caption{Commutativity of \eqref{eq:sGamma} in \cref{prop:gabi-moand-VS-strong-closed}.}%
  \label{fig:gabi-moand-VS-strong-closed-3}%
\end{amssidewaysfigure}

\clearpage

\begin{amssidewaysfigure}\vspace{2cm}
  \[
    \hspace{-3.2cm}\mathscale{.6}{\begin{tikzcd}[ampersand replacement=\&,column sep=large,row sep=large]
        {T[TX,Y]} \&\&\& {[T^2X,TY]} \&\& {[TX,TY]} \&\& {[X,TY]} \\
        \&\&\& {[T^2X,TY]} \&\& {[[P,T^2X],[P,TY]]} \& {[[P,TX],[P,TY]]} \& {[[P,X],[P,TY]]} \\
        \&\&\& {[[TP,T^2X],[TP,TY]]} \&\& {[[TP,T^2X],[P,TY]]} \& {[[TP,TX],[P,TY]]} \\
        {T[[P,TX],[P,Y]]} \& {[T[P,TX],T[P,Y]]} \&\& {[T[P,TX],[TP,TY]]} \&\& {[T[P,TX],[P,TY]]} \& {[T[P,X],[P,TY]]} \& {[[P,X],[P,TY]]} \\
        {T[[TP,TX],[P,Y]]} \& {[T[TP,TX],T[P,Y]]} \\
        {T[[T^2P,TX],[P,Y]]} \& {[T[T^2P,TX],T[P,Y]]} \\
        {T[T[TP,X],[P,Y]]} \& {[T^2[TP,X],T[P,Y]]} \&\& {[T[TP,X],T[P,Y]]} \&\&\& {[T[TP,X],[P,TY]]} \\
        \&\&\& {[[TP,X],T[P,Y]]} \&\& {[[TP,X],[TP,TY]]} \& {[[TP,X],[P,TY]]} \\
        \&\&\& {[[TP,X],T[TP,Y]]} \&\& {[[TP,X],[T^2P,TY]]} \\
        {T[T[TP,X],[TP,Y]]} \& {[T^2[TP,X],T[TP,Y]]} \& {[T[TP,X],T[TP,Y]]} \& {[[TP,X],T[TP,Y]]} \& {[[P,X],T[TP,Y]]} \& {[[P,X],[T^2P,TY]]} \& {[[P,X],[TP,TY]]} \& {[[P,X],[P,TY]]}
        \arrow["\xi_{TX,Y}", from=1-1, to=1-4]
        \arrow["T\Gamma^P_{TX,Y}"', from=1-1, to=4-1]
        \arrow["{{T\ \mathsf{closed}}}"{description}, draw=none, from=1-1, to=4-4]
        \arrow["{[T\eta_{X}, \id]}", from=1-4, to=1-6]
        \arrow[equals, from=1-4, to=2-4]
        \arrow["{[\eta_X,\id]}", from=1-6, to=1-8]
        \arrow["{{\mathsf{nat}\ \Gamma}}"{description}, draw=none, from=1-8, to=2-4]
        \arrow["\Gamma^P_{X,TY}", from=1-8, to=2-8]
        \arrow[""{name=0, anchor=center, inner sep=0}, "\Gamma^P_{T^2X,TY}", from=2-4, to=2-6]
        \arrow["\Gamma^{TP}_{T^2X,TY}"', from=2-4, to=3-4]
        \arrow["{[[\id,T\eta_{X}],\id]}", from=2-6, to=2-7]
        \arrow["{[[\eta_P,\id],\id]}"', from=2-6, to=3-6]
        \arrow["\equiv"{description}, draw=none, from=2-6, to=3-7]
        \arrow["{[[\id,\eta_X],\id]}", from=2-7, to=2-8]
        \arrow["{[[\eta_P,\id],\id]}", from=2-7, to=3-7]
        \arrow["{{\genfrac{}{}{0pt}{}{\mathsf{closed}}{\mathsf{monad}}}}"{description}, draw=none, from=2-8, to=4-7]
        \arrow[""{name=1, anchor=center, inner sep=0}, "{{{[\id,[\eta_P,\id]]}}}", from=3-4, to=3-6]
        \arrow["{[\xi_{P,TX},\id]}"', from=3-4, to=4-4]
        \arrow["{[[\id,T\eta_{X}],\id]}", from=3-6, to=3-7]
        \arrow["{[\xi_{P,TX},\id]}"', from=3-6, to=4-6]
        \arrow["{{\mathsf{nat}\ \xi}}"{description}, draw=none, from=3-6, to=4-7]
        \arrow["{[\xi_{P,X},\id]}", from=3-7, to=4-7]
        \arrow["\xi_{[P,TX],[P,Y]}", from=4-1, to=4-2]
        \arrow["{T[[\eta_P, \id], \id]}"', from=4-1, to=5-1]
        \arrow["{{\mathsf{nat}\ \xi}}"{description}, draw=none, from=4-1, to=10-2]
        \arrow["{[\id,\xi_{P,Y}]}"', from=4-2, to=4-4]
        \arrow["{{{{{{[T[\eta_P,\id],\id]}}}}}}", from=4-2, to=5-2]
        \arrow["\equiv"{description}, draw=none, from=4-4, to=3-6]
        \arrow["{{{[\id,[\eta_P,\id]]}}}", from=4-4, to=4-6]
        \arrow["\equiv"', draw=none, from=4-4, to=8-7]
        \arrow["{{{[T[\id,\eta_X],\id]}}}", from=4-6, to=4-7]
        \arrow["{{{[\eta_{[P,X]},\id]}}}", from=4-7, to=4-8]
        \arrow[""{name=2, anchor=center, inner sep=0}, "{[T[\eta_P,\id],\id]}"', curve={height=12pt}, from=4-7, to=7-7]
        \arrow[equals, from=4-8, to=2-8]
        \arrow[equals, from=4-8, to=10-8]
        \arrow["{T[[\eta_{TP}, \id],\id]}"', from=5-1, to=6-1]
        \arrow["{{{{{{[T[\eta_{TP},\id],\id]}}}}}}"', from=5-2, to=6-2]
        \arrow["{[T[\id,\eta_X],\id]}"{description}, curve={height=-24pt}, from=5-2, to=7-4]
        \arrow["{{\genfrac{}{}{0pt}{}{\mathsf{closed}}{\mathsf{monad}}}}"', draw=none, from=5-2, to=7-4]
        \arrow["{{{{{{{{T[\xi_{TP,X},\id]}}}}}}}}"', from=6-1, to=7-1]
        \arrow["{{{{{{[T\xi_{TP,X},\id]}}}}}}"', from=6-2, to=7-2]
        \arrow["{{{{{{{{T[\id,[\act_P,\id]]}}}}}}}}"', from=7-1, to=10-1]
        \arrow["{[T\eta_{[TP,X]},\id]}"', from=7-2, to=7-4]
        \arrow["{[\id,T[\act_P,Y]]}"', from=7-2, to=10-2]
        \arrow["{{{{{{[\eta_{[TP,X]},\id]}}}}}}", from=7-4, to=8-4]
        \arrow[""{name=3, anchor=center, inner sep=0}, "{[T[\act_P,\id],\id]}"', curve={height=12pt}, from=7-7, to=4-7]
        \arrow["{[\eta_{[TP,X]},\id]}"', from=7-7, to=8-7]
        \arrow["{[\id,\xi_{P,Y}]}", from=8-4, to=8-6]
        \arrow["{[\id,T[\act_P,Y]]}", from=8-4, to=9-4]
        \arrow["{{\mathsf{nat}\ \xi}}"{description}, draw=none, from=8-4, to=9-6]
        \arrow["{[\id,[\eta_P,\id]]}"', from=8-6, to=8-7]
        \arrow["{[\id,[T\act_P,\id]]}"', from=8-6, to=9-6]
        \arrow["{{\mathsf{nat}\ \eta}}"{description}, draw=none, from=8-6, to=10-7]
        \arrow["{{\mathsf{nat}\ \eta}}"{description}, draw=none, from=8-7, to=4-8]
        \arrow[""{name=4, anchor=center, inner sep=0}, "{[[\act_P,\id],[\act_P,\id]]}"{description}, from=8-7, to=10-7]
        \arrow["{[[\act_P,\id],\id]}"{description}, from=8-7, to=10-8]
        \arrow["{[\id,\xi_{TP,Y}]}"', from=9-4, to=9-6]
        \arrow[equals, from=9-4, to=10-4]
        \arrow["\equiv"{description}, draw=none, from=9-6, to=10-4]
        \arrow["{[[\act_P,X],\id]}"', from=9-6, to=10-6]
        \arrow["{{{{{{{{\xi_{T[TP,X],[TP,Y]}}}}}}}}}"', from=10-1, to=10-2]
        \arrow["\equiv"{description}, draw=none, from=10-2, to=7-4]
        \arrow["{[T\eta_{[TP,X]},\id]}"', from=10-2, to=10-3]
        \arrow["{{{{{{{{[\eta_{[TP,X]},\id]}}}}}}}}"', from=10-3, to=10-4]
        \arrow["{{{{{{{{[[\act_P,X],\id]}}}}}}}}"', from=10-4, to=10-5]
        \arrow["{{{{{{{{[\id,\xi_{TP,Y}]}}}}}}}}"', from=10-5, to=10-6]
        \arrow["{{{{{{{{[\id,[\eta_{TP},\id]]}}}}}}}}"', from=10-6, to=10-7]
        \arrow["{{{{{{{{[\id,[\eta_P,\id]]}}}}}}}}"', from=10-7, to=10-8]
        \arrow["{{\mathsf{dinat}\ \Gamma}}"{description}, draw=none, from=0, to=1]
        \arrow["{{\mathsf{alg}}}"{description}, draw=none, from=2, to=3]
        \arrow["{\mathsf{alg}}"{description}, draw=none, from=10-8, to=4]
        \arrow["{s_{X,Y}}", from=1-1, to=1-8, rounded corners, color=red,
          to path={-- ([yshift=.7cm]\tikztostart.center) -- ([yshift=.7cm]\tikztotarget.center) \tikztonodes -- (\tikztotarget.north)}]
        \arrow["{T[s_{P,X},[\act_P,Y]]}"', from=4-1, to=10-1, rounded corners, color=red,
          to path={-- ([xshift=-2.5cm]\tikztostart.center) -- ([xshift=-2.5cm]\tikztotarget.center) \tikztonodes -- (\tikztotarget.west)}]
        \arrow["{s_{[TP,X],[TP,Y]}}"', from=10-1, to=10-4, rounded corners, color=red,
          to path={-- ([yshift=-.9cm]\tikztostart.center) -- ([yshift=-.9cm,xshift=-.1cm]\tikztotarget.center) \tikztonodes -- ([xshift=-.1cm]\tikztotarget.south)}]
        \arrow["{[[\act_P,X],s_{P,Y}]}"', from=10-4, to=10-8, rounded corners, color=red,
          to path={([xshift=.1cm]\tikztostart.south) -- ([yshift=-.9cm,xshift=.1cm]\tikztostart.center) -- ([yshift=-.9cm]\tikztotarget.center) \tikztonodes -- (\tikztotarget.south)}]
      \end{tikzcd}}
  \]
  \caption{The coherence map \(\Gamma\) lifts to the Eilenberg–Moore category of \(T\) in \cref{prop:normal-closed-is-gabi}.}
  \label{fig:closed-monad-lift-gamma}
\end{amssidewaysfigure}

\clearpage

\begin{amssidewaysfigure}\vspace{1cm}
  \[
    \hspace{-2cm}\mathscale{0.89}{\begin{tikzcd}[ampersand replacement=\&,sep=3em]
	\& {T[TX,\{TY,Z\}]} \&\&\& {T\{TY,[TX,Z]\}} \& \\
	\& {T[TTX,\{TY,Z\}]} \& {T[TX,\{TY,TZ\}]} \& {T\{TY,[TX,TZ]\}} \& {T\{TTY,[TX,Z]\}} \\
	{T[TX,\{TY,Z\}]} \& {[TX,T\{TY,Z\}]} \& {T[TTX,\{TY,TZ\}]} \& {T\{TTY,[TX,TZ]\}} \& {\{TY,T[TX,Z]\}} \& {T\{TY,[TX,Z]\}} \\
	{[X,T\{TY,Z\}]} \& {[TX,T\{TTY,Z\}]} \& {[TX,T\{TY,TZ\}]} \& {\{TY,T[TX,TZ]\}} \& {\{TY,T[TTX,Z]\}} \& {\{Y,T[TX,Z]\}} \\
	\& {[TX,\{TY,TZ\}]} \& {[TX,T\{TTY,TZ\}]} \& {\{TY,T[TTX,TZ]\}} \& {\{TY,[TX,TZ]\}} \\
	\&\& {[TX,\{TY,TTZ\}]} \& {\{TY,[TX,TTZ]\}} \&  \\
	\&\& {[TX,\{TY,TZ\}]} \& {\{TY,[TX,TZ]\}} \\
	{[X,\{Y,TZ\}]} \&\&\&\&\& {\{Y,[X,TZ]\}}
	\arrow[""{name=0, anchor=center, inner sep=0}, "{T\xi_{TX,TY,Z}}"{description}, from=1-2, to=1-5]
	\arrow["{T[\mu_X,\{TY,Z\}]}"{description}, from=1-2, to=2-2]
	\arrow["{T[TX,\{TY,\eta_Z\}]}"{description}, from=1-2, to=2-3]
	\arrow[""{name=1, anchor=center, inner sep=0}, curve={height=24pt}, equals, from=1-2, to=3-1]
	\arrow["{T\{TY,[TX,\eta_Z]\}}"{description}, from=1-5, to=2-4]
	\arrow["{T\{\mu_Y,[TX,Z]\}}"{description}, from=1-5, to=2-5]
	\arrow[""{name=2, anchor=center, inner sep=0}, curve={height=-24pt}, equals, from=1-5, to=3-6]
	\arrow["{T[T\eta_X,\{TY,Z\}]}"{description}, from=2-2, to=3-1]
	\arrow["{s_{TX,\{TY,Z\}}}"{description}, from=2-2, to=3-2]
	\arrow[""{name=3, anchor=center, inner sep=0}, "{T\xi_{TX,TY,TZ}}", from=2-3, to=2-4]
	\arrow["{T[\mu_X,\{TY,TZ\}]}"{description}, from=2-3, to=3-3]
	\arrow["{\scriptscriptstyle\mathsf{nat\ and\ funct}}"{description}, draw=none, from=2-3, to=5-2]
	\arrow["{T\{\mu_Y,[TX,TZ]\}}"{description}, from=2-4, to=3-4]
	\arrow["{\scriptscriptstyle\mathsf{nat\ and\ funct}}"{description}, draw=none, from=2-4, to=5-5]
	\arrow["{t_{TY,[TX,Z]}}"{description}, from=2-5, to=3-5]
	\arrow["{T\{T\eta_Y,[TX,Z]\}}"{description}, from=2-5, to=3-6]
	\arrow["{\scriptscriptstyle\mathsf{nat}}"{description}, draw=none, from=3-1, to=3-2]
	\arrow["{s_{X,\{TY,Z\}}}"{description}, from=3-1, to=4-1]
	\arrow[""{name=4, anchor=center, inner sep=0}, "{[\eta_X,T\{TY,Z\}]}"{description}, from=3-2, to=4-1]
	\arrow["{[TX,T\{\mu_Y,Z\}]}"{description}, from=3-2, to=4-2]
	\arrow["{s_{TX,\{TY,TZ\}}}"{description}, from=3-3, to=4-3]
	\arrow["{t_{TY,[TX,TZ]}}"{description}, from=3-4, to=4-4]
	\arrow["{\scriptscriptstyle\mathsf{nat}}"{description}, draw=none, from=3-5, to=3-6]
	\arrow["{\{TY,T[\mu_X,Z]\}}"{description}, from=3-5, to=4-5]
	\arrow[""{name=5, anchor=center, inner sep=0}, "{\{\eta_Y,T[TX,Z]\}}"{description}, from=3-5, to=4-6]
	\arrow["{t_{Y,[TX,Z]}}"{description}, from=3-6, to=4-6]
	\arrow[""{name=6, anchor=center, inner sep=0}, "{[X,t_{Y,Z}]}"', from=4-1, to=8-1]
	\arrow["{[\eta_X,T\{T\eta_Y,Z\}]}", from=4-2, to=4-1]
	\arrow["{[TX,t_{TY,Z}]}"{description}, from=4-2, to=5-2]
	\arrow["{[TX,T\{\mu_Y,TZ\}]}"{description}, from=4-3, to=5-3]
	\arrow["{\{TY,T[\mu_X,TZ]\}}"{description}, from=4-4, to=5-4]
	\arrow["{\{\eta_Y,T[T\eta_X,Z]\}}"', from=4-5, to=4-6]
	\arrow["{\{TY,s_{TX,Z}\}}"{description}, from=4-5, to=5-5]
	\arrow[""{name=7, anchor=center, inner sep=0}, "{\{Y,s_{X,Z}\}}"{description}, from=4-6, to=8-6]
	\arrow["{[TX,\{TY,T\eta_Z\}]}"{description}, from=5-2, to=6-3]
	\arrow[""{name=8, anchor=center, inner sep=0}, curve={height=30pt}, equals, from=5-2, to=7-3]
	\arrow["{[\eta_X,\{\eta_Y,TZ\}]}"{description}, from=5-2, to=8-1]
	\arrow["{[TX,t_{TY,TZ}]}"{description}, from=5-3, to=6-3]
	\arrow["{\{TY,s_{TX,TZ}\}}"{description}, from=5-4, to=6-4]
	\arrow["{\{TY,[TX,T\eta_Z]\}}"{description}, from=5-5, to=6-4]
	\arrow[""{name=9, anchor=center, inner sep=0}, curve={height=-30pt}, equals, from=5-5, to=7-4]
	\arrow["{\{\eta_Y,[\eta_X,TZ]\}}"{description}, from=5-5, to=8-6]
	\arrow["{[TX,\{TY,\mu_Z\}]}"{description}, from=6-3, to=7-3]
	\arrow["{\{TY,[TX,\mu_Z]\}}"{description}, from=6-4, to=7-4]
	\arrow[""{name=10, anchor=center, inner sep=0}, "{\xi_{TX,TY,TZ}}"', from=7-3, to=7-4]
	\arrow["{[\eta_X,\{\eta_Y,TZ\}]}"{description}, from=7-3, to=8-1]
	\arrow["{\{\eta_Y,[\eta_X,TZ]\}}"{description}, from=7-4, to=8-6]
	\arrow[""{name=11, anchor=center, inner sep=0}, "{\xi_{X,Y,TZ}}"{description}, from=8-1, to=8-6]
	\arrow["{\scriptscriptstyle T\mbox{-}\mathsf{alg}}"{description}, draw=none, from=2-2, to=1]
	\arrow["{\scriptscriptstyle\mathsf{nat}}"{description}, draw=none, from=3, to=0]
	\arrow["{\scriptscriptstyle T\mbox{-}\mathsf{alg}}"{description}, draw=none, from=2-5, to=2]
	\arrow["{\scriptscriptstyle T\mbox{-}\mathsf{alg}}"{description}, draw=none, from=4-2, to=4]
	\arrow["{\scriptscriptstyle T\mbox{-}\mathsf{alg}}"{description}, draw=none, from=4-5, to=5]
	\arrow["{\scriptscriptstyle\mathsf{nat}}"{description}, draw=none, from=5-2, to=6]
	\arrow["{\scriptscriptstyle\mathsf{nat}}"{description}, draw=none, from=5-5, to=7]
	\arrow["{\scriptscriptstyle T\mbox{-}\mathsf{alg}}"{description}, draw=none, from=6-3, to=8]
	\arrow["{\scriptscriptstyle T\mbox{-}\mathsf{alg}}"{description}, draw=none, from=6-4, to=9]
	\arrow["{\scriptscriptstyle\mathsf{nat}}"{description}, draw=none, from=10, to=11]
\end{tikzcd}}\]
  \caption{If the isomorphism \(\xi\) is a morphism of $T$-algebras in \cref{prop:dgas-recover-nice-biclosed-structures}, then \eqref{eq:dyadic} holds.}
  \label{fig:dyadic-gabimonad-lift-xi}
\end{amssidewaysfigure}

\end{document}